%% file: main.tex
\documentclass[12pt]{article}

\usepackage[margin=1.3in]{geometry}
\usepackage{setspace}
\usepackage{amsmath}
\usepackage{mathtools}
\usepackage{amsxtra}
\usepackage{amscd}
\usepackage{amsthm}
\allowdisplaybreaks
\usepackage{amssymb}
\usepackage[normalem]{ulem}
\usepackage[all]{xy}
\usepackage{comment}
\usepackage{color}
\usepackage{graphicx}
\usepackage{hyperref}
\usepackage[shortlabels]{enumitem}
\usepackage{array}

\usepackage{tikz}
\usepackage{tikz-cd}
\usepackage{quiver}
\usetikzlibrary{arrows.meta, decorations.markings, calc, positioning}
\usetikzlibrary{decorations.pathreplacing}
\usepackage{ifthen} 

\usepackage[backend=biber, style=alphabetic, giveninits=true]{biblatex}
\theoremstyle{definition}
\newtheorem{theorem}{Theorem}
\newtheorem{lemma}[theorem]{Lemma}
\newtheorem{proposition}[theorem]{Proposition}
\newtheorem{corollary}[theorem]{Corollary}

\newtheorem{definition}[theorem]{Definition}

\newtheorem{example}[theorem]{Example}

\newtheorem{remark}[theorem]{Remark}

\newtheorem{theoremletter}{Theorem}

\numberwithin{theorem}{section}
\newcommand{\Z}{\mathbb{Z} }
\newcommand{\D}{\mathbb{D} }
\newcommand{\N}{\mathbb{N} }

\newcommand{\C}{\mathbb{C} }

\newcommand{\up}{{\mkern.5mu\begin{tikzpicture}[x=.27em,y=.45em,decoration={markings,mark=at position 0.99 with {\arrow[black]{Stealth[length=4.8pt]}}}]
\draw[color=white,postaction={decorate}] (0,-.1) -- (0,.1);
\end{tikzpicture}\mkern.5mu}}
\newcommand{\dn}{{\mkern.5mu\begin{tikzpicture}[x=.27em,y=.45em,decoration={markings,mark=at position 0.99 with {\arrow[black]{Stealth[length=4.8pt]}}}]
\draw[color=white,postaction={decorate}] (0,.1) -- (0,-.1);
\end{tikzpicture}\mkern.5mu}}

\input{arcDiagram.tex}

\title{A quiver presentation of the type D arc algebras and intrinsic formality of $\D_4$}
\author{Eliot Grimont}
\date{}

\begin{document}

\maketitle

\begin{abstract}
We construct a presentation of the Khovanov arc algebras of type $D$ by a quiver with quadratic relations. As an application, we use this presentation and the Koszul dual to compute some Hochschild cohomology groups of $\mathbb{D}_4$, and show that $\mathbb{D}_4$ is intrinsically formal, that is, it admits no non-trivial $A_\infty$-deformations.
\end{abstract}

\tableofcontents
\newpage

\section*{Introduction}
\addcontentsline{toc}{section}{Introduction}

Khovanov arc algebras were originally introduced in the context of categorification, arising from Khovanov’s work~\cite{Khovanov2000} on the categorification of the Jones polynomial. In this framework, diagrammatic algebras built from arc diagrams play a central role in encoding the combinatorial and categorical structures underlying link invariants. Later, Brundan and Stroppel introduced extended versions of these arc algebras, called $K_m^n$ for $m,n \geq 1$, and showed that they are closely related to representation theory : more precisely, the category of finite dimensional $K_m^n$-modules is equivalent to the principal block of the parabolic category $\mathcal{O}$ associated to the parabolic subalgebra corresponding to $\mathfrak{gl}_m(\C) \oplus \mathfrak{gl}_n(\C) \subset \mathfrak{gl}_{m+n}(\C) $, see~\cite{BrundanStroppel2011a} and~\cite{BrundanStroppel2011b}. The construction was subsequently generalized to other Lie types, and in particular to type $D$ by Ehrig and Stroppel, where analogous diagrammatic algebras, called $\D_n$ for $n \geq 1$ appear and are again related to representation theory: indeed, the category of finite dimensional $\D_n$-modules is equivalent to the principal block of the parabolic category $\mathcal{O}_0^{\mathfrak{p}}(\mathfrak{so}(2n))$, where $\mathfrak{p}$ is a maximal parabolic subalgebra of type A, see~\cite{EhrigStroppel2016a}. Understanding the algebraic structure of these arc algebras is therefore important both from the perspective of categorification and from the viewpoint of representation theory.

In this paper, we focus on the type $D$ arc algebras $\D_n$ and study their algebraic structure through presentations by quivers with relations. Our approach is inspired by the work of Barmeier and Wang~\cite{BarmeierWang2026}, who exploited the Koszul property of the algebras $K_m^n$ to construct such a presentation in type $A$. A key observation motivating this work is that the algebras $\D_n$ are also Koszul, see~\cite{EhrigStroppel2016a,Seynnaeve2017,BeilinsonGinzburgSoergel1996}. In particular, they are quadratic algebras, and this allows one to study them through presentations by quivers with relations of degree two. In such a presentation, the vertices correspond to degree-zero diagrams, while the arrows correspond to generators in degree one. We adapt the approach of Barmeier and Wang to the type $D$ setting and use the diagrammatic structure of $\D_n$ to identify these generators and determine the relations between them. The main result of the paper is the following theorem.

\begin{theoremletter}\label{thmA}
    There is a graded $\Bbbk$-algebra isomorphism $\overline{\omega} : \Bbbk Q_n/I_n \to \D_n$, where $Q_n$ is the quiver defined in Section~\ref{sec:5.1}, and $I_n$ is generated by the monomial relations, square relations, and relations at vertices given in Theorem~\ref{thm:5.15}.
\end{theoremletter}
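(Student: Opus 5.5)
The plan follows the strategy of Barmeier and Wang in type $A$ and exploits the Koszulity of $\D_n$. Since $\D_n$ is Koszul, it is generated in degrees $0$ and $1$ and has quadratic relations. Concretely, $\D_n \cong T_{\D_n^0}(\D_n^1)/(R)$, where $R \subseteq \D_n^1 \otimes_{\D_n^0} \D_n^1$ is the kernel of the multiplication map into $\D_n^2$.

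The argument therefore has three steps.

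\begin{enumerate}[(i)]
\item Identify $\D_n^0$ with the semisimple algebra spanned by the idempotents $e_\lambda$ attached to the weights (cup diagrams). These are the vertices of $Q_n$.
\item Choose a basis of each space $e_\mu \D_n^1 e_\lambda$. These basis elements are the arrows of $Q_n$.
\item Show that the relations listed in Theorem~\ref{thm:5.15} span $R$.
\end{enumerate}

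This gives a surjection $\Bbbk Q_n \to \D_n$ whose kernel is generated in degree $2$ by $R$. It then suffices to show that $I_n \subseteq \ker$ and that $I_n$ and $\ker$ agree in degree two.

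For step (ii), I would use the degree-one basis vectors $\underline{\mu}\,\nu\,\overline{\lambda}$, where $\lambda,\mu$ are weights and $\nu$ is an orientation of the circle diagram $\underline{\mu}\overline{\lambda}$ of degree one. By the type $D$ combinatorics of Ehrig--Stroppel, these occur exactly when $\underline{\lambda}$ and $\underline{\mu}$ differ by a single local move. Such a move is either the usual type $A$ move on a pair of cups, or a move involving a dotted cup or a cup/ray near the left boundary. Each such pair contributes exactly one arrow in each direction, so $Q_n$ is the double of the Bruhat-type graph. The map $\overline{\omega}$ sends each arrow to the corresponding basis diagram, possibly rescaled by a sign. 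The sign choice is needed because type $D$ multiplication involves signs from dots.

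For step (iii), I would verify each family of relations by direct diagram computation using the surgery rules with dots. Monomial relations are paths of length two $\lambda\to\mu\to\nu$ whose product vanishes because $\underline{\nu}\overline{\lambda}$ is not orientable, or is orientable only in degree $\neq 2$. Square relations equate (up to sign) the two paths through different intermediate vertices. The relations at vertices are the loops $\lambda\to\mu\to\lambda$, which are expressed in terms of one another. After checking these, I would compare dimensions: $\dim e_\nu(\Bbbk Q_n)_2 e_\lambda - \dim e_\nu (I_n)_2 e_\lambda$ must equal $\dim e_\nu\D_n^2 e_\lambda$, which is the number of degree-two orientations of $\underline{\nu}\overline{\lambda}$. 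This is counted case by case according to the relative position of $\lambda$ and $\nu$: equal, differing by two commuting moves, or differing by two overlapping moves. Finally, Koszulity (quadraticity) upgrades the degree-two agreement to an isomorphism $\Bbbk Q_n/I_n \cong \D_n$.

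I expect the main obstacle to be the sign bookkeeping and the case analysis near the dotted or boundary part of the diagrams. In type $D$ the surgery maps acquire signs depending on dots and on positions of cups relative to the wall. A naive choice of arrow images may then make square relations hold only up to signs that cannot be made consistent.

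I would handle this in two stages:
\begin{itemize}
\item First fix a normalization of the arrows, for instance by a sign depending on the positions of the cup endpoints, as Ehrig--Stroppel do for their graded cellular basis.
\item Then check the square relations in each local configuration: disjoint moves, nested moves, and moves interacting with the dotted region. The aim is to confirm that the commuting squares have exactly the signs stated in Theorem~\ref{thm:5.15}.
\end{itemize}

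The dimension count for the loops at a vertex, where the number of degree-two orientations of $\underline{\lambda}\overline{\lambda}$ must match the number of independent loops minus the relations, is the other delicate point.
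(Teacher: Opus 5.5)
Your plan is essentially the paper's proof. Koszulity gives quadraticity; the $e_\lambda$ are the vertices and the degree-one diagrams are the arrows. One then checks that $\omega$ kills the monomial, square and vertex relations. Finally, bijectivity in degree two is obtained by bounding $\dim e_\lambda(\Bbbk Q_n/I_n)_2 e_\mu$ above by $1$ for $\lambda\neq\mu$ and by $\text{def}(\lambda)$ for $\lambda=\mu$, and matching these bounds against $\D_n$ via surjectivity and explicit nonzero products.

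The sign normalization you anticipate is not needed. The paper uses the sign-adjusted multiplication of Ehrig--Tubbenhauer--Wilbert, so arrows are sent directly to the diagrams $\underline{\lambda}\mu\overline{\mu}$ and $\underline{\mu}\mu\overline{\lambda}$, and the square relations are plain differences.
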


Although this description replaces the original diagrammatic definition with a more algebraic one, it provides effective tools for studying structural properties of the algebras. In particular, the quiver presentation makes it possible to compute the Koszul dual algebra, which provides further information on the homological structure of $\D_n$. As an illustration of the usefulness of this approach, we study in Section~\ref{sec:7} the $A_\infty$-structures associated with $\D_4$. More precisely, we use the quiver presentation and its Koszul dual to investigate whether $\D_4$ admits non-trivial $A_\infty$-deformations. This gives the following result.

\begin{theoremletter}\label{thmB}
    The algebra $\D_4$ is intrinsically formal.
\end{theoremletter}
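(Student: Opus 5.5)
We reduce intrinsic formality to a vanishing statement in Hochschild cohomology. By the standard Kadeishvili-type criterion, a graded algebra $A$ (concentrated in internal degrees, viewed as an $A_\infty$-algebra with $m_1=0$) is intrinsically formal if
\[
\mathrm{HH}^{n,2-n}(A,A)=0 \quad \text{for all } n\geq 3,
\]
where the first index is the Hochschild degree and the second the internal degree. Indeed, any minimal $A_\infty$-structure $(m_2,m_3,\dots)$ on $A$ can then be killed order by order. The first nonvanishing higher $m_n$ is a Hochschild cocycle of bidegree $(n,2-n)$, so it is a coboundary, and an $A_\infty$-isomorphism with identity linear part removes it. The obstruction at each subsequent stage lives in the same groups, and convergence of the composite isomorphism is automatic because we work degree by degree. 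So the task is to compute these bigraded pieces for $A=\D_4$.

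To compute them, we use Koszulity of $\D_4$ together with the quiver presentation of Theorem~\ref{thmA}. From $Q_4$ and the quadratic ideal $I_4$ of Theorem~\ref{thm:5.15}, we write down the Koszul dual $A^! = \Bbbk Q_4^{\mathrm{op}}/I_4^\perp$, taking orthogonal complements of the relation spaces vertex pair by vertex pair. Koszulity gives the Koszul bimodule resolution $A\otimes (A^!_n)^*\otimes A$, where $(A^!_n)^*$ is concentrated in internal degree $n$. Hence $\mathrm{HH}^{n,*}(A,A)$ is the cohomology of
\[
\bigoplus_{e,f}\operatorname{Hom}_{\Bbbk}\bigl(e(A^!_n)^*f,\; eAf\bigr)\;\cong\; \bigoplus_{e,f} e A^!_n f\otimes eAf,
\]
with the differential induced by the arrows. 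An element of internal degree $2-n$ must pair a component of $A^!_n$ (internal degree $n$) with an element of $A$ of internal degree $2$. Since $\D_4$ is positively graded, only $A_2$ can contribute, so the relevant piece of the cochain complex in Hochschild degree $n$ is
\[
\bigoplus_{e,f} eA^!_n f\otimes eA_2 f .
\]

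The key combinatorial step is then to determine, for each pair of vertices $(e,f)$ with $eA_2f\neq 0$, the spaces $eA^!_nf$ for all $n\geq 3$. We expect to show either that these vanish, or that the Koszul differential (from $eA^!_{n-1}\otimes A_1$ into $eA^!_n\otimes A_2$, and out to $A^!_{n+1}\otimes A_3$) is exact there. The degree-2 part of $\D_4$ is small and explicit: it is spanned by the arc diagrams of degree $2$, namely paths of length two modulo the square relations and the relations at vertices. We plan to list the pairs $(e,f)$ with $eA_2f\neq0$. We also plan to compute the Hilbert series of $A^!$, possibly via the numerical Koszul relation $H_A(t)\,H_{A^!}(-t)=1$ using the known graded dimensions of $\D_4$ (its Cartan matrix from cup diagram combinatorics). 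This gives the dimensions of $eA^!_nf$ at exactly those pairs.

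The main obstacle is that $A^!$ is probably infinite-dimensional, since $\D_4$ has infinite global dimension or at least large Koszul dual. This would give infinitely many $n$ to handle. We plan to exploit a periodicity or a degree bound: $eA^!_nf$ for the relevant $(e,f)$ should either vanish for $n$ large because of the monomial relations in $Q_4$, or stabilize in a pattern that can be handled by a single inductive argument. Failing that, we fall back on computing the relevant Hochschild groups directly by machine (e.g.\ a minimal projective resolution truncated in internal degree) and verifying the vanishing of $\mathrm{HH}^{n,2-n}$. We will argue separately that only finitely many $n$ can contribute, because internal degree $2$ of $A$ bounds path lengths.
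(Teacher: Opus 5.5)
The reduction in your proposal is correct, and it amounts to the same linear algebra the paper performs, seen from the other side of Koszul duality. The criterion $\mathrm{HH}^{n,2-n}(\D_4,\D_4)=0$ for $n\geq 3$ is the Seidel--Thomas criterion the paper uses. Your complex $A^!_{n-1}\otimes A_1\to A^!_n\otimes A_2\to A^!_{n+1}\otimes A_3$ matches, after Keller's regrading with $q=n-2$, the complex the paper builds for $\mathrm{HH}^2_q(\overline{\D}_4,\overline{\D}_4)$ from the reduction system $\overline{R}_4$. There, the cochains $\tilde\varphi$ live on the $24$ elements of $\overline{S}_4$, the coboundaries $\psi$ on the arrows, and the cocycle conditions come from the ten overlaps.

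The gap is that the proposal stops exactly where the proof begins. Your sentence ``we expect to show either that these vanish, or that the Koszul differential is exact there'' \emph{is} Theorem~B. Nothing structural forces it (the paper deliberately makes no claim for $n\geq 5$), and a deferred machine check is not an argument.

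The plan as written would also not close this gap.
\begin{itemize}
\item The relevant terms do not vanish. In the paper's computation the spaces $\bigoplus_{e,f} eA^!_nf\otimes eA_2f$ are nonzero for $n=4,6,8,10$, of dimensions $24,16,2,1$, with cocycle spaces of dimensions $16,16,2,1$. So you must actually compute the rank of the differential in each of these degrees. The paper does this with two coefficient matrices of rank $16$ (for $n=4,6$) and explicit primitives $\psi$ (for $n=8,10$).
\item The Hilbert series identity $H_A(t)H_{A^!}(-t)=1$ only gives the dimensions of the terms. An Euler characteristic in a fixed internal degree only controls an alternating sum of several Hochschild groups, not $\mathrm{HH}^{n,2-n}$ alone. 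To compute ranks you need explicit normal forms in $A^!$, which is what the confluent reduction system of Lemma~\ref{lem:7.4} supplies; the presentation $\Bbbk Q_4^{\mathrm{op}}/I_4^\perp$ alone does not.
\item Your finiteness reasoning is wrong. $\D_4$ does not have infinite global dimension: it is quasi-hereditary, since its module category is a block of parabolic category $\mathcal{O}$. Hence $A^!$, the Yoneda algebra of the simple modules, is finite-dimensional. Concretely, the longest irreducible path in $\overline{Q}_4$ parallel to a relation has length $10$, which disposes of all $n\geq 12$.
\item The reason you give for finiteness, that internal degree $2$ of $A$ bounds path lengths, bounds nothing. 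It is the factor $A^!_n$ that grows with $n$.
\item You do not say how to handle odd $n$. These vanish by bipartiteness of $\Gamma_4$ (Lemma~\ref{lem:5.5}), which forces $eA^!_nf\otimes eA_2f=0$ for $n$ odd.
\end{itemize}
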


The paper is organized as follows. In Section~\ref{sec:4}, we recall the definition of the arc algebras $\D_n$ in terms of arc diagrams, and describe their multiplication using the surgery procedure. Section~\ref{sec:5} is devoted to the construction of a presentation of $\D_n$ by a quiver with relations, which leads to Theorem~\ref{thmA}. We first define the quiver $Q_n$ in Section~\ref{sec:5.1}: its vertices are the degree-zero diagrams and its arrows are the generators in degree one. We then determine the relations in Section~\ref{sec:5.2}, by studying the paths of length two in $Q_n$. The last part of the paper, Section~\ref{sec:7}, concerns the case $n=4$. After a quick recall of the necessary background, we present in Section~\ref{mhc} a method due to Barmeier and Wang~\cite{BarmeierWang2020} to compute Hochschild cohomology groups, which are closely tied to $A_\infty$-deformations. This method relies on the Koszul dual of $\D_4$, which we describe in Section~\ref{sec:7.1}, and on a reduction system for the quiver $\overline{Q}_4$, which we construct in Section~\ref{sec:7.2}. We then compute some Hochschild cohomology groups of $\D_4$ in Section~\ref{sec:7.3}, and use it to prove Theorem~\ref{thmB}.

\textbf{Acknowledgments.} This work was initially carried out as part of my Master's thesis. I would like to thank my supervisor, Emily Norton, for her guidance and advice throughout this project. 

\textbf{AI declaration.} During the preparation of this work, I used Claude (Anthropic) to help create a template for drawing diagrams. The tool was used only for this purpose and was not used for data analysis, literature review, or writing the text of the manuscript. I reviewed and edited the output and takes full responsibility for the content of the publication.

\textbf{Convention.} We will use the convention that $\N =\{0,1,2,\ldots\}$.

\newpage

\section{Definition of the arc algebras $\D_n$}\label{sec:4}

In this section, we will define the algebras $\D_n$, following~\cite{EhrigStroppel2016b} and~\cite{EhrigTubbenhaurWilbert2019}, which are a generalization of the algebras $K_m^n$ defined in~\cite{stroppel2009}. We need to define the arc diagrams, obtained by stacking a cup diagram, a weight, and a cap diagram in an admissible way. After defining these concepts, we will explain how to multiply arc diagrams in order to have an algebra structure on $\D_n$.

\begin{definition}\label{def:4.1}
    Let $n \geq 1$ be an integer. We fix a horizontal line in the plane and $n$ distinct points on it. A weight of type $n$ is a sequence of $k \; \up$'s and $(n-k) \; \dn$'s, that we can place on the fixed points on the line. We will denote by $\Delta_n^0$ the set of weights of type $n$ with an even number of $\up$'s, and $\Delta_n^1$ the set of weights of type $n$ with an odd number of $\up$'s. These sets are called blocks.
\end{definition}

\begin{definition}\label{def:4.3}
    Let $n \geq 1$ be an integer, we fix a horizontal line in the plane and $n$ distinct points on it. A cup diagram consists of a diagram with cups and vertical lines under the fixed horizontal line, and attached to the fixed points on it. There cannot be intersections, and each cup and line can be decorated or not, with a dot, following the conditions :
    \begin{itemize}
        \item Every decorated cup or line cannot be to the right of a vertical line. 
        \item Every decorated cup cannot be nested in another cup.
    \end{itemize}
    Visually, these conditions tell us that any dot from a decorated arc can be connected with the left boundary of the diagram without intersecting any other arc, if we see the cup diagram in a rectangle with top edge lying on the fixed line.
\end{definition}

\begin{example}\label{ex:4.4}
    The following visual example will help to understand Definition~\ref{def:4.3}. Let $n = 7$, the following diagram is a cup diagram because, as we said in the definition, each dot can be connected with the left boundary of the rectangle.
    $$\begin{tikzpicture}[scale=0.8]
        \draw[green!80!black,thick] (.5,-0.7) .. controls (.3,-1) .. (0,-1);
        \draw[green!80!black,thick] (6,-1.15) .. controls (5,-1.6) .. (0,-1.6);

        \bigdCUP{0}{1}{1}
        \CUP{3}{4}{1}
        \CUP{2}{5}{.75}
        \bigdDOWNLINE{6}{2}
        \BFILL{7}
        
        \draw[dotted,thick] (0,0) -- (6,0);
        \draw[dotted,thick] (0,-2) -- (6,-2);
        \draw[dotted,thick] (0,0) -- (0,-2);
    \end{tikzpicture}$$
    And the following diagram is not a cup diagram because the decorated cup is on the right of a vertical line.
    $$\begin{tikzpicture}[scale=0.8]
        \draw[red,thick] (5.5,-0.7) .. controls (4.5,-1.3) .. (4,-1.3);

        \CUP{0}{1}{1}
        \CUP{2}{3}{1}
        \bigdCUP{5}{6}{1}
        \DOWNLINE{4}{2}
        \BFILL{7}
        
        \draw[dotted,thick] (0,0) -- (6,0);
        \draw[dotted,thick] (0,-2) -- (6,-2);
        \draw[dotted,thick] (0,0) -- (0,-2);
        \draw[dotted,thick] (6,0) -- (6,-2);
    \end{tikzpicture}$$
\end{example}

\begin{definition}\label{def:4.5}
    A cap diagram is the mirror image of a cup diagram obtained by reflecting all cups and lines across the fixed horizontal line.
\end{definition}

We now have all the ingredients to properly define the arc diagrams obtained by stacking a cup diagram, a weight, and a cap diagram in an admissible way.

\begin{definition}\label{def:4.6}
    Let $n \geq 1$ be an integer and $p \in \{0,1\}$. We fix a horizontal line in the plane and $n$ distinct points on it. An arc diagram associated to the block $\Delta_n^p$, is the glueing of a cup and a cap diagram of size $n$ along a weight belonging to $\Delta_n^p$, satisfying the following conditions.
    \begin{enumerate}
        \item[\textit{(i)}] Any undecorated cup or cap is glued to exactly one $\up$ and one $\dn$.
        \item[\textit{(ii)}] Any decorated cup or cap is glued to either two $\up$'s or two $\dn$'s.
        \item[\textit{(iii)}] Any undecorated half line is glued to one $\dn$.
        \item[\textit{(iv)}] Any decorated half line is glued to one $\up$.
    \end{enumerate}
    Visually each admisssible cup, cap or half line looks like one of the following :
    $$\begin{tikzpicture}[scale=.7]
        \weight{0,1}
        \CUP{0}{1}{1}
        \BFILL{2}

        \begin{scope}[xshift=3cm]
            \weight{1,0}
            \CUP{0}{1}{1}
            \BFILL{2}
        \end{scope}

        \begin{scope}[xshift=6cm]
            \weight{1,1}
            \bigdCUP{0}{1}{1}
            \BFILL{2}
        \end{scope}

        \begin{scope}[xshift=9cm]
            \weight{0,0}
            \bigdCUP{0}{1}{1}
            \BFILL{2}
        \end{scope}

        \begin{scope}[xshift=12cm]
            \weight{0}
            \DOWNLINE{0}{1}
            \BFILL{1}
        \end{scope}

        \begin{scope}[xshift=14cm]
            \weight{1}
            \bigdDOWNLINE{0}{1}
            \BFILL{1}
        \end{scope}

      \begin{scope}[yshift=-3cm]
        \weight{0,1}
        \CAP{0}{1}{1}
        \TFILL{2}

        \begin{scope}[xshift=3cm]
            \weight{1,0}
            \CAP{0}{1}{1}
            \TFILL{2}
        \end{scope}

        \begin{scope}[xshift=6cm]
            \weight{1,1}
            \bigdCAP{0}{1}{1}
            \TFILL{2}
        \end{scope}

        \begin{scope}[xshift=9cm]
            \weight{0,0}
            \bigdCAP{0}{1}{1}
            \TFILL{2}
        \end{scope}

        \begin{scope}[xshift=12cm]
            \weight{0}
            \UPLINE{0}{1}
            \TFILL{1}
        \end{scope}

        \begin{scope}[xshift=14cm]
            \weight{1}
            \bigdUPLINE{0}{1}
            \TFILL{1}
        \end{scope}
      \end{scope}

    \end{tikzpicture}$$
\end{definition}

\begin{example}\label{ex:4.7}
    Let $n = 4$. Here is an example of an arc diagram :
    $$\begin{tikzpicture}[scale=.7]
        \weight{1,1,0,1}
        \CUP{2}{3}{1}
        \CAP{1}{2}{1}
        \bigdCAP{0}{3}{.75}
        \bigdCUP{0}{1}{1}
        \FILL{4}
    \end{tikzpicture}$$
    And here are examples of diagrams that are not arc diagrams :
    $$\begin{tikzpicture}[scale=.7]
        \weight{0,1,1,0}
        \CAP{2}{3}{1}
        \bigdCAP{0}{1}{1}
        \bigdCUP{0}{1}{1}
        \bigdDOWNLINE{2}{1}
        \DOWNLINE{3}{1}
        \FILL{4}

        \begin{scope}[xshift=7cm]
            \weight{1,1,0,0}
            \bigdCAP{0}{1}{1}
            \CUP{1}{2}{1}
            \DOWNLINE{0}{1}
            \UPLINE{2}{1}
            \LINE{3}{1}
            \FILL{4}
        \end{scope}
    \end{tikzpicture}$$
    because they break the conditions $\textit{(ii)}$ and $\textit{(iii)}$, respectively.
\end{example}

\begin{definition}\label{def:4.8}
    \begin{itemize}
        \item A cap or a cup (decorated or not) is said to be oriented clockwise, if it is attached on the right side to a $\dn$, and is said to be oriented counter-clockwise, if it is attached on the right side to a $\up$.
        \item The degree of an arc diagram is given by the number of clockwise caps and clockwise cups it contains.
    \end{itemize}
\end{definition}

\begin{example}\label{ex:4.9}
    We give the table of all possible arc diagrams asssociated to the block $\Delta_3^0$, with their respective degrees
    $$\begin{tabular}{c|c|c|c}
        weight & degree $0$ & degree $1$ & degree $2$ \\ 
        \hline
        $\dn \dn \dn$ & $\begin{tikzpicture}[scale=0.3, baseline={(0,0)}]
    \Bweight{0,0,0}
    \LINE{0}{1}
    \LINE{1}{1}
    \LINE{2}{1}
    \FILL{3}
    \draw[strand, draw=white] (0,-1.1) -- (0,-1.4);
\end{tikzpicture}$ & $\begin{tikzpicture}[scale=0.3, baseline={(0,0)}]
    \Bweight{0,0,0}
    \LINE{2}{1}
    \DOWNLINE{0}{1}
    \DOWNLINE{1}{1}
    \sdCAP{0}{1}{1}
    \FILL{3}
\end{tikzpicture} \qquad \begin{tikzpicture}[scale=0.3, baseline={(0,0)}]
    \Bweight{0,0,0}
    \LINE{2}{1}
    \UPLINE{0}{1}
    \UPLINE{1}{1}
    \sdCUP{0}{1}{1}
    \FILL{3}
    \end{tikzpicture}$ & $\begin{tikzpicture}[scale=0.3, baseline={(0,0)}]
    \Bweight{0,0,0}
    \LINE{2}{1}
    \sdCAP{0}{1}{1}
    \sdCUP{0}{1}{1}
    \FILL{3}
    \end{tikzpicture}$ \\ 
        \hline
        $\up \up \dn $ & $\begin{tikzpicture}[scale=0.3, baseline={(0,0)}]
    \Bweight{1,1,0}
    \LINE{2}{1}
    \sdCAP{0}{1}{1}
    \sdCUP{0}{1}{1}
    \FILL{3}
    \draw[strand, draw=white] (0,-1.1) -- (0,-1.4);
\end{tikzpicture}$ & $\begin{tikzpicture}[scale=0.3, baseline={(0,0)}]
    \Bweight{1,1,0}
    \sdDOWNLINE{0}{1}
    \sdCAP{0}{1}{1}
    \CUP{1}{2}{1}
    \UPLINE{2}{1}
    \FILL{3}
\end{tikzpicture} \qquad \begin{tikzpicture}[scale=0.3, baseline={(0,0)}]
    \Bweight{1,1,0}
    \sdUPLINE{0}{1}
    \sdCUP{0}{1}{1}
    \CAP{1}{2}{1}
    \DOWNLINE{2}{1}
    \FILL{3}
    \end{tikzpicture}$ & $\begin{tikzpicture}[scale=0.3, baseline={(0,0)}]
    \Bweight{1,1,0}
    \CUP{1}{2}{1}
    \CAP{1}{2}{1}
    \sdLINE{0}{1}
    \FILL{3}
    \end{tikzpicture}$ \\ 
        \hline
        $\up \dn \up $ & $\begin{tikzpicture}[scale=0.3, baseline={(0,0)}]
    \Bweight{1,0,1}
    \CUP{1}{2}{1}
    \CAP{1}{2}{1}
    \sdLINE{0}{1}
    \FILL{3}
    \draw[strand, draw=white] (0,-1.1) -- (0,-1.4);
\end{tikzpicture}$ & $\begin{tikzpicture}[scale=0.3, baseline={(0,0)}]
    \Bweight{1,0,1}
    \sdDOWNLINE{0}{1}
    \CAP{0}{1}{1}
    \CUP{1}{2}{1}
    \sdUPLINE{2}{1}
    \FILL{3}
\end{tikzpicture} \qquad \begin{tikzpicture}[scale=0.3, baseline={(0,0)}]
    \Bweight{1,0,1}
    \sdUPLINE{0}{1}
    \CUP{0}{1}{1}
    \CAP{1}{2}{1}
    \sdDOWNLINE{2}{1}
    \FILL{3}
    \end{tikzpicture}$ & $\begin{tikzpicture}[scale=0.3, baseline={(0,0)}]
    \Bweight{1,0,1}
    \CAP{0}{1}{1}
    \CUP{0}{1}{1}
    \sdLINE{2}{1}
    \FILL{3}
    \end{tikzpicture}$ \\ 
        \hline
        $\dn \up \up $ & $\begin{tikzpicture}[scale=0.3, baseline={(0,0)}]
    \Bweight{0,1,1}
    \CAP{0}{1}{1}
    \CUP{0}{1}{1}
    \sdLINE{2}{1}
    \FILL{3}
    \draw[strand, draw=white] (0,-1.1) -- (0,-1.4);
\end{tikzpicture}$ 
\end{tabular}$$
\end{example}

Before explaining how to multiply arc diagrams, we need to introduce one last important concept, which explains how to construct a cup or cap diagram from a fixed weight.

\begin{definition}\label{def:4.10}
    Let $\lambda \in \Delta_n^p$ be a weight. We denote by $\underline{\lambda}$ the unique cup diagram obtained by applying in order the following rules
    \begin{enumerate}
        \item Connect recursively all neighbouring $\dn \cdots \up$ pairs in $\lambda$ with an undecorated cup.
        \item Connect recursively from left to right every pair of neighbouring $\up \cdots \up$ with a decorated cup.
        \item Draw undecorated half-lines extending downwards starting at every remaining $\dn$.
        \item If a single $\up$ remains, draw a decorated half-line extending downwards starting at it.
        \item Delete the weight $\lambda$.
    \end{enumerate}
    We define the cap diagram $\overline{\lambda}$ to be the mirror image of $\underline{\lambda}$ obtained by reflecting all cups and lines across the fixed horizontal line.
\end{definition}

\begin{example}\label{ex:4.11}
    Let $\lambda = \up \dn \up \up \up \dn$, then we detail the process step by step of the construction of $\underline{\lambda}$
    $$\begin{tikzpicture}[x=.4cm,y=.4cm]
    \Bweight{1,0,1,1,1,0}
    \CUP{1}{2}{1}
    \BBFILL{1}
    \BBFILL{2}
    \begin{scope}[xshift=3.5cm]
        \Bweight{1,0,1,1,1,0}
        \CUP{1}{2}{1}
        \dCUP{0}{3}{.75}
        \BBFILL{2}
        \BBFILL{3}
        \BBFILL{1}
        \BBFILL{0}
    \end{scope}
    \begin{scope}[xshift=7cm]
        \Bweight{1,0,1,1,1,0}
        \CUP{1}{2}{1}
        \dCUP{0}{3}{.75}
        \dDOWNLINE{4}{1.5}
        \DOWNLINE{5}{1.5}
        \BFILL{6}
    \end{scope}
    \begin{scope}[xshift=10.5cm]
        \CUP{1}{2}{1}
        \dCUP{0}{3}{.75}
        \dDOWNLINE{4}{1.5}
        \DOWNLINE{5}{1.5}
        \BFILL{6}
    \end{scope}
    \draw[->, very thick] (6,0) -- (7.5,0);
    \draw[->, very thick] (14.8,0) -- (16.3,0);
    \draw[->, very thick] (23.6,0) -- (25.1,0);
\end{tikzpicture}$$
\end{example}

\begin{remark}\label{rmk:4.12}
    Observe that any cup diagram is of the form $\underline{\lambda}$ for some unique weight $\lambda$. Hence, any arc diagram can be written in the form $\underline{\alpha}\lambda\overline{\beta}$ for unique weights $\alpha, \lambda$ and $\beta$.
\end{remark}

\begin{definition}\label{def:4.13}
     Let $\lambda \in \Delta_n^p$ be a weight. Then we define the arc diagram $e_\lambda$ to be $\underline{\lambda}\lambda\overline{\lambda}$.
\end{definition}

\begin{remark}\label{rmk:4.14}
    Note that for each $\lambda \in \Delta_n^p$, the arc diagram $e_\lambda$ is the only arc diagram of degree $0$ and weight $\lambda$. These diagrams will be very important in Section~\ref{sec:5}.
\end{remark}

Now we can finally define the multiplication and the diagrammatic algebra structure. The multiplication process is long, but we will give many examples of diagram multiplications in order to understand it well.

\begin{definition}\label{def:4.15}
    Let $\Bbbk$ be a field of characteristic $0$. We define $\D_n^p$ to be the $\Bbbk$-vector space with basis given by the arc diagrams associated to the block $\Delta_n^p$. We will denote by ${}_\lambda(\D_n^p){}_\mu$ the vector subspace of $\D_n^p$, with basis given by the arc diagrams of the form $\underline{\lambda}\nu\overline{\mu}$.
\end{definition}

\begin{definition}\label{def:4.16}
    Let $\lambda, \mu \in \Delta_n^p$ be weights. We say that the glueing of $\underline{\lambda}$ above $\overline{\mu}$, denoted $\underline{\lambda}\overline{\mu}$, can be oriented, if there exists a weight $\nu \in \Delta_n^p$ such that $\underline{\lambda}\nu\overline{\mu}$ is an arc diagram.
\end{definition}

\begin{definition}\label{def:4.17}
    Let $\lambda, \mu \in \Delta_n^p$ be weights, and assume that $\underline{\lambda}\overline{\mu}$ can be oriented. Then we define $I_{\underline{\lambda}\overline{\mu}}$ to be the ideal in $\Bbbk[X_i \ \vert \ 1 \leq i \leq n ]$ generated by
    \begin{itemize}
        \item $X_i^2$ for all $1 \leq i \leq n$.
        \item $X_i - X_j$ if $i$ and $j$ are connected by an undecorated cup or cap in $\underline{\lambda}\overline{\mu}$.
        \item $X_i + X_j$ if $i$ and $j$ are connected by a decorated cup or cap in $\underline{\lambda}\overline{\mu}$.
        \item $X_i$ if $i$ is the endpoint of a half-line in $\underline{\lambda}\overline{\mu}$.
    \end{itemize}
    Then we also define
    $$\mathcal{M}(\underline{\lambda}\overline{\mu}) = \Bbbk[X_i \ \vert \ 1 \leq i \leq n ]/I_{\underline{\lambda}\overline{\mu}}$$
\end{definition}

\begin{example}\label{ex:4.18}
    Let $\lambda = \up \up \dn \up$ and $\mu = \up \dn \up \up$. Then
    $$\underline{\lambda}\overline{\mu} \ = \ \begin{tikzpicture}[scale=.8, baseline = {(0,0.15)}]
        \CUP{2}{3}{1}
        \CAP{1}{2}{1}
        \bigdCAP{0}{3}{.75}
        \bigdCUP{0}{1}{1}
        \FILL{4}
    \end{tikzpicture}$$
    can be oriented, so that
    $$I_{\underline{\lambda}\overline{\mu}} = (X_1^2, X_2^2, X_3^2, X_4^2, X_1+X_2, X_1+X_4,X_2-X_3,X_3-X_4)$$
    and then 
    $$\mathcal{M}(\underline{\lambda}\overline{\mu}) \cong \Bbbk[X_4]/(X_4^2)$$
\end{example}

\begin{proposition}[{\cite[Prop. 4.5]{EhrigStroppel2016a}}]\label{prop:4.19}
    Let $\lambda, \mu \in \Delta_n^p$ be weights, and assume that $\underline{\lambda}\overline{\mu}$ can be oriented. Then there is an isomorphism of vector spaces
    \begin{align*}
        \varphi_{\underline{\lambda}\overline{\mu}} : {}_\lambda(\D_n^p){}_\mu &\longrightarrow \mathcal{M}(\underline{\lambda}\overline{\mu}) \\
        \underline{\lambda}\nu\overline{\mu} &\mapsto \prod_{C \in \mathcal{C}_{\text{clock}}(\underline{\lambda}\nu\overline{\mu})} X_{t(C)}
    \end{align*}
    where $\mathcal{C}_{\text{clock}}(\underline{\lambda}\nu\overline{\mu})$ is the set of clockwise oriented circles in $\underline{\lambda}\nu\overline{\mu}$, and $t(C)$ is the position of the rightmost arrow of $\nu$ which $C$ goes through.
\end{proposition}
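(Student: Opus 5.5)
We need to prove that $\varphi_{\underline{\lambda}\overline{\mu}}$ is a well-defined linear isomorphism. The strategy is to compute both sides in terms of the connected components of $\underline{\lambda}\overline{\mu}$ and match bases. The diagram $\underline{\lambda}\overline{\mu}$ decomposes into closed circles and non-closed components (lines, each containing exactly two half-line endpoints). Since the defining relations of $I_{\underline{\lambda}\overline{\mu}}$ only involve variables on the same component, $\mathcal{M}(\underline{\lambda}\overline{\mu})$ is a tensor product over components of the corresponding quotient algebras. We analyse these factors one at a time.

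\textbf{Component algebras.} For a line component, some variable is killed by the half-line relation, and the relations $X_i \mp X_j$ along the line propagate this, so every variable on the line is zero. The factor is therefore $\Bbbk$.

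For a circle $C$ with vertices $i_1,\dots,i_{2r}$, the relations say $X_{i_{k+1}} = \pm X_{i_k}$, with sign $-$ exactly at decorated arcs. Going around the circle gives $X_{i_1} = (-1)^{d}X_{i_1}$, where $d$ is the number of dots on $C$. Here we use the orientability hypothesis. Existence of an orientation $\nu$ forces $d$ to be even: each decorated arc flips the orientation pattern, and after a full loop one must return to the starting label. In characteristic $0$ an odd $d$ would give $X=0$, but this case is excluded. So with $d$ even the relations are consistent. All variables on $C$ equal $\pm X_{t}$ for a chosen vertex $t$, and the factor is $\Bbbk[X_t]/(X_t^2)$, which has basis $1, X_t$.

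Hence $\mathcal{M}(\underline{\lambda}\overline{\mu})$ has a basis consisting of the monomials $\prod_{C\in S} X_{t(C)}$, indexed by subsets $S$ of the set of circles. Its dimension is $2^{\#\text{circles}}$.

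\textbf{Orientations.} Next we count the weights $\nu$ for which $\underline{\lambda}\nu\overline{\mu}$ is an arc diagram. Conditions (i)–(iv) of Definition~\ref{def:4.6} propagate along each component: knowing the label at one point determines it everywhere on that component.
\begin{itemize}
\item On a line, the half-line ends fix the labels, giving exactly one choice. Compatibility at the two ends should follow from orientability.
\item On a circle, there are exactly two consistent orientations, clockwise and anticlockwise, because $d$ is even.
\end{itemize}
The parity block $\Delta_n^p$ is preserved. Flipping the orientation of a circle with $2r$ points changes the number of $\up$'s by an even amount, which one checks via the decorated-arc rule. Thus ${}_\lambda(\D_n^p)_\mu$ has basis indexed by subsets $S$ of circles (the clockwise ones), and its dimension is also $2^{\#\text{circles}}$.

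\textbf{Conclusion and main obstacle.} The map $\varphi$ sends the basis element with clockwise set $S$ to $\prod_{C\in S}X_{t(C)}$, taking $t(C)$ as the chosen representative vertex. This is a bijection between bases, so $\varphi$ is an isomorphism. The main obstacle is the parity bookkeeping. We need to verify three things:
\begin{itemize}
\item orientability forces an even number of dots on each circle;
\item reversing the orientation of a circle stays within $\Delta_n^p$;
\item "clockwise" in Definition~\ref{def:4.8}, read at the rightmost point $t(C)$, is a well-defined property of the whole circle.
\end{itemize}
I would prove all three by induction on nesting, using that dots can be slid to the left boundary without crossing other arcs.
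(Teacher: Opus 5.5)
Your proposal is correct and follows the same route as the paper, which simply writes down the inverse. That inverse sends $\prod_i X_{t(C_i)}^{p_i}$ to the orientation making $C_i$ clockwise exactly when $p_i=1$, with the open arcs forced. You additionally justify two things the paper leaves implicit: that these monomials form a basis of $\mathcal{M}(\underline{\lambda}\overline{\mu})$ via the tensor decomposition over components, and that each circle has exactly two orientations, both staying in $\Delta_n^p$.

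No induction on nesting is needed for your three closing points:
\begin{enumerate}
\item The first is already settled by your loop-parity argument.
\item For the second, reversing a circle with $2r$ points, $u$ of which carry $\up$, changes the number of $\up$'s by $2r-2u$, which is even.
\item The third is just the convention that $C$ is clockwise when $t(C)$ carries a $\dn$. Exactly one of the two orientations has this property, since they differ by a global flip of all labels on $C$.
\end{enumerate}
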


\begin{proof}
    This map is pretty natural. Indeed, let's construct an inverse. Suppose that there are $k$ circles in $\underline{\lambda}\overline{\mu}$ denoted $C_1, \ldots, C_k$. Then observe that any monomial in $\mathcal{M}(\underline{\lambda}\overline{\mu})$ is of the form $\prod_{i=1}^{k} X_{t(C_i)}^{p_i}$, where $p_i \in \{ 0,1 \}$ for $1 \leq i \leq k$. Then we set
    $$(\varphi_{\underline{\lambda}\overline{\mu}})^{-1}\left(\prod_{i=1}^{k} X_{t(C_i)}^{p_i}\right) = \underline{\lambda}\nu\left(\prod_{i=1}^{k} X_{t(C_i)}^{p_i}\right)\overline{\mu}$$
    where $\nu\left(\prod_{i=1}^{k} X_{t(C_i)}^{p_i}\right)$ orients the circle $C_i$ clockwisely if $p_i = 1$ and counterclockwisely if $p_i = 0$, and has no choice for the open arcs. Finally we extend this map linearly.
\end{proof}

\begin{definition}\label{def:4.21}
    We recall that $\D_n^p$ is defined to be the $\Bbbk$-vector space with basis given by the arc diagrams associated to the block $\Delta_n^p$. It can be turned into a $\N$-graded $\Bbbk$-algebra, with grading given by the degrees of the diagrams, and multiplication given as follows. \\
    Let $a = \underline{\alpha}\lambda\overline{\beta}$ and $b = \underline{\gamma}\mu\overline{\delta}$, then we set $ab = 0$ if $\beta \neq \gamma$. Otherwise, if $\beta = \gamma$, the product $ab$ is obtained by drawing $a$ below $b$, without the weights, connecting the open arcs at the same positions, and applying inductively the surgery processes which consist to transform a cup-cap pair into lines in the following way
    $$\begin{tikzpicture}[x=.8cm,y=.8cm]
    \CUP{0}{1}{1}
    \UPLINE{0}{.5}
    \UPLINE{1}{.5}
    \FILL{2}
    \node at (-0.55,0) {$\cdots$};   
    \node at (0.55,0) {$\cdots$};    
    \node at (1.55,0) {$\cdots$};    
    \begin{scope}[yshift=-2cm]
        \CAP{0}{1}{1}
        \DOWNLINE{0}{.5}
        \DOWNLINE{1}{.5}
        \FILL{2}
        \node at (-0.55,0) {$\cdots$};   
        \node at (0.55,0) {$\cdots$};    
        \node at (1.55,0) {$\cdots$};    
    \end{scope}
    \begin{scope}[xshift = 5cm]
    \UPLINE{0}{.5}
    \UPLINE{1}{.5}
    \FILL{2}
    \node at (-0.55,0) {$\cdots$};   
    \node at (0.55,0) {$\cdots$};    
    \node at (1.55,0) {$\cdots$};    
    \begin{scope}[yshift=-2cm]
        \UPLINE{0}{2.265}
        \UPLINE{1}{2.265}
        \DOWNLINE{0}{.5}
        \DOWNLINE{1}{.5}
        \FILL{2}
        \node at (-0.55,0) {$\cdots$};   
        \node at (0.55,0) {$\cdots$};    
        \node at (1.55,0) {$\cdots$};    
    \end{scope}
    \end{scope}
    \draw[->, very thick] (3,-1.4) -- (4,-1.4);
\end{tikzpicture}$$
that we call an undotted surgery, or
$$\begin{tikzpicture}[x=.8cm,y=.8cm]
    \bigdCUP{0}{1}{1}
    \UPLINE{0}{.5}
    \UPLINE{1}{.5}
    \FILL{2}
    \node at (-0.55,0) {$\cdots$};   
    \node at (0.55,0) {$\cdots$};    
    \node at (1.55,0) {$\cdots$};    
    \begin{scope}[yshift=-2cm]
        \bigdCAP{0}{1}{1}
        \DOWNLINE{0}{.5}
        \DOWNLINE{1}{.5}
        \FILL{2}
        \node at (-0.55,0) {$\cdots$};   
        \node at (0.55,0) {$\cdots$};    
        \node at (1.55,0) {$\cdots$};    
    \end{scope}
    \begin{scope}[xshift = 5cm]
    \UPLINE{0}{.5}
    \UPLINE{1}{.5}
    \FILL{2}
    \node at (-0.55,0) {$\cdots$};   
    \node at (0.55,0) {$\cdots$};    
    \node at (1.55,0) {$\cdots$};    
    \begin{scope}[yshift=-2cm]
        \UPLINE{0}{2.265}
        \UPLINE{1}{2.265}
        \DOWNLINE{0}{.5}
        \DOWNLINE{1}{.5}
        \FILL{2}
        \node at (-0.55,0) {$\cdots$};   
        \node at (0.55,0) {$\cdots$};    
        \node at (1.55,0) {$\cdots$};    
    \end{scope}
    \end{scope}
    \draw[->, very thick] (3,-1.4) -- (4,-1.4);
\end{tikzpicture}$$
    that we call a dotted surgery. During all these steps we will define a map $\chi$ in order to reorient the resulting diagram. We start with a map
    $$\chi' : \Bbbk[X_i \ \vert \ 1 \leq i \leq n ] \otimes \Bbbk[X_i \ \vert \ 1 \leq i \leq n ] \to \Bbbk[X_i \ \vert \ 1 \leq i \leq n ] $$
    associated to our surgery procedure. To obtain the image of $f \otimes g \in \Bbbk[X_i \ \vert \ 1 \leq i \leq n ] \otimes \Bbbk[X_i \ \vert \ 1 \leq i \leq n ]$, we will apply the following algorithm :
    \begin{enumerate}
        \item After drawing $a$ below $b$, define $y = f \cdot g \in \Bbbk[X_i \ \vert \ 1 \leq i \leq n ]$.
        \item After connecting all open arcs at same positions, multiply $y$ with $\prod_{C \in \mathcal{C}_{\text{clock}}^{\text{new}}} X_{t(C)}$ where $\mathcal{C}_{\text{clock}}^{\text{new}}$ is the set of all clockwise circles created by this step of connection of open arcs, and $t(C)$ is the position of the rightmost point of the fixed horizontal line which $C$ goes through.
        \item For each surgery, consult the following table 
        
        \textbf{Merge :} If the surgery produces one connected component out of two, leave $y$ as it is.

        \textbf{Split :} If the surgery produces two connected component out of one, then multiply $y$ with
        \[
\begin{dcases}
(X_i + X_j) & \text{if the surgery occurs at positions } i \text{ and } j \text{,}\\
             & \text{and either the surgery is undotted,} \\
             & \text{or dotted but non-nested} \\[1em]
(X_i - X_j) & \text{if the surgery occurs at positions } i \text{ and } j \text{,}\\
             & \text{and the surgery is dotted and nested with} \\
             & \text{position } i \text{ belonging to the inner circle} \\
             & \text{after surgery},
\end{dcases}
\]
        where a nested surgery is a split surgery where both of the resulting components are circles one of which is contained in the other one.

        \textbf{Reconnect :} If the surgery produces two connected components out of two, which only happens if both are open arcs, then either the two lines (before the surgery) are propagating and we leave $y$ as it is, otherwise we multiply $y$ with $0$. We say that an open arc is propagating if it connects the bottom boundary to the top boundary (or vice-versa).
        \item If the resulting diagram cannot be oriented, multiply $y$ with $0$.
        \item We define $\chi'(f \otimes g)$ to be equal to the resulting polynomial $y$.
    \end{enumerate}
    One can show (see~\cite{EhrigStroppel2016a}) that this map $\chi'$ descends to a map
    $$\chi : \mathcal{M}(\underline{\alpha}\overline{\beta}) \otimes \mathcal{M}(\underline{\beta}\overline{\delta}) \to \mathcal{M}(\underline{\alpha}\overline{\delta})$$
    that doesn't depend on our chosen surgery procedure. Then consider the composition
    $$\Phi_{\alpha\beta\delta} : {}_\alpha(\D_n^p)_\beta \otimes {}_\beta(\D_n^p)_\delta \xrightarrow{\varphi_{\underline{\alpha}\overline{\beta}} \otimes \varphi_{\underline{\beta}\overline{\delta}}} \mathcal{M}(\underline{\alpha}\overline{\beta}) \otimes \mathcal{M}(\underline{\beta}\overline{\delta}) \xrightarrow{\chi} \mathcal{M}(\underline{\alpha}\overline{\delta}) \xrightarrow{(\varphi_{\underline{\alpha}\overline{\delta}})^{-1}} {}_\alpha(\D_n^p)_\delta $$
    Finally, we define the multiplication on the basis of $\D_n^p$ by
    $$(\underline{\alpha}\lambda\overline{\beta}) \cdot (\underline{\gamma}\mu\overline{\delta}) = \left\{
    \begin{array}{cc}
        \Phi_{\alpha\beta\delta}((\underline{\alpha}\lambda\overline{\beta}) \otimes (\underline{\gamma}\mu\overline{\delta}))  &\mbox{if } \beta = \gamma \\
        0 &\mbox{else}
    \end{array} \right.$$
    and we extend it bilinearly to obtain a well-defined associative grading preserving multiplication, as shown in~\cite{EhrigStroppel2016a}.
\end{definition}

\begin{remark}\label{rmk:4.22}
    This definition is called the sign adjusted version of the usual multiplication defined in~\cite{EhrigStroppel2016a}. It has been introduced in~\cite{EhrigTubbenhaurWilbert2019} by Ehrig, Tubbenhauer and Wilbert, where they showed that this is an equivalent definition to the original one. However this multiplication is useful because we can choose the order we want for the surgery processes, and there are less changes of signs, so less room for mistakes.
\end{remark}

\begin{example}\label{ex:4.23}
    \begin{itemize}
        \item Consider first that
    $$\underline{\alpha}\lambda\overline{\beta} \ = \ \begin{tikzpicture}[scale=.5, baseline={(0,-0.1)}]
    \Bweight{1,1,0,0}
    \dCAP{0}{1}{1}
    \dCAP{2}{3}{1}
    \CUP{1}{2}{1}
    \CUP{0}{3}{.75}
    \FILL{4}
\end{tikzpicture} \quad \text{ and } \quad \underline{\gamma}\mu\overline{\delta} \ = \ \begin{tikzpicture}[scale=.5, baseline={(0,-0.1)}]
    \Bweight{1,1,0,0}
    \CAP{1}{2}{1}
    \CAP{0}{3}{.75}
    \CUP{1}{2}{1}
    \CUP{0}{3}{.75}
    \FILL{4}
\end{tikzpicture}$$
Then observe that $\beta \neq \gamma$, so
$$(\underline{\alpha}\lambda\overline{\beta}) \cdot (\underline{\gamma}\mu\overline{\delta}) \ = \ \begin{tikzpicture}[scale=.5, baseline={(0,.68)}]
    \Bweight{1,1,0,0}
    \dCAP{0}{1}{1}
    \dCAP{2}{3}{1}
    \CUP{1}{2}{1}
    \CUP{0}{3}{.75}
    \FILL{4}

    \begin{scope}[yshift=3cm]
        \Bweight{1,1,0,0}
        \CAP{1}{2}{1}
        \CAP{0}{3}{.75}
        \CUP{1}{2}{1}
        \CUP{0}{3}{.75}
        \FILL{4}
    \end{scope}
\end{tikzpicture} \ = \ 0$$
\item Now, consider
$$\underline{\alpha}\lambda\overline{\beta} \ = \ \begin{tikzpicture}[scale=.5, baseline={(0,-0.1)}]
    \Bweight{0,1,0,0}
    \CAP{1}{2}{1}
    \CUP{1}{2}{1}
    \UPLINE{0}{1}
    \UPLINE{3}{1}
    \dCUP{0}{3}{.75}
    \FILL{4}
\end{tikzpicture} \quad \text{ and } \quad \underline{\gamma}\mu\overline{\delta} \ = \ \begin{tikzpicture}[scale=.5, baseline={(0,-0.1)}]
    \Bweight{0,0,1,0}
    \CAP{1}{2}{1}
    \dCAP{0}{3}{.75}
    \CUP{1}{2}{1}
    \DOWNLINE{0}{1}
    \DOWNLINE{3}{1}
    \FILL{4}
\end{tikzpicture}$$
This time, $\beta = \gamma$, and we have
$$\psi_{\underline{\alpha}\overline{\beta}}(\underline{\alpha}\lambda\overline{\beta}) = X_3 \ \text{ and } \ \psi_{\underline{\gamma}\overline{\delta}}(\underline{\gamma}\mu\overline{\delta}) = 1$$
So we set $y= X_3 \cdot 1 = X_3$, and we now connect the arcs and perform the surgery procedure, keeping track of $y$ to know the resulting orientation
$$(\underline{\alpha}\lambda\overline{\beta}) \cdot (\underline{\gamma}\mu\overline{\delta}) \ \overset{X_3}{=} \ \begin{tikzpicture}[scale=.5, baseline={(0,.68)}]
    \Bweight{0,1,0,0}
    \CAP{1}{2}{1}
    \CUP{1}{2}{1}
    \UPLINE{0}{1}
    \UPLINE{3}{1}
    \dCUP{0}{3}{.75}
    \FILL{4}

    \begin{scope}[yshift=3cm]
        \Bweight{0,0,1,0}
    \CAP{1}{2}{1}
    \dCAP{0}{3}{.75}
    \CUP{1}{2}{1}
    \DOWNLINE{0}{1}
    \DOWNLINE{3}{1}
    \FILL{4}
    \end{scope}
\end{tikzpicture} \ \underset{\text{Open arc}}{\overset{X_3X_4}{=}} \  \begin{tikzpicture}[scale=.5, baseline={(0,.68)}]
    \Bweight{0,1,0,0}
    \CAP{1}{2}{1}
    \CUP{1}{2}{1}
    \UPLINE{0}{1.5}
    \UPLINE{3}{1.5}
    \dCUP{0}{3}{.75}
    \FILL{4}

    \begin{scope}[yshift=3cm]
        \Bweight{0,0,1,0}
    \CAP{1}{2}{1}
    \dCAP{0}{3}{.75}
    \CUP{1}{2}{1}
    \DOWNLINE{0}{1.5}
    \DOWNLINE{3}{1.5}
    \FILL{4}
    \end{scope}
\end{tikzpicture} \ \underset{\text{Merge}}{\overset{X_3X_4}{=}} \ \begin{tikzpicture}[scale=.5, baseline={(0,.68)}]
    \Bweight{0,1,0,0}
    \UPLINE{1}{1.5}
    \UPLINE{2}{1.5}
    \CUP{1}{2}{1}
    \UPLINE{0}{1.5}
    \UPLINE{3}{1.5}
    \dCUP{0}{3}{.75}
    \FILL{4}

    \begin{scope}[yshift=3cm]
        \Bweight{0,1,0,0}
    \CAP{1}{2}{1}
    \dCAP{0}{3}{.75}
    \DOWNLINE{1}{1.5}
    \DOWNLINE{2}{1.5}
    \DOWNLINE{0}{1.5}
    \DOWNLINE{3}{1.5}
    \FILL{4}
    \end{scope}
\end{tikzpicture} \ = \  \begin{tikzpicture}[scale=.5, baseline={(0,-0.1)}]
    \Bweight{0,1,0,0}
    \CAP{1}{2}{1}
    \dCAP{0}{3}{.75}
    \CUP{1}{2}{1}
    \dCUP{0}{3}{.75}
    \FILL{4}
\end{tikzpicture} $$
\item Finally, consider
$$\underline{\alpha}\lambda\overline{\beta} \ = \ \begin{tikzpicture}[scale=.5, baseline={(0,-0.1)}]
    \Bweight{0,0,1,1}
    \dCAP{0}{1}{1}
    \dCAP{2}{3}{1}
    \CUP{1}{2}{1}
    \CUP{0}{3}{.75}
    \FILL{4}
\end{tikzpicture} \quad \text{ and } \quad \underline{\gamma}\mu\overline{\delta} \ = \ \begin{tikzpicture}[scale=.5, baseline={(0,-0.1)}]
    \Bweight{1,1,0,0}
    \dCUP{0}{1}{1}
    \dCUP{2}{3}{1}
    \CAP{1}{2}{1}
    \CAP{0}{3}{.75}
    \FILL{4}
\end{tikzpicture}$$
Again, we have $\beta = \gamma$, and 
$$\psi_{\underline{\alpha}\overline{\beta}}(\underline{\alpha}\lambda\overline{\beta}) = 1 \ \text{ and } \ \psi_{\underline{\gamma}\overline{\delta}}(\underline{\gamma}\mu\overline{\delta}) = X_4$$
So we set $y= 1 \cdot X_4 = X_4$, and we perform the surgery procedure, keeping track of $y$ to know the resulting orientation
\[
\begin{aligned}
(\underline{\alpha}\lambda\overline{\beta})
\cdot
(\underline{\gamma}\mu\overline{\delta})
\ &\overset{X_4}{=} \
\begin{tikzpicture}[scale=.5, baseline={(0,.68)}]
    \Bweight{0,0,1,1}
    \dCAP{0}{1}{1}
    \dCAP{2}{3}{1}
    \CUP{1}{2}{1}
    \CUP{0}{3}{.75}
    \FILL{4}

    \begin{scope}[yshift=3cm]
    \Bweight{1,1,0,0}
    \dCUP{0}{1}{1}
    \dCUP{2}{3}{1}
    \CAP{1}{2}{1}
    \CAP{0}{3}{.75}
    \FILL{4}
    \end{scope}
\end{tikzpicture}
\ \underset{\text{Merge}}{\overset{X_4}{=}} \
\begin{tikzpicture}[scale=.5, baseline={(0,.68)}]
    \Bweight{1,1,0,0}
    \UPLINE{0}{1.5}
    \UPLINE{1}{1.5}
    \dCAP{2}{3}{1}
    \CUP{1}{2}{1}
    \CUP{0}{3}{.75}
    \FILL{4}

    \begin{scope}[yshift=3cm]
    \Bweight{1,1,0,0}
    \DOWNLINE{0}{1.5}
    \DOWNLINE{1}{1.5}
    \dCUP{2}{3}{1}
    \CAP{1}{2}{1}
    \CAP{0}{3}{.75}
    \FILL{4}
    \end{scope}
\end{tikzpicture}
\ \underset{\text{(Nested) split}}{\overset{(X_3-X_4)X_4}{=}} \
\begin{tikzpicture}[scale=.5, baseline={(0,.68)}]
    \Bweight{1,1,0,0}
    \UPLINE{0}{1.5}
    \UPLINE{1}{1.5}
    \UPLINE{2}{1.5}
    \UPLINE{3}{1.5}
    \CUP{1}{2}{1}
    \CUP{0}{3}{.75}
    \FILL{4}

    \begin{scope}[yshift=3cm]
    \Bweight{1,1,0,0}
    \DOWNLINE{0}{1.5}
    \DOWNLINE{1}{1.5}
    \DOWNLINE{2}{1.5}
    \DOWNLINE{3}{1.5}
    \CAP{1}{2}{1}
    \CAP{0}{3}{.75}
    \FILL{4}
    \end{scope}
\end{tikzpicture}
\\[5ex]
&= \
\begin{tikzpicture}[scale=.5, baseline={(0,-0.1)}]
    \Bweight{1,1,0,0}
    \CAP{1}{2}{1}
    \CAP{0}{3}{.75}
    \CUP{1}{2}{1}
    \CUP{0}{3}{.75}
\end{tikzpicture}
\end{aligned}
\]
because $(X_3-X_4)X_4 = X_3X_4 - X_4^2 = X_3X_4 \in \mathcal{M}(\underline{\alpha}\overline{\delta})$.
\end{itemize}
\end{example}

\begin{definition}\label{def:4.24}
    We will denote $\D_n^0$ by $\D_n$, and from now on we will work exclusively with $\D_n$. This choice is motivated by the fact that $\D_n^0$ is the algebra most studied in the literature. It is shown in~\cite{EhrigStroppel2016a} that the category of finite dimensional $\D_n$-modules is equivalent to the category of perverse sheaves on the isotropic Grassmannians of type $D_n$ or $B_{n-1}$. We note however that $\D_n^1$ is also interesting, and all the results cited in the next sections remain valid for $\D_n^1$ as well.
\end{definition}

\section{Quiver presentation with relations of $\D_n$}\label{sec:5}

In this section we will explain how to present the algebra $\D_n$ as a quiver algebra with quadratic relations. These results are inspired by the work of~\cite{BarmeierWang2026}. In parallel of this work, Ben Mills proved a similar result in~\cite{Mills2026b} for the diagrammatic Hecke category.

\subsection{The quiver $Q_n$}\label{sec:5.1}

We know from~\cite{EhrigStroppel2016a,Seynnaeve2017} that the algebra $\D_n$ is Koszul, and hence quadratic, so we will define a quiver $Q_n$ with vertices given by degree $0$ diagrams, and edges given by degree $1$ diagrams, so that we have a surjective homomorphism of graded algebras
$$\omega : \Bbbk Q_n \twoheadrightarrow \D_n$$
which sends the vertices $e_\lambda$ to the degree $0$ arc diagrams $e_\lambda$ and sends edges to the corresponding degree $1$ arc diagrams. \\
Let's give a more precise description of this quiver $Q_n$.

\begin{definition}\label{def:5.1}
    Let $n \geq 1$, we define the simple undirected graph $\Gamma_n$ as follows. The vertices are given by the degree $0$ arc diagrams in $\D_n$. There is an edge between $e_\lambda$ and $e_\mu$ if the weight of one is obtained from the weight of the other by exchanging either a $\dn \cdots \up$ or a $\up \cdots \up$ pair lying in the same counterclockwise circle (decorated or not).
\end{definition}

\begin{lemma}\label{lem:5.2}
    The quiver $Q_n$ for $\D_n$ is obtained from $\Gamma_n$ by replacing each edge in it by $2$ arrows in opposite directions.
\end{lemma}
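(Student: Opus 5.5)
The plan is to identify the arrows of $Q_n$ with a basis of the degree $1$ part of $\D_n$. Since $\D_n$ is Koszul (hence quadratic) and nonnegatively graded with degree $0$ part $\bigoplus_\lambda \Bbbk e_\lambda$ semisimple, the radical is $\D_n^{\geq 1}$. So $\mathrm{rad}/\mathrm{rad}^2$ in degree $1$ is all of $(\D_n)_1$, because $\mathrm{rad}^2$ lives in degrees $\geq 2$. Hence the number of arrows $e_\lambda \to e_\mu$ is $\dim e_\lambda (\D_n)_1 e_\mu$, which is the number of degree $1$ arc diagrams $\underline{\lambda}\nu\overline{\mu}$. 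It therefore suffices to show the following: this number is $1$ if $\{e_\lambda,e_\mu\}$ is an edge of $\Gamma_n$, and $0$ otherwise, including the case $\lambda=\mu$.

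\textbf{Step 1 (degree splits).} Write $\deg(\underline{\lambda}\nu\overline{\mu}) = \deg(\underline{\lambda}\nu) + \deg(\nu\overline{\mu})$, counting clockwise cups and clockwise caps separately. Recall that $\underline{\lambda}\nu$ is oriented of degree $0$ iff $\nu=\lambda$; this is the standard fact behind Remark~\ref{rmk:4.14}, and I would reprove it from the construction in Definition~\ref{def:4.10}. So a degree $1$ diagram has either $\nu=\lambda$ and $\lambda\overline{\mu}$ of degree $1$, or $\nu=\mu$ and $\underline{\lambda}\mu$ of degree $1$. These two cases are exchanged by the reflection across the horizontal line, which swaps $\underline{\lambda}\nu\overline{\mu}$ and $\underline{\mu}\nu\overline{\lambda}$ and preserves degree. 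This reflection symmetry is what gives the two opposite arrows, so I only treat $\nu=\mu$.

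\textbf{Step 2 (local analysis).} I would prove that $\underline{\lambda}\mu$ is oriented of degree $1$ iff $\mu$ is obtained from $\lambda$ by reversing the orientation of a single cup of $\underline{\lambda}$. That is, $\mu$ swaps a $\dn\cdots\up$ pair joined by an undecorated cup, or turns a $\up\cdots\up$ pair joined by a decorated cup into $\dn\cdots\dn$. Conversely, every cup of $\underline{\lambda}$ gives such a $\mu$. This uses the fact that the orientations of $\underline{\lambda}$ correspond to choosing, for each cup, one of its two admissible weightings, while rays are forced. Degree counts the cups in their "non-$\lambda$" choice.

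\textbf{Step 3.} I then need to check that $\underline{\lambda}\mu\overline{\mu}$ really is an arc diagram of degree $1$: all caps of $\overline{\mu}$ must be anticlockwise for $\mu$, which holds automatically. Next I translate the condition into the language of Definition~\ref{def:5.1}. The pair swapped is the pair of endpoints of a cup of $\underline{\lambda}$, and in $e_\lambda=\underline{\lambda}\lambda\overline{\lambda}$ this cup closes up with its mirror cap into a single anticlockwise circle, decorated or not. Conversely, if $\mu$ swaps a $\dn\cdots\up$ or $\up\cdots\up$ pair on one anticlockwise circle of $e_\lambda$, then since circles in $e_\lambda$ are exactly cup-plus-mirror-cap, the pair is a cup of $\underline{\lambda}$. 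The diagram is unique because $\nu$ is forced to be $\mu$ (or $\lambda$ in the other case). Both cases cannot occur simultaneously for the same ordered pair, so there is no double counting, and $\lambda=\mu$ never arises in degree $1$. Together these give exactly one arrow each way per edge.

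The main obstacle is the rigorous combinatorics in Steps 1 and 2 in the presence of decorations. One must check that the admissibility rules of Definition~\ref{def:4.3} (dots reachable from the left boundary) and the parity constraint $\Delta_n^0$ don't create or destroy orientations, for instance flipping a decorated cup on $\up\up$ to $\dn\dn$. One must also check that $\underline{\lambda}\nu$ of degree $0$ forces $\nu=\lambda$ even with decorated rays. I would handle this by induction on $n$, removing an innermost cup of $\underline{\lambda}$.
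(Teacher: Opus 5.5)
Your proposal is correct and follows the paper's route: an edge of $\Gamma_n$ is the reversal of a single cup of $\underline{\lambda}$ (equivalently, of a single circle of $e_\lambda$), which gives the degree~$1$ diagrams $\underline{\lambda}\mu\overline{\mu}$ and $\underline{\mu}\mu\overline{\lambda}$. You also make explicit two points the paper leaves implicit: the degree splitting that forces the middle weight of a degree~$1$ diagram to be $\lambda$ or $\mu$ (no induction is needed here, since each cup independently has exactly two admissible orientations and rays are forced), and the identification of arrows with $(\D_n)_1$.

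Your one unjustified claim, that $\lambda$ and $\mu$ cannot each be a one-cup reversal of the other, also holds. Every such reversal strictly increases the order of Definition~\ref{def:5.6}, as the computation in the proof of Lemma~\ref{lem:5.5} shows.
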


\begin{proof}
    Clearly, $Q_n$ and $\Gamma_n$ have the same vertices. \\
    Now, suppose that there is an edge in $\Gamma_n$ between $e_\lambda$ and $e_\mu$, and without loss of generality suppose that $\mu$ is obtained from $\lambda$ by exchanging a $\dn \cdots \up$ or $\up \cdots \up$ pair lying in the same counterclockwise circle of $e_\lambda$. Then observe that $\underline{\lambda}\mu$ is a degree $1$ cup diagram, so that $\underline{\lambda}\mu\overline{\mu}$ is a degree $1$ arc diagram from $e_\lambda$ to $e_\mu$. Similarly, $\underline{\mu}\mu\overline{\lambda}$ is a degree $1$ arc diagram from $e_\mu$ to $e_\lambda$. \\
    Conversely, if $\underline{\lambda}\mu\overline{\mu}$ is a degree $1$ arc diagram from $e_\lambda$ to $e_\mu$, then $\underline{\lambda}\mu$ is a degree $1$ cup diagram, and this exactly means that $\mu$ is obtained from $\lambda$ by exchanging a $\dn \cdots \up$ or $\up \cdots \up$ pair lying in the same counterclockwise circle of $e_\lambda$. Similarly if $\underline{\mu}\mu\overline{\lambda}$ is a degree $1$ arc diagram from $e_\mu$ to $e_\lambda$.
\end{proof}

\begin{definition}\label{def:5.3}
    Let $\lambda \in \Delta_n^0$ be a weight.  We define the floor of $\lambda$, denoted $\vert \lambda \vert_f$, to be the number of $\up$'s in $\lambda$, divided by $2$. We define the local height of $\lambda$, denoted $\vert \lambda \vert_h$, to be the sum of the number of $\up$'s on the left of each $\dn$ in $\lambda$. Finally, we define the height of $\lambda$, denoted $\lVert \lambda \rVert$, to be the sum of $\vert \lambda \vert_f$ and $\vert \lambda \vert_h$.
\end{definition}

\begin{example}\label{ex:5.4}
    We have that 
    $$\vert \up \up \dn \up \dn \dn \up \dn \vert_f = \frac{4}{2} = 2,$$
    and
    $$\vert \up \up \dn \up \dn \dn \up \dn \vert_h = 2+3+3+4 = 12,$$
    so that
    $$\lVert \up \up \dn \up \dn \dn \up \dn \rVert = 2+12 = 14.$$
\end{example}

\begin{lemma}\label{lem:5.5}
    Let $n \geq 1$, then the graph $\Gamma_n$ is bipartite.
\end{lemma}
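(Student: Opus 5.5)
The plan is to use the height $\lVert \lambda \rVert$ of Definition~\ref{def:5.3} as a two-colouring. I will show that whenever $e_\lambda$ and $e_\mu$ are joined by an edge of $\Gamma_n$, the heights $\lVert \lambda \rVert$ and $\lVert \mu \rVert$ have different parities. Then the partition of the vertices into those of even height and those of odd height gives the bipartition. Since $\lambda \in \Delta_n^0$ has an even number of $\up$'s, $\vert\lambda\vert_f$ is an integer and $\lVert\lambda\rVert \in \N$, so parity makes sense.

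By Definition~\ref{def:5.1}, $\mu$ is obtained from $\lambda$ by swapping a pair at positions $i<j$. There are two cases.

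\textbf{Case 1: a $\dn\cdots\up$ pair becomes $\up\cdots\dn$.} The number of $\up$'s is unchanged, so $\vert\cdot\vert_f$ is unchanged. For the local height, let $m$ be the number of $\dn$'s strictly between $i$ and $j$, and $u$ the number of $\up$'s strictly between them.
\begin{itemize}
\item Each of the $m$ interior $\dn$'s gains one $\up$ on its left.
\item The $\dn$ moved from $i$ to $j$ now has, on its left, the original $\up$'s left of $i$, plus $u$, plus the new $\up$ at $i$.
\item $\dn$'s outside $[i,j]$ are unaffected.
\end{itemize}
So $\vert\mu\vert_h - \vert\lambda\vert_h = m+u+1 = j-i$.

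Now use the hypothesis that $i$ and $j$ lie on the same circle of $e_\lambda$. The circle is made of the cup diagram $\underline\lambda$ and its mirror. Any cup or cap joining positions $a<b$ encloses a region that is entirely matched, since half-lines cannot be nested under cups. Hence $b-a$ is odd, and a closed circle alternates positions of opposite parity along each arc. Therefore two points on the same circle satisfy $j-i \equiv \text{(number of arcs traversed)} \pmod 2$.

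This is where the orientation hypothesis enters. For an undecorated cup, the endpoints carry $\dn$ and $\up$. Going around a counterclockwise circle, I expect the points carrying $\dn$ to be exactly those of one parity class and those carrying $\up$ to be the other class. If so, a $\dn$ and an $\up$ on it are at odd distance and $j-i$ is odd, which gives the parity change in this case.

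\textbf{Case 2: a $\up\cdots\up$ pair becomes $\dn\cdots\dn$.} The floor drops by exactly $1$. For the local height:
\begin{itemize}
\item The new $\dn$ at $i$ contributes $A$, the number of $\up$'s left of $i$.
\item The new $\dn$ at $j$ contributes $A+u$.
\item Each of the $m$ interior $\dn$'s loses one $\up$.
\item Each $\dn$ right of $j$ loses two $\up$'s, which does not affect parity.
\end{itemize}
So the change in $\lVert\cdot\rVert$ is congruent to $-1 + 2A + u - m \equiv 1 + u + m = j-i \pmod 2$. Thus I need $j-i$ even here.

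That is consistent: the circle is decorated, and a decorated arc joins two equal arrows, which flips the alternation pattern. So on a counterclockwise decorated circle, two $\up$'s should be at even distance (for example, the single decorated cup on $\up\up$ at positions $1,2$ would need more care).

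The main obstacle is making this orientation-versus-parity bookkeeping rigorous, including the dots and the rule "rightmost arrow". I would prove a lemma: on any circle of $e_\lambda$, the arrow at position $k$ is determined by $k \bmod 2$ together with the number of dots passed, and a counterclockwise circle containing an exchangeable pair forces the required parity of $j-i$. If this bookkeeping fails in some case, the fallback is to check directly that $\lVert\mu\rVert-\lVert\lambda\rVert$ is odd, using the explicit shape of the degree-one cup diagram $\underline\lambda\mu$ from the proof of Lemma~\ref{lem:5.2}.
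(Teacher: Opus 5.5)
Your overall strategy is the same as the paper's: two-colour $\Gamma_n$ by the parity of $\lVert\lambda\rVert$, and compute the change of height in the $\dn\cdots\up$ and $\up\cdots\up$ cases. Your Case 1 formula $\vert\mu\vert_h-\vert\lambda\vert_h=m+u+1=j-i$ is correct.

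\textbf{The gap is in Case 2.} You correctly derive $\lVert\mu\rVert-\lVert\lambda\rVert\equiv 1+u+m=j-i \pmod 2$, but then conclude that you need $j-i$ \emph{even}. Since the change must be odd, you in fact need $j-i$ \emph{odd}. The claim you then try to support, that two $\up$'s on a counterclockwise decorated circle of $e_\lambda$ lie at even distance, is false. Your own example, a decorated cup on $\up\up$ at positions $1,2$, has distance $1$. So the auxiliary parity lemma you plan to prove would, in Case 2, be aimed at a false statement, and Case 2 does not go through as written. The Case 1 step is also left as an unproven ``I expect''.

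\textbf{The missing observation.} This also removes all the orientation, dot and ``rightmost arrow'' bookkeeping. Since $e_\lambda=\underline{\lambda}\lambda\overline{\lambda}$ and $\overline{\lambda}$ is the mirror image of $\underline{\lambda}$, every circle of $e_\lambda$ is a single cup of $\underline{\lambda}$ closed up by its mirror cap. It therefore meets the line in exactly two points. So the exchanged pair $i<j$ is precisely the pair of endpoints of one cup: undecorated in Case 1, decorated in Case 2.

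As you already noted, the points strictly between $i$ and $j$ are matched among themselves. Because decorated cups cannot be nested, they are matched by undecorated cups, so $u=m=k$, where $k$ is the number of circles inside $C$. Hence $j-i=2k+1$ is odd in both cases, and orientation plays no role.

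Plugging this into your own bookkeeping, with $A$ the number of $\up$'s left of $i$ and $D$ the number of $\dn$'s right of $j$, gives:
\begin{itemize}
\item $\lVert\mu\rVert-\lVert\lambda\rVert=2k+1$ in Case 1;
\item $\lVert\mu\rVert-\lVert\lambda\rVert=-1+2A-2D$ in Case 2.
\end{itemize}
Both are odd. This is exactly the paper's computation, with $k'=A$ and $k''=D$.
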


\begin{proof}
    Suppose that there is an edge in $\Gamma_n$ between $e_\lambda$ and $e_\mu$, and without loss of generality suppose that $\mu$ is obtained from $\lambda$ by exchanging a $\dn \cdots \up$ or $\up \cdots \up$ pair lying in the same counterclockwise circle $C$ of $e_\lambda$. Denote by $k$ the number of circles inside $C$ in $e_\lambda$, so that there are $k \ \dn$'s and $k \ \up$'s inside the pair $\dn \cdots \up$ or $\up \cdots \up$ lying in $C$, denote also by $k'$ the number of $\up$'s on the left of $C$ in $e_\lambda$, and denote also by $k''$ the number of $\dn$'s on the right of $C$ in $e_\lambda$. \\
    If $\mu$ is obtained from $\lambda$ by exchanging a $\dn \cdots \up$ pair lying in $C$, then $\lVert \mu \rVert = \lVert \lambda \rVert + 2k + 1$ because $\vert \mu \vert_f = \vert \lambda \vert_f$ and $\vert \mu \vert_h = \vert \lambda \vert_h + 2k +1$. \\
    Now if $\mu$ is obtained from $\lambda$ by exchanging a $\up \cdots \up$ pair lying in $C$, then $\lVert \mu \rVert = \lVert \lambda \rVert + 2(k'-k'') - 1$ because $\vert \mu \vert_f = \vert \lambda \vert_f - 1$ and $\vert \mu \vert_h = \vert \lambda \vert_h + k - k + 2k' - 2k'' = \vert \lambda \vert_h + 2(k'-k'')$. \\
    Thus, in any case the heights of $\mu$ and $\lambda$ do not have the same parity, so $\Gamma_n$ is bipartite.
\end{proof}

\begin{definition}\label{def:5.6}
    Let $\lambda, \mu \in \Delta_n^0$ be weights. We will denote
    $$\vert \lambda \vert = (\vert \lambda \vert_f, \vert \lambda \vert_h)$$
    And we will write $\vert \lambda \vert < \vert \mu \vert$ if either $\vert \lambda \vert_f > \vert \mu \vert_f$ or $\vert \lambda \vert_f = \vert \mu \vert_f$ and $\vert \lambda \vert_h < \vert \mu \vert_h$. Observe that the order of the floors    are reversed.
\end{definition}

Definition~\ref{def:5.6} gives us a partial order on the weights in $\Delta_n^0$ that allows us to draw in a good-looking way the graphs $Q_n$ and $\Gamma_n$. We will draw the floors in the reverse order (floor $0$ at the top), and then use the local heights inside each floor.

\begin{example}\label{ex:5.7}
    We draw the quiver $Q_4$ with colors indicating from which circle each edge comes, and with dotted lines indicating the floors
    $$\begin{tikzpicture}[scale=.35]
    \CBweight{1,1,0,0}{7,7,9,9}
    \CdCAP{0}{1}{1}{7}
    \CdCUP{0}{1}{1}{7}
    \LINE{2}{1}
    \LINE{3}{1}
    \CFILL{4}{7,7,9,9}

    \begin{scope}[yshift=5cm]
        \Bweight{0,0,0,0}
        \LINE{0}{1}
        \LINE{1}{1}
        \LINE{2}{1}
        \LINE{3}{1}
        \FILL{4}
    \end{scope}

    \begin{scope}[yshift=-5cm]
        \CBweight{1,0,1,0}{9,0,0,9}
        \dLINE{0}{1}
        \LINE{3}{1}
        \CCUP{1}{2}{1}{0}
        \CCAP{1}{2}{1}{0}
        \CFILL{4}{9,0,0,9}
    \end{scope}

    \begin{scope}[yshift=-10cm]
        \begin{scope}[xshift=-4cm]
            \CBweight{0,1,1,0}{1,1,9,9}
            \dLINE{2}{1}
            \LINE{3}{1}
            \CCUP{0}{1}{1}{1}
            \CCAP{0}{1}{1}{1}
            \CFILL{4}{1,1,9,9}
        \end{scope}
        \begin{scope}[xshift=4cm]
            \CBweight{1,0,0,1}{9,9,2,2}
            \dLINE{0}{1}
            \LINE{1}{1}
            \CCUP{2}{3}{1}{2}
            \CCAP{2}{3}{1}{2}
            \CFILL{4}{9,9,2,2}
        \end{scope}
    \end{scope}

    \begin{scope}[yshift=-15cm]
        \CBweight{0,1,0,1}{3,3,4,4}
        \CCUP{0}{1}{1}{3}
        \CCAP{0}{1}{1}{3}
        \CCUP{2}{3}{1}{4}
        \CCAP{2}{3}{1}{4}
        \CFILL{4}{3,3,4,4}
    \end{scope}

    \begin{scope}[yshift=-20cm]
        \CBweight{0,0,1,1}{5,6,6,5}
        \CCUP{0}{3}{.75}{5}
        \CCAP{0}{3}{.75}{5}
        \CCUP{1}{2}{1}{6}
        \CCAP{1}{2}{1}{6}
        \CFILL{4}{5,6,6,5}
    \end{scope}

    \begin{scope}[yshift=-25cm]
        \CBweight{1,1,1,1}{8,8,10,10}
        \CdCUP{0}{1}{1}{8}
        \CdCAP{0}{1}{1}{8}
        \CdCUP{2}{3}{1}{10}
        \CdCAP{2}{3}{1}{10}
        \CFILL{4}{8,8,10,10}
    \end{scope}
    
    \draw[->, very thick, , draw=\colorFor{0}] (1,-1.75) -- (1,-3.25);
    \draw[->, very thick, , draw=\colorFor{0}] (2,-3.25) -- (2,-1.75);
    \draw[->, very thick, , draw=\colorFor{1}] (-1,-6.75) -- (-2,-8.25);
    \draw[->, very thick, , draw=\colorFor{1}] (-1,-8.25) -- (0,-6.75);
    \draw[->, very thick, , draw=\colorFor{2}] (3,-6.75) -- (4,-8.25);
    \draw[->, very thick, , draw=\colorFor{2}] (5,-8.25) -- (4,-6.75);
    \draw[->, very thick, , draw=\colorFor{4}] (-2,-11.75) -- (-1,-13.25);
    \draw[->, very thick, , draw=\colorFor{4}] (0,-13.25) -- (-1,-11.75);
    \draw[->, very thick, , draw=\colorFor{3}] (4,-11.75) -- (3,-13.25);
    \draw[->, very thick, , draw=\colorFor{3}] (4,-13.25) -- (5,-11.75);
    \draw[->, very thick, , draw=\colorFor{6}] (1,-16.75) -- (1,-18.25);
    \draw[->, very thick, , draw=\colorFor{6}] (2,-18.25) -- (2,-16.75);
    \draw[->, very thick, , draw=\colorFor{5}] (-7,-12.5) arc[start angle=180, end angle=90, x radius=6, y radius=7];
    \draw[very thick, , draw=\colorFor{5}] (-7,-12.5) arc[start angle=180, end angle=270, x radius=6, y radius=7];
    \draw[very thick, , draw=\colorFor{5}] (-8,-12.5) arc[start angle=180, end angle=90, x radius=7, y radius=8];
    \draw[->, very thick, , draw=\colorFor{5}] (-8,-12.5) arc[start angle=180, end angle=270, x radius=7, y radius=8];

    \draw[<-, very thick, , draw=\colorFor{7}] (1,1.75) -- (1,3.25);
    \draw[<-, very thick, , draw=\colorFor{7}] (2,3.25) -- (2,1.75);
    \draw[->, very thick, , draw=\colorFor{8}] (1,-21.75) -- (1,-23.25);
    \draw[->, very thick, , draw=\colorFor{8}] (2,-23.25) -- (2,-21.75);
    \draw[very thick, , draw=\colorFor{10}] (10,-12.5) arc[start angle=0, end angle=90, x radius=6, y radius=12];
    \draw[->,very thick, , draw=\colorFor{10}] (10,-12.5) arc[start angle=0, end angle=-90, x radius=6, y radius=12];
    \draw[->,very thick, , draw=\colorFor{10}] (11,-12.5) arc[start angle=0, end angle=90, x radius=7, y radius=13];
    \draw[very thick, , draw=\colorFor{10}] (11,-12.5) arc[start angle=0, end angle=-90, x radius=7, y radius=13];

    \draw[thick, dotted] (-10,2.5) -- (13,2.5);
    \draw[thick, dotted] (-10,-22.5) -- (13,-22.5);
\end{tikzpicture}$$
And if we zoom in on the top two diagrams in this graph, and we denote the vertices and the edges by
$$\begin{tikzpicture}[scale=.8]
    \Bweight{0,0,0,0}
    \LINE{0}{1}
    \LINE{1}{1}
    \LINE{2}{1}
    \LINE{3}{1}
    \FILL{4}

    \begin{scope}[yshift=-5cm]
        \Bweight{1,1,0,0}
        \LINE{2}{1}
        \LINE{3}{1}
        \bigdCUP{0}{1}{1}
        \bigdCAP{0}{1}{1}
        \FILL{4}
    \end{scope}

    \draw[->, very thick] (1,-1.75) -- (1,-3.25);
    \draw[->, very thick] (2,-3.25) -- (2,-1.75);

    \node at (0.3,-2.5) {$y$};
    \node at (2.7,-2.5) {$x$};

    \node at (-2,-0.1) {$e_\mu =$};
    \node at (-2,-5.1) {$e_\lambda =$};
\end{tikzpicture}$$
then, if we follow what we explained in the proof of Lemma~\ref{lem:5.5}, the edges are given diagramatically by
$$x = \underline{\lambda}\mu\overline{\mu} = \begin{tikzpicture}[scale=.8, baseline = {(0,-.1)}]
    \Bweight{0,0,0,0}
    \bigdCUP{0}{1}{1}
    \LINE{2}{1}
    \UPLINE{0}{1}
    \UPLINE{1}{1}
    \LINE{3}{1}
    \FILL{4}
    \end{tikzpicture} \quad \text{ and } \quad y = \underline{\mu}\mu\overline{\lambda} = \begin{tikzpicture}[scale=.8, baseline = {(0,-.1)}]
    \Bweight{0,0,0,0}
    \bigdCAP{0}{1}{1}
    \LINE{2}{1}
    \DOWNLINE{0}{1}
    \DOWNLINE{1}{1}
    \LINE{3}{1}
    \FILL{4}
    \end{tikzpicture} $$
\end{example}

\subsection{Relations in the quiver}\label{sec:5.2}

We will now compute the relations in the quiver algebra. Since we know that $\D_n$ is a Koszul algebra and hence quadratic, we need to understand the length two paths in $Q_n$.

\begin{remark}\label{rmk:5.8}
    From now on, each time we will draw an edge in $\Gamma_n$ or an arrow in $Q_n$, we will draw it in black if it correponds to a type A move (coming from a $\dn \cdots \up$ pair), and in blue if it corresponds to a type D move (coming from a $\up \cdots \up$ pair). In addition, we will sometimes draw dotted lines to indicate that the vertices lie on different floors.
\end{remark}

\begin{lemma}\label{lem:5.9}
    Let $n \geq 1$, and let $\lambda, \mu \in \Delta_n^0$ be distinct weights such that $\vert \lambda \vert \leq \vert \mu \vert$. Then
    \begin{enumerate}
        \item[\textit{(i)}] There is at most one length $2$ path in $\Gamma_n$ between $\lambda$ and $\mu$ passing through $\nu$ such that $\vert \nu \vert < \vert \lambda \vert$.
        \item[\textit{(ii)}] There is at most one length $2$ path in $\Gamma_n$ between $\lambda$ and $\mu$ passing through $\nu$ such that $\vert \nu \vert > \vert \mu \vert$.
    \end{enumerate}
\end{lemma}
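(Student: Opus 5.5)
The plan is to translate edges of $\Gamma_n$ into local moves on weights that are oriented with respect to the order of Definition~\ref{def:5.6}, and then to show that the middle vertex of a length-$2$ path with both ends on the same side can be reconstructed from $\lambda$ and $\mu$.

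\emph{Oriented edges.} By the proof of Lemma~\ref{lem:5.2} and the computation in Lemma~\ref{lem:5.5}, every edge of $\Gamma_n$ has the form $\{\nu,\nu'\}$ where $\nu'$ is obtained from $\nu$ by flipping the two endpoints of a cup $c$ of $\underline{\nu}$.
\begin{itemize}
\item If $c$ is undecorated, this is a type A move $\dn\cdots\up \mapsto \up\cdots\dn$. The floor is unchanged and the local height increases.
\item If $c$ is decorated, this is a type D move $\up\cdots\up\mapsto\dn\cdots\dn$. The floor drops by one.
\end{itemize}
In both cases $|\nu|<|\nu'|$. So every edge is oriented: the larger endpoint is obtained from the smaller one by flipping a cup of the smaller one.

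\emph{Part (i).} Here $\nu$ is smaller than both $\lambda$ and $\mu$. So $\lambda=\nu^{c_1}$ and $\mu=\nu^{c_2}$ for two cups $c_1\neq c_2$ of $\underline{\nu}$, which are automatically disjoint. Since the two flips act on disjoint pairs of positions, $\lambda$ and $\mu$ differ from each other at exactly the four endpoints of $c_1\cup c_2$. Moreover:
\begin{itemize}
\item at the endpoints of $c_1$, $\nu$ agrees with $\mu$ and $\lambda$ carries the flipped symbols;
\item at the endpoints of $c_2$, $\nu$ agrees with $\lambda$.
\end{itemize}
Hence $\nu$ is determined by the set $D=\{i:\lambda_i\neq\mu_i\}$ of four positions together with a partition of $D$ into the pair $c_1$ and the pair $c_2$. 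Each such partition is one of the three possible pairings of the four positions, with an ordering of the two blocks.

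It therefore suffices to show that at most one (ordered) pairing yields a weight $\nu$ for which both pairs are genuine cups of $\underline{\nu}$. First I would use the symbol constraints to cut down the options. A type A flip turns $\dn\up$ into $\up\dn$ and a type D flip turns $\up\up$ into $\dn\dn$. So at each position of $D$ the symbols of $\lambda$ and $\mu$ record which of the two moves touched that position, and in which direction. This already rules out most pairings. The remaining cases I would split by the relative position of the two cups: side by side, nested, and (for type D) both reaching the left boundary region. In each case I would use the recursive construction of Definition~\ref{def:4.10} to check that the alternative pairing cannot consist of cups of $\underline{\nu}$. For example, in the crossing pairing the two arcs would intersect. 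In a nested versus side-by-side ambiguity, the recursive matching of neighbouring $\dn\cdots\up$ pairs forces the innermost pair to be matched first.

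\emph{Part (ii).} This is dual. Now $\lambda$ and $\mu$ are both smaller than $\nu$, so $\nu=\lambda^{c}=\mu^{c'}$ with $c$ a cup of $\underline{\lambda}$ and $c'$ a cup of $\underline{\mu}$. Then $\lambda$ and $\mu$ differ on the symmetric difference of the endpoint sets of $c$ and $c'$. If $c$ and $c'$ had the same endpoints we would get $\lambda=\mu$, so the symmetric difference is nonempty; it has two or four elements. I would again reconstruct $\nu$ from $\lambda$, $\mu$ and the pairing data, and rule out a second choice. When the symmetric difference has two elements, the shared endpoint is pinned down by requiring that $c$ be a cup of $\underline{\lambda}$ and $c'$ a cup of $\underline{\mu}$. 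I would check this with the same case split as in part (i). An alternative is to derive (ii) from (i) by a symmetry reversing the order, but no such symmetry is available from the earlier results, so I would argue directly.

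The main obstacle is the case analysis showing that the pairing is unique. This is especially delicate for type D moves, where dotted cups may sit side by side at the left and the recursive left-to-right matching of $\up\cdots\up$ pairs in Definition~\ref{def:4.10} must be used carefully. I would first try to encode the condition ``$\{i,j\}$ is a cup of $\underline{\nu}$'' as a balanced-bracket condition on the segment $\nu_i\cdots\nu_j$. Uniqueness would then follow from the fact that two different pairings of the same four points cannot both be bracket-compatible with the given symbols.
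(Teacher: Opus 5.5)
Your route differs from the paper's. The paper argues by cases on the floors of $\lambda,\mu$ and of two hypothetical middle vertices $\nu\neq\nu'$, that is, on which moves are of type A or D. It draws the possible local pictures and excludes configurations using the bipartiteness of $\Gamma_n$ or the height order. Part (ii) is only declared ``similar''. You instead orient every edge and reconstruct the middle vertex from $D=\{p:\lambda_p\neq\mu_p\}$. This needs no bipartiteness and makes (i) a finite check.

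You only announce that check, but it does close. Put $U=\{p\in D:\lambda_p=\up\}$ and $W=D\setminus U$.
\begin{itemize}
\item The cup producing $\lambda$ is either an undecorated cup $(u,w)$ with $u\in U$, $w\in W$, $u<w$, or a decorated pair in $W$.
\item The cup producing $\mu$ is either an undecorated cup $(w,u)$ with $w<u$, or a decorated pair in $U$.
\item The hypothesis $|\lambda|\leq|\mu|$ forces $|U|\in\{2,3\}$.
\item In each of the ten resulting patterns, at most one assignment survives once crossing arcs and decorated cups lying inside another cup are excluded.
\end{itemize}
It is this last rule, not the recursive matching of $\dn\cdots\up$ pairs, that resolves the nested-versus-side-by-side ambiguities. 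For example, when $\lambda$ reads $\up\dn\dn\up$ on $D$, the competing assignment is two nested decorated cups.

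There is a genuine gap in (ii). Rerunning ``the same case split'' cannot work. Now $c$ is a cup of $\underline{\lambda}$ and $c'$ a cup of $\underline{\mu}$, two different diagrams, so planarity says nothing about their relative position. A configuration you would discard in (i) can be the one that actually occurs. For example, $\lambda=\up\up\dn\dn\dn\up\up$ and $\mu=\up\dn\dn\up\up\up\dn$ satisfy $|\lambda|=|\mu|$. Their common upper neighbour $\up\up\dn\up\dn\up\dn$ is obtained by flipping the cup $(4,7)$ of $\underline{\lambda}$ and the cup $(2,5)$ of $\underline{\mu}$, and these two cups cross. Also, when $|D|=2$ the shared endpoint can a priori be any position outside $D$. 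So uniqueness must come from comparing two putative solutions, not from excluding pairings one at a time.

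The missing observation is that $\nu=\lambda^{c}$ determines $c$. Hence two distinct common upper neighbours come from distinct, hence disjoint, cups $c_1,c_2$ of $\underline{\lambda}$ and disjoint cups $c_1',c_2'$ of $\underline{\mu}$, with $c_i\triangle c_i'=D$.
\begin{itemize}
\item If $|D|=4$, this forces $c_2=c_1'$. That pair is then a cup of both $\underline{\lambda}$ and $\underline{\mu}$, although $\mu=\bar\lambda$ on it. This is impossible, since neither $\up\dn$ nor $\dn\dn$ is a cup pattern.
\item If $D=\{p<q\}$, after relabelling you get cups $\{s_1,p\},\{s_2,q\}$ of $\underline{\lambda}$ and $\{s_1,q\},\{s_2,p\}$ of $\underline{\mu}$. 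The hypothesis $|\lambda|\leq|\mu|$ leaves $\lambda_p\lambda_q\in\{\dn\up,\up\up\}$.
\end{itemize}

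Case $\lambda_p\lambda_q=\dn\up$. The $\dn$ at $p$ in $\lambda$ and the $\dn$ at $q$ in $\mu$ give $p<q<s_1$. So $q$ lies inside the undecorated cup $(p,s_1)$ of $\underline{\lambda}$, whence $p<s_2<q$ and $\lambda_{s_2}=\dn$. But then $\{s_2,p\}$ can only be a cup of $\underline{\mu}$ if $s_2<p$, a contradiction.

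Case $\lambda_p\lambda_q=\up\up$. The $\dn$'s of $\mu$ at $p$ and $q$ are left ends of undecorated cups, so $\lambda_{s_1}=\lambda_{s_2}=\up$ and $s_1>q$. Then $\{p,s_1\}$ and $\{q,s_2\}$ are decorated cups of $\underline{\lambda}$ with $p<q<s_1$. So one decorated cup sits inside another cup, which never happens.
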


\begin{proof}
    Let's prove $\textit{(i)}$. Suppose by contradiction that there exist two distinct weights $\nu$ and $\nu'$, both connected to $\lambda$ and $\mu$ by an edge in $\Gamma_m$, and with $\vert \nu' \vert \leq \vert \nu \vert < \vert \lambda \vert$. We will separate two cases.

    \textbf{If $\lambda$ and $\mu$ are in the same floor :} then we will have several subcases. We denote by $j$ the floor of $\lambda$. Either $\nu$ and $\nu'$ lie on the same floor as $\lambda$ and $\mu$, then all the edges in
    \[\begin{tikzcd}
	\lambda &&&&&& \mu \\
	&&& \nu \\
	&&& {\nu'}
	\arrow[no head,very thick, from=2-4, to=1-1]
	\arrow[no head,very thick, from=2-4, to=1-7]
	\arrow[no head,very thick, from=3-4, to=1-1]
	\arrow[no head,very thick, from=3-4, to=1-7]
\end{tikzcd}\]
    correspond to type A moves. Since $\lambda$ and $\mu$ are distinct, they are obtained from $\nu$ and $\nu'$ by exchanging in each two $\dn \cdots \up$ pairs lying in distinct circles of $e_\nu$ and $e_{\nu'}$. Then these circles must be nested in one of $e_\nu, e_{\nu'}$, and non-nested in the other. Without loss of generality, these circles are nested in $e_\nu$, and non-nested in $e_{\nu'}$
    $$\begin{tikzpicture}[scale=.8, baseline={(0,-0.15)}]
    \weight{0,0,1,1}
    \CAP{1}{2}{1}
    \CAP{0}{3}{.75}
    \CUP{1}{2}{1}
    \CUP{0}{3}{.75}
    \FILL{4}
    \node at (-0.45,0) {$\cdots$};   
    \node at (0.55,0) {$\cdots$};   
    \node at (1.55,0) {$\cdots$};
    \node at (2.55,0) {$\cdots$};   
    \node at (3.55,0) {$\cdots$};
\end{tikzpicture}$$
and
$$\begin{tikzpicture}[scale=.8, baseline={(0,-0.18)}]
    \weight{0,1,0,1}
    \CAP{0}{1}{1}
    \CAP{2}{3}{1}
    \CUP{0}{1}{1}
    \CUP{2}{3}{1}
    \FILL{4}
    \node at (-0.45,0) {$\cdots$};   
    \node at (0.55,0) {$\cdots$};   
    \node at (1.55,0) {$\cdots$};
    \node at (2.55,0) {$\cdots$};   
    \node at (3.55,0) {$\cdots$};
\end{tikzpicture}$$
    But then observe that there would be an edge between $\nu$ and $\nu'$ in $\Gamma_n$, which contradicts the fact that $\Gamma_n$ is bipartite. \\
    Now if both $\nu$ and $\nu'$ lie on the floor $j+1$ (drawn under floor $j$ in $\Gamma_n$), then all the edges in
    \[\begin{tikzcd}
	& \lambda &&&&&& \mu & \\
	{} &&&&&&&& {} \\
	&&&& \nu \\
	&&&& {\nu'}
	\arrow[dashed, no head, from=2-1, to=2-9]
	\arrow[color=\colorFor{0}, no head,very thick, from=3-5, to=1-2]
	\arrow[color=\colorFor{0}, no head,very thick, from=3-5, to=1-8]
	\arrow[color=\colorFor{0}, no head,very thick, from=4-5, to=1-2]
	\arrow[color=\colorFor{0}, no head,very thick, from=4-5, to=1-8]
\end{tikzcd}\]
    correspond to type D moves. But then $\lambda$ and $\mu$ are both obtained from $\nu$ and $\nu'$ by exchanging two $\up \cdots \up$ pairs lying in distinct circles of $e_\nu$ and also of $e_{\nu'}$. There is only one possibility, so the diagrams $e_\nu$ and $e_{\nu'}$ look like
     $$\begin{tikzpicture}[scale=.8, baseline={(0,-0.15)}]
    \weight{1,1,1,1}
    \bigdCAP{0}{1}{1}
    \bigdCAP{2}{3}{1}
    \bigdCUP{0}{1}{1}
    \bigdCUP{2}{3}{1}
    \FILL{4}
    \node at (-0.45,0) {$\cdots$};   
    \node at (0.55,0) {$\cdots$};   
    \node at (1.55,0) {$\cdots$};
    \node at (2.55,0) {$\cdots$};   
    \node at (3.55,0) {$\cdots$};
\end{tikzpicture}$$
    because
    $$\begin{tikzpicture}[scale=.8, baseline={(0,-0.15)}]
    \weight{1,1,1,1}
    \bigdCAP{1}{2}{1}
    \bigdCAP{0}{3}{.75}
    \bigdCUP{1}{2}{1}
    \bigdCUP{0}{3}{.75}
    \FILL{4}
    \node at (-0.45,0) {$\cdots$};   
    \node at (0.55,0) {$\cdots$};   
    \node at (1.55,0) {$\cdots$};
    \node at (2.55,0) {$\cdots$};   
    \node at (3.55,0) {$\cdots$};
\end{tikzpicture}$$
    is not admissible (and corresponds to the same weight). Hence $\nu = \nu'$ and we have a contradiction. \\
    Finally, if $\nu$ lies on the same floor as $\lambda$ and $\mu$, and $\nu'$ lies on the floor $j+1$, then the paths would look like
    \[\begin{tikzcd}
	& \lambda &&&&&& \mu & \\
	&&&& \nu \\
	{} &&&&&&&& {} \\
	&&&& {\nu'}
	\arrow[no head,very thick, from=2-5, to=1-2]
	\arrow[no head,very thick, from=2-5, to=1-8]
	\arrow[dashed, no head, from=3-1, to=3-9]
	\arrow[color=\colorFor{0},no head,very thick, from=4-5, to=1-2]
	\arrow[color=\colorFor{0},no head,very thick, from=4-5, to=1-8]
\end{tikzcd}\]
    As in the previous subcase, we know that $e_{\nu'}$ will look like
    $$\begin{tikzpicture}[scale=.8, baseline={(0,-0.15)}]
    \weight{1,1,1,1}
    \bigdCAP{0}{1}{1}
    \bigdCAP{2}{3}{1}
    \bigdCUP{0}{1}{1}
    \bigdCUP{2}{3}{1}
    \FILL{4}
    \node at (-0.45,0) {$\cdots$};   
    \node at (0.55,0) {$\cdots$};   
    \node at (1.55,0) {$\cdots$};
    \node at (2.55,0) {$\cdots$};   
    \node at (3.55,0) {$\cdots$};
\end{tikzpicture}$$
    and since $\vert\lambda\vert \leq \vert\mu\vert$, we will have
    $$\begin{tikzpicture}[scale=.8, baseline={(0,-0.15)}]
    \weight{0,0,1,1}
    \CAP{1}{2}{1}
    \CAP{0}{3}{.75}
    \CUP{1}{2}{1}
    \CUP{0}{3}{.75}
    \FILL{4}
    \node at (-0.45,0) {$\cdots$};   
    \node at (0.55,0) {$\cdots$};   
    \node at (1.55,0) {$\cdots$};
    \node at (2.55,0) {$\cdots$};   
    \node at (3.55,0) {$\cdots$};

    \node at (1.5,-2) {$e_\lambda$};
\end{tikzpicture} \qquad \qquad \begin{tikzpicture}[scale=.8, baseline={(0,-0.15)}]
    \weight{1,1,0,0}
    \bigdCAP{0}{1}{1}
    \bigdCUP{0}{1}{1}
    \RLINE{2}{1}
    \RLINE{3}{1}
    \FILL{4}
    \node at (-0.45,0) {$\cdots$};   
    \node at (0.55,0) {$\cdots$};   
    \node at (1.55,0) {$\cdots$};
    \node at (2.55,0) {$\cdots$};   
    \node at (3.55,0) {$\cdots$};

    \node at (1.5,-2) {$e_\lambda$};
\end{tikzpicture}$$
    where the bent arcs indicate that the arcs are either a straight line, or belong to a circle in the bending direction.
    Observe that $\nu$ and $\lambda$ are different only on $2$ of the $4$ positions indicated on the diagrams, because they are connected in $\Gamma_n$ and $\nu$ is also connected to $\mu$ so it has to be among these $4$ positions. But $\nu$ and $\lambda$ lie on the same floor, and $\dn \dn \up \up$ is the lowest weight of the floor if we zoom on the $4$ interesting positions. This weight correponds to $\lambda$, but $\vert \nu \vert < \vert \lambda \vert$ so this leads to a contradiction.
    
    \textbf{If $\lambda$ and $\mu$ are in different floors :} then necessarly, the paths would look like
    \[\begin{tikzcd}
	&&&&&&& \mu & \\
	{} &&&&&&&& {} \\
	& \lambda \\
	&&&& \nu \\
	&&&& {\nu'}
	\arrow[dashed, no head, from=2-1, to=2-9]
	\arrow[color=\colorFor{0},no head,very thick, from=4-5, to=1-8]
	\arrow[no head,very thick, from=4-5, to=3-2]
	\arrow[color=\colorFor{0},no head,very thick, from=5-5, to=1-8]
	\arrow[no head,very thick, from=5-5, to=3-2]
\end{tikzcd}\]
    so that $\nu$ and $\nu'$ lie on the same floor as $\lambda$. We know that $\lambda$ is obtained from $\nu$ (respectively $\nu'$) by exchanging a $\dn \cdots \up$ pair lying in a circle of $e_\nu$ (respectively $e_{\nu'})$. Similarly, $\mu$ is obtained from $\nu$ (respectively $\nu'$) by exchanging a $\up \cdots \up$ pair lying in a circle of $e_\nu$ (respectively $e_{\nu'})$. This leads to the three following possibilities for $e_\nu$ and $e_{\nu'}$:
    $$
.$$
    But there are edges between them in $\Gamma_n$ in the following way:
    $$%
$$
    where all the edges come from the non-dotted circle. Since the graph $\Gamma_n$ is bipartite, it's not possible that $\nu$ and $\nu'$ are connected, so we can suppose that
    $$e_\nu \ = \ %
 $$
    But when we exchange the $\dn \cdots \up$ pair lying in the non-dotted counterclockwise circles in each diagram, this does not give the same weight, which should be $\lambda$, so we have a contradiction. \\
    We found a contradiction in each possible subcase, so this proves $\textit{(i)}$. The proof for $\textit{(ii)}$ is similar. 
\end{proof}

Lemma~\ref{lem:5.9} will help us to understand the squares in $\Gamma_n$, so every length $2$ path with at least one other parallel length $2$ path.

\begin{lemma}\label{lem:5.10}
    Any square in $\Gamma_n$ is of one of the following forms
    $$\quad%
  $$
The dots between the arcs indicates that they can be other up and down arrows in between the indicated weights, and these are the same in every diagram within a given square. The bent arcs indicate that the arcs either become a straight line, or belong to a circle in the bent direction. And the dotted lines indicate that the arcs either become a dotted straight line, or belong to a circle in any direction (dotted or not).
\end{lemma}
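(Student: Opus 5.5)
A square in $\Gamma_n$ is a $4$-cycle $\lambda - \nu - \mu - \nu' - \lambda$ with $\lambda\neq\mu$ and $\nu\neq\nu'$. By Lemma~\ref{lem:5.5}, $\Gamma_n$ is bipartite, so $\lambda,\mu$ have heights of the same parity and $\nu,\nu'$ lie in the other parity class. Fix the labelling so that $|\lambda|\le|\mu|$ and $|\nu'|\le|\nu|$.

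The first step is to invoke Lemma~\ref{lem:5.9}. It rules out both middle vertices lying strictly below $\lambda$, and it rules out both lying strictly above $\mu$. So, up to the symmetry $\lambda\leftrightarrow\mu$ and up to the reversal of the order (which exchanges \textit{(i)} and \textit{(ii)}), we may assume
\[
|\nu'|<|\lambda|\le|\mu|<|\nu|
\quad\text{or}\quad
|\lambda|<|\nu'|\le|\nu| \ \text{or}\ \ldots
\]
In every case we can choose one vertex of the square, say $\lambda$ after relabelling, from which at least one incident edge is an exchange inside a counterclockwise circle of $e_\lambda$. The aim is to reduce to the configuration where one vertex, the "source", has two distinct counterclockwise circles $C_1,C_2$ of $e_{\lambda}$ (or one circle used twice) whose exchanges produce the two neighbours in the square.

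The core of the argument is a local case analysis. Each edge is either a type A move (a $\dn\cdots\up$ pair exchanged) or a type D move (a $\up\cdots\up$ pair exchanged). Since the diagrams $e_\lambda$ are determined by $\lambda$ via Definition~\ref{def:4.10}, a move only alters the weight at the two endpoints of one circle. Consequently the four weights of a square differ on at most four positions, and the symbols in between are identical across the square; these are the "dots" in the statement. The plan is to enumerate:
\begin{enumerate}[(a)]
\item both moves out of the source are type A;
\item one move is type A and the other type D;
\item both moves are type D.
\end{enumerate}
In each case we then consider the relative position of the two circles: disjoint side by side, nested, or equal (the same circle reused, which can happen when the resulting circle structure changes). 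For each configuration we compute, via Definition~\ref{def:4.10}, the cup diagrams of the two neighbours. We then determine whether the fourth vertex exists, that is, whether both neighbours admit a further move landing on a common weight. This is the same style of computation as in the proof of Lemma~\ref{lem:5.9}, where for instance the nested $\dn\dn\up\up$ and non-nested $\dn\up\dn\up$ configurations are compared. The admissibility constraints of Definition~\ref{def:4.3} restrict the possible placements. Decorated cups cannot be nested, and decorated cups cannot lie right of a line. These constraints produce the "bent arcs" and "dotted lines" caveats: the outer structure is either a line or a circle closing in a forced direction.

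The main obstacle is completeness of the enumeration, especially in the mixed and type D cases. There, exchanging a $\up\cdots\up$ pair changes the floor and can re-pair distant $\up$'s through the left-to-right rule of step 2 in Definition~\ref{def:4.10}, so the two positions touched by a move need not look local. I would handle this by first proving a small claim: the positions affected by any edge lie among the endpoints of the counterclockwise circle used. I would then combine this claim with Lemma~\ref{lem:5.9} to bound the number of distinct positions involved in a square by four. After that, the remaining task is a finite check over weights on four positions (with surrounding context encoded by the bent or dotted arcs), matching each surviving case with one of the pictured forms.
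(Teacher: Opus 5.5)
Your overall plan is the paper's: pass to a vertex with two ascending moves through distinct circles, localise to four positions, split by move type and nesting, and finish with a finite check. But the completion step as you state it has a genuine gap.

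You look for the fourth vertex as a weight that both neighbours reach by a further move, i.e.\ by an exchange inside a circle of $e_\nu$ and inside a circle of $e_{\nu'}$. That only detects squares whose opposite vertex lies above both middle vertices. Take $\lambda$ the lowest vertex and $|\nu'|\le|\nu|$ its two neighbours. One can instead have $|\nu'|<|\mu|<|\nu|$, and then the edge joining $\nu$ and $\mu$ comes from a circle of $e_\mu$, not of $e_\nu$.

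Already in $\Gamma_4$ this happens:
\begin{itemize}
\item The weights $\dn\dn\up\up$, $\dn\up\dn\up$, $\up\dn\dn\up$, $\up\dn\up\dn$ form a square, yet the only ascending neighbour of $\up\dn\up\dn$ is $\up\up\dn\dn$.
\item The weights $\up\up\up\up$, $\dn\dn\up\up$, $\up\dn\up\dn$, $\up\up\dn\dn$ form a square in which $\dn\dn\up\up$ and $\up\up\dn\dn$ have no common ascending neighbour.
\end{itemize}
The forms arising from the non-nested configurations $\dn\up\up\up$ and $\up\up\up\up$, and from the nested configurations with no separating circle, are all of this staircase shape. These are \textit{(c)}--\textit{(h)}, so your check would only recover \textit{(a)}, \textit{(b)}, \textit{(i)}, \textit{(j)}.

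This is exactly what Lemma~\ref{lem:5.9} is used for in the paper. Applied to the pair $\nu',\nu$ with middle vertices $\lambda,\mu$, it excludes $|\mu|<|\nu'|$. So $\mu$ is an ascending neighbour of $\nu'$, read off from the at most two circles of $e_{\nu'}$ on the four positions. The edge joining $\nu$ and $\mu$ must then be tested in both directions, via the circles of $e_\nu$ or of $e_\mu$.

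The set-up also needs repair, though these points are easy.
\begin{itemize}
\item You need \emph{both} square edges at the source to be ascending, not ``at least one''. Take a vertex minimal for the order of Definition~\ref{def:5.6}. The exchange along any edge strictly increases $|\cdot|$: a type A move raises the local height on a fixed floor, and a type D move decreases $|\cdot|_f$, which by the reversed convention raises $|\cdot|$. Hence both square edges at that vertex ascend from it.
\item There is no order-reversing symmetry of $\Gamma_n$ to justify your ``reversal of the order'' reduction. In $\Gamma_4$ the top vertex $\dn\dn\dn\dn$ has degree $1$ and the bottom vertex $\up\up\up\up$ has degree $2$.
\item Each circle of $e_\lambda$ is a single cup with its mirror cap, so it gives exactly one ascending neighbour. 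Your ``same circle reused'' case is therefore empty, and the two circles have disjoint endpoint pairs. Since $\nu,\nu'$ then differ in exactly those four positions and $\mu$ differs from each in two, all four weights agree elsewhere. Lemma~\ref{lem:5.9} plays no role in this bound.
\item The nested case must be split according to whether a third circle separates $C_1$ from $C_2$. Only without one does exchanging the inner circle destroy $C_2$ and create the two mixed circles responsible for the staircase squares.
\end{itemize}
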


\begin{proof}
    Observe first that any square in $\Gamma_n$ has a lowest weight $\lambda$ with two ascending arrows starting from it in the square, because otherwise it would create a cycle that would contradicts the order from~\ref{def:5.6}. We will denote by $\nu$ and $\nu'$ the two weights in the square obtained from $\lambda$, and we can suppose without loss of generality that $\vert \nu \vert \geq \vert \nu' \vert$. We will denote by $\mu$ the fourth weight completing the square. \\
    Observe that $\vert \mu \vert \neq \vert \nu \vert, \vert \nu' \vert$ because $\Gamma_n$ is bipartite, by parity of the heights. Observe also that thanks to Lemma~\ref{lem:5.9}, we cannot have that $\vert \mu \vert < \vert \nu' \vert \leq \vert \nu \vert$. Thus a square in $\Gamma_n$ can only have one of the following two shapes
    $$\begin{tikzcd}[column sep=tiny]
	& \mu & \\
	\nu && {\nu'} \\
	& \lambda \\
	& {\vert \nu' \vert \leq \vert \nu \vert < \vert \mu \vert}
	\arrow[no head,very thick, from=2-1, to=1-2]
	\arrow[no head,very thick, from=2-1, to=3-2]
	\arrow[no head,very thick, from=2-3, to=1-2]
	\arrow[no head,very thick, from=3-2, to=2-3]
    \end{tikzcd} \qquad \text{ or } \qquad  \begin{tikzcd}[column sep=tiny, row sep = scriptsize]
	\nu & \\
	& \mu \\
	& {\nu'} \\
	\lambda \\
	{\vert \nu' \vert < \vert \mu \vert < \vert \nu \vert}
	\arrow[no head,very thick, from=2-2, to=1-1]
	\arrow[no head,very thick, from=3-2, to=2-2]
	\arrow[curve={height=-24pt}, no head,very thick, from=4-1, to=1-1]
	\arrow[no head,very thick, from=4-1, to=3-2]
    \end{tikzcd} $$
    In any square, the two edges connected to $\lambda$ correpond to exchanging two $\dn \cdots \up$ or $\up \cdots \up$ pairs lying in distinct circles of $e_\lambda$. If these circles are non-nested, there are $4$ different possibilites given by
    $$\begin{tikzpicture}[scale=.4, baseline={(0,-0.1)}]

        \begin{scope}[xshift=-4cm]
            \Bweight{0,1,1,0}
            \CUP{0}{1}{1}
            \CAP{0}{1}{1}
            \pdLINE{2}{1}
            \RLINE{3}{1}
            \FILL{2}
            \node at (-0.80,0) {$\cdots$};   
            \node at (0.52,0) {$\cdot$};   
            \node at (1.52,0) {$\cdot$};
            \node at (2.52,0) {$\cdot$};   
            \node at (4,0) {$\cdots$};
        \end{scope}
        \begin{scope}[xshift=4cm]
            \Bweight{1,0,0,1}
            \CUP{2}{3}{1}
            \CAP{2}{3}{1}
            \pdLINE{0}{1}
            \RLINE{1}{1}
            \PFILL{2}
            \PFILL{3}
            \node at (-0.80,0) {$\cdots$};   
            \node at (0.52,0) {$\cdot$};   
            \node at (1.52,0) {$\cdot$};
            \node at (2.52,0) {$\cdot$};   
            \node at (4,0) {$\cdots$};
        \end{scope}

    \begin{scope}[yshift=-4cm]
        \Bweight{0,1,0,1}
        \CUP{0}{1}{1}
        \CAP{0}{1}{1}
        \CUP{2}{3}{1}
        \CAP{2}{3}{1}
        \FILL{4}
        \node at (-0.80,0) {$\cdots$};   
        \node at (0.52,0) {$\cdot$};   
        \node at (1.52,0) {$\cdot$};
        \node at (2.52,0) {$\cdot$};   
        \node at (4,0) {$\cdots$};
    \end{scope}

    \draw[strand] (-1,-3) -- (-2,-1.5);
    \draw[strand] (4,-3) -- (5,-1.5);

\end{tikzpicture} \qquad \begin{tikzpicture}[scale=.4, baseline={(0,-0.1)}]

        \begin{scope}[xshift=-4cm]
            \Bweight{1,1,1,0}
            \dCUP{0}{1}{1}
            \dCAP{0}{1}{1}
            \pdLINE{2}{1}
            \RLINE{3}{1}
            \FILL{2}
            \node at (-0.80,0) {$\cdots$};   
            \node at (0.52,0) {$\cdot$};   
            \node at (1.52,0) {$\cdot$};
            \node at (2.52,0) {$\cdot$};   
            \node at (4,0) {$\cdots$};
        \end{scope}
        \begin{scope}[xshift=4cm]
            \Bweight{0,0,0,1}
            \CUP{2}{3}{1}
            \CAP{2}{3}{1}
            \RLINE{0}{1}
            \RLINE{1}{1}
            \PFILL{2}
            \PFILL{3}
            \node at (-0.80,0) {$\cdots$};   
            \node at (0.52,0) {$\cdot$};   
            \node at (1.52,0) {$\cdot$};
            \node at (2.52,0) {$\cdot$};   
            \node at (4,0) {$\cdots$};
        \end{scope}

    \begin{scope}[yshift=-4cm]
        \Bweight{1,1,0,1}
        \dCUP{0}{1}{1}
        \dCAP{0}{1}{1}
        \CUP{2}{3}{1}
        \CAP{2}{3}{1}
        \FILL{4}
        \node at (-0.80,0) {$\cdots$};   
        \node at (0.52,0) {$\cdot$};   
        \node at (1.52,0) {$\cdot$};
        \node at (2.52,0) {$\cdot$};   
        \node at (4,0) {$\cdots$};
    \end{scope}

    \draw[strand] (-1,-3) -- (-2,-1.5);
    \draw[colored strand = {0}] (4,-3) -- (5,-1.5);

\end{tikzpicture}$$
\vspace{.5cm}
$$\begin{tikzpicture}[scale=.4, baseline={(0,-0.1)}]

        \begin{scope}[xshift=-4cm]
            \Bweight{0,1,0,0}
            \CUP{0}{1}{1}
            \CAP{0}{1}{1}
            \RLINE{2}{1}
            \RLINE{3}{1}
            \FILL{2}
            \node at (-0.80,0) {$\cdots$};   
            \node at (0.52,0) {$\cdot$};   
            \node at (1.52,0) {$\cdot$};
            \node at (2.52,0) {$\cdot$};   
            \node at (4,0) {$\cdots$};
        \end{scope}
        \begin{scope}[xshift=4cm]
            \Bweight{1,0,1,1}
            \CUP{1}{2}{1}
            \CAP{1}{2}{1}
            \dCAP{0}{3}{.6}
            \dCUP{0}{3}{.6}
            \FILL{4}
            \node at (-0.80,0) {$\cdots$};   
            \node at (0.52,0) {$\cdot$};   
            \node at (1.52,0) {$\cdot$};
            \node at (2.52,0) {$\cdot$};   
            \node at (4,0) {$\cdots$};
        \end{scope}

    \begin{scope}[yshift=-4cm]
        \Bweight{0,1,1,1}
        \CUP{0}{1}{1}
        \CAP{0}{1}{1}
        \dCUP{2}{3}{1}
        \dCAP{2}{3}{1}
        \FILL{4}
        \node at (-0.80,0) {$\cdots$};   
        \node at (0.52,0) {$\cdot$};   
        \node at (1.52,0) {$\cdot$};
        \node at (2.52,0) {$\cdot$};   
        \node at (4,0) {$\cdots$};
    \end{scope}

    \draw[colored strand = {0}] (-1,-3) -- (-2,-1.5);
    \draw[strand] (4,-3) -- (5,-1.5);

\end{tikzpicture} \qquad \begin{tikzpicture}[scale=.4, baseline={(0,-0.1)}]

        \begin{scope}[xshift=-4cm]
            \Bweight{1,1,0,0}
            \dCUP{0}{1}{1}
            \dCAP{0}{1}{1}
            \RLINE{2}{1}
            \RLINE{3}{1}
            \FILL{2}
            \node at (-0.80,0) {$\cdots$};   
            \node at (0.52,0) {$\cdot$};   
            \node at (1.52,0) {$\cdot$};
            \node at (2.52,0) {$\cdot$};   
            \node at (4,0) {$\cdots$};
        \end{scope}
        \begin{scope}[xshift=4cm]
            \Bweight{0,0,1,1}
            \CUP{1}{2}{1}
            \CAP{1}{2}{1}
            \CAP{0}{3}{.6}
            \CUP{0}{3}{.6}
            \FILL{4}
            \node at (-0.80,0) {$\cdots$};   
            \node at (0.52,0) {$\cdot$};   
            \node at (1.52,0) {$\cdot$};
            \node at (2.52,0) {$\cdot$};   
            \node at (4,0) {$\cdots$};
        \end{scope}

    \begin{scope}[yshift=-4cm]
        \Bweight{1,1,1,1}
        \dCUP{0}{1}{1}
        \dCAP{0}{1}{1}
        \dCUP{2}{3}{1}
        \dCAP{2}{3}{1}
        \FILL{4}
        \node at (-0.80,0) {$\cdots$};   
        \node at (0.52,0) {$\cdot$};   
        \node at (1.52,0) {$\cdot$};
        \node at (2.52,0) {$\cdot$};   
        \node at (4,0) {$\cdots$};
    \end{scope}

    \draw[colored strand = {0}] (-1,-3) -- (-2,-1.5);
    \draw[colored strand = {0}] (4,-3) -- (5,-1.5);

\end{tikzpicture}$$
    If we want now to add $\mu$ to complete one of these figures into a square, we have to change two pairs $\dn \cdots \up$ or $\up \cdots \up$  (ascending arrow) or $\up \cdots \dn$ or $\dn \cdots \dn$ (descending arow) from $\nu$ to obtain $\nu'$. But $\nu$ and $\nu'$ differ only on the $4$ positions drawn in the diagrams, and since exchanging a pair of arrows changes $2$ positions, the two changes from $\nu$ to $\nu'$ have to be done at the $4$ interesting positions, which gives us few possibilities. Observe that in the two different shapes
    $$\begin{tikzcd}
	& \mu & \\
	\nu && {\nu'} \\
	& \lambda 
	\arrow[no head,very thick, from=2-1, to=1-2]
	\arrow[no head,very thick, from=2-1, to=3-2]
	\arrow[no head,very thick, from=2-3, to=1-2]
	\arrow[no head,very thick, from=3-2, to=2-3]
    \end{tikzcd} \qquad \text{ or } \qquad  \begin{tikzcd}[row sep = scriptsize]
	\nu & \\
	& \mu \\
	& {\nu'} \\
	\lambda
	\arrow[no head,very thick, from=2-2, to=1-1]
	\arrow[no head,very thick, from=3-2, to=2-2]
	\arrow[curve={height=-24pt}, no head,very thick, from=4-1, to=1-1]
	\arrow[no head,very thick, from=4-1, to=3-2]
    \end{tikzcd} $$
    there is an ascending arrow starting at $\nu'$ giving rise to $\mu$. So we can create all the possible $\mu$'s thanks to the circles appearing in $e_{\nu'}$ at the $4$ drawn positions (at most $2$), and then complete the picture into a square with an edge between $\lambda$ and $\mu$ (either ascending or descending depending on the shape) by just checking if the circles appearing in $e_\lambda$ or $e_\mu$ give rise to the other one. For example, for the first picture, it is the only one in which we cannot say which diagram is $e_\nu$ and the other is $e_{\nu'}$, so by symmetry we can suppose without loss of generality that $e_\nu$ is on the left, and then $e_{\nu'}$ on the right. There is only one circle in the $4$ interesting positions of $e_{\nu'}$ so the only possibility for $\mu$ is
    \vspace{.5cm}
    $$
$$
    Then there is only one circle in $e_\nu$ and none in $e_\mu$, and it gives rise to
    $$%
$$
    We deduce that the first figure
    $$%
$$
    gives rise uniquely to the square $\textit{(a)}$. \\
    We can give another example for the second picture
    $$%
$$
    This time we know that the diagram on the right is $e_\nu$ because it has been obtained from $e_\lambda$ by a type D move, and $e_{\nu'}$ by a type A move, so $\vert \nu \vert > \vert \nu' \vert$. Again there is only one circle in the $4$ interesting positions of $e_{\nu'}$ so the only possibility for $\mu$ is
    $$%
$$
    Then there is only one circle in $e_\nu$ and none in $e_\mu$, so it gives rise to
    $$%
$$
    We deduce that the second figure
    $$%
$$
    give rise uniquely to the square $\textit{(b)}$. \\
    We apply the exact same process for the third and fourth pictures
    $$%
$$
    but these time there are $2$ circles in $e_{\nu'}$ so it gives two different possible $\mu$'s, then we check all possibilites and we deduce that these figures give rise uniquely to squares $\textit{(c)}$ and $\textit{(d)}$ respectively. \\
    Now if the circles are nested, without any other circle in-between, it gives us two different cases
    $$%
$$
    Similarly, we can easily check that the first figure gives rise uniquely to squares $\textit{(e)}$ and $\textit{(f)}$, and the second figure gives rise uniquely to squares $\textit{(g)}$ and $\textit{(h)}$. \\
    Finally, if the circles are nested, with a circle in-between, it gives us two different cases
    $$%
$$
    And again, we can easily check that they give rise uniquely to squares $\textit{(i)}$ and $\textit{(j)}$ respectively.
\end{proof}

It remains to understand the length $2$ paths in $\Gamma_n$ that do not appear in any square. These type of paths are classified by the following lemma.
\newpage

\begin{lemma}\label{lem:5.11}
     Any length $2$ path in $\Gamma_n$ without a parallel length $2$ path is of one of the following forms
    $$%
$$
    The dots between the arcs indicates that they can be other up and down arrows in between the indicated weights, and these are the same in every diagram within a given path. The (dotted) bent arcs indicate that the arcs either become a (dotted) straight line, or belong to a (dotted or not) circle in the bent direction. And the dotted lines indicate that the arcs either become a dotted straight line, or belong to a circle in any direction (dotted or not).
\end{lemma}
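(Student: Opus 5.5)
The approach mirrors the proof of Lemma~\ref{lem:5.10}: classify a length $2$ path $\lambda - \nu - \mu$ in $\Gamma_n$ by the relative position of $\nu$, and then rule out the configurations that already sit in a square.

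First, by Lemma~\ref{lem:5.5}, $\lambda$ and $\mu$ have heights of the same parity, and $\nu$ has the opposite parity. Since $\lambda \neq \mu$ (a path back to itself is a different kind of relation, treated at vertices), we may assume $\vert \lambda \vert \leq \vert \mu \vert$. The middle vertex then falls into one of three shapes:
\begin{itemize}
\item a valley, $\vert \nu \vert < \vert \lambda \vert$;
\item a peak, $\vert \nu \vert > \vert \mu \vert$;
\item a staircase, $\vert \lambda \vert < \vert \nu \vert < \vert \mu \vert$.
\end{itemize}
In the valley case, both edges leaving $\nu$ are ascending moves, that is, exchanges of a $\dn\cdots\up$ or $\up\cdots\up$ pair in two distinct counterclockwise circles of $e_\nu$. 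In the other two cases, I would reverse one or both edges so that every move starts from a degree-$0$ diagram $e_\nu$, $e_\lambda$ or $e_{\nu}$ as appropriate.

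In each shape I would enumerate the local pictures exactly as in Lemma~\ref{lem:5.10}. The two moves involve two circles, say $C$ and $C'$, which are either non-nested, nested with nothing between them, or nested with a circle in between. Each move is either type A or type D. The two circles may also share a circle, when the path uses the same circle twice in the staircase shape, for instance a type A move followed by a move in the newly created circle.

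Only a bounded number of positions change, at most four or six. So for each local picture I would list the candidate fourth vertices $\nu'$ that differ from $\lambda$ and $\mu$ on these positions. For each candidate, I would check whether $e_{\nu'}$ has the required counterclockwise circles joining it to both $\lambda$ and $\mu$. Lemma~\ref{lem:5.9} limits the candidates: in a valley there is at most one vertex below, and in a peak at most one above. The configurations that admit a $\nu'$ are precisely the squares \textit{(a)}--\textit{(j)} already found in Lemma~\ref{lem:5.10}. The remaining configurations, in which no $\nu'$ exists, give the list in the statement.

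A typical surviving configuration is one where the second move uses a circle created by the first move, or where the two circles are nested so that the "other order" of moves is not admissible. The decoration rules forbid this second ordering: a dotted cup cannot be nested, and a dotted arc cannot lie to the right of a line.

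The main obstacle is the staircase case. There the moves do not commute naively, and the second move can depend on circles that the first move creates or destroys. In particular, after a type D move the dotted circles rearrange, and circles change from nested to non-nested. I would handle this by reversing the lower edge, so that both moves again start from the degree-$0$ diagram at the middle vertex. I would then use the bent-arc and dotted-line conventions to compress the outer context. Finally, I would verify admissibility case by case, using that an exchange alters only the two positions of its pair.
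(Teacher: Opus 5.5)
Your overall architecture matches the paper's. You split by whether $\nu$ is a valley, a peak or a middle step, and discard valleys, since two ascending moves out of $e_\nu$ always complete to a square by the enumeration in the proof of Lemma~\ref{lem:5.10}. Then you enumerate the remaining configurations and remove those lying in one of the squares \textit{(a)}--\textit{(j)}. The gap is in that enumeration.

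You sort the two moves using the list from Lemma~\ref{lem:5.10}: non-nested, nested with or without a circle between, plus a shared endpoint. That list describes two circles of one and the same diagram, which can never cross. In the staircase the two circles live in $e_\lambda$ and $e_\nu$, and in the peak in $e_\lambda$ and $e_\mu$. There the two exchanged position pairs can a priori cross, and this case is not negligible: in the peak shape it is what produces path \textit{(i)}. Reading off the product computed for \textit{(i)} in the proof of Lemma~\ref{lem:5.18}, locally $\lambda=\dn\dn\up\up\up\dn$, $\nu=\up\dn\up\dn\up\dn$ and $\mu=\up\dn\dn\dn\up\up$. Here $\nu$ arises from the circle on positions $1,4$ of $e_\lambda$ and from the circle on positions $3,6$ of $e_\mu$. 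So the enumeration you describe has no slot for \textit{(i)}. In the staircase, by contrast, crossing has to be ruled out, as the paper does. A position strictly inside the pair exchanged in $e_\lambda$ lies on a circle nested inside the circle used. That circle survives unchanged in $e_\nu$, so its positions cannot be an endpoint of the second circle.

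Your remedy for the staircase, reversing the lower edge so that both moves start at $e_\nu$, also does not work. Reversed, that edge exchanges a $\up\cdots\dn$ or $\dn\cdots\dn$ pair of $\nu$, whereas every circle of $e_\nu$ sits on a $\dn\cdots\up$ or $\up\cdots\up$ pair. So it is never a move in a circle of $e_\nu$; in \textit{(e)} and \textit{(f)}, for instance, the dotted circle used first has disappeared from $e_\nu$. The circle-based classification therefore cannot be transported to it.

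What works, and what the paper does, is to keep both edges ascending (from $e_\lambda$, then from $e_\nu$) and track which circles of $e_\lambda$ survive the first move. The cases are then sorted by whether $\lambda,\mu$ differ in two positions or in four:
\begin{itemize}
\item two positions: a shared endpoint, giving the monomial paths \textit{(a)}--\textit{(d)};
\item four positions, side by side: two of the six pictures are \textit{(e)}, \textit{(f)}, and the other four lie in squares \textit{(a)}, \textit{(b)};
\item four positions, nested: paths \textit{(g)}, \textit{(h)}, or else squares \textit{(d)}, \textit{(c)};
\item four positions, crossing: impossible.
\end{itemize}
The nested case hinges on a non-local condition: whether some strand to the left is not closed into a circle lying entirely on the left. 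Your check of candidate $\nu'$ on a bounded set of positions must carry that left context explicitly.
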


\begin{proof}
    Firstly, by looking at every possible square from Lemma~\ref{lem:5.10}, we can see that these paths $\textit{(a)}$-$\textit{(i)}$ do not appear in any square. \\
    Now we need to prove that there is no other path of length $2$ without any parallel path of length $2$. We consider without loss of generality that such a path connect $\lambda$ and $\mu$ through $\nu$, with $\vert \lambda \vert \leq \vert \mu \vert$. We have that $\vert \nu \vert \neq \vert \lambda \vert, \vert \mu \vert$ because $\Gamma_n$ is bipartite, by parity of the heights. Observe that it is not possible that $\vert \nu \vert < \vert \lambda \vert \leq \vert \mu \vert$ because we proved in Lemma~\ref{lem:5.9} that any such path gives rise to at least one square. \\
    So any path we are looking for is of one of the following shape
    $$\begin{tikzcd}
	\mu \\
	\nu \\
	\lambda \\
	{\vert \lambda \vert < \vert \nu \vert < \vert \mu \vert}
	\arrow[no head, very thick, from=2-1, to=1-1]
	\arrow[no head, very thick, from=3-1, to=2-1]
    \end{tikzcd} \qquad \text{ or } \qquad \begin{tikzcd}[column sep=tiny]
	& \nu & \\
	\lambda && \mu \\
	& {\vert \lambda \vert \leq \vert \mu \vert < \vert \nu \vert}
	\arrow[no head, very thick, from=2-1, to=1-2]
	\arrow[no head, very thick, from=2-3, to=1-2]
    \end{tikzcd}$$
    Suppose that $\vert \lambda \vert < \vert \nu \vert < \vert \mu \vert$, then $\lambda$ and $\mu$ can differ in at most $4$ positions. \\
    In the first case, $\lambda$ and $\mu$ are distinct in only $2$ positions. This means that the $\dn \cdot \up$ or $\up \cdot \up$ pair used in $\nu$ to obtain $\mu$, share a position with the $\dn \cdot \up$ or $\up \cdot \up$ pair used in $\lambda$ to obtain $\nu$. Clearly, this leads to paths $\textit{(a)}$, $\textit{(b)}$, $\textit{(c)}$ and $\textit{(d)}$ only, depending on whether the sharing position is on the right or on the left of the circle used in $e_\lambda$, and if there is a type D move or not in the path. \\
    In the second case, $\lambda$ and $\mu$ are different in $4$ positions, and the positions of the pairs $\dn \cdot \up$ or $\up \cdot \up$ used in $\lambda$ and $\nu$ respectively are non-nested. Then such a path look like
    $$\begin{tikzpicture}[scale=.5, baseline={(0,-0.1)}]
    \Bweight{1,0,1,0}
    \pdLINE{0}{1}
    \RLINE{1}{1}
    \pdLINE{2}{1}
    \RLINE{3}{1}
    \node at (-0.80,0) {$\cdots$};   
    \node at (0.52,0) {$\cdot$};   
    \node at (1.52,0) {$\cdot$};
    \node at (2.52,0) {$\cdot$};
    \node at (4.00,0) {$\cdots$}; 

    \begin{scope}[yshift=-5cm]
        \Bweight{1,0,0,1}
    \CAP{2}{3}{1}
    \CUP{2}{3}{1}
    \pdLINE{0}{1}
    \RLINE{1}{1}
    \PFILL{2}
    \PFILL{3}
    \node at (-0.80,0) {$\cdots$};   
    \node at (0.52,0) {$\cdot$};   
    \node at (1.52,0) {$\cdot$};
    \node at (2.52,0) {$\cdot$};
    \node at (4.00,0) {$\cdots$}; 
    \end{scope}

    \begin{scope}[yshift=-10cm]
        \Bweight{0,1,0,1}
    \CAP{0}{1}{1}
    \CUP{0}{1}{1}
    \CAP{2}{3}{1}
    \CUP{2}{3}{1}
    \FILL{4}
    \node at (-0.80,0) {$\cdots$};   
    \node at (0.52,0) {$\cdot$};   
    \node at (1.52,0) {$\cdot$};
    \node at (2.52,0) {$\cdot$};
    \node at (4.00,0) {$\cdots$}; 
    \end{scope}

    \draw[strand] (1.5,-1.75) -- (1.5,-3.25);
    \draw[strand] (1.5,-6.75) -- (1.5,-8.25);

    \begin{scope}[xshift=10cm]
    \Bweight{1,0,1,0}
    \pdLINE{0}{1}
    \RLINE{1}{1}
    \pdLINE{2}{1}
    \RLINE{3}{1}
    \node at (-0.80,0) {$\cdots$};   
    \node at (0.52,0) {$\cdot$};   
    \node at (1.52,0) {$\cdot$};
    \node at (2.52,0) {$\cdot$};
    \node at (4.00,0) {$\cdots$}; 

    \begin{scope}[yshift=-5cm]
        \Bweight{0,1,1,0}
    \CAP{0}{1}{1}
    \CUP{0}{1}{1}
    \pdLINE{2}{1}
    \RLINE{3}{1}
    \FILL{2}
    \node at (-0.80,0) {$\cdots$};   
    \node at (0.52,0) {$\cdot$};   
    \node at (1.52,0) {$\cdot$};
    \node at (2.52,0) {$\cdot$};
    \node at (4.00,0) {$\cdots$}; 
    \end{scope}

    \begin{scope}[yshift=-10cm]
        \Bweight{0,1,0,1}
    \CAP{0}{1}{1}
    \CUP{0}{1}{1}
    \CAP{2}{3}{1}
    \CUP{2}{3}{1}
    \FILL{4}
    \node at (-0.80,0) {$\cdots$};   
    \node at (0.52,0) {$\cdot$};   
    \node at (1.52,0) {$\cdot$};
    \node at (2.52,0) {$\cdot$};
    \node at (4.00,0) {$\cdots$}; 
    \end{scope}

    \draw[strand] (1.5,-1.75) -- (1.5,-3.25);
    \draw[strand] (1.5,-6.75) -- (1.5,-8.25);
    \end{scope}

    \begin{scope}[xshift=20cm]
    \Bweight{0,0,1,0}
    \RLINE{0}{1}
    \RLINE{1}{1}
    \pdLINE{2}{1}
    \RLINE{3}{1}
    \node at (-0.80,0) {$\cdots$};   
    \node at (0.52,0) {$\cdot$};   
    \node at (1.52,0) {$\cdot$};
    \node at (2.52,0) {$\cdot$};
    \node at (4.00,0) {$\cdots$}; 

    \begin{scope}[yshift=-5cm]
        \Bweight{0,0,0,1}
    \CAP{2}{3}{1}
    \CUP{2}{3}{1}
    \RLINE{0}{1}
    \RLINE{1}{1}
    \PFILL{2}
    \PFILL{3}
    \node at (-0.80,0) {$\cdots$};   
    \node at (0.52,0) {$\cdot$};   
    \node at (1.52,0) {$\cdot$};
    \node at (2.52,0) {$\cdot$};
    \node at (4.00,0) {$\cdots$}; 
    \end{scope}

    \begin{scope}[yshift=-10cm]
        \Bweight{1,1,0,1}
    \dCAP{0}{1}{1}
    \dCUP{0}{1}{1}
    \CAP{2}{3}{1}
    \CUP{2}{3}{1}
    \FILL{4}
    \node at (-0.80,0) {$\cdots$};   
    \node at (0.52,0) {$\cdot$};   
    \node at (1.52,0) {$\cdot$};
    \node at (2.52,0) {$\cdot$};
    \node at (4.00,0) {$\cdots$}; 
    \end{scope}

    \draw[strand] (1.5,-1.75) -- (1.5,-3.25);
    \draw[colored strand = {0}] (1.5,-6.75) -- (1.5,-8.25);
    \end{scope}

    \end{tikzpicture}$$
    \vspace{.5cm}
    $$\begin{tikzpicture}[scale=.5, baseline={(0,-0.1)}]
   \Bweight{0,0,1,0}
    \RLINE{0}{1}
    \RLINE{1}{1}
    \pdLINE{2}{1}
    \RLINE{3}{1}
    \node at (-0.80,0) {$\cdots$};   
    \node at (0.52,0) {$\cdot$};   
    \node at (1.52,0) {$\cdot$};
    \node at (2.52,0) {$\cdot$};
    \node at (4.00,0) {$\cdots$}; 

    \begin{scope}[yshift=-5cm]
        \Bweight{1,1,1,0}
    \dCAP{0}{1}{1}
    \dCUP{0}{1}{1}
    \pdLINE{2}{1}
    \RLINE{3}{1}
    \FILL{2}
    \node at (-0.80,0) {$\cdots$};   
    \node at (0.52,0) {$\cdot$};   
    \node at (1.52,0) {$\cdot$};
    \node at (2.52,0) {$\cdot$};
    \node at (4.00,0) {$\cdots$}; 
    \end{scope}

    \begin{scope}[yshift=-10cm]
        \Bweight{1,1,0,1}
    \dCAP{0}{1}{1}
    \dCUP{0}{1}{1}
    \CAP{2}{3}{1}
    \CUP{2}{3}{1}
    \FILL{4}
    \node at (-0.80,0) {$\cdots$};   
    \node at (0.52,0) {$\cdot$};   
    \node at (1.52,0) {$\cdot$};
    \node at (2.52,0) {$\cdot$};
    \node at (4.00,0) {$\cdots$}; 
    \end{scope}

    \draw[colored strand = {0}] (1.5,-1.75) -- (1.5,-3.25);
    \draw[strand] (1.5,-6.75) -- (1.5,-8.25);

    \begin{scope}[xshift=10cm]
    \Bweight{1,0,0,0}
    \pdLINE{0}{1}
    \RLINE{1}{1}
    \RLINE{2}{1}
    \RLINE{3}{1}
    \node at (-0.80,0) {$\cdots$};   
    \node at (0.52,0) {$\cdot$};   
    \node at (1.52,0) {$\cdot$};
    \node at (2.52,0) {$\cdot$};
    \node at (4.00,0) {$\cdots$}; 

    \begin{scope}[yshift=-5cm]
        \Bweight{0,1,0,0}
    \CAP{0}{1}{1}
    \CUP{0}{1}{1}
    \RLINE{2}{1}
    \RLINE{3}{1}
    \FILL{2}
    \node at (-0.80,0) {$\cdots$};   
    \node at (0.52,0) {$\cdot$};   
    \node at (1.52,0) {$\cdot$};
    \node at (2.52,0) {$\cdot$};
    \node at (4.00,0) {$\cdots$}; 
    \end{scope}

    \begin{scope}[yshift=-10cm]
        \Bweight{0,1,1,1}
    \CAP{0}{1}{1}
    \CUP{0}{1}{1}
    \dCAP{2}{3}{1}
    \dCUP{2}{3}{1}
    \FILL{4}
    \node at (-0.80,0) {$\cdots$};   
    \node at (0.52,0) {$\cdot$};   
    \node at (1.52,0) {$\cdot$};
    \node at (2.52,0) {$\cdot$};
    \node at (4.00,0) {$\cdots$}; 
    \end{scope}

    \draw[strand] (1.5,-1.75) -- (1.5,-3.25);
    \draw[colored strand = {0}] (1.5,-6.75) -- (1.5,-8.25);
    \end{scope}

    \begin{scope}[xshift=20cm]
    \Bweight{0,0,0,0}
    \RLINE{0}{1}
    \RLINE{1}{1}
    \RLINE{2}{1}
    \RLINE{3}{1}
    \node at (-0.80,0) {$\cdots$};   
    \node at (0.52,0) {$\cdot$};   
    \node at (1.52,0) {$\cdot$};
    \node at (2.52,0) {$\cdot$};
    \node at (4.00,0) {$\cdots$}; 

    \begin{scope}[yshift=-5cm]
        \Bweight{1,1,0,0}
    \dCAP{0}{1}{1}
    \dCUP{0}{1}{1}
    \RLINE{2}{1}
    \RLINE{3}{1}
    \FILL{2}
    \node at (-0.80,0) {$\cdots$};   
    \node at (0.52,0) {$\cdot$};   
    \node at (1.52,0) {$\cdot$};
    \node at (2.52,0) {$\cdot$};
    \node at (4.00,0) {$\cdots$}; 
    \end{scope}

    \begin{scope}[yshift=-10cm]
        \Bweight{1,1,1,1}
    \dCAP{0}{1}{1}
    \dCUP{0}{1}{1}
    \dCAP{2}{3}{1}
    \dCUP{2}{3}{1}
    \FILL{4}
    \node at (-0.80,0) {$\cdots$};   
    \node at (0.52,0) {$\cdot$};   
    \node at (1.52,0) {$\cdot$};
    \node at (2.52,0) {$\cdot$};
    \node at (4.00,0) {$\cdots$}; 
    \end{scope}

    \draw[colored strand = {0}] (1.5,-1.75) -- (1.5,-3.25);
    \draw[colored strand = {0}] (1.5,-6.75) -- (1.5,-8.25);
    \end{scope}

    \draw[strand, dotted] (8,1.5) -- (25,1.5);
    \draw[strand, dotted] (8,-11.5) -- (25,-11.5);
    \draw[strand, dotted] (8,1.5) -- (8,-11.5);
    \draw[strand, dotted] (25,1.5) -- (25,-11.5);
    \draw[white] (-2,1.5) -- (-2,-11.5);

    \end{tikzpicture}$$
    The first two paths appear in the square $\textit{(a)}$ from Lemma~\ref{lem:5.10}, the next two paths appear in the square $\textit{(b)}$ from Lemma~\ref{lem:5.10}, and the last two paths (in the rectangle) are paths $\textit{(e)}$ and $\textit{(f)}$ respectively. \\
    In the third case, $\lambda$ and $\mu$ are different in $4$ positions, and the positions of the pairs $\dn \cdot \up$ or $\up \cdot \up$ used in $\lambda$ and $\nu$ respectively are nested, then such a path look like
    $$\begin{tikzpicture}[scale=.5, baseline={(0,-0.1)}]
    \Bweight{1,1,0,0}
    \pdLINE{0}{1}
    \pdLINE{1}{1}
    \RLINE{2}{1}
    \RLINE{3}{1}
    \node at (-0.80,0) {$\cdots$};   
    \node at (0.52,0) {$\cdot$};   
    \node at (1.52,0) {$\cdot$};
    \node at (2.52,0) {$\cdot$};
    \node at (4.00,0) {$\cdots$};

    \begin{scope}[yshift=-5cm]
        \Bweight{1,0,1,0}
        \pdLINE{0}{1}
        \CUP{1}{2}{1}
        \CAP{1}{2}{1}
        \RLINE{3}{1}
        \PFILL{1}
        \PFILL{2}
        \node at (-0.80,0) {$\cdots$};   
        \node at (0.52,0) {$\cdot$};   
        \node at (1.52,0) {$\cdot$};
        \node at (2.52,0) {$\cdot$};
        \node at (4.00,0) {$\cdots$};
    \end{scope}

    \begin{scope}[yshift=-10cm]
        \Bweight{0,0,1,1}
        \CUP{0}{3}{.6}
        \CUP{1}{2}{1}
        \CAP{1}{2}{1}
        \CAP{0}{3}{.6}
        \FILL{4}
        \node at (-0.80,0) {$\cdots$};   
        \node at (0.52,0) {$\cdot$};   
        \node at (1.52,0) {$\cdot$};
        \node at (2.52,0) {$\cdot$};
        \node at (4.00,0) {$\cdots$};
    \end{scope}

    \draw[strand] (1.5,-1.75) -- (1.5,-3.25);
    \draw[strand] (1.5,-6.75) -- (1.5,-8.25);

    \end{tikzpicture} \qquad\qquad\qquad \begin{tikzpicture}[scale=.5, baseline={(0,-0.1)}]
    \Bweight{0,1,0,0}
    \RLINE{0}{1}
    \pdLINE{1}{1}
    \RLINE{2}{1}
    \RLINE{3}{1}
    \node at (-0.80,0) {$\cdots$};   
    \node at (0.52,0) {$\cdot$};   
    \node at (1.52,0) {$\cdot$};
    \node at (2.52,0) {$\cdot$};
    \node at (4.00,0) {$\cdots$};

    \begin{scope}[yshift=-5cm]
        \Bweight{0,0,1,0}
        \RLINE{0}{1}
        \CUP{1}{2}{1}
        \CAP{1}{2}{1}
        \RLINE{3}{1}
        \PFILL{1}
        \PFILL{2}
        \node at (-0.80,0) {$\cdots$};   
        \node at (0.52,0) {$\cdot$};   
        \node at (1.52,0) {$\cdot$};
        \node at (2.52,0) {$\cdot$};
        \node at (4.00,0) {$\cdots$};
    \end{scope}

    \begin{scope}[yshift=-10cm]
        \Bweight{1,0,1,1}
        \dCUP{0}{3}{.6}
        \CUP{1}{2}{1}
        \CAP{1}{2}{1}
        \dCAP{0}{3}{.6}
        \FILL{4}
        \node at (-0.80,0) {$\cdots$};   
        \node at (0.52,0) {$\cdot$};   
        \node at (1.52,0) {$\cdot$};
        \node at (2.52,0) {$\cdot$};
        \node at (4.00,0) {$\cdots$};
    \end{scope}

    \draw[strand] (1.5,-1.75) -- (1.5,-3.25);
    \draw[colored strand = {0}] (1.5,-6.75) -- (1.5,-8.25);

    \end{tikzpicture}$$
    Observe that if the first path have a strand on the left of it that does not appear in a circle lying entirely on the left of it, then it gives rise to paths $\textit{(g)}$ and $\textit{(h)}$, and otherwise the paths appear in squares $\textit{(d)}$ and $\textit{(c)}$ from Lemma~\ref{lem:5.10} respectively. \\
    Finally, the last case would be that the two pairs $\dn \cdot \up$ or $\up \cdot \up$ used in $\lambda$ and $\nu$ respectively, overlap in the following way
    $$\begin{tikzpicture}[baseline = {(0,-0.5)}]
    \filldraw[\colorFor{1}]  (0,0) circle (2pt);
    \filldraw[\colorFor{1}]  (2,0) circle (2pt);
    \filldraw[\colorFor{0}] (1,0) circle (2pt);
    \filldraw[\colorFor{0}] (3,0) circle (2pt);
    \CCUP{0}{2}{.5}{1}
    \CCUP{1}{3}{.5}{0}
    \node at (.8,-1) {$\textcolor{\colorFor{1}}{\text{in } e_\lambda}$};
    \node at (2.2,-1) {$\textcolor{\colorFor{0}}{\text{in } e_\nu}$};
    \end{tikzpicture} \qquad \text{ or } \qquad \begin{tikzpicture}[baseline = {(0,-0.5)}]
    \filldraw[\colorFor{0}]  (0,0) circle (2pt);
    \filldraw[\colorFor{0}]  (2,0) circle (2pt);
    \filldraw[\colorFor{1}] (1,0) circle (2pt);
    \filldraw[\colorFor{1}] (3,0) circle (2pt);
    \CCUP{0}{2}{.5}{0}
    \CCUP{1}{3}{.5}{1}
    \node at (.8,-1) {$\textcolor{\colorFor{0}}{\text{in } e_\nu}$};
    \node at (2.2,-1) {$\textcolor{\colorFor{1}}{\text{in } e_\lambda}$};
    \end{tikzpicture}$$
    but that's not possible. Indeed, suppose that we switch a pair $\dn \cdots \up$ or $\up \cdot \up$ in $e_\lambda$ at positions $1$ and $3$ (the other case is symmetric), it would mean that the arrow at position $2$ lie in a circle in $e_\lambda$, because otherwise it would not be an admissible diagram. This would give us
    $$\begin{tikzpicture}[scale=.5, baseline={(0,-1.3)}]
        \CBweight{1,0,1,0}{1,0,0,1}
        \CpdLINE{0}{1}{1}
        \CCUP{1}{2}{1}{0}
        \CCAP{1}{2}{1}{0}
        \CRLINE{3}{1}{1}
        \CPFILL{1}{0}
        \CPFILL{2}{0}
        \filldraw[\colorFor{0}]  (4,0) circle (4pt);
        \node at (-0.80,0) {$\cdots$};   
        \node at (0.52,0) {$\cdot$};   
        \node at (1.52,0) {$\cdot$};
        \node at (2.52,0) {$\cdot$};
        \node at (3.52,0) {$\cdot$};
        \node at (5.00,0) {$\cdots$};

    \begin{scope}[yshift=-5cm]
        \CBweight{0,0,1,1}{1,0,0,1}
        \CCUP{0}{3}{.6}{1}
        \CCUP{1}{2}{1}{0}
        \CCAP{1}{2}{1}{0}
        \CCAP{0}{3}{.6}{1}
        \CFILL{3}{1,0,0,1}
        \filldraw[\colorFor{0}]  (4,0) circle (4pt);
        \node at (-0.80,0) {$\cdots$};   
        \node at (0.52,0) {$\cdot$};   
        \node at (1.52,0) {$\cdot$};
        \node at (2.52,0) {$\cdot$};
        \node at (3.52,0) {$\cdot$};
        \node at (5.00,0) {$\cdots$};
    \end{scope}

    \draw[strand] (2,-1.75) -- (2,-3.25);
    
    \end{tikzpicture} \qquad \text{ or } \qquad \begin{tikzpicture}[scale=.5, baseline={(0,-1.3)}]
        \CBweight{0,0,1,0}{1,0,0,1}
        \CRLINE{0}{1}{1}
        \CCUP{1}{2}{1}{0}
        \CCAP{1}{2}{1}{0}
        \CRLINE{3}{1}{1}
        \CPFILL{1}{0}
        \CPFILL{2}{0}
        \filldraw[\colorFor{0}]  (4,0) circle (4pt);
        \node at (-0.80,0) {$\cdots$};   
        \node at (0.52,0) {$\cdot$};   
        \node at (1.52,0) {$\cdot$};
        \node at (2.52,0) {$\cdot$};
        \node at (3.52,0) {$\cdot$};
        \node at (5.00,0) {$\cdots$};

    \begin{scope}[yshift=-5cm]
        \CBweight{1,0,1,1}{1,0,0,1}
        \CdCUP{0}{3}{.6}{1}
        \CCUP{1}{2}{1}{0}
        \CCAP{1}{2}{1}{0}
        \CdCAP{0}{3}{.6}{1}
        \CFILL{3}{1,0,0,1}
        \filldraw[\colorFor{0}]  (4,0) circle (4pt);
        \node at (-0.80,0) {$\cdots$};   
        \node at (0.52,0) {$\cdot$};   
        \node at (1.52,0) {$\cdot$};
        \node at (2.52,0) {$\cdot$};
        \node at (3.52,0) {$\cdot$};
        \node at (5.00,0) {$\cdots$};
    \end{scope}

    \draw[colored strand = {0}] (2,-1.75) -- (2,-3.25);
    
    \end{tikzpicture}$$
    But then the blue circle stays intact in $e_\nu$, so we cannot construct $\mu$ thanks to a circle lying in positions $2$ and $4$. \\
    This proves that all paths not in squares of shape
    $$\begin{tikzcd}
	\mu \\
	\nu \\
	\lambda
	\arrow[no head, very thick, from=2-1, to=1-1]
	\arrow[no head, very thick, from=3-1, to=2-1]
    \end{tikzcd}$$
    are $\textit{(a)}$, $\textit{(b)}$, $\textit{(c)}$, $\textit{(d)}$, $\textit{(e)}$, $\textit{(f)}$, $\textit{(g)}$ and $\textit{(h)}$. \\
    For the case $\vert \lambda \vert \leq \vert \mu \vert < \vert \nu \vert$, the idea is the same, and it gives rise to square $\textit{(i)}$.
\end{proof}

In order to compute the relations in the quiver $Q_n$, we need to give names to the arrows in it.

\begin{definition}\label{def:5.13}
    Let $\gamma_\lambda^\mu$ be an edge in $\Gamma_n$ connecting $\lambda$ and $\mu$ with $\vert \lambda \vert < \vert \mu \vert $. Then we denote in $Q_n$ the ascending arrow from $\lambda$ to $\mu$ by $x_\lambda^\mu$, and the descending arrow by $y_\lambda^\mu$. We recall that they correpond to degree $1$ diagrams.
\end{definition}

\begin{definition}\label{def:5.14}
    Let $\lambda \in \Delta_n^0$ be a weight. Then the defect of $\lambda$, denoted $\text{def}(\lambda)$, is the number of circles in $e_\lambda$.
\end{definition}

\begin{remark}
    Let $\lambda \in \Delta_n^0$ be a weight. Observe that $\text{def}(\lambda)$ is also the number of ascending arrows in $Q_n$ starting at $e_\lambda$.
\end{remark}

\begin{theorem}\label{thm:5.15}
    Let $\kappa, \lambda, \mu_i \in \Delta_n^0$ for $1 \leq i \leq \text{def}(\lambda)$, where $\mu_i$ is obtained from $\lambda$ by exchanging a $\dn \cdots \up$ or $\up \cdots \up$ pair lying in a circle $C_i$ of $e_\lambda$, and where $\lambda$ is obtained from $\kappa$ by exchanging a $\dn \cdots \up$ or $\up \cdots \up$ pair lying in a circle $D$ of $e_\kappa$. Then the map $\omega : \Bbbk Q_n \twoheadrightarrow \D_n $ sends the following quadratic relations to $0$.

    \textbf{Monomial relations :} The length $2$ paths from the top to the bottom, and from the bottom to the top of figures $\textit{(a)}$, $\textit{(b)}$, $\textit{(c)}$ and $\textit{(d)}$ from Lemma~\ref{lem:5.11}.

    \textbf{Square relations :} The difference of any parallel legth $2$ paths from any square from Lemma~\ref{lem:5.10}.

    \textbf{Relations at vertices :} The following linear combinations of paths starting and ending at $\lambda$
    $$y_\kappa^\lambda x_\kappa^\lambda - \sum_{i=1}^{\text{def}(\lambda)} d_\kappa^{\mu_i}(\lambda) x_\lambda^{\mu_i} y_\lambda^{\mu_i}$$
    Where the coefficients $d_\kappa^{\mu_i}(\lambda)$ are given by 
    $$
    d_\kappa^{\mu_i}(\lambda) =
    \begin{cases}
        1  &\text{ if } C_i \text{ does not appear in } e_\kappa \text{ and does not enclose another }\\[-1ex]
        &\text{ circle } C_j \text{ that does not appear either in } e_\kappa,\\[1ex]

        -2 &\text{ if } C_i \text{ does not appear in } e_\kappa \text{ and encloses another }\\[-1ex]
        &\text{ circle } C_j \text{ that does not appear either in } e_\kappa,\\[1ex]

        0 &\text{ if } C_i \text{ appears in } e_\kappa \text{ and does not enclose } D, \\[1ex]

        (-1)^{j-1}2 &\text{ if } C_i \text{ appears in } e_\kappa \text{ and is the } j \text{th circle enclosing } D.
    \end{cases}
    $$
\end{theorem}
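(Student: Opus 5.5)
The plan is to verify each family of relations by a direct computation of products of degree-one diagrams, using the surgery algorithm of Definition~\ref{def:4.21} together with the isomorphisms $\varphi_{\underline{\alpha}\overline{\beta}}$ of Proposition~\ref{prop:4.19}. Every arrow is a degree-one diagram: $x_\lambda^\mu=\underline{\lambda}\mu\overline{\mu}$ and $y_\lambda^\mu=\underline{\mu}\mu\overline{\lambda}$. A length two path from $\alpha$ to $\gamma$ through $\beta$ is therefore sent to a product in ${}_\alpha(\D_n)_\gamma$. Only the surgeries at the positions where $\underline{\beta}$ and $\overline{\beta}$ meet differ from the identity, since away from these positions $\underline{\alpha}$, $\underline{\beta}$ and $\underline{\gamma}$ agree. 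So each product reduces to a local computation on at most four positions, which are the ones drawn in Lemmas~\ref{lem:5.10} and~\ref{lem:5.11}. The parts of the diagram indicated by bent or dotted arcs only affect whether a component is an open line or a circle. We track this as a short case distinction, and it never changes the surgery type (merge, split or reconnect).

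\textbf{Monomial relations.} For paths (a)--(d) of Lemma~\ref{lem:5.11}, the target space ${}_\alpha(\D_n)_\gamma$ is concentrated in degree $\geq 2$ only if $\underline{\alpha}\overline{\gamma}$ can be oriented. The first check is whether $\underline{\alpha}\overline{\gamma}$ admits an orientation at all; in these configurations the two moves share an endpoint, and we expect that no orientation exists. If it does exist, we run the surgery instead. There the sequence is a merge followed by a reconnect of non-propagating arcs (factor $0$), or a split producing a factor $X_i\pm X_j$ that vanishes in $\mathcal M(\underline{\alpha}\overline{\gamma})$. Either way the product is zero.

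\textbf{Square relations.} For each of the squares (a)--(j) of Lemma~\ref{lem:5.10}, the two parallel paths land in the same space ${}_\alpha(\D_n)_\gamma$. Here the degree-two part of $\mathcal M(\underline{\alpha}\overline{\gamma})$ corresponding to orienting the two relevant circles (or one circle twice) is one-dimensional. So it suffices to compute both products via $\chi$ and check that they yield the same monomial with the same sign. The sign-adjusted rules make this manageable. We expect the squares involving dotted nested circles, such as (e)--(h), to require care, since there the nested-split rule $X_i-X_j$ appears.

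\textbf{Relations at vertices.} This is the main obstacle. Both $y_\kappa^\lambda x_\kappa^\lambda$ and $x_\lambda^{\mu_i}y_\lambda^{\mu_i}$ lie in ${}_\lambda(\D_n)_\lambda\cong\mathcal M(\underline{\lambda}\overline{\lambda})$. In this space a basis of degree two is given by the variables $X_{t(C)}$, one for each circle $C=C_i$ of $e_\lambda$.
\begin{itemize}
\item First compute $x_\lambda^{\mu_i}y_\lambda^{\mu_i}$. The circle $C_i$ is cut open and reglued, so one merge and one split occur, and the product is $X_{t(C_i)}$ up to sign. In the sign-adjusted convention we expect the sign to be $+$.
\item Next compute $y_\kappa^\lambda x_\kappa^\lambda$ as a polynomial, which gives a combination $\sum_i c_i X_{t(C_i)}$. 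Relative to $e_\kappa$, the circle $D$ of $e_\kappa$ is split into one or two circles of $e_\lambda$. These are the $C_i$ not appearing in $e_\kappa$, and they contribute $1$, or $1$ and $-2$ when they are nested, via the rule $X_i-X_j$ with $X_j$ rewritten through the relations of $I_{\underline{\lambda}\overline{\lambda}}$.
\item Circles $C_i$ appearing in $e_\kappa$ contribute only when they enclose $D$. In that case rewriting $X_{t(D)}$ in terms of the rightmost points of the enclosing circles creates the alternating coefficients $(-1)^{j-1}2$. This rewriting uses the decorated-cup relation $X_i+X_j$ and the nesting parity.
\end{itemize}
Comparing coefficients then gives the stated $d_\kappa^{\mu_i}(\lambda)$. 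We will organize this computation by induction on the nesting depth of $D$ and carefully track the signs.
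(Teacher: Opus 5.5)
\textbf{There is a genuine gap in the relations at vertices.} Your monomial and square relations follow the paper: non-orientability of $\underline{\lambda}\overline{\mu}$, and computing both parallel paths by surgery. The vertex part, however, rests on a false computation.

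\textbf{The product $x_\lambda^{\mu_i}y_\lambda^{\mu_i}$ is not $\pm X_{t(C_i)}$.} Suppose $C_i$ is enclosed by some circle of $e_\lambda$, and let $C_i^+$ be the innermost such circle. Then $\underline{\mu_i}$ differs from $\underline{\lambda}$ on both $C_i$ and $C_i^+$, because the swapped endpoints of $C_i$ get re-paired with the endpoints of $C_i^+$. The product is therefore a merge followed by a split separating $C_i$ from $C_i^+$, so $x_\lambda^{\mu_i}y_\lambda^{\mu_i}=X_{t(C_i)}+X_{t(C_i^+)}$. This is the paper's formula $\underline{\lambda}\mu_i\overline{\lambda}+\underline{\lambda}\mu_{i+1}\overline{\lambda}$. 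Only for a non-enclosed $C_i$ do you get $X_{t(C_i)}$ alone. For example, take $\lambda=\dn\dn\up\up$ with $C_1$ on positions $2,3$ and $C_2$ on positions $1,4$. Then $\underline{\mu_1}$ has cups $(1,2),(3,4)$, and the undotted split gives $x_\lambda^{\mu_1}y_\lambda^{\mu_1}=X_3+X_4$, whereas $x_\lambda^{\mu_2}y_\lambda^{\mu_2}=X_4$.

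\textbf{Your mechanism for the coefficients $-2$ and $(-1)^{j-1}2$ cannot work.} The ideal $I_{\underline{\lambda}\overline{\lambda}}$ only identifies variables along a single circle. So $\mathcal{M}(\underline{\lambda}\overline{\lambda})\cong\bigotimes_{C}\Bbbk[X_C]/(X_C^2)$, and no relation rewrites $X_j$ in terms of enclosing circles. The same holds for $X_{t(D)}$, and $D$ is not even a circle of $\underline{\lambda}\overline{\lambda}$.

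In fact $y_\kappa^\lambda x_\kappa^\lambda$ involves only the new circles. It equals $0$, $X_{t(C_1)}$, $X_{t(C_1)}+X_{t(C_2)}$, or $X_{t(C_1)}-X_{t(C_2)}$ for a dotted nested split. Old circles have coefficient $0$ in the $X$-basis. Combined with your formula for $x_\lambda^{\mu_i}y_\lambda^{\mu_i}$, comparing coefficients would give $d$-values in $\{1,-1,0\}$, contradicting the statement.

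In the example above with $\kappa=\up\up\up\up$, you get $y_\kappa^\lambda x_\kappa^\lambda=X_3-X_4=x_\lambda^{\mu_1}y_\lambda^{\mu_1}-2\,x_\lambda^{\mu_2}y_\lambda^{\mu_2}$. The $-2$ appears only because of the extra term $X_4$ in $x_\lambda^{\mu_1}y_\lambda^{\mu_1}$.

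\textbf{The missing step, which is what the paper does.} Compute $x_\lambda^{\mu_i}y_\lambda^{\mu_i}$ correctly, then invert the resulting unitriangular change of basis along the chain of nested circles. Write $\mu(C)$ for the weight obtained from $\lambda$ by swapping in $C$, and set $C^{(0)}=C$, $C^{(m+1)}=(C^{(m)})^+$. Then $X_{t(C)}=\sum_{m\geq 0}(-1)^m x_\lambda^{\mu(C^{(m)})}y_\lambda^{\mu(C^{(m)})}$. Applying this to $X_{t(C_1)}+X_{t(C_2)}$, with $C_1,C_2$ directly inside the same old circle, gives exactly the coefficients $1,1,-2,2,-2,\ldots$. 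Applying it to $X_{t(C_1)}-X_{t(C_2)}$ gives $1,-2$.
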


\begin{proof}
    We need to check all the square relations diagrammatically, but there are many of them, so we are just going to give some examples, but we will detail monomial relations, and the relations at vertices which are a bit harder. \\
    Firstly, all the monomial relations correspond to paths of shape 
    $$\begin{tikzcd}
	\mu \\
	\nu \\
	\lambda
	\arrow[no head, very thick, from=2-1, to=1-1]
	\arrow[no head, very thick, from=3-1, to=2-1]
    \end{tikzcd}$$
    and in every case we can observe that $\underline{\lambda}\overline{\mu}$ and $\underline{\mu}\overline{\lambda}$ are not orientable, so we deduce the monomial relations. \\
    We verify now a square relation corresponding to the square
    $$\begin{tikzpicture}[scale=.4, baseline={(0,-0.1)}]
    \Bweight{0,0,1,0}
    \RLINE{0}{1}
    \RLINE{1}{1}
    \RLINE{3}{1}
    \pdLINE{2}{1}
    \node at (-0.80,0) {$\cdots$};   
    \node at (0.52,0) {$\cdot$};   
    \node at (1.52,0) {$\cdot$};
    \node at (2.52,0) {$\cdot$};   
    \node at (4,0) {$\cdots$};

    \begin{scope}[yshift=-4cm]
        \begin{scope}[xshift=-4cm]
            \Bweight{1,1,1,0}
            \dCUP{0}{1}{1}
            \dCAP{0}{1}{1}
            \pdLINE{2}{1}
            \RLINE{3}{1}
            \FILL{2}
            \node at (-0.80,0) {$\cdots$};   
            \node at (0.52,0) {$\cdot$};   
            \node at (1.52,0) {$\cdot$};
            \node at (2.52,0) {$\cdot$};   
            \node at (4,0) {$\cdots$};
        \end{scope}
        \begin{scope}[xshift=4cm]
            \Bweight{0,0,0,1}
            \CUP{2}{3}{1}
            \CAP{2}{3}{1}
            \RLINE{0}{1}
            \RLINE{1}{1}
            \PFILL{2}
            \PFILL{3}
            \node at (-0.80,0) {$\cdots$};   
            \node at (0.52,0) {$\cdot$};   
            \node at (1.52,0) {$\cdot$};
            \node at (2.52,0) {$\cdot$};   
            \node at (4,0) {$\cdots$};
        \end{scope}
    \end{scope}

    \begin{scope}[yshift=-8cm]
        \Bweight{1,1,0,1}
        \dCUP{0}{1}{1}
        \dCAP{0}{1}{1}
        \CUP{2}{3}{1}
        \CAP{2}{3}{1}
        \FILL{4}
        \node at (-0.80,0) {$\cdots$};   
        \node at (0.52,0) {$\cdot$};   
        \node at (1.52,0) {$\cdot$};
        \node at (2.52,0) {$\cdot$};   
        \node at (4,0) {$\cdots$};
    \end{scope}

    \draw[colored strand = {0}, <-] (-1,-1) -- (-2,-2.5);
    \draw[strand, <-] (4,-1) -- (5,-2.5);
    \draw[strand, ->] (-1,-7) -- (-2,-5.5);
    \draw[colored strand = {0}, ->] (4,-7) -- (5,-5.5);

    \node at (-2.5,-1.75) {$x_\nu^\mu$};
    \node at (5.5,-1.75) {$x_{\nu'}^\mu$};
    \node at (-2.5,-6.75) {$x_\lambda^\nu$};
    \node at (5.5,-6.75) {$x^{\nu'}_\lambda$};

\end{tikzpicture}$$
    by computing
    $$\omega(x_\lambda^\nu x_\nu^\mu) \ = \ \begin{tikzpicture}[scale=.5, baseline={(0,.68)}]
        \Bweight{1,1,1,0}
        \dCUP{0}{1}{1}
        \CUP{2}{3}{1}
        \dCAP{0}{1}{1}
        \TpdLINE{2}{1}
        \TRLINE{3}{1}
        \FILL{4}
        \node at (-0.80,0) {$\cdots$};   
        \node at (0.52,0) {$\cdot$};   
        \node at (1.52,0) {$\cdot$};
        \node at (2.52,0) {$\cdot$};
        \node at (4.00,0) {$\cdots$};

        \begin{scope}[yshift=3cm]
             \Bweight{0,0,1,0}
        \dCUP{0}{1}{1}
        \TRLINE{0}{1}
        \TRLINE{1}{1}
        \pdLINE{2}{1}
        \RLINE{3}{1}
        \FILL{4}
        \node at (-0.80,0) {$\cdots$};   
        \node at (0.52,0) {$\cdot$};   
        \node at (1.52,0) {$\cdot$};
        \node at (2.52,0) {$\cdot$};
        \node at (4.00,0) {$\cdots$};
        \end{scope}
    \end{tikzpicture} \ = \ \begin{tikzpicture}[scale=.5, baseline={(0,.68)}]
        \Bweight{0,0,1,0}
        \dCUP{0}{1}{1}
        \CUP{2}{3}{1}
        \UPLINE{0}{1.5}
        \UPLINE{1}{1.5}
        \UPLINE{2}{1.5}
        \UPLINE{3}{1.5}
        \FILL{4}
        \node at (-0.80,0) {$\cdots$};   
        \node at (0.52,0) {$\cdot$};   
        \node at (1.52,0) {$\cdot$};
        \node at (2.52,0) {$\cdot$};
        \node at (4.00,0) {$\cdots$};

        \begin{scope}[yshift=3cm]
             \Bweight{0,0,1,0}
        \TRLINE{0}{1}
        \TRLINE{1}{1}
        \TpdLINE{2}{1}
        \TRLINE{3}{1}
        \DOWNLINE{0}{1.5}
        \DOWNLINE{1}{1.5}
        \DOWNLINE{2}{1.5}
        \DOWNLINE{3}{1.5}
        \FILL{4}
        \node at (-0.80,0) {$\cdots$};   
        \node at (0.52,0) {$\cdot$};   
        \node at (1.52,0) {$\cdot$};
        \node at (2.52,0) {$\cdot$};
        \node at (4.00,0) {$\cdots$};
        \end{scope}
    \end{tikzpicture} \ = \ \begin{tikzpicture}[scale=.5, baseline={(0,-0.1)}]
            \Bweight{0,0,1,0}
            \TRLINE{0}{1}
        \TRLINE{1}{1}
        \TpdLINE{2}{1}
        \TRLINE{3}{1}
            \dCUP{0}{1}{1}
            \CUP{2}{3}{1}
            \FILL{4}
           \node at (-0.80,0) {$\cdots$};   
        \node at (0.52,0) {$\cdot$};   
        \node at (1.52,0) {$\cdot$};
        \node at (2.52,0) {$\cdot$};
        \node at (4.00,0) {$\cdots$};
    \end{tikzpicture} $$
    and
    $$\omega(x_\lambda^{\nu'} x_{\nu'}^\mu) \ = \ \begin{tikzpicture}[scale=.5, baseline={(0,.68)}]
        \Bweight{0,0,0,1}
        \dCUP{0}{1}{1}
        \CUP{2}{3}{1}
        \CAP{2}{3}{1}
        \TRLINE{0}{1}
        \TRLINE{1}{1}
        \FILL{4}
        \node at (-0.80,0) {$\cdots$};   
        \node at (0.52,0) {$\cdot$};   
        \node at (1.52,0) {$\cdot$};
        \node at (2.52,0) {$\cdot$};
        \node at (4.00,0) {$\cdots$};

        \begin{scope}[yshift=3cm]
             \Bweight{0,0,1,0}
        \CUP{2}{3}{1}
        \RLINE{0}{1}
        \RLINE{1}{1}
        \TpdLINE{2}{1}
        \TRLINE{3}{1}
        \FILL{4}
        \node at (-0.80,0) {$\cdots$};   
        \node at (0.52,0) {$\cdot$};   
        \node at (1.52,0) {$\cdot$};
        \node at (2.52,0) {$\cdot$};
        \node at (4.00,0) {$\cdots$};
        \end{scope}
    \end{tikzpicture} \ = \ \begin{tikzpicture}[scale=.5, baseline={(0,.68)}]
        \Bweight{0,0,1,0}
        \dCUP{0}{1}{1}
        \CUP{2}{3}{1}
        \UPLINE{0}{1.5}
        \UPLINE{1}{1.5}
        \UPLINE{2}{1.5}
        \UPLINE{3}{1.5}
        \FILL{4}
        \node at (-0.80,0) {$\cdots$};   
        \node at (0.52,0) {$\cdot$};   
        \node at (1.52,0) {$\cdot$};
        \node at (2.52,0) {$\cdot$};
        \node at (4.00,0) {$\cdots$};

        \begin{scope}[yshift=3cm]
             \Bweight{0,0,1,0}
        \TRLINE{0}{1}
        \TRLINE{1}{1}
        \TpdLINE{2}{1}
        \TRLINE{3}{1}
        \DOWNLINE{0}{1.5}
        \DOWNLINE{1}{1.5}
        \DOWNLINE{2}{1.5}
        \DOWNLINE{3}{1.5}
        \FILL{4}
        \node at (-0.80,0) {$\cdots$};   
        \node at (0.52,0) {$\cdot$};   
        \node at (1.52,0) {$\cdot$};
        \node at (2.52,0) {$\cdot$};
        \node at (4.00,0) {$\cdots$};
        \end{scope}
    \end{tikzpicture} \ = \ \begin{tikzpicture}[scale=.5, baseline={(0,-0.1)}]
            \Bweight{0,0,1,0}
            \TRLINE{0}{1}
        \TRLINE{1}{1}
        \TpdLINE{2}{1}
        \TRLINE{3}{1}
            \dCUP{0}{1}{1}
            \CUP{2}{3}{1}
            \FILL{4}
           \node at (-0.80,0) {$\cdots$};   
        \node at (0.52,0) {$\cdot$};   
        \node at (1.52,0) {$\cdot$};
        \node at (2.52,0) {$\cdot$};
        \node at (4.00,0) {$\cdots$};
    \end{tikzpicture}$$
    so that $\omega(x_\lambda^\nu x_\nu^\mu - x_\lambda^{\nu'} x_{\nu'}^\mu) = 0$ as wanted. \\
    We also verify a square relation corresponding to the square
    $$\begin{tikzpicture}[scale=.4, baseline={(0,-0.1)}]
    \Bweight{0,0,1,0}
    \CUP{1}{2}{1}
    \CAP{1}{2}{1}
    \RLINE{0}{1}
    \RLINE{3}{1}
    \PFILL{1}
    \PFILL{2}
    \node at (-0.80,0) {$\cdots$};   
    \node at (0.52,0) {$\cdot$};   
    \node at (1.52,0) {$\cdot$};
    \node at (2.52,0) {$\cdot$};   
    \node at (4,0) {$\cdots$};

    \begin{scope}[yshift=-4cm]
        \begin{scope}[xshift=4cm]
            \Bweight{0,0,0,1}
            \CUP{2}{3}{1}
            \CAP{2}{3}{1}
            \RLINE{0}{1}
            \RLINE{1}{1}
            \PFILL{2}
            \PFILL{3}
            \node at (-0.80,0) {$\cdots$};   
            \node at (0.52,0) {$\cdot$};   
            \node at (1.52,0) {$\cdot$};
            \node at (2.52,0) {$\cdot$};   
            \node at (4,0) {$\cdots$};
        \end{scope}
    \end{scope}

    \begin{scope}[yshift=-8cm]
        \begin{scope}[xshift=4cm]
            \Bweight{1,1,0,1}
            \dCUP{0}{1}{1}
            \dCAP{0}{1}{1}
            \CUP{2}{3}{1}
            \CAP{2}{3}{1}
            \FILL{4}
            \node at (-0.80,0) {$\cdots$};   
            \node at (0.52,0) {$\cdot$};   
            \node at (1.52,0) {$\cdot$};
            \node at (2.52,0) {$\cdot$};   
            \node at (4,0) {$\cdots$};
        \end{scope}
    \end{scope}

    \begin{scope}[yshift=-12cm]
        \Bweight{1,0,1,1}
        \CUP{1}{2}{1}
        \CAP{1}{2}{1}
        \dCUP{0}{3}{.7}
        \dCAP{0}{3}{.7}
        \FILL{4}
        \node at (-0.80,0) {$\cdots$};   
        \node at (0.52,0) {$\cdot$};   
        \node at (1.52,0) {$\cdot$};
        \node at (2.52,0) {$\cdot$};   
        \node at (4,0) {$\cdots$};
    \end{scope}

    \draw[strand, <-] (4,-1) -- (5,-2.5);
    \draw[strand, <-] (4,-11) -- (5,-9.5);
    \draw[colored strand = {0}, ->] (5.5,-5.5) -- (5.5,-6.5);
    \draw[colored strand = {0}, <-] (-1,-11) .. controls (-3,-8) and (-3,-4) .. (-1,-1);

    \node at (5.5,-1.75) {$x_\mu^\nu$};
    \node at (5.5,-10.25) {$y^{\nu'}_\lambda$};
    \node at (7, -6) {$y_{\nu'}^\mu$};
    \node at (-3.5, -6) {$y_\lambda^\nu$};

\end{tikzpicture}$$
    by computing
    $$\omega(x_\mu^\nu y_\lambda^\nu) \ = \ \begin{tikzpicture}[scale=.5, baseline={(0,.68)}]
        \Bweight{0,0,1,0}
        \CUP{2}{3}{1}
        \CAP{1}{2}{1}
        \RLINE{0}{1}
        \BRLINE{1}{1}
        \TRLINE{3}{1}
        \node at (-0.80,0) {$\cdots$};   
        \node at (0.52,0) {$\cdot$};   
        \node at (1.52,0) {$\cdot$};
        \node at (2.52,0) {$\cdot$};
        \node at (4.00,0) {$\cdots$};

        \begin{scope}[yshift=3cm]
            \Bweight{0,0,1,0}
            \CUP{1}{2}{1}
            \CAP{1}{2}{1}
            \dCAP{0}{3}{.7}
            \BRLINE{0}{1}
            \BRLINE{3}{1}
            \node at (-0.80,0) {$\cdots$};   
            \node at (0.52,0) {$\cdot$};   
            \node at (1.52,0) {$\cdot$};
            \node at (2.52,0) {$\cdot$};
            \node at (4.00,0) {$\cdots$};
        \end{scope}
    \end{tikzpicture} \ = \ \begin{tikzpicture}[scale=.5, baseline={(0,.68)}]
        \Bweight{0,0,1,0}
        \CUP{2}{3}{1}
        \BRLINE{0}{1}
        \BRLINE{1}{1}
        \UPLINE{0}{1.5}
        \UPLINE{1}{1.5}
        \UPLINE{2}{1.5}
        \UPLINE{3}{1.5}
        \node at (-0.80,0) {$\cdots$};   
        \node at (0.52,0) {$\cdot$};   
        \node at (1.52,0) {$\cdot$};
        \node at (2.52,0) {$\cdot$};
        \node at (4.00,0) {$\cdots$};

        \begin{scope}[yshift=3cm]
            \Bweight{0,0,1,0}
            \CAP{1}{2}{1}
            \dCAP{0}{3}{.7}
            \DOWNLINE{0}{1.5}
            \DOWNLINE{1}{1.5}
            \DOWNLINE{2}{1.5}
            \DOWNLINE{3}{1.5}
            \node at (-0.80,0) {$\cdots$};   
            \node at (0.52,0) {$\cdot$};   
            \node at (1.52,0) {$\cdot$};
            \node at (2.52,0) {$\cdot$};
            \node at (4.00,0) {$\cdots$};
        \end{scope}
    \end{tikzpicture} \ = \ \begin{tikzpicture}[scale=.5, baseline={(0,-0.1)}]
            \Bweight{0,0,1,0}
            \CAP{1}{2}{1}
            \dCAP{0}{3}{.7}
            \CUP{2}{3}{1}
            \BRLINE{0}{1}
            \BRLINE{1}{1}
            \node at (-0.80,0) {$\cdots$};   
            \node at (0.52,0) {$\cdot$};   
            \node at (1.52,0) {$\cdot$};
            \node at (2.52,0) {$\cdot$};
            \node at (4.00,0) {$\cdots$};
    \end{tikzpicture}$$
    and
    $$\omega(y_{\nu'}^\mu y_\lambda^{\nu'}) \ = \ \begin{tikzpicture}[scale=.5, baseline={(0,.68)}]
        \Bweight{0,0,0,1}
        \CUP{2}{3}{1}
        \dCAP{0}{1}{1}
        \CAP{2}{3}{1}
        \BRLINE{0}{1}
        \BRLINE{1}{1}
        \node at (-0.80,0) {$\cdots$};   
        \node at (0.52,0) {$\cdot$};   
        \node at (1.52,0) {$\cdot$};
        \node at (2.52,0) {$\cdot$};
        \node at (4.00,0) {$\cdots$};

        \begin{scope}[yshift=3cm]
            \Bweight{1,1,0,1}
            \dCUP{0}{1}{1}
            \CUP{2}{3}{1}
            \CAP{1}{2}{1}
            \dCAP{0}{3}{.7}
            \node at (-0.80,0) {$\cdots$};   
            \node at (0.52,0) {$\cdot$};   
            \node at (1.52,0) {$\cdot$};
            \node at (2.52,0) {$\cdot$};
            \node at (4.00,0) {$\cdots$};
        \end{scope}
    \end{tikzpicture} \ = \ \begin{tikzpicture}[scale=.5, baseline={(0,.68)}]
        \Bweight{0,0,1,0}
        \CUP{2}{3}{1}
        \BRLINE{0}{1}
        \BRLINE{1}{1}
        \UPLINE{0}{1.5}
        \UPLINE{1}{1.5}
        \UPLINE{2}{1.5}
        \UPLINE{3}{1.5}
        \node at (-0.80,0) {$\cdots$};   
        \node at (0.52,0) {$\cdot$};   
        \node at (1.52,0) {$\cdot$};
        \node at (2.52,0) {$\cdot$};
        \node at (4.00,0) {$\cdots$};

        \begin{scope}[yshift=3cm]
            \Bweight{0,0,1,0}
            \CAP{1}{2}{1}
            \dCAP{0}{3}{.7}
            \DOWNLINE{0}{1.5}
            \DOWNLINE{1}{1.5}
            \DOWNLINE{2}{1.5}
            \DOWNLINE{3}{1.5}
            \node at (-0.80,0) {$\cdots$};   
            \node at (0.52,0) {$\cdot$};   
            \node at (1.52,0) {$\cdot$};
            \node at (2.52,0) {$\cdot$};
            \node at (4.00,0) {$\cdots$};
        \end{scope}
    \end{tikzpicture} \ = \ \begin{tikzpicture}[scale=.5, baseline={(0,-0.1)}]
            \Bweight{0,0,1,0}
            \CAP{1}{2}{1}
            \dCAP{0}{3}{.7}
            \CUP{2}{3}{1}
            \BRLINE{0}{1}
            \BRLINE{1}{1}
            \node at (-0.80,0) {$\cdots$};   
            \node at (0.52,0) {$\cdot$};   
            \node at (1.52,0) {$\cdot$};
            \node at (2.52,0) {$\cdot$};
            \node at (4.00,0) {$\cdots$};
    \end{tikzpicture}$$
    so that $\omega(x_\mu^\nu y_\lambda^\nu - y_{\nu'}^\mu y_\lambda^{\nu'}) = 0$. \\
    Finally we need to prove the relations at vertices. Suppose first that there are circles in $e_\lambda$ appearing in $e_\kappa$ enclosing $D$. Then there are two circles in $e_\lambda$ that do not appear in $e_\kappa$, we denote them $C_1$ and $C_2$, from left to right. Observe that the innermost circle in $e_\kappa$ enclosing $D$ does not appear in $e_\lambda$, and we denote by $C_i$ the $(i-1)$th circle in $e_\kappa$ enclosing $D$ for $3 \leq i \leq k$. Thus $\mu_i$ is obtained from $\lambda$ by exchanging a $\dn \cdots \up$ pair lying in a circle $C_i$ of $e_\lambda$, for $1 \leq i \leq k$. Diagrammatically, $e_\kappa, e_\lambda$ and $e_{\mu_i}$ have the following form
    $$\begin{tikzpicture}[scale=.4, baseline={(0,-0.08)}]
        \FILL{10}
        \CAP{4}{5}{1}
        \CAP{3}{6}{.6}
        \CAP{2}{7}{.5}
        \CAP{1}{8}{.5}
        \CAP{0}{9}{.5}
        \CUP{4}{5}{1}
        \CUP{3}{6}{.6}
        \CUP{2}{7}{.5}
        \CUP{1}{8}{.5}
        \CUP{0}{9}{.5}
        \node at (-0.48,0) {$\cdot$};   
        \node at (0.52,0) {$\cdot$};   
        \node at (1.52,0) {$\cdot$};
        \node at (2.52,0) {$\cdot$};
        \node at (3.52,0) {$\cdot$};   
        \node at (4.52,0) {$\cdot$};
        \node at (5.52,0) {$\cdot$};
        \node at (6.52,0) {$\cdot$};   
        \node at (7.52,0) {$\cdot$};
        \node at (8.52,0) {$\cdot$};
        \node at (9.52,0) {$\cdot$};

        \node at (4.5,4) {$\kappa$};
    \end{tikzpicture} \qquad \begin{tikzpicture}[scale=.4, baseline={(0,-0.08)}]
        \FILL{10}
        \CAP{3}{4}{1}
        \CAP{5}{6}{1}
        \CAP{2}{7}{.5}
        \CAP{1}{8}{.5}
        \CAP{0}{9}{.5}
        \CUP{3}{4}{1}
        \CUP{5}{6}{1}
        \CUP{2}{7}{.5}
        \CUP{1}{8}{.5}
        \CUP{0}{9}{.5}
        \node at (-0.48,0) {$\cdot$};   
        \node at (0.52,0) {$\cdot$};   
        \node at (1.52,0) {$\cdot$};
        \node at (2.52,0) {$\cdot$};
        \node at (3.52,0) {$\cdot$};   
        \node at (4.52,0) {$\cdot$};
        \node at (5.52,0) {$\cdot$};
        \node at (6.52,0) {$\cdot$};   
        \node at (7.52,0) {$\cdot$};
        \node at (8.52,0) {$\cdot$};
        \node at (9.52,0) {$\cdot$};

        \node at (4.5,4) {$\lambda$};
    \end{tikzpicture} \qquad \begin{tikzpicture}[scale=.4, baseline={(0,-0.08)}]
        \FILL{10}
        \CAP{2}{3}{1}
        \CAP{5}{6}{1}
        \CAP{4}{7}{.6}
        \CAP{1}{8}{.5}
        \CAP{0}{9}{.5}
        \CUP{2}{3}{1}
        \CUP{5}{6}{1}
        \CUP{4}{7}{.6}
        \CUP{1}{8}{.5}
        \CUP{0}{9}{.5}
        \node at (-0.48,0) {$\cdot$};   
        \node at (0.52,0) {$\cdot$};   
        \node at (1.52,0) {$\cdot$};
        \node at (2.52,0) {$\cdot$};
        \node at (3.52,0) {$\cdot$};   
        \node at (4.52,0) {$\cdot$};
        \node at (5.52,0) {$\cdot$};
        \node at (6.52,0) {$\cdot$};   
        \node at (7.52,0) {$\cdot$};
        \node at (8.52,0) {$\cdot$};
        \node at (9.52,0) {$\cdot$};

        \node at (4.5,4) {$\mu_1$};
    \end{tikzpicture}$$
    $$\begin{tikzpicture}[scale=.4, baseline={(0,-0.08)}]
        \FILL{10}
        \CAP{3}{4}{1}
        \CAP{6}{7}{1}
        \CAP{2}{5}{.6}
        \CAP{1}{8}{.5}
        \CAP{0}{9}{.5}
        \CUP{3}{4}{1}
        \CUP{6}{7}{1}
        \CUP{2}{5}{.6}
        \CUP{1}{8}{.5}
        \CUP{0}{9}{.5}
        \node at (-0.48,0) {$\cdot$};   
        \node at (0.52,0) {$\cdot$};   
        \node at (1.52,0) {$\cdot$};
        \node at (2.52,0) {$\cdot$};
        \node at (3.52,0) {$\cdot$};   
        \node at (4.52,0) {$\cdot$};
        \node at (5.52,0) {$\cdot$};
        \node at (6.52,0) {$\cdot$};   
        \node at (7.52,0) {$\cdot$};
        \node at (8.52,0) {$\cdot$};
        \node at (9.52,0) {$\cdot$};

        \node at (4.5,-4) {$\mu_2$};
    \end{tikzpicture} \qquad \cdots \qquad \begin{tikzpicture}[scale=.4, baseline={(0,-0.08)}]
        \FILL{10}
        \CAP{3}{4}{1}
        \CAP{5}{6}{1}
        \CAP{2}{7}{.5}
        \CAP{1}{8}{.5}
        \pdLINE{0}{1.7}
        \CUP{3}{4}{1}
        \CUP{5}{6}{1}
        \CUP{2}{7}{.5}
        \CUP{1}{8}{.5}
        \RLINE{9}{2}
        \node at (-0.48,0) {$\cdot$};   
        \node at (0.52,0) {$\cdot$};   
        \node at (1.52,0) {$\cdot$};
        \node at (2.52,0) {$\cdot$};
        \node at (3.52,0) {$\cdot$};   
        \node at (4.52,0) {$\cdot$};
        \node at (5.52,0) {$\cdot$};
        \node at (6.52,0) {$\cdot$};   
        \node at (7.52,0) {$\cdot$};
        \node at (8.52,0) {$\cdot$};
        \node at (9.52,0) {$\cdot$};

        \node at (4.5,-4) {$\mu_k$};
    \end{tikzpicture}$$
    or
    $$\begin{tikzpicture}[scale=.4, baseline={(0,-0.08)}]
        \FILL{10}
        \CAP{4}{5}{1}
        \CAP{3}{6}{.6}
        \CAP{2}{7}{.5}
        \CAP{1}{8}{.5}
        \dCAP{0}{9}{.5}
        \CUP{4}{5}{1}
        \CUP{3}{6}{.6}
        \CUP{2}{7}{.5}
        \CUP{1}{8}{.5}
        \dCUP{0}{9}{.5}
        \node at (-0.48,0) {$\cdot$};   
        \node at (0.52,0) {$\cdot$};   
        \node at (1.52,0) {$\cdot$};
        \node at (2.52,0) {$\cdot$};
        \node at (3.52,0) {$\cdot$};   
        \node at (4.52,0) {$\cdot$};
        \node at (5.52,0) {$\cdot$};
        \node at (6.52,0) {$\cdot$};   
        \node at (7.52,0) {$\cdot$};
        \node at (8.52,0) {$\cdot$};
        \node at (9.52,0) {$\cdot$};

        \node at (4.5,4) {$\kappa$};
    \end{tikzpicture} \qquad \begin{tikzpicture}[scale=.4, baseline={(0,-0.08)}]
        \FILL{10}
        \CAP{3}{4}{1}
        \CAP{5}{6}{1}
        \CAP{2}{7}{.5}
        \CAP{1}{8}{.5}
        \dCAP{0}{9}{.5}
        \CUP{3}{4}{1}
        \CUP{5}{6}{1}
        \CUP{2}{7}{.5}
        \CUP{1}{8}{.5}
        \dCUP{0}{9}{.5}
        \node at (-0.48,0) {$\cdot$};   
        \node at (0.52,0) {$\cdot$};   
        \node at (1.52,0) {$\cdot$};
        \node at (2.52,0) {$\cdot$};
        \node at (3.52,0) {$\cdot$};   
        \node at (4.52,0) {$\cdot$};
        \node at (5.52,0) {$\cdot$};
        \node at (6.52,0) {$\cdot$};   
        \node at (7.52,0) {$\cdot$};
        \node at (8.52,0) {$\cdot$};
        \node at (9.52,0) {$\cdot$};

        \node at (4.5,4) {$\lambda$};
    \end{tikzpicture} \qquad \begin{tikzpicture}[scale=.4, baseline={(0,-0.08)}]
        \FILL{10}
        \CAP{2}{3}{1}
        \CAP{5}{6}{1}
        \CAP{4}{7}{.6}
        \CAP{1}{8}{.5}
        \dCAP{0}{9}{.5}
        \CUP{2}{3}{1}
        \CUP{5}{6}{1}
        \CUP{4}{7}{.6}
        \CUP{1}{8}{.5}
        \dCUP{0}{9}{.5}
        \node at (-0.48,0) {$\cdot$};   
        \node at (0.52,0) {$\cdot$};   
        \node at (1.52,0) {$\cdot$};
        \node at (2.52,0) {$\cdot$};
        \node at (3.52,0) {$\cdot$};   
        \node at (4.52,0) {$\cdot$};
        \node at (5.52,0) {$\cdot$};
        \node at (6.52,0) {$\cdot$};   
        \node at (7.52,0) {$\cdot$};
        \node at (8.52,0) {$\cdot$};
        \node at (9.52,0) {$\cdot$};

        \node at (4.5,4) {$\mu_1$};
    \end{tikzpicture}$$
    $$\begin{tikzpicture}[scale=.4, baseline={(0,-0.08)}]
        \FILL{10}
        \CAP{3}{4}{1}
        \CAP{6}{7}{1}
        \CAP{2}{5}{.6}
        \CAP{1}{8}{.5}
        \dCAP{0}{9}{.5}
        \CUP{3}{4}{1}
        \CUP{6}{7}{1}
        \CUP{2}{5}{.6}
        \CUP{1}{8}{.5}
        \dCUP{0}{9}{.5}
        \node at (-0.48,0) {$\cdot$};   
        \node at (0.52,0) {$\cdot$};   
        \node at (1.52,0) {$\cdot$};
        \node at (2.52,0) {$\cdot$};
        \node at (3.52,0) {$\cdot$};   
        \node at (4.52,0) {$\cdot$};
        \node at (5.52,0) {$\cdot$};
        \node at (6.52,0) {$\cdot$};   
        \node at (7.52,0) {$\cdot$};
        \node at (8.52,0) {$\cdot$};
        \node at (9.52,0) {$\cdot$};

        \node at (4.5,-4) {$\mu_2$};
    \end{tikzpicture} \qquad \cdots \qquad \begin{tikzpicture}[scale=.4, baseline={(0,-0.08)}]
        \FILL{10}
        \CAP{3}{4}{1}
        \CAP{5}{6}{1}
        \CAP{2}{7}{.5}
        \CAP{1}{8}{.5}
        \RLINE{0}{2}
        \CUP{3}{4}{1}
        \CUP{5}{6}{1}
        \CUP{2}{7}{.5}
        \CUP{1}{8}{.5}
        \RLINE{9}{2}
        \node at (-0.48,0) {$\cdot$};   
        \node at (0.52,0) {$\cdot$};   
        \node at (1.52,0) {$\cdot$};
        \node at (2.52,0) {$\cdot$};
        \node at (3.52,0) {$\cdot$};   
        \node at (4.52,0) {$\cdot$};
        \node at (5.52,0) {$\cdot$};
        \node at (6.52,0) {$\cdot$};   
        \node at (7.52,0) {$\cdot$};
        \node at (8.52,0) {$\cdot$};
        \node at (9.52,0) {$\cdot$};

        \node at (4.5,-4) {$\mu_k$};
    \end{tikzpicture}$$
    if $C_{k}$ is a dotted circle. We can compute for the undotted version
    $$
    \omega(y_\kappa^\lambda x_\kappa^\lambda) \ = \ \begin{tikzpicture}[scale=.4, baseline={(0,1.1)}]
        \FILL{10}
        \CAP{4}{5}{1}
        \CAP{3}{6}{.6}
        \CAP{2}{7}{.5}
        \CAP{1}{8}{.5}
        \CAP{0}{9}{.5}
        \CUP{3}{4}{1}
        \CUP{5}{6}{1}
        \CUP{2}{7}{.5}
        \CUP{1}{8}{.5}
        \CUP{0}{9}{.5}
        \node at (-0.48,0) {$\cdot$};   
        \node at (0.52,0) {$\cdot$};   
        \node at (1.52,0) {$\cdot$};
        \node at (2.52,0) {$\cdot$};
        \node at (3.52,0) {$\cdot$};   
        \node at (4.52,0) {$\cdot$};
        \node at (5.52,0) {$\cdot$};
        \node at (6.52,0) {$\cdot$};   
        \node at (7.52,0) {$\cdot$};
        \node at (8.52,0) {$\cdot$};
        \node at (9.52,0) {$\cdot$};

        \begin{scope}[yshift=6cm]
        \FILL{10}
        \CUP{4}{5}{1}
        \CUP{3}{6}{.6}
        \CUP{2}{7}{.5}
        \CUP{1}{8}{.5}
        \CUP{0}{9}{.5}
        \CAP{3}{4}{1}
        \CAP{5}{6}{1}
        \CAP{2}{7}{.5}
        \CAP{1}{8}{.5}
        \CAP{0}{9}{.5}
        \node at (-0.48,0) {$\cdot$};   
        \node at (0.52,0) {$\cdot$};   
        \node at (1.52,0) {$\cdot$};
        \node at (2.52,0) {$\cdot$};
        \node at (3.52,0) {$\cdot$};   
        \node at (4.52,0) {$\cdot$};
        \node at (5.52,0) {$\cdot$};
        \node at (6.52,0) {$\cdot$};   
        \node at (7.52,0) {$\cdot$};
        \node at (8.52,0) {$\cdot$};
        \node at (9.52,0) {$\cdot$};
        \end{scope}

    \end{tikzpicture} \ = \ \begin{tikzpicture}[scale=.4, baseline={(0,1.1)}]
        \FILL{10}
        \UPLINE{0}{3}
        \UPLINE{1}{3}
        \UPLINE{2}{3}
        \UPLINE{3}{3}
        \CAP{4}{5}{1}
        \UPLINE{6}{3}
        \UPLINE{7}{3}
        \UPLINE{8}{3}
        \UPLINE{9}{3}
        \CUP{3}{4}{1}
        \CUP{5}{6}{1}
        \CUP{2}{7}{.5}
        \CUP{1}{8}{.5}
        \CUP{0}{9}{.5}
        \node at (-0.48,0) {$\cdot$};   
        \node at (0.52,0) {$\cdot$};   
        \node at (1.52,0) {$\cdot$};
        \node at (2.52,0) {$\cdot$};
        \node at (3.52,0) {$\cdot$};   
        \node at (4.52,0) {$\cdot$};
        \node at (5.52,0) {$\cdot$};
        \node at (6.52,0) {$\cdot$};   
        \node at (7.52,0) {$\cdot$};
        \node at (8.52,0) {$\cdot$};
        \node at (9.52,0) {$\cdot$};

        \begin{scope}[yshift=6cm]
        \FILL{10}
        \DOWNLINE{0}{3}
        \DOWNLINE{1}{3}
        \DOWNLINE{2}{3}
        \DOWNLINE{3}{3}
        \CUP{4}{5}{1}
        \DOWNLINE{6}{3}
        \DOWNLINE{7}{3}
        \DOWNLINE{8}{3}
        \DOWNLINE{9}{3}
        \CAP{3}{4}{1}
        \CAP{5}{6}{1}
        \CAP{2}{7}{.5}
        \CAP{1}{8}{.5}
        \CAP{0}{9}{.5}
        \node at (-0.48,0) {$\cdot$};   
        \node at (0.52,0) {$\cdot$};   
        \node at (1.52,0) {$\cdot$};
        \node at (2.52,0) {$\cdot$};
        \node at (3.52,0) {$\cdot$};   
        \node at (4.52,0) {$\cdot$};
        \node at (5.52,0) {$\cdot$};
        \node at (6.52,0) {$\cdot$};   
        \node at (7.52,0) {$\cdot$};
        \node at (8.52,0) {$\cdot$};
        \node at (9.52,0) {$\cdot$};
        \end{scope}

    \end{tikzpicture}$$
    \vspace{1cm}
    $$\qquad \qquad = \begin{tikzpicture}[scale=.4, baseline={(0,-0.08)}]
        \FILL{10}
        \CAP{3}{4}{1}
        \CAP{5}{6}{1}
        \CAP{2}{7}{.5}
        \CAP{1}{8}{.5}
        \CAP{0}{9}{.5}
        \CUP{3}{4}{1}
        \CUP{5}{6}{1}
        \CUP{2}{7}{.5}
        \CUP{1}{8}{.5}
        \CUP{0}{9}{.5}
        \node at (-0.48,0) {$\cdot$};   
        \node at (0.52,0) {$\cdot$};   
        \node at (1.52,0) {$\cdot$};
        \node at (2.52,0) {$\cdot$};
        \node at (3.52,0) {$\cdot$};   
        \node at (4.52,0) {$\cdot$};
        \node at (5.52,0) {$\cdot$};
        \node at (6.52,0) {$\cdot$};   
        \node at (7.52,0) {$\cdot$};
        \node at (8.52,0) {$\cdot$};
        \node at (9.52,0) {$\cdot$};
        \begin{scope}[xshift=3cm]
            \Bweight{1,0}
        \end{scope}
    \end{tikzpicture} \ + \ \begin{tikzpicture}[scale=.4, baseline={(0,-0.08)}]
        \FILL{10}
        \CAP{3}{4}{1}
        \CAP{5}{6}{1}
        \CAP{2}{7}{.5}
        \CAP{1}{8}{.5}
        \CAP{0}{9}{.5}
        \CUP{3}{4}{1}
        \CUP{5}{6}{1}
        \CUP{2}{7}{.5}
        \CUP{1}{8}{.5}
        \CUP{0}{9}{.5}
        \node at (-0.48,0) {$\cdot$};   
        \node at (0.52,0) {$\cdot$};   
        \node at (1.52,0) {$\cdot$};
        \node at (2.52,0) {$\cdot$};
        \node at (3.52,0) {$\cdot$};   
        \node at (4.52,0) {$\cdot$};
        \node at (5.52,0) {$\cdot$};
        \node at (6.52,0) {$\cdot$};   
        \node at (7.52,0) {$\cdot$};
        \node at (8.52,0) {$\cdot$};
        \node at (9.52,0) {$\cdot$};
        \begin{scope}[xshift=5cm]
            \Bweight{1,0}
        \end{scope}
    \end{tikzpicture}$$
    so that
    $$\omega(y_\kappa^\lambda x_\kappa^\lambda) = \underline{\lambda}\mu_1\overline{\lambda} + \underline{\lambda}\mu_2\overline{\lambda} $$
    and with similar computations, we find also that
    \begin{align*}
        &\omega(x_\lambda^{\mu_1} y_\lambda^{\mu_1}) = \underline{\lambda}\mu_1\overline{\lambda} + \underline{\lambda}\mu_3\overline{\lambda} \\
        &\omega(x_\lambda^{\mu_i} y_\lambda^{\mu_i}) = \underline{\lambda}\mu_i\overline{\lambda} + \underline{\lambda}\mu_{i+1}\overline{\lambda} \text{ for } 2 \leq i < k \\
        &\omega(x_\lambda^{\mu_k} y_\lambda^{\mu_k}) = \underline{\lambda}\mu_k\overline{\lambda} 
    \end{align*}
    so we deduce that
    \begin{align*}
        \omega(y_\kappa^\lambda x_\kappa^\lambda) &= \omega(x_\lambda^{\mu_1} y_\lambda^{\mu_1}) + \omega(x_\lambda^{\mu_2} y_\lambda^{\mu_2}) - 2\omega(x_\lambda^{\mu_3} y_\lambda^{\mu_3}) + \cdots + (-1)^{k}2 \omega(x_\lambda^{\mu_{k}} y_\lambda^{\mu_{k}}) \\
        &= \sum_{i=1}^{k} d_\kappa^{\mu_i}(\lambda)\omega(x_\lambda^{\mu_i} y_\lambda^{\mu_i})
    \end{align*}
    and hence
    $$\omega\left( y_\kappa^\lambda x_\kappa^\lambda - \sum_{i=1}^{\text{def}(\lambda)} d_\kappa^{\mu_i}(\lambda) x_\lambda^{\mu_i} y_\lambda^{\mu_i} \right)$$
    because $d_\kappa^{\mu_i}(\lambda) = 0$ for all $k+1 \leq i \leq \text{def}(\lambda)$, if they exist. The computations give the same result for the dotted version. Now, if there is no circle in $e_\lambda$ appearing in $e_\kappa$ enclosing $D$, then $\kappa$ and $\lambda$ are of the form of one of the nine following subcases
    $$
$$
    In the first and second subcases, we have if they exist, that $d_\kappa^{\mu_i}(\lambda) = 0$ for all $1 \leq i \leq \text{def}(\lambda)$, and we compute
    $$\omega(y_\kappa^\lambda x_\kappa^\lambda) \ = \ %
 \ = \ 0$$
    as wanted. For the third subcase, if we denote $C_1$ and $C_2$ the circles from left to right on the picture of $e_\lambda$, giving rise to $\mu_1$ and $\mu_2$, we compute
    \[
\begin{aligned}
\omega(y_\kappa^\lambda x_\kappa^\lambda)
&=
%
$$
    so that
    $$\omega\left( y_\kappa^\lambda x_\kappa^\lambda - \sum_{i=1}^{\text{def}(\lambda)} d_\kappa^{\mu_i}(\lambda) x_\lambda^{\mu_i} y_\lambda^{\mu_i} \right)$$
    because $d_\kappa^{\mu_i}(\lambda) = 0$ for all $3 \leq i \leq \text{def}(\lambda)$, if they exist. The fourth subcase is analogous. Then, for the fifth subcase, if we denote $C_1$ the circle on the picture of $e_\lambda$, giving rise to $\mu_1$, we compute
    $$\omega(y_\kappa^\lambda x_\kappa^\lambda) \ = %
 $$
    so that
    $$\omega\left( y_\kappa^\lambda x_\kappa^\lambda - \sum_{i=1}^{\text{def}(\lambda)} d_\kappa^{\mu_i}(\lambda) x_\lambda^{\mu_i} y_\lambda^{\mu_i} \right)$$
    because $d_\kappa^{\mu_i}(\lambda) = 0$ for all $2 \leq i \leq \text{def}(\lambda)$, if they exist. The sixth to eighth subcases are analogous. Finally, for the ninth subcase, if we denote $C_1$ and $C_2$ the inner circle and the outer circle respectively in $e_\lambda$, giving rise to $\mu_1$ and $\mu_2$, we compute
    \[
\begin{aligned}
\omega(y_\kappa^\lambda x_\kappa^\lambda)
&=
%
 $$
    so that
    $$\omega\left( y_\kappa^\lambda x_\kappa^\lambda - \sum_{i=1}^{\text{def}(\lambda)} d_\kappa^{\mu_i}(\lambda) x_\lambda^{\mu_i} y_\lambda^{\mu_i} \right)$$
    because $c_\kappa^{\mu_i}(\lambda) = 0$ for all $3 \leq i \leq \text{def}(\lambda)$, if they exist (and $d_\kappa^{\mu_2}(\lambda) = -2$). The tenth subcase is analogous. These are the only subcases where the coefficient $d_\kappa^{\mu_i}(\lambda) = -2$ occurs.
\end{proof}

Now, if we denote by $I_n$ the two-sided ideal of $\Bbbk Q_n$ generated by the quadratic relations from Theorem~\ref{thm:5.15}, we obtain a surjective graded algebra homomorphism
$$\overline{\omega} : \Bbbk Q_n/I_n \twoheadrightarrow \D_n$$
but it still remains to show that this is an isomorphism, so that we found all the generating quadratic relations. \\
Observe that $\overline{\omega}$ is bijective in degree $0$ and $1$, and since $\D_n$ is Koszul, and thus quadratic, we only have to show that $\overline{\omega}$ is bijective in degree $2$.

\begin{remark}\label{rmk:invD}
    Observe that $\Bbbk Q_n/I_n$ admits an involution
    $$(\bullet)^\ast : \Bbbk Q_n/I_n \longrightarrow \Bbbk Q_n/I_n $$
    defined on generators by $(e_\lambda)^\ast = e_\lambda$, $(x_\lambda^\mu)^\ast = y_\lambda^\mu$ and $(y_\lambda^\mu)^\ast = x_\lambda^\mu$. Note also that $\D_n$ admits an involution
    $$(\bullet)^\ast : \D_n \longrightarrow \D_n$$
    defined by $(\underline{\alpha}\lambda\overline{\beta})^\ast = \underline{\beta}\lambda\overline{\alpha}$. Note that these two involutions are compatible under $\overline{\omega}$.
\end{remark}

\begin{lemma}\label{lem:5.17}
    Let $\lambda, \mu \in \Delta_n^0$, then we have
    $$
    \text{dim}_\Bbbk \left( e_\lambda ( \Bbbk Q_n/I_n )_2 e_\mu \right) \leq
    \begin{cases}
        1  &\text{ if } \lambda \neq \mu,\\
        \text{def}(\lambda) &\text{ if } \lambda = \mu.
    \end{cases}
    $$
\end{lemma}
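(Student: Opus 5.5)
The degree-$2$ part $e_\lambda(\Bbbk Q_n/I_n)_2 e_\mu$ is spanned by the images of the length-$2$ paths of $Q_n$ from $\lambda$ to $\mu$. Each such path lies over a length-$2$ walk $\lambda - \nu - \mu$ in $\Gamma_n$, and since $\Gamma_n$ is simple there is exactly one path in $Q_n$ over each walk. So it suffices to show that the relations in $I_n$ collapse these spanning paths to at most $1$ element when $\lambda \neq \mu$, and at most $\text{def}(\lambda)$ elements when $\lambda = \mu$.

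\textbf{Case $\lambda \neq \mu$.} Here the length-$2$ walks correspond to the intermediate vertices $\nu$ adjacent to both $\lambda$ and $\mu$, and $\nu \neq \lambda, \mu$ by bipartiteness (Lemma~\ref{lem:5.5}).
\begin{enumerate}
\item If there is only one such $\nu$, the path is not parallel to any other. By Lemma~\ref{lem:5.11} it is then one of the listed forms, so the dimension is trivially at most $1$ (and the path is even zero for the monomial cases $\textit{(a)}$--$\textit{(d)}$).
\item If there are several such $\nu$, any two of the corresponding paths form a square. By Lemma~\ref{lem:5.10} every square is one of $\textit{(a)}$--$\textit{(j)}$, and Theorem~\ref{thm:5.15} makes the two parallel paths of each square equal modulo $I_n$.
\item Hence all paths from $\lambda$ to $\mu$ are identified pairwise, and they span a space of dimension at most $1$.
\end{enumerate}
The point to check is that the square relations are imposed for every pair of parallel paths, and not only for adjacent pairs in some ordering; this is immediate because every pair of parallel paths is itself a square. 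Lemma~\ref{lem:5.9} additionally bounds how many such $\nu$ lie below $\lambda$ or above $\mu$, but we only need the pairwise identification. Applying the involution of Remark~\ref{rmk:invD} lets us treat only one orientation in each case.

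\textbf{Case $\lambda = \mu$.} The closed length-$2$ paths at $\lambda$ are of two kinds:
\begin{itemize}
\item $x_\lambda^{\mu_i} y_\lambda^{\mu_i}$, going up to one of the $\text{def}(\lambda)$ neighbours $\mu_i$ and back;
\item $y_\kappa^\lambda x_\kappa^\lambda$, going down to a lower neighbour $\kappa$ and back.
\end{itemize}
The relation at vertices in Theorem~\ref{thm:5.15} expresses each $y_\kappa^\lambda x_\kappa^\lambda$ as a linear combination of the $x_\lambda^{\mu_i} y_\lambda^{\mu_i}$. So the span is generated by the $\text{def}(\lambda)$ elements $x_\lambda^{\mu_i} y_\lambda^{\mu_i}$, which gives the bound.

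Two points in this case need to be checked.
\begin{itemize}
\item The ascending arrows at $\lambda$ are exactly the $\text{def}(\lambda)$ arrows $x_\lambda^{\mu_i}$, one per circle $C_i$ of $e_\lambda$. This is the remark following Definition~\ref{def:5.14}, combined with Lemma~\ref{lem:5.2}.
\item Every lower neighbour $\kappa$ of $\lambda$ arises as in the hypothesis of Theorem~\ref{thm:5.15}, that is, $\lambda$ is obtained from $\kappa$ by flipping a pair in a circle $D$ of $e_\kappa$. This holds by Definition~\ref{def:5.1} and the height ordering of Definition~\ref{def:5.6}: an edge joins two weights, one of which is obtained from the other by such a flip, and that one is the higher of the two.
\end{itemize}

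I expect the main obstacle to be in the $\lambda \neq \mu$ case: making sure that every pair of parallel length-$2$ paths is genuinely covered by one of the squares $\textit{(a)}$--$\textit{(j)}$ with the correct arrow orientations. In $Q_n$ each edge of a square gives arrows in both directions, so a square of Lemma~\ref{lem:5.10} yields relations for all orientations of the corresponding paths between its two opposite corners. I would state explicitly that the square relations are imposed for all such orientation choices, which is how Theorem~\ref{thm:5.15} phrases them.
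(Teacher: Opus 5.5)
Your proposal is correct and follows essentially the same route as the paper: for $\lambda \neq \mu$, any two parallel length-$2$ paths form a square of Lemma~\ref{lem:5.10} and are identified by the square relations of Theorem~\ref{thm:5.15}, and for $\lambda = \mu$ the relations at vertices reduce the span to the $\text{def}(\lambda)$ cycles $x_\lambda^{\mu_i} y_\lambda^{\mu_i}$. The extra remarks you add (Lemma~\ref{lem:5.9}, the involution, the checks on ascending arrows and lower neighbours) are harmless but not needed for the bound.
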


\begin{proof}
    If $\lambda \neq \mu$, suppose by contradiction that
    $$\text{dim}_\Bbbk \left( e_\lambda ( \Bbbk Q_n/I_n )_2 e_\mu \right) > 1$$
    Then there exist two paths $p$ and $q$ of length $2$ from $e_\lambda$ to $e_\mu$ such that $[p]$ and $[q]$ are linearly independant in $\Bbbk Q_n/I_n$. This means that $p-q \not \in I_n$, but $p$ and $q$ define a square, so we should have $p-q \in I_n$ thanks to Theorem~\ref{thm:5.15}. This contradiction tells us that
    $$\text{dim}_\Bbbk \left( e_\lambda ( \Bbbk Q_n/I_n )_2 e_\mu \right) \leq 1$$
    Now if $\lambda = \mu$, the relations at vertex $e_\lambda$ in $I_n$ tell us that $e_\lambda( \Bbbk Q_n/I_n )_2 e_\lambda$ is spanned by the $2$-cycles of the form $x_\lambda^{\mu_i} y_\lambda^{\mu_i}$ for $1 \leq i \leq \text{def}(\lambda)$. So we have that
    $$\text{dim}_\Bbbk \left( e_\lambda ( \Bbbk Q_n/I_n )_2 e_\lambda \right) \leq \text{def}(\lambda)$$
    as wanted.
\end{proof}

\begin{lemma}\label{lem:5.18}
    Let $\lambda \neq \mu \in \Delta_n^0$, then
    $$\text{dim}_\Bbbk \left( e_\lambda ( \Bbbk Q_n/I_n )_2 e_\mu \right) = \text{dim}_\Bbbk \left( e_\lambda ( \D_n )_2 e_\mu \right).$$
\end{lemma}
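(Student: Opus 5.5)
We prove the equality by establishing the two inequalities separately. Since $\overline{\omega}$ is surjective and preserves idempotents and grading, it restricts to a surjection $e_\lambda(\Bbbk Q_n/I_n)_2 e_\mu \twoheadrightarrow e_\lambda(\D_n)_2 e_\mu$. This gives $\dim e_\lambda(\D_n)_2 e_\mu \leq \dim e_\lambda(\Bbbk Q_n/I_n)_2 e_\mu$. By Lemma~\ref{lem:5.17} the right-hand side is at most $1$. It therefore suffices to show that whenever the quiver side is nonzero, the diagram side is nonzero as well. The case where both sides vanish is then automatic.

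Suppose $e_\lambda(\Bbbk Q_n/I_n)_2 e_\mu \neq 0$. Then there is a length $2$ path $\lambda \to \nu \to \mu$ in $Q_n$ whose class is nonzero; in particular it is not one of the monomial relations of Theorem~\ref{thm:5.15}. We classify such paths using Lemmas~\ref{lem:5.10} and~\ref{lem:5.11}.

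\textbf{Path in a square.} If the path lies in a square of Lemma~\ref{lem:5.10}, we show directly that its image under $\omega$ is a nonzero diagram. The image is the product of two degree $1$ diagrams, and its value is a single basis diagram $\underline{\lambda}\rho\overline{\mu}$ of degree $2$. We see this by reading off the weights of the four corners: the underlying diagram $\underline{\lambda}\overline{\mu}$ can be oriented, and the degree $2$ orientation is obtained from the merges and splits of the surgeries. The two example computations in the proof of Theorem~\ref{thm:5.15} are exactly of this type: the product was a nonzero basis diagram.

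\textbf{Path not in a square.} If the path is of one of the forms (e)--(i) in Lemma~\ref{lem:5.11}, we again argue that $\underline{\lambda}\overline{\mu}$ is orientable with exactly two clockwise arcs. Hence ${}_\lambda(\D_n)_\mu$ has a degree $2$ basis vector. To see that the product actually hits it, we compute the surgery explicitly. The pattern is one merge followed by a reconnect of propagating lines, or a merge of two circles, so no factor of $0$ is introduced.

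A cleaner alternative for the nonvanishing is to avoid computing products. One can use that $\D_n$ is Koszul, hence generated in degree $1$. Then $e_\lambda(\D_n)_2e_\mu$ equals the span of the images of length $2$ paths, so its dimension is the number of degree $2$ basis diagrams $\underline{\lambda}\rho\overline{\mu}$. We then show directly that such a diagram exists whenever $\lambda$ and $\mu$ are joined by a non-monomial length $2$ path: given the intermediate weight $\nu$, take $\rho=\nu$ (or the appropriate weight obtained from the two moves), and check that both $\underline{\lambda}\nu$ and $\nu\overline{\mu}$ have degree $1$. Using the involution of Remark~\ref{rmk:invD}, the two directions of each path correspond, which halves the casework.

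The main obstacle is the uniform verification that no cancellation or zero from a reconnect or a non-orientable result occurs in the non-square cases. The plan is to treat it case by case on the list (a)--(j) of Lemma~\ref{lem:5.10} and (e)--(i) of Lemma~\ref{lem:5.11}, exploiting the symmetry $(\bullet)^\ast$ and the fact that only four positions matter. Conversely, when every length $2$ path is monomial, we must check that $\underline{\lambda}\overline{\mu}$ is non-orientable or of degree $\neq 2$. This was already observed for the monomial relations in the proof of Theorem~\ref{thm:5.15}.
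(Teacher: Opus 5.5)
This is essentially the paper's argument: surjectivity of $\overline{\omega}$ together with Lemma~\ref{lem:5.17} reduces the claim to showing $\omega(p)\neq 0$ for one length $2$ path $p$ with nonzero class. That is then checked via the square computations in the proof of Theorem~\ref{thm:5.15}, and by direct surgery for the paths \textit{(e)}--\textit{(i)} of Lemma~\ref{lem:5.11}, with the involution halving the cases.

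Two small remarks. First, your closing ``conversely'' check is unnecessary, since vanishing of the quiver side already forces $e_\lambda(\D_n)_2e_\mu=0$ by surjectivity. Second, in your alternative route the test that $\underline{\lambda}\nu$ and $\nu\overline{\mu}$ both have degree $1$ only fits paths that go up then down. For an ascending path such as \textit{(e)}, the degree $2$ diagram is $\underline{\lambda}\mu\overline{\mu}$, and $\nu\overline{\mu}$ is not even orientable.
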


\begin{proof}
    Since $\overline{\omega}$ is surjective, and thanks to Lemma~\ref{lem:5.17}, we have that
    $$1 \geq \text{dim}_\Bbbk \left( e_\lambda ( \Bbbk Q_n/I_n )_2 e_\mu \right) \geq \text{dim}_\Bbbk \left( e_\lambda ( \D_n )_2 e_\mu \right)$$
    Observe that if 
    $$\text{dim}_\Bbbk \left( e_\lambda ( \Bbbk Q_n/I_n )_2 e_\mu \right) = 0$$
    we're done. So suppose that
    $$\text{dim}_\Bbbk \left( e_\lambda ( \Bbbk Q_n/I_n )_2 e_\mu \right) = 1$$
    and then there exists a length $2$ path $p$ from $e_\lambda$ to $e_\mu$ such that $[p] \neq 0$ in $e_\lambda ( \Bbbk Q_n/I_n )_2 e_\mu$. \\
    Either $p$ appears in a square, and then computations from the proof of Theorem~\ref{thm:5.15} tell us that $\omega(p) \neq 0$ so that 
    $$\text{dim}_\Bbbk \left( e_\lambda ( \D_n )_2 e_\mu \right) = 1 = \text{dim}_\Bbbk \left( e_\lambda ( \Bbbk Q_n/I_n )_2 e_\mu \right)$$
    Otherwise, thanks to Theorem~\ref{thm:5.15}, we know that $p$ is a path from one of the figures $\textit{(e)}$, $\textit{(f)}$, $\textit{(g)}$, $\textit{(h)}$ or $\textit{(i)}$ from Lemma~\ref{lem:5.11}. If $p$ is the ascending path from figure $\textit{(e)}$, then we compute
    $$\omega(p) \ = \ \begin{tikzpicture}[scale=.5, baseline={(0,.68)}]
        \Bweight{0,1,0,0}
        \CAP{0}{1}{1}
        \CUP{0}{1}{1}
        \dCUP{2}{3}{1}
        \TRLINE{2}{1}
        \TRLINE{3}{1}
        \node at (-0.80,0) {$\cdots$};   
        \node at (0.52,0) {$\cdot$};   
        \node at (1.52,0) {$\cdot$};
        \node at (2.52,0) {$\cdot$};
        \node at (4.00,0) {$\cdots$};

        \begin{scope}[yshift=3cm]
            \Bweight{1,0,0,0}
            \CUP{0}{1}{1}
            \RLINE{2}{1}
            \RLINE{3}{1}
            \TRLINE{1}{1}
            \TpdLINE{0}{1}
            \node at (-0.80,0) {$\cdots$};   
            \node at (0.52,0) {$\cdot$};   
            \node at (1.52,0) {$\cdot$};
            \node at (2.52,0) {$\cdot$};
            \node at (4.00,0) {$\cdots$};
        \end{scope}
    \end{tikzpicture} \ = \  \begin{tikzpicture}[scale=.5, baseline={(0,-0.1)}]
            \Bweight{1,0,0,0}
            \CUP{0}{1}{1}
            \dCUP{2}{3}{1}
            \TRLINE{1}{1}
            \TpdLINE{0}{1}
            \TRLINE{2}{1}
            \TRLINE{3}{1}
            \node at (-0.80,0) {$\cdots$};   
            \node at (0.52,0) {$\cdot$};   
            \node at (1.52,0) {$\cdot$};
            \node at (2.52,0) {$\cdot$};
            \node at (4.00,0) {$\cdots$};
    \end{tikzpicture}$$
    If $p$ is the ascending path from figure $\textit{(f)}$, then we compute
    $$\omega(p) \ = \ \begin{tikzpicture}[scale=.5, baseline={(0,.68)}]
        \Bweight{1,1,0,0}
        \dCAP{0}{1}{1}
        \dCUP{0}{1}{1}
        \dCUP{2}{3}{1}
        \TRLINE{2}{1}
        \TRLINE{3}{1}
        \node at (-0.80,0) {$\cdots$};   
        \node at (0.52,0) {$\cdot$};   
        \node at (1.52,0) {$\cdot$};
        \node at (2.52,0) {$\cdot$};
        \node at (4.00,0) {$\cdots$};

        \begin{scope}[yshift=3cm]
            \Bweight{0,0,0,0}
            \dCUP{0}{1}{1}
            \RLINE{2}{1}
            \RLINE{3}{1}
            \TRLINE{1}{1}
            \TRLINE{0}{1}
            \node at (-0.80,0) {$\cdots$};   
            \node at (0.52,0) {$\cdot$};   
            \node at (1.52,0) {$\cdot$};
            \node at (2.52,0) {$\cdot$};
            \node at (4.00,0) {$\cdots$};
        \end{scope}
    \end{tikzpicture} \ = \  \begin{tikzpicture}[scale=.5, baseline={(0,-0.1)}]
            \Bweight{0,0,0,0}
            \dCUP{0}{1}{1}
            \dCUP{2}{3}{1}
            \TRLINE{1}{1}
            \TRLINE{0}{1}
            \TRLINE{2}{1}
            \TRLINE{3}{1}
            \node at (-0.80,0) {$\cdots$};   
            \node at (0.52,0) {$\cdot$};   
            \node at (1.52,0) {$\cdot$};
            \node at (2.52,0) {$\cdot$};
            \node at (4.00,0) {$\cdots$};
    \end{tikzpicture}$$
    If $p$ is the ascending path from figure $\textit{(g)}$, then we compute
    $$\omega(p) \ = \ \begin{tikzpicture}[scale=.5, baseline={(0,.68)}]
        \Bweight{0,1,0,1,0}
        \CAP{0}{1}{1}
        \CAP{2}{3}{1}
        \CUP{2}{3}{1}
        \CUP{1}{4}{.7}
        \BRLINE{0}{1}
        \TRLINE{4}{1}
        \FILL{5}
        \node at (-0.80,0) {$\cdots$};   
        \node at (0.52,0) {$\cdot$};   
        \node at (1.52,0) {$\cdot$};
        \node at (2.52,0) {$\cdot$};
        \node at (3.52,0) {$\cdot$};
        \node at (5.00,0) {$\cdots$};

        \begin{scope}[yshift=3cm]
            \Bweight{0,1,1,0,0}
            \CUP{0}{1}{1}
            \CUP{2}{3}{1}
            \BRLINE{4}{1}
            \CAP{0}{1}{1}
            \TpdLINE{2}{1}
            \TRLINE{3}{1}
            \TRLINE{4}{1}
            \FILL{4}
            \node at (-0.80,0) {$\cdots$};   
            \node at (0.52,0) {$\cdot$};   
            \node at (1.52,0) {$\cdot$};
            \node at (2.52,0) {$\cdot$};
            \node at (3.52,0) {$\cdot$};
            \node at (5.00,0) {$\cdots$};
        \end{scope}
    \end{tikzpicture} \ = \  \begin{tikzpicture}[scale=.5, baseline={(0,-0.1)}]
            \Bweight{0,1,1,0,0}
            \CUP{2}{3}{1}
            \CUP{1}{4}{.7}
            \BRLINE{0}{1}
            \CAP{0}{1}{1}
            \TpdLINE{2}{1}
            \TRLINE{3}{1}
            \TRLINE{4}{1}
            \FILL{4}
            \node at (-0.80,0) {$\cdots$};   
            \node at (0.52,0) {$\cdot$};   
            \node at (1.52,0) {$\cdot$};
            \node at (2.52,0) {$\cdot$};
            \node at (3.52,0) {$\cdot$};
            \node at (5.00,0) {$\cdots$};
    \end{tikzpicture}$$
    If $p$ is the ascending path from figure $\textit{(h)}$, then we compute
    $$\omega(p) \ = \ \begin{tikzpicture}[scale=.5, baseline={(0,.68)}]
        \Bweight{1,1,0,1,0}
        \dCAP{0}{1}{1}
        \CAP{2}{3}{1}
        \CUP{2}{3}{1}
        \CUP{1}{4}{.7}
        \dBRLINE{0}{1}
        \TRLINE{4}{1}
        \FILL{5}
        \node at (-0.80,0) {$\cdots$};   
        \node at (0.52,0) {$\cdot$};   
        \node at (1.52,0) {$\cdot$};
        \node at (2.52,0) {$\cdot$};
        \node at (3.52,0) {$\cdot$};
        \node at (5.00,0) {$\cdots$};

        \begin{scope}[yshift=3cm]
            \Bweight{1,1,1,0,0}
            \dCUP{0}{1}{1}
            \CUP{2}{3}{1}
            \BRLINE{4}{1}
            \dCAP{0}{1}{1}
            \TpdLINE{2}{1}
            \TRLINE{3}{1}
            \TRLINE{4}{1}
            \FILL{4}
            \node at (-0.80,0) {$\cdots$};   
            \node at (0.52,0) {$\cdot$};   
            \node at (1.52,0) {$\cdot$};
            \node at (2.52,0) {$\cdot$};
            \node at (3.52,0) {$\cdot$};
            \node at (5.00,0) {$\cdots$};
        \end{scope}
    \end{tikzpicture} \ = \  \begin{tikzpicture}[scale=.5, baseline={(0,-0.1)}]
            \Bweight{1,1,1,0,0}
            \CUP{2}{3}{1}
            \CUP{1}{4}{.7}
            \dBRLINE{0}{1}
            \dCAP{0}{1}{1}
            \TpdLINE{2}{1}
            \TRLINE{3}{1}
            \TRLINE{4}{1}
            \FILL{4}
            \node at (-0.80,0) {$\cdots$};   
            \node at (0.52,0) {$\cdot$};   
            \node at (1.52,0) {$\cdot$};
            \node at (2.52,0) {$\cdot$};
            \node at (3.52,0) {$\cdot$};
            \node at (5.00,0) {$\cdots$};
    \end{tikzpicture}$$
    And if $p$ is the path from left to right from figure $\textit{(i)}$, then we compute
    $$\omega(p) \ = \ \begin{tikzpicture}[scale=.5, baseline={(0,.68)}]
        \Bweight{1,0,1,0,1,0}
        \CAP{1}{2}{1}
        \CUP{1}{2}{1}
        \CUP{0}{3}{.7}
        \TpdLINE{0}{1}
        \CAP{3}{4}{1}
        \BpdLINE{4}{1}
        \RLINE{5}{1}
        \node at (-0.80,0) {$\cdots$};   
        \node at (0.52,0) {$\cdot$};   
        \node at (1.52,0) {$\cdot$};
        \node at (2.52,0) {$\cdot$};
        \node at (3.52,0) {$\cdot$};
        \node at (4.52,0) {$\cdot$};
        \node at (6.00,0) {$\cdots$};

        \begin{scope}[yshift=3cm]
            \Bweight{1,0,1,0,1,0}
            \CUP{1}{2}{1}
            \CUP{3}{4}{1}
            \CAP{3}{4}{1}
            \BRLINE{5}{1}
            \pdLINE{0}{1}
            \TRLINE{1}{1}
            \CAP{2}{5}{.7}
            \node at (-0.80,0) {$\cdots$};   
            \node at (0.52,0) {$\cdot$};   
            \node at (1.52,0) {$\cdot$};
            \node at (2.52,0) {$\cdot$};
            \node at (3.52,0) {$\cdot$};
        \node at (4.52,0) {$\cdot$};
        \node at (6.00,0) {$\cdots$};
        \end{scope}
    \end{tikzpicture} \ = \  \begin{tikzpicture}[scale=.5, baseline={(0,-0.1)}]
            \Bweight{1,0,1,0,1,0}
            \CUP{1}{2}{1}
            \CUP{0}{3}{.7}
            \CAP{3}{4}{1}
            \CAP{2}{5}{.7}
            \TpdLINE{0}{1}
            \TRLINE{1}{1}
            \BpdLINE{4}{1}
            \BRLINE{5}{1}
            \node at (-0.80,0) {$\cdots$};   
            \node at (0.52,0) {$\cdot$};   
            \node at (1.52,0) {$\cdot$};
            \node at (2.52,0) {$\cdot$};
            \node at (3.52,0) {$\cdot$};
        \node at (4.52,0) {$\cdot$};
        \node at (6.00,0) {$\cdots$};
    \end{tikzpicture}$$
    Thanks to the involution from Remark~\ref{rmk:invD}, we just had to verify diagrammatically that one direction of each of these figures is non-zero.
\end{proof}

\begin{theorem}\label{thm:5.19}
    The graded algebra homomorphism $\overline{\omega} : \Bbbk Q_n/I_n \twoheadrightarrow \D_n$ is an isomorphism.
\end{theorem}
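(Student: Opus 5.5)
Since $\D_n$ is Koszul, and hence quadratic, $\D_n \cong T_{(\D_n)_0}((\D_n)_1)/(R)$, where $R \subset (\D_n)_1 \otimes_{(\D_n)_0} (\D_n)_1$ is the kernel of multiplication. The map $\overline{\omega}$ is already surjective and graded, and it is bijective in degrees $0$ and $1$ by Lemma~\ref{lem:5.2}. So it suffices to show that $\overline{\omega}$ is bijective in degree $2$. Then the kernel $J$ of $\omega : \Bbbk Q_n \to \D_n$ is generated in degree $2$ (it is the ideal generated by $R$, viewed in $\Bbbk Q_n$). Bijectivity in degree $2$ gives $J_2 = (I_n)_2$, and since both ideals are generated in degree $2$, we get $J = I_n$, so $\overline{\omega}$ is an isomorphism.

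For bijectivity in degree $2$, I would decompose $(\Bbbk Q_n/I_n)_2$ and $(\D_n)_2$ into the pieces $e_\lambda(\cdot)_2 e_\mu$; both $\overline{\omega}$ and the idempotents respect this decomposition. Since $\overline{\omega}$ is surjective on each piece, it is enough to check that dimensions agree piece by piece. For $\lambda \neq \mu$ this is exactly Lemma~\ref{lem:5.18}. For $\lambda = \mu$, Lemma~\ref{lem:5.17} gives $\dim e_\lambda(\Bbbk Q_n/I_n)_2 e_\lambda \leq \text{def}(\lambda)$, and surjectivity gives the reverse inequality with $\dim e_\lambda(\D_n)_2 e_\lambda$. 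It therefore remains to show $\dim e_\lambda(\D_n)_2e_\lambda \geq \text{def}(\lambda)$.

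For that last step I would use Proposition~\ref{prop:4.19}. We have ${}_\lambda(\D_n)_\lambda \cong \mathcal{M}(\underline{\lambda}\overline{\lambda})$, the diagram $\underline{\lambda}\overline{\lambda}$ has exactly $\text{def}(\lambda)$ circles, and $\mathcal{M}(\underline{\lambda}\overline{\lambda}) \cong \bigotimes_{C} \Bbbk[X_{t(C)}]/(X_{t(C)}^2)$. Each clockwise circle in $e_\lambda$'s orientation class contributes degree $2$ (one cup and one cap). Hence the degree-$2$ part has basis the diagrams $\underline{\lambda}\nu\overline{\lambda}$ with exactly one clockwise circle, that is, the monomials $X_{t(C_i)}$. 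This gives dimension exactly $\text{def}(\lambda)$, and these are precisely the diagrams $\underline{\lambda}\mu_i\overline{\lambda}$.

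The main subtlety I expect is justifying that "bijective in degree $2$" suffices, which needs the quadratic presentation with generators and relations matching $Q_n$. So I would spell out that $\omega$ restricted to degrees $\leq 1$ identifies $\Bbbk Q_n$ with the tensor algebra on $(\D_n)_1$ over $(\D_n)_0 = \prod \Bbbk e_\lambda$. Once that identification is in place, quadraticity of $\D_n$ (from Koszulity, \cite{EhrigStroppel2016a,Seynnaeve2017}) means $\ker\omega$ is generated by $(\ker\omega)_2$. We then have $I_n \subseteq \ker\omega$ with equality in degree $2$ by the dimension count, so $\ker\omega = I_n$.
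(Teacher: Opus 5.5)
Your proposal is correct and follows the paper's proof. You reduce to degree $2$ using quadraticity of $\D_n$, handle $\lambda\neq\mu$ by Lemma~\ref{lem:5.18}, and for $\lambda=\mu$ sandwich $\dim e_\lambda(\Bbbk Q_n/I_n)_2e_\lambda$ between $\text{def}(\lambda)$ (Lemma~\ref{lem:5.17}) and $\dim e_\lambda(\D_n)_2e_\lambda=\text{def}(\lambda)$ (exactly one clockwise circle); your argument that $\ker\omega=I_n$ follows from agreement in degree $2$, and your count via Proposition~\ref{prop:4.19}, spell out steps the paper states only briefly.
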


\begin{proof}
    As we said before, we just need to verify that $\overline{\omega}$ is a bijection in degree $2$. And since $\overline{\omega}$ is surjective, we just need to verify that
    $$\text{dim}_\Bbbk \left( e_\lambda ( \Bbbk Q_n/I_n )_2 e_\mu \right) = \text{dim}_\Bbbk \left( e_\lambda ( \D_n )_2 e_\mu \right)$$
    for every $\lambda, \mu \in \Delta_n^0$. We know from Lemma~\ref{lem:5.18} that it is true if $\lambda \neq \mu$. Now if $\lambda = \mu$, we have from Lemma~\ref{lem:5.17}, and by surjectivity of $\overline{\omega}$ that
    $$\text{def}(\lambda) \geq \text{dim}_\Bbbk \left( e_\lambda ( \Bbbk Q_n/I_n )_2 e_\mu \right) \geq \text{dim}_\Bbbk \left( e_\lambda ( \D_n )_2 e_\mu \right) = \text{def}(\lambda)$$
    where the last equality comes from the fact that for each circle in $e_\lambda$, you can either orient it clockwise or counterclockwise, and each clockwise circle has degree $2$.
\end{proof}

\section{Intrinsic formality of $\D_4$}\label{sec:7}

The goal of this section is to show that $\D_4$ is intrinsically formal, that is $\D_4$ does not admit any non-trivial $A_\infty$-deformation. We will first recall these concepts, and then explain and use a method introduced by Barmeier and Wang in~\cite{BarmeierWang2020} to compute the Hochschild cohomology groups of $\D_4$, that are closely related to its $A_\infty$-deformations.

\subsection{Quick recall}\label{qr}

\begin{definition}
    An $A_\infty$-algebra over $\Bbbk$ is a graded vector space $A = \bigoplus_{k \in \Z} A^k$ endowed with graded maps
    $$m_n : A^{\otimes n} \to A, \ n \geq 1$$
    of degree $2-n$, satisfying the following relations
    $$m_1m_1 = 0, \qquad m_1m_2 = m_2(m_1 \otimes \textbf{1} + \textbf{1} \otimes m_1)$$
    $$m_2(\textbf{1} \otimes m_2 - m_2 \otimes \textbf{1}) = m_1m_3 + m_3(m_1 \otimes \textbf{1} \otimes \textbf{1} + \textbf{1} \otimes m_1 \otimes \textbf{1} + \textbf{1} \otimes \textbf{1} \otimes m_1)$$
    and more generally, for $n \geq 1$
    $$\sum_{r+s+t=n} (-1)^{r+st}m_{r+t+1}(\textbf{1}^{\otimes r} \otimes m_s \otimes \textbf{1}^{\otimes t}) = 0$$
    where $\textbf{1}$ denotes the identity map of $A$.
\end{definition}

\begin{definition}
    Let  $A = \bigoplus_{k \in \Z} A^k$ be an $A_\infty$-algebra. Then we say that $A$ is intrinsically formal if any $A_\infty$-deformations of $A$ is $A_\infty$-quasi-isomorphic to the underlying graded algebra $A$.
\end{definition}

It turns out that that the $A_\infty$-deformations of an algebra are closely related to the Hochschild cohomology groups of it. The following theorems are particularly interesting in our setting.

\begin{theorem}[{\cite[Thm. 4.7]{SeidelThomas2001}}]\label{intformal}
    Let  $A = \bigoplus_{k \in \Z} A^k$ be an $A_\infty$-algebra. If $\text{HH}^2_q(A,A) = 0$ for each $q > 0$, then $A$ is intrinsically formal.
\end{theorem}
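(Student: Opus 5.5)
The plan is the classical obstruction-theoretic argument of Kadeishvili type: kill the higher products one arity at a time using the vanishing of $\text{HH}^2_q$. We interpret an $A_\infty$-deformation of the graded algebra $A$ as an $A_\infty$-structure $(m_n)_{n\geq 1}$ on the graded vector space $A$ with $m_1 = 0$ and $m_2$ equal to the given multiplication. By homological perturbation (minimal models), any $A_\infty$-algebra whose cohomology is $A$ can be reduced to this form, so this interpretation loses no generality. We will construct an $A_\infty$-isomorphism $F$ with $F_1 = \textbf{1}$ from $(A, m)$ to $(A, m_2)$.

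\textbf{Step 1 (the first nonzero higher product is a cocycle).} Suppose $m_3 = \cdots = m_{n-1} = 0$ for some $n \geq 3$. Write out the $A_\infty$-relation of arity $n+1$, using $m_1 = 0$. The only surviving terms are those involving one $m_2$ and one $m_n$, and they read $m_2(\textbf{1}\otimes m_n) \pm m_2(m_n \otimes \textbf{1}) \pm \sum_i m_n(\textbf{1}^{\otimes i}\otimes m_2 \otimes \textbf{1}^{\otimes n-1-i}) = 0$. Up to the standard sign conventions, this says exactly that $\delta m_n = 0$ for the Hochschild differential $\delta$ of the graded algebra $(A, m_2)$. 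Since $m_n$ has arity $n$ and internal degree $2-n$, its class lies in $\text{HH}^{2}_{q}(A,A)$ with $q = n-2 > 0$, once the grading convention of \cite{SeidelThomas2001} is matched (there the total Hochschild degree is $n+(2-n)=2$ and the weight is $q=n-2$).

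\textbf{Step 2 (gauge away $m_n$).} By hypothesis this class vanishes, so $m_n = \delta f$ for some $f : A^{\otimes (n-1)} \to A$ of degree $2-n$. Consider the $A_\infty$-morphism $G$ with $G_1 = \textbf{1}$, $G_{n-1} = \pm f$ and all other components zero. It is invertible because its linear part is the identity. Transport $m$ along $G$ to obtain a new $A_\infty$-structure $m'$. By the morphism equations, $m'_k = m_k$ for $k < n$, and $m'_n = m_n \mp \delta f$, so choosing the sign correctly gives $m'_n = 0$. Only $m'_{k}$ with $k > n$ are otherwise altered.

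\textbf{Step 3 (convergence).} Iterate Steps 1 and 2 to obtain isomorphisms $G^{(3)}, G^{(4)}, \dots$. The $k$-th component of the composite $\cdots G^{(4)}\circ G^{(3)}$ only involves finitely many factors, since $G^{(n)}$ differs from the identity only in arity $n-1 \geq k$ once $n>k$. Hence this composite is a well-defined $A_\infty$-morphism with linear part $\textbf{1}$, and it maps $(A,m)$ to $(A,m_2)$. This is the required quasi-isomorphism.

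The main obstacle is Step 2. There we must check carefully, with the Koszul signs of the $A_\infty$-relations as stated, that transport along $G$ changes $m_n$ by precisely $\pm\delta f$ and leaves all lower products untouched. This requires identifying the arity-$n$ part of the composition formula for $A_\infty$-morphisms and matching it with the Hochschild differential. The approach is to work in the bar-construction language: there $m$ is a square-zero coderivation $b$, and $G$ is a coalgebra automorphism $\exp$-like in $f$, so $G^{-1} b\, G = b + [b_2, \tilde f] + (\text{higher arity})$. With this formulation, the sign bookkeeping reduces to the single commutator identity $[b_2,\tilde f] = \widetilde{\delta f}$.
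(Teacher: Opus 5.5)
The paper gives no proof of its own and simply quotes the result from Seidel--Thomas; your argument is the same as theirs. The first nonvanishing higher product $m_n$ is a Hochschild cocycle of bidegree $(n,2-n)$, which is weight $q=n-2$ in the paper's indexing. It is killed by the $A_\infty$-isomorphism with $G_1=\mathbf{1}$, $G_{n-1}=\pm f$, and the arity-wise stabilizing infinite composite gives the quasi-isomorphism to $(A,m_2)$, so your proposal is correct.

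There is one small slip in Step 3. $G^{(n)}$ leaves the arity-$k$ component of a composite unchanged only when $n-1>k$, i.e. $n\geq k+2$; for $n=k+1$ it does contribute. Only finitely many factors are still relevant, so the conclusion stands.
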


\begin{theorem}[{\cite[\S 3.5]{Keller2003}}]\label{intformaltool}
    Let $A = \Bbbk Q/I$ be a Koszul algebra. We view $A$ as bigraded by assigning any arrow $a \in Q$ the bidegree $(0,1)$, we view also the Koszul dual $A^! \cong \Bbbk \overline{Q}/(I_2^\perp)$ as bigraded by assigning any arrow $\overline{a} \in \overline{Q}$ the bidegree $(1,-1)$. Then we have
    $$\text{HH}_q^p(A,A) \cong \text{HH}_q^p(A^!,A^!)$$
    for all $p,q \in \Z$.
\end{theorem}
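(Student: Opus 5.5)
The plan is to compute both sides with one explicit cochain complex, built from the Koszul bimodule resolution, and then check that the two bigradings match.

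\textbf{Step 1: the Koszul bimodule resolution of $A$.} Write $A=\Bbbk Q/I$ with $I$ generated by $I_2\subset \Bbbk Q_2$, and set $S=\Bbbk Q_0$. Let $K_p\subset (\Bbbk Q_1)^{\otimes_S p}$ be the usual intersections
$$K_p=\bigcap_{i+j+2=p}(\Bbbk Q_1)^{\otimes i}\otimes I_2\otimes(\Bbbk Q_1)^{\otimes j},$$
so that $K_p\cong (A^!_p)^\ast$ as $S$-bimodules. Since $A$ is Koszul, the complex $A\otimes_S K_\bullet\otimes_S A$, with differential
$$d(1\otimes a_1\cdots a_p\otimes 1)=a_1\otimes a_2\cdots a_p\otimes 1+(-1)^p\,1\otimes a_1\cdots a_{p-1}\otimes a_p,$$
is a projective bimodule resolution of $A$. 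It is graded by path length.

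\textbf{Step 2: the Hochschild cochain complex.} Apply $\mathrm{Hom}_{A^e}(-,A)$. This gives
$$C^p=\mathrm{Hom}_{S^e}(K_p,A)\cong \bigoplus_{i,j\in Q_0} e_iA e_j\otimes e_j A^!_p e_i,$$
which uses only that each $A^!_p$ is finite dimensional, since $Q$ is finite. Under this identification the differential is the graded commutator $[\,\mathbf{e},-\,]$ with the canonical element $\mathbf{e}=\sum_{a\in Q_1} a\otimes \overline{a}$, computed in the algebra $A\otimes A^!$ with the Koszul sign rule. A cochain in $e_iA_me_j\otimes e_jA^!_pe_i$ has Hochschild degree $p$. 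It has internal weight $m-p$, because $\mathrm{HH}_q^p$ records the degree shift of a cochain $K_p\to A$ with $K_p$ sitting in weight $p$.

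\textbf{Step 3: the symmetric computation for $A^!$.} The algebra $A^!$ is also Koszul, and $(A^!)^!\cong A$. So the same construction applied to $A^!$ gives a complex whose $p'$-th term is $\bigoplus e_jA^!e_i\otimes e_iA_{p'}e_j$. Its differential is again the commutator with the same element $\mathbf{e}$, now read as $\sum \overline a\otimes a$. This is the same total space with the same differential; only the roles of the two tensor factors are exchanged. Here the bigradings of the statement enter. An element of $A_m$ has bidegree $(0,m)$, and an element of $A^!_p$ has bidegree $(p,-p)$. A summand $A_m\otimes A^!_p$ therefore has bidegree $(p,m-p)$ for the $A$-complex. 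For the $A^!$-complex, the Hochschild degree is $m$ and the internal degree is computed from the $(1,-1)$ bigrading. I would carry out this bookkeeping and check that, with the stated conventions, the class contributing to $\mathrm{HH}^p_q(A,A)$ lands in $\mathrm{HH}^p_q(A^!,A^!)$. The reason to expect this is that the total degree $p+q$ and the weight $q$ are intrinsic to the summand $A_m\otimes A^!_p$ together with its internal grading. The bigrading $(1,-1)$ on $A^!$ is exactly what makes the Hochschild degree of one complex plus the Adams weight equal the corresponding data of the other. Finally, I would check that the Koszul signs in the two commutators agree up to an automorphism of the complex, for instance $x\mapsto(-1)^{\binom{p}{2}}x$ on the appropriate summand, so that the cohomologies are isomorphic.

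\textbf{Main obstacle.} I expect the hardest part to be this degree and sign matching, together with the finiteness issue. In the direction $A^!\to A$, the algebra $A^!$ may be infinite dimensional, for example for $\D_4$. Then $\mathrm{Hom}_{S^e}(K'_\bullet,A^!)$ is a product over weights rather than a sum. I would handle this by working weight by weight: each fixed bidegree $(p,q)$ involves only finitely many summands $A_m\otimes A^!_p$, because $A$ is finite dimensional. So the product and the sum agree in each bidegree, and the isomorphism holds for each pair $p,q$ separately. If the direct verification of signs becomes unwieldy, my fallback is to invoke Keller's conceptual argument. Koszul duality $\mathrm{RHom}_A(S,-)$ gives a fully faithful functor between the relevant derived categories of bigraded bimodules, sending the diagonal bimodule $A$ to the diagonal bimodule $A^!$. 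Hochschild cohomology is $\mathrm{Ext}$ of the diagonal bimodule, so the isomorphism follows, with the grading shifts read off from the definition of the functor.
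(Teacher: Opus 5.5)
The paper gives no proof of this statement; it cites Keller. Keller's argument is conceptual. The semisimple part $S=\Bbbk Q_0$, viewed as an $A$--$A^!$-bimodule, has the double centralizer property $A^!\simeq\mathrm{RHom}_A(S,S)$ and $A\simeq\mathrm{RHom}_{A^!}(S,S)$ in the Adams-graded derived categories, and the Hochschild complex is invariant under such a bimodule. Your main route, identifying both cochain complexes with $\bigoplus A_m\otimes A^!_p$ and the commutator with $\mathbf e$, is a genuinely different and more elementary argument. It gives only an isomorphism of bigraded vector spaces, not Keller's compatibility with cup product and bracket, but that is all the paper uses. Steps 1 and 2 are fine. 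However, Step 3 has a gap exactly where you defer the work, and your description of the sign issue points to a fix that cannot work.

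\textbf{The gap.} The bidegree $(1,-1)$ is not only bookkeeping. It makes $A^!$ a graded algebra with the arrows $\overline a$ in odd degree, and $\mathrm{HH}(A^!,A^!)$ must be its graded Hochschild cohomology. Already in Hochschild degree $0$ the differential is $b\mapsto b\xi-(-1)^{|b||\xi|}\xi b$. If you apply ``the same construction'' to $A^!$ as an ungraded quadratic algebra, the differential on $A^!_p\otimes A_m$ (Hochschild degree $m$) is $\mathbf e\cdot(-)-(-1)^{m}(-)\cdot\mathbf e$. On the $A$-side it is $\mathbf e\cdot(-)-(-1)^{p}(-)\cdot\mathbf e$. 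A diagonal rescaling such as $(-1)^{\binom p2}$ multiplies the left and right multiplication terms by the same scalar, so it cannot change their relative sign.

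The two complexes really differ. For $A=\Bbbk[\epsilon]/(\epsilon^2)$ one has $A^!=\Bbbk[x]$. Ungraded, $\mathrm{HH}(\Bbbk[x])=\Bbbk[x]\oplus\Bbbk[x]\partial_x$ has two classes, $x^n$ and $x^n\partial_x$, in each total degree $n\geq 1$. By contrast, $\mathrm{HH}^n$ of the dual numbers is one-dimensional for $n\geq 1$ in characteristic $0$.

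\textbf{The fix.} Use the graded Koszul resolution of $A^!$: totalize $\bigoplus_m (A^!\otimes_S K'_m\otimes_S A^!)[m]$.
\begin{itemize}
\item The shift twists one action by a Koszul sign, e.g.\ $b\cdot s^m y=(-1)^{|b|m}s^m(by)$.
\item The Hom complex carries the sign $\partial F=-(-1)^{|F|}F\circ d$.
\item For the cochain $\xi\otimes u$ with $\xi\in A^!_p$, $u\in A_m$ and $\mathbf e=\sum\overline a\otimes a$, one gets $\partial(\xi\otimes u)=\pm\bigl(\mathbf e\cdot(\xi\otimes u)-(-1)^{p}(\xi\otimes u)\cdot\mathbf e\bigr)$, with the overall sign depending only on the summand.
\end{itemize}
The relative sign is now governed by $|\xi|=p$, as on the $A$-side. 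The remaining overall sign is absorbed by a diagonal rescaling of the type you suggest, which always works because the differential moves along $(m,p)\mapsto(m+1,p+1)$.

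The same computation settles the bookkeeping you left open. This cochain has Hochschild degree $m$ and internal bidegree $(p-m,m-p)$, hence total degree $p$ and Adams weight $m-p$. These are exactly the data of $u\otimes\xi$ in the $A$-complex. So the identification respects $\mathrm{HH}^p_q$ when $p$ is read as total degree and $q$ as Adams weight.

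\textbf{Two smaller remarks.}
\begin{itemize}
\item Your finiteness worry is unnecessary. $K'_m\cong A_m^{*}$ is finite dimensional, so each cochain space is $A^!\otimes A_m$ up to idempotents. A fixed pair (total degree, weight) selects the single summand $A_m\otimes A^!_p$ on both sides.
\item In your fallback, $\mathrm{RHom}_A(S,-)$ does not carry the diagonal bimodule $A$ to the diagonal bimodule $A^!$. The bimodule-level functor is $M\mapsto\mathrm{RHom}_A(S,M\otimes^{L}_A S)$. What is actually needed is the double centralizer property of $S$ together with Keller's invariance theorem, which is what the citation supplies.
\end{itemize}
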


In the following subsections, we will compute the Hochschild cohomology groups $\text{HH}^2_q(\D_4^!, \D_4^!)$ for $q > 0$ in order to apply Theorems~\ref{intformal} and~\ref{intformaltool}.

\subsection{Method to compute Hochschild cohomology groups}\label{mhc}

The Hochschild cohomology groups can be very difficult to compute, but fortunately in our setting, there is a method introduced by Barmeier and Wang in~\cite{BarmeierWang2020} that is very helpful to compute the groups we are interested in. This method involves reduction systems in the sense of~\cite{Bergman1978}, and we explain it very quickly in this subsection.

\begin{theorem}[{\cite[Thm. 7.1]{BarmeierWang2020}}]
Let $Q$ be any finite quiver, let $I\subset kQ$ be any two-sided ideal and let $R$ be any reduction system satisfying the diamond condition for $I$. Then there is an equivalence of deformation problems between
\begin{enumerate}
    \item deformations of the associative algebra $A=kQ/I$,
    \item deformations of the ideal $I$,
    \item deformations of the reduction system $R$.
\end{enumerate}
\end{theorem}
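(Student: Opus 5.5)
We would prove this by attaching to each of the three problems a deformation functor on local Artinian $\Bbbk$-algebras $(B,\mathfrak m)$, and then exhibiting natural transformations between these functors that we can show are isomorphisms. Write $R=\{(s,\varphi_s)\}_{s\in S}$, with $S$ a set of paths and $\varphi_s\in \Bbbk\mathrm{Irr}_S$. The diamond condition, via Bergman's diamond lemma~\cite{Bergman1978}, gives $\Bbbk Q=I\oplus \Bbbk\mathrm{Irr}_S$. Hence $A\cong \Bbbk\mathrm{Irr}_S$ as vector spaces, with the product given by multiplying paths and then reducing.

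The three functors are as follows.
\begin{itemize}
\item A deformation of $R$ over $B$ is a family $R_B=\{(s,\varphi_s+\varphi'_s)\}$ with $\varphi'_s\in \Bbbk\mathrm{Irr}_S\otimes\mathfrak m$ such that every overlap ambiguity $uvw$ (with $uv,vw\in S$) and every inclusion ambiguity is still resolvable.
\item A deformation of $I$ is a two-sided ideal $I_B\subset \Bbbk Q\otimes B$ with $I_B\otimes_B\Bbbk=I$ and $\Bbbk Q\otimes B/I_B$ flat over $B$.
\item A deformation of $A$ is an associative $B$-bilinear product on $A\otimes B$ reducing to the given one modulo $\mathfrak m$, all taken up to gauge equivalence.
\end{itemize}

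The maps between them are the following.
\begin{itemize}
\item From $R_B$ to ideals: send $R_B$ to the ideal $I_B$ generated by the elements $s-\varphi_s-\varphi'_s$.
\item From $R_B$ to algebras: send $R_B$ to the product $f\star g=\mathrm{red}_{R_B}(fg)$ on $\Bbbk\mathrm{Irr}_S\otimes B$.
\end{itemize}
We would show directly, by running Bergman's argument over $B$ (induction on the order of $\mathfrak m$ using the ordering on paths), that the following are equivalent: $R_B$ satisfies the diamond condition; $\star$ is associative; $\Bbbk\mathrm{Irr}_S\otimes B$ is a complement to $I_B$, so that the quotient is free and hence flat. This shows both maps are well defined. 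Conversely, given a flat $I_B$, a $B$-basis of the quotient lifting $\mathrm{Irr}_S$ gives elements $s-\varphi_s-\varphi'_s\in I_B$, so the functor of ideals is reached. It remains to identify these functors up to equivalence.

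For this we would use deformation theory controlled by DG Lie or $L_\infty$ algebras. Algebra deformations are controlled by the Hochschild complex $\mathrm{Hom}(A^{\otimes\bullet},A)$ with the Gerstenhaber bracket. For the reduction system we would use the Chouhy--Solotar projective bimodule resolution $P_\bullet$ of $A$. In this resolution $P_0,P_1,P_2,P_3$ are built from the vertices, the arrows, $S$, and the ambiguities $S_3$ respectively. So $\mathrm{Hom}_{A^e}(P_\bullet,A)$ has $2$-cochains $\mathrm{Hom}(\Bbbk S,A)$, which are exactly the possible $\varphi'$. We would then construct explicit comparison morphisms between $P_\bullet$ and the bar resolution, together with a homotopy, built from the reduction maps of $R$. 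Homotopy transfer along these data carries the Gerstenhaber bracket to an $L_\infty$ structure on $\mathrm{Hom}(\Bbbk S_\bullet,A)$, quasi-isomorphic to the Hochschild DG Lie algebra. Since quasi-isomorphic $L_\infty$ algebras have isomorphic deformation functors (Goldman--Millson type theorem), it suffices to check one thing. For $\varphi'\in \mathrm{Hom}(\Bbbk S,A)\otimes\mathfrak m$, the Maurer--Cartan equation must be exactly the statement that every ambiguity in $S_3$ resolves for $\{(s,\varphi_s+\varphi'_s)\}$. The gauge action must also match changes of basis in $\Bbbk\mathrm{Irr}_S\otimes B$.

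The main obstacle is this last identification. The higher brackets $\ell_k(\varphi',\dots,\varphi')$ evaluated on an overlap $uvw$ must reproduce the difference of the two ways of fully reducing $uvw$ with $R_B$, including all the iterated reductions. We would prove this by induction on the order of $\mathfrak m$, using that the homotopy is defined through the same reduction paths, so that the order-$k$ terms of the two reductions are the iterated insertions appearing in $\ell_k$. If this bookkeeping proves unmanageable, there is a fallback. Verify the weaker criterion that the natural transformation of functors induces an isomorphism on tangent spaces, both being $\mathrm{HH}^2(A,A)$ computed via $P_\bullet$, and an injection on obstruction spaces, both landing in $\mathrm{HH}^3$. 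The standard criterion for morphisms of deformation functors then gives the equivalence.
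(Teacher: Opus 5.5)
The paper gives no proof of this statement; it quotes it from Barmeier--Wang. Your outline follows the strategy of that source:
\begin{itemize}
\item the Chouhy--Solotar resolution built from $Q_0,Q_1,S,S_3,\dots$;
\item explicit comparison maps with the bar resolution;
\item homotopy transfer to an $L_\infty$ structure on $\mathrm{Hom}_{\Bbbk Q_0^e}(\Bbbk S_\bullet,\Bbbk\mathrm{Irr}_S)$;
\item identification of its Maurer--Cartan equation with resolvability of the overlaps of $\{(s,\varphi_s+\varphi'_s)\}$.
\end{itemize}
The step you call the main obstacle is where essentially all the work lies, so as written this is an accurate plan rather than a proof. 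Your tangent/obstruction fallback is a legitimate softer route. It still requires proving that the reduction-system side is a deformation functor, with an obstruction theory (non-resolvable overlaps, giving a class in $\mathrm{HH}^3$ via $P_\bullet$) compatible with the Hochschild one.

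One concrete point needs correcting: the equivalence relations. You never fix an equivalence relation on deformations of $I$ or of $R$. The one you hint at, ``changes of basis in $\Bbbk\mathrm{Irr}_S\otimes B$'', is the gauge action of the Hochschild DG Lie algebra, whose gauge directions are $\mathrm{Hom}(A,A)\otimes\mathfrak m$. After transfer, the gauge directions are $\mathrm{Hom}_{\Bbbk Q_0^e}(\Bbbk Q_1,\Bbbk\mathrm{Irr}_S)\otimes\mathfrak m$. These correspond to substitutions $a\mapsto a+\psi(a)$ of arrows, i.e.\ automorphisms of $\Bbbk Q\otimes B$. At first order this is exactly the relation $\tilde\varphi-\tilde\varphi'=\delta(\psi)$ of Definition~\ref{defmethod}.

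Both the ideal functor and the reduction-system functor must be taken modulo these automorphisms. By your own basis argument, bare flat ideals correspond bijectively to bare deformed reduction systems, i.e.\ to Maurer--Cartan elements, not to gauge classes. For example, take $A=\Bbbk[x]/(x^2)$ and $R=\{(x^2,0)\}$. Over $\Bbbk[\epsilon]/(\epsilon^2)$ the ideals $(x^2-\epsilon(a+bx))$ are pairwise distinct and have free quotient. So the bare-ideal functor has a $2$-dimensional tangent space, while $\mathrm{HH}^2(A,A)\cong\Bbbk$. The substitution $x\mapsto x+\tfrac{b}{2}\epsilon$ removes $b$.

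Two smaller points:
\begin{itemize}
\item $f\star g=\mathrm{red}_{R_B}(fg)$ is only defined once reduction-uniqueness is known. So ``$\star$ is associative'' can serve as one of your equivalent conditions only if $\star$ is defined by a fixed reduction strategy, as in the overlap condition of Definition~\ref{defmethod}.
\item The paths occurring in $\varphi'_s$ need not be smaller than $s$; in Section~\ref{sec:7.3} they are longer. So Bergman's argument over $B$ must be run as a lexicographic induction, first on the $\mathfrak m$-adic filtration and then on the path order.
\end{itemize}
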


\begin{definition}\label{defmethod}
A first-order deformation of a reduction system \(R = \{(s,\varphi_s)\}_{s\in S}\) is a reduction system
\[
\widetilde{R} = \{(s,\varphi_s + \widetilde{\varphi}_s\, t)\}_{s\in S}
\]
over \(\Bbbk Q \otimes_\Bbbk \Bbbk [t]/(t^2)\), where \(\widetilde{\varphi}_s \in \Bbbk \mathrm{Irr}_S\), such that \(\widetilde{R}\) is reduction-unique. \\
Equivalently, \(\widetilde{\varphi}\) defines a morphism
\[
\widetilde{\varphi} \in \mathrm{Hom}_{\Bbbk Q_0^e}(\Bbbk S, \Bbbk \mathrm{Irr}_S), 
\quad s \mapsto \widetilde{\varphi}_s.
\]
We say that \(\widetilde{\varphi}\) defines a first-order deformation of \(R\) if it is a \(2\)-cocycle in the cochain complex associated to the Bardzell--Chouhy--Solotar resolution from~\cite{ChouhrSolotar2015}. In other words this means that $\tilde{\varphi}$ is a \(2\)-cocycle if and only if for any overlap ambiguity $pqr$ of length $3$, we have 
$$(p \star q) \star r  = p \star (q \star r) \text{ mod } t^2$$
where
$$u \star v = \left\{
    \begin{array}{cc}
        uv & \mbox{if } uv \not \in S \\
        \varphi_{uv} + t\tilde{\varphi}_{uv} & \mbox{if } uv \in S
    \end{array} \right.$$
Two first-order deformations $\widetilde{R} = \{(s,\varphi_s + \widetilde{\varphi}_s t)\}_{s\in S}$ and $\widetilde{R}' = \{(s,\varphi_s + \widetilde{\varphi}'_s t)\}_{s\in S}$ are equivalent if there exists
\[
\psi \in \mathrm{Hom}_{\Bbbk Q_0^e}(\Bbbk Q_1, \Bbbk \mathrm{Irr}_S)
\]
such that for all \(s \in S\),
\[
\widetilde{\varphi}_s - \widetilde{\varphi}'_s = \delta(\psi)(s).
\]
Here
\[
\delta : \mathrm{Hom}_{\Bbbk Q_0^e}(\Bbbk Q_1, \Bbbk \mathrm{Irr}_S)
\longrightarrow
\mathrm{Hom}_{\Bbbk Q_0^e}(\Bbbk S, \Bbbk \mathrm{Irr}_S)
\]
is given by
\[
\delta(\psi)(s) = T(\psi)(s - \varphi_s),
\]
where \(T(\psi) : \Bbbk Q \to \Bbbk \mathrm{Irr}_S\) is defined on a path \(p = a_1 \cdots a_m\) by
\[
T(\psi)(p)
=
\sum_{i=1}^m
\sigma\pi(a_1 \cdots a_{i-1}\,\psi(a_i)\,a_{i+1} \cdots a_m).
\]
Here \(a_1,\ldots,a_m\) are arrows in \(Q\), \(\pi : \Bbbk Q \to \Bbbk Q/I\) is the natural projection, and \(\sigma : \Bbbk Q/I \to \Bbbk\mathrm{Irr}_S\) is the inverse of \(\pi|_{\mathrm{Irr}_S}\). In other words, \(\sigma\pi(p)\) is obtained by reducing \(p\) with respect to \(R\) until it becomes irreducible.
\end{definition}

\begin{corollary}[{\cite[Cor. 7.44]{BarmeierWang2020}}]\label{method}
Let $A=kQ/I$ and $R$ be any reduction system satisfying the diamond condition for $I$. Then $\mathrm{HH}^2(A,A)$ is isomorphic to the space of first-order deformations of $R$ modulo equivalence.
\end{corollary}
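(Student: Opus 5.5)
The plan is to compute $\mathrm{HH}^2(A,A)$ from a projective bimodule resolution built out of the reduction system $R$ itself. With such a resolution, $2$-cochains are literally maps $s\mapsto\widetilde{\varphi}_s$, and the cocycle and coboundary conditions can be read off as the conditions in Definition~\ref{defmethod}.

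\textbf{Step 1: the resolution and the cochain spaces.} I would use the resolution of Chouhy--Solotar~\cite{ChouhrSolotar2015} attached to a reduction system satisfying the diamond condition. Its low terms are
$$P_0=A\otimes_{\Bbbk Q_0}\Bbbk Q_0\otimes_{\Bbbk Q_0}A,\qquad P_1=A\otimes_{\Bbbk Q_0}\Bbbk Q_1\otimes_{\Bbbk Q_0}A,$$
$$P_2=A\otimes_{\Bbbk Q_0}\Bbbk S\otimes_{\Bbbk Q_0}A,\qquad P_3=A\otimes_{\Bbbk Q_0}\Bbbk S_3\otimes_{\Bbbk Q_0}A,$$
where $S_3$ is the set of overlap ambiguities $pqr$ (with $pq,qr\in S$). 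Applying $\mathrm{Hom}_{A^e}(-,A)$ gives $\mathrm{Hom}_{\Bbbk Q_0^e}(\Bbbk S_n,A)$. Since $\sigma$ identifies $A$ with $\Bbbk\mathrm{Irr}_S$ (this is where the diamond condition is used), the $2$-cochains become exactly $\mathrm{Hom}_{\Bbbk Q_0^e}(\Bbbk S,\Bbbk\mathrm{Irr}_S)$ and the $1$-cochains become $\mathrm{Hom}_{\Bbbk Q_0^e}(\Bbbk Q_1,\Bbbk\mathrm{Irr}_S)$. This matches the spaces in Definition~\ref{defmethod}.

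\textbf{Step 2: coboundaries.} The map $P_2\to P_1$ sends $1\otimes s\otimes 1$ to the image of the element $s-\varphi_s\in I$ under the canonical derivation $\Bbbk Q\to P_1$, $a_1\cdots a_m\mapsto\sum_i a_1\cdots a_{i-1}\otimes a_i\otimes a_{i+1}\cdots a_m$. Dualizing, a $1$-cochain $\psi$ goes to $s\mapsto T(\psi)(s-\varphi_s)=\delta(\psi)(s)$. Hence the $2$-coboundaries are precisely the differences $\widetilde{\varphi}-\widetilde{\varphi}'$ that define equivalent deformations. I would also check this independently: the change of variables $a\mapsto a+t\psi(a)$ on arrows turns one deformed reduction system into the other, and the first-order term it produces is exactly $\delta(\psi)$.

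\textbf{Step 3: cocycles.} The map $P_2\to P_3$, dualized, has to be shown to send $\widetilde{\varphi}$ to the $t$-coefficient of $(p\star q)\star r-p\star(q\star r)$ on each overlap $pqr$. To do this, I would expand both sides modulo $t^2$. The $t^0$-terms agree because $R$ is reduction-unique. The $t^1$-terms are a sum over the reduction steps of the two chosen reduction paths, with $\widetilde{\varphi}$ inserted at one step and the result then reduced by $\sigma\pi$. This is the defining formula for the Chouhy--Solotar differential $d_3$, which is itself constructed by following the two reduction paths of each ambiguity.

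One also needs that checking these overlaps suffices for $\widetilde{R}$ to be reduction-unique over $\Bbbk[t]/(t^2)$. This follows from Bergman's diamond lemma~\cite{Bergman1978}: there are no inclusion ambiguities because $S$ consists of paths of the same length, and every overlap ambiguity of $\widetilde{R}$ is one of the overlaps $pqr$.

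\textbf{Conclusion and main obstacle.} Taking cohomology in degree $2$ then gives
$$\mathrm{HH}^2(A,A)\cong\{\text{first-order deformations of }R\}/\sim.$$
As a cross-check, one can route the argument through Theorem 7.1 of~\cite{BarmeierWang2020} and Gerstenhaber's classical identification of $\mathrm{HH}^2(A,A)$ with first-order deformations of $A$ modulo equivalence. The main obstacle is Step~3: making the dualized third differential explicit and verifying that it equals the $t$-linear part of the associativity defect. I would handle this by induction along the reduction sequences, using that $T(\cdot)$ and $\sigma\pi$ commute with left and right multiplication by irreducible paths up to further reductions.
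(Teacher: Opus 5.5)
You take the right route: compute $\mathrm{HH}^2$ from the Chouhy--Solotar resolution attached to $R$. The paper gives no proof of its own. It quotes the result from Barmeier--Wang, and in Definition~\ref{defmethod} a first-order deformation is (also) defined as a $2$-cocycle of the Chouhy--Solotar cochain complex, with equivalence meaning a difference of the form $\delta(\psi)$.

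Your Steps~1 and~2 are correct. Once $A$ is identified with $\Bbbk\mathrm{Irr}_S$ via $\sigma$, the cochains in degrees $1$ and $2$ are the two Hom-spaces of Definition~\ref{defmethod}. Since $d_2(1\otimes s\otimes 1)=\partial(s-\varphi_s)$, the dual of $d_2$ is exactly $\delta$. If you take the cocycle description as the definition, these two steps already give the corollary. Step~3 is what links it to reduction-uniqueness of $\widetilde R$ and to the overlap condition. (There you mean the dual of $d_3\colon P_3\to P_2$, not of a map $P_2\to P_3$.)

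\textbf{The gap is Step~3.} The Chouhy--Solotar $d_3$ is not \emph{defined} by following the two reduction routes of an overlap; it is produced inductively from a contracting homotopy. So the claim that $\mathrm{Hom}(d_3,A)(\widetilde\varphi)$ is the $t$-linear part of $(p\star q)\star r-p\star(q\star r)$ has to be proved, and ``induction along reduction sequences'' does not say how.

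In the quadratic setting the paper uses, you can make this precise as follows.
\begin{enumerate}[(1)]
\item After the first reduction of $pqr$, every monomial at order $t^0$ has at most one length-$2$ subword in $S$, so both routes are forced.
\item Suppose the $j$-th basic reduction of a route replaces $c_j\,u_js_jv_j$ by $c_j\,u_j\varphi_{s_j}v_j$. Put $D=\sum_j c_j\,u_j\otimes s_j\otimes v_j\in P_2$. Telescoping gives $d_2(D)=\partial(pqr-\sigma\pi(pqr))$, with $\partial$ the derivation of your Step~2.
\item The cochain $\widetilde\varphi$, extended $A^e$-linearly and followed by $\sigma$, sends $D$ to $\sum_j c_j\,\sigma\pi(u_j\widetilde\varphi_{s_j}v_j)$. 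Because $t^2=0$ and $R$ is reduction-unique, this is exactly the $t$-coefficient produced along the route.
\item Hence the overlap condition says precisely that $\widetilde\varphi$ kills one element $D_1-D_2\in\ker d_2$ for each overlap.
\end{enumerate}
What is still missing is that these elements generate $\ker d_2$ as a bimodule. Equivalently, $1\otimes pqr\otimes 1\mapsto D_1-D_2$ must coincide with, or be able to replace, the Chouhy--Solotar $d_3$. That needs either an unwinding of their homotopy or a separate exactness argument, for instance by comparing leading terms.

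Three further points.
\begin{enumerate}[(a)]
\item The corollary is stated for an arbitrary reduction system. The forced routes, and your claim that there are no inclusion ambiguities because all of $S$ has length $2$, are special to the quadratic case. In general the $t$-coefficient can depend on the routes chosen, and that dependence must be controlled.
\item To apply Bergman's lemma to $\widetilde R$ you need reduction-finiteness of $\widetilde R$. A length-compatible order is unavailable because $\widetilde\varphi_s$ is longer than $s$. Reduction-finiteness does follow from that of $R$, since $t^2=0$.
\item The argument you list only as a cross-check gives the statement in full generality without any of these computations. Restrict the equivalence of deformation problems in Theorem~7.1 of Barmeier--Wang (quoted in Section~\ref{mhc}) to $\Bbbk[t]/(t^2)$, and combine it with Gerstenhaber's identification of $\mathrm{HH}^2(A,A)$ with first-order deformations of $A$. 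This works provided the equivalence relation on deformations of $R$ there is the one in Definition~\ref{defmethod}.
\end{enumerate}
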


This method may seem somewhat unclear at this point, but it will become much clearer as we apply it several times in Section~\ref{sec:7.3}.

\subsection{Koszul dual}\label{sec:7.1}

We will not prove or disprove if $\D_n$ is intrinsically formal for $n \geq 5$ , but we can still compute the Koszul dual of $\D_n$ explicitely.

Recall that the Koszul dual of $\D_n$ is defined by
$$(\D_n)^{!} = T_\Bbbk (\D_n)_1 / (I_n)_2^{\perp}$$
and it is known that $(\D_n)^{!}$ is also Koszul, and that $((\D_n)^{!})^{!} \cong \D_n$. \\
Since we proved in Theorem~\ref{thm:5.19} that $\D_n \cong \Bbbk Q_n/I_n$ can be viewed as a quiver algebra with quadratic relations, it is well known that there is an isomorphism
$$\overline{\D}_n := (\D_n)^{!} \cong \Bbbk \overline{Q}_n/\overline{I}_n$$
where $\overline{Q}_n$ denotes the opposite quiver of $Q_n$, and $\overline{I}_n := (I_n)_2^{\perp}$ is defined to be the perpendicular ideal of $(I_n)_2$ with respect to the nondegenerate $\Bbbk$-bilinear pairing $\langle \bullet, \bullet \rangle : \Bbbk Q_n \times \Bbbk \overline{Q}_n \to \Bbbk$ which is uniquely determined by
$$\langle a_1 \cdots a_n, \overline{b}_n \cdots \overline{b}_1 \rangle = \delta_{a_1,b_1}\cdots\delta_{a_n,b_n}$$
for paths $a_1\cdots a_n, b_1 \cdots b_n$ in $Q_n$. Concretely, the quadratic relations for $(\D_n)^{!}$ are given by
$$\overline{I}_n = (I_n)_2^{\perp} = \{ p \in \Bbbk \overline{Q}_n \vert \langle (I_n)_2, p \rangle = 0 \} $$
We will write $\overline{x}_\lambda^\mu$ and $\overline{y}_\lambda^\mu$ for the Koszul duals of the generators $x_\lambda^\mu$ and $y_\lambda^\mu$ of $\D_n$. Therefore, $\overline{x}_\lambda^\mu$ is a descending arrow, and $\overline{y}_\lambda^\mu$ is an ascending arrow. \\
Now let's describe the ideal $\overline{I}_n$.

\begin{theorem}\label{thm:7.1}
    The ideal $\overline{I}_n$ is generated by the following quadratic relations

    \textbf{Dual monomial relations :} The paths from the top to the bottom, and from the bottom to the top of figures $\textit{(e)}$, $\textit{(f)}$, $\textit{(g)}$ and $\textit{(h)}$ from Lemma~\ref{lem:5.11}, and the paths from the left to the right, and from the right to the left of figure $\textit{(i)}$ from Lemma~\ref{lem:5.11}.

    \textbf{Dual parallel relations :} If there is more than one parallel path of length $2$ (excluding cycles), then the sum of all such parallel paths is in $\overline{I}_n$.

    \textbf{Dual relations at vertices :} Fix $\lambda \in \Delta_n^0$, and let $\overline{y}_{\kappa_j}^\lambda$ be the ascending arrows in $\overline{Q}_n$ ending at $e_\lambda$. Let also $\overline{y}_\lambda^\mu$ be any ascending arrow starting at $e_\lambda$. Then the following element belongs to $\overline{I}_n$
    $$\overline{y}_\lambda^\mu\overline{x}_\lambda^\mu + \sum_{j} d_{\kappa_j}^\mu(\lambda) \overline{x}_{\kappa_j}^\lambda \overline{y}_{\kappa_j}^\lambda $$
    where the coefficients $d_{\kappa_j}^\mu(\lambda)$ are the same as in Theorem~\ref{thm:5.15}.
\end{theorem}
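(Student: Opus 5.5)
The plan is a dimension count that is local in the pair of endpoints. Write $V = (\Bbbk Q_n)_2 = \bigoplus_{\lambda,\mu} e_\lambda V e_\mu$, and split the pairing $\langle\bullet,\bullet\rangle$ according to the endpoints of length-$2$ paths. Paths in $Q_n$ from $e_\lambda$ to $e_\mu$ pair only with the reversed paths in $\overline{Q}_n$. By Theorem~\ref{thm:5.19} and the Koszul property, $(I_n)_2$ is spanned by the relations of Theorem~\ref{thm:5.15}. Each of these relations lives in a single summand $e_\lambda V e_\mu$, so the orthogonal complement decomposes summand by summand as well.

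Therefore $\overline{I}_n$ is generated by $\bigoplus_{\lambda,\mu} (e_\lambda (I_n)_2 e_\mu)^\perp$, and the proof reduces to the following. For each pair $(\lambda,\mu)$, (1) each listed dual relation pairs to zero with every relation of Theorem~\ref{thm:5.15}, and (2) the listed dual relations span a subspace of dimension $\dim e_\lambda V e_\mu - \dim e_\lambda (I_n)_2 e_\mu$. Using Lemmas~\ref{lem:5.17} and~\ref{lem:5.18} together with Theorem~\ref{thm:5.19}, the latter number equals $\dim e_\lambda V e_\mu - \dim e_\lambda(\D_n)_2 e_\mu$.

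For $\lambda\neq\mu$ we case on the classification of length-$2$ paths from Lemmas~\ref{lem:5.10} and~\ref{lem:5.11}.
\begin{itemize}
\item If the unique path is one of the monomial paths (a)--(d) of Lemma~\ref{lem:5.11}, it lies in $I_n$. Its perp is therefore zero, consistent with no dual relation being listed.
\item If the unique path comes from (e)--(i), then $I_n$ has no relation there and $\dim(\D_n)_2=1$ by Lemma~\ref{lem:5.18}. So the whole one-dimensional space of reversed paths is in $\overline{I}_n$, giving exactly the dual monomial relations.
\item If there are $r\ge 2$ parallel paths $p_1,\dots,p_r$ (the squares of Lemma~\ref{lem:5.10}), then $e_\lambda (I_n)_2 e_\mu$ is spanned by the differences $p_i-p_j$, which has dimension $r-1$. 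Its perp is spanned by $\sum_i \overline{p}_i$, giving the dual parallel relation.
\end{itemize}
One should double-check here that the square relations carry coefficient exactly $\pm1$ with no signs. Theorem~\ref{thm:5.15} writes them as plain differences, so the perp is indeed the plain sum.

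The main obstacle is $\lambda=\mu$. The space $e_\lambda V e_\lambda$ has basis the $2$-cycles $y_\kappa^\lambda x_\kappa^\lambda$ (one for each $\kappa$ below $\lambda$) and $x_\lambda^{\mu_i}y_\lambda^{\mu_i}$ ($\text{def}(\lambda)$ of them). The relations at vertices give one relation for each $\kappa_j$. Each has coefficient $1$ on its own $y x$-cycle and coefficients $-d_{\kappa_j}^{\mu_i}(\lambda)$ on the $x y$-cycles. These relations are visibly linearly independent, and by Theorem~\ref{thm:5.19} they span the full $e_\lambda(I_n)_2 e_\lambda$, since the dimension count forces codimension $\text{def}(\lambda)$.

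The perp of this span is computed as for a graph of a linear map. A vector $\sum_j a_j\,\overline{(y x)}_{j} + \sum_i b_i\,\overline{(x y)}_i$ is orthogonal to all the relations iff $a_j = \sum_i d_{\kappa_j}^{\mu_i}(\lambda) b_i$ for all $j$. A basis of this perp is obtained by setting $b_i=\delta_{i,i_0}$. Reversing the paths, $\overline{(x_\lambda^{\mu}y_\lambda^{\mu})}=\overline{y}_\lambda^\mu\overline{x}_\lambda^\mu$ and $\overline{(y_\kappa^\lambda x_\kappa^\lambda)} = \overline{x}_\kappa^\lambda\overline{y}_\kappa^\lambda$, so this basis consists of the $\text{def}(\lambda)$ dual relations at vertices, each determined by its choice of ascending arrow $\overline{y}_\lambda^\mu$.

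The care needed here is in tracking the sign convention that turns $-d$ into $+d$. The element that comes out literally is $\overline{y}\,\overline{x}+\sum_j d\,\overline{x}\,\overline{y}$, matching the statement, provided the pairing is taken without any sign twist. I would verify this against a small case such as $Q_4$ before concluding. Counting dimensions then finishes every case.
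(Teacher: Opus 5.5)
Your proposal is correct, and it proves more than the paper's own argument. The paper only checks that each listed element pairs to zero with every relation of Theorem~\ref{thm:5.15}. That shows the listed relations lie in $(I_n)_2^\perp$, but the paper never shows that they \emph{span} $(I_n)_2^\perp$, which is what ``$\overline{I}_n$ is generated by'' actually requires.

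Your block-by-block computation of the orthogonal complement gives both inclusions at once:
\begin{itemize}
\item zero for the monomial paths \textit{(a)}--\textit{(d)};
\item the reversed path for the paths with no parallel path, \textit{(e)}--\textit{(i)};
\item the plain sum when there are $r\geq 2$ parallel paths;
\item the graph-of-a-linear-map description at a vertex.
\end{itemize}
The price is that you lean explicitly on the classification in Lemmas~\ref{lem:5.10} and~\ref{lem:5.11}: monomial paths have no parallels, any two parallel non-cycle paths form a square, and every path with no parallel is among \textit{(a)}--\textit{(i)}. The paper's orthogonality check uses these facts only implicitly.

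You can drop the appeals to Koszulity, Lemmas~\ref{lem:5.17}--\ref{lem:5.18} and Theorem~\ref{thm:5.19}. Since $I_n$ is by definition generated by the relations of Theorem~\ref{thm:5.15}, each $e_\lambda(I_n)_2e_\mu$ is simply the span of those relations lying in that block. Your perp computations are therefore already direct and complete.

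Your sign concern resolves in your favour. The paper's pairing has no sign twist, and your vertex computation gives exactly $\overline{y}_\lambda^\mu\overline{x}_\lambda^\mu+\sum_j d_{\kappa_j}^\mu(\lambda)\overline{x}_{\kappa_j}^\lambda\overline{y}_{\kappa_j}^\lambda$, as in the paper's own pairing computation. The $Q_4$ check also works: at $\dn\dn\up\up$ the outer circle has $d=-2$, matching the reduction $(\overline{y}_8\overline{x}_8,\,2\overline{x}_9\overline{y}_9)$ in $\overline{R}_4$.
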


\begin{proof}
    We need to show that the above relations are orthogonal to the relations from Theorem~\ref{thm:5.15}. The dual monomial relations are clearly orthogonal to the square and vertices relations, they're also orthogonal to the monomial relations because they correspond to different paths from Lemma~\ref{lem:5.11}. \\
    The dual parallel relations are clearly orthogonal to the monomials and vertices relations, they're also orthogonal to the square relations because the signs compensate. \\
    Finally, the dual relations at vertices are clearly orthogonal to the monomial and square relations, but it's less obvious for the relations at vertices. Fix $\lambda \in \Delta_n^0$, and let $\overline{y}_{\kappa_j}^\lambda$ be the ascending arrows in $\overline{Q}_n$ ending at $e_\lambda$, let also $\mu_i$ for $1 \leq i \leq \text{def}(\lambda)$ be the weights obtained from $\lambda$ by exchanging a $\dn \cdots \up$ or a $\up \cdots \up$ pair lying in a circle of $e_\lambda$. Then we have
\begin{align*}
    \left\langle
y^{\lambda}_{\kappa_k} x^{\lambda}_{\kappa_k}
-
\sum_i d^{\mu_i}_{\kappa_k}(\lambda) x^{\mu_i}_{\lambda} y^{\mu_i}_{\lambda},
\bar y^{\mu_l}_{\lambda} \bar x^{\mu_l}_{\lambda}
+
\sum_j d^{\mu_l}_{\kappa_j}(\lambda) \bar x^{\lambda}_{\kappa_j} \bar y^{\lambda}_{\kappa_j}
\right\rangle
&=
\sum_j \delta_{\kappa_k,\kappa_j} d^{\mu_l}_{\kappa_j}
-
\sum_i \delta_{\mu_i,\mu_l} d^{\mu_i}_{\kappa_k} \\
&=0
\end{align*}
where we used that
\[
\langle
y^{\lambda}_{\kappa_k} x^{\lambda}_{\kappa_k},
\;
\bar x^{\lambda}_{\kappa_j} \bar y^{\lambda}_{\kappa_j}
\rangle
=
\delta_{\kappa_k,\kappa_j}
\quad \text{ and } \quad
\langle
x^{\mu_i}_{\lambda} y^{\mu_i}_{\lambda},
\;
\bar y^{\mu_l}_{\lambda} \bar x^{\mu_l}_{\lambda}
\rangle
=
\delta_{\mu_i,\mu_l}
\]
This verifies that the dual relations at vertices are indeed orthogonal to the relations at vertices.
\end{proof}

\subsection{Reduction system of $\overline{Q}_4$}\label{sec:7.2}

Now that we have an explicit description of $\overline{\D}_4$, we can define an explicit reduction system that might satisfy the diamond condition for $\overline{I}_4$. But first let's give a visual description of $\overline{Q}_4$, with explicit labels to the arrows.

$$\begin{tikzpicture}[scale=.4]
    \CBweight{1,1,0,0}{9,9,9,9}
    \dCAP{0}{1}{1}
    \dCUP{0}{1}{1}
    \LINE{2}{1}
    \LINE{3}{1}
    \CFILL{4}{9,9,9,9}

    \begin{scope}[yshift=5cm]
        \CBweight{0,0,0,0}{9,9,9,9}
        \LINE{0}{1}
        \LINE{1}{1}
        \LINE{2}{1}
        \LINE{3}{1}
        \CFILL{4}{9,9,9,9}
    \end{scope}

    \begin{scope}[yshift=-5cm]
        \CBweight{1,0,1,0}{9,9,9,9}
        \dLINE{0}{1}
        \LINE{3}{1}
        \CCUP{1}{2}{1}{9}
        \CCAP{1}{2}{1}{9}
        \CFILL{4}{9,9,9,9}
    \end{scope}
    \begin{scope}[yshift=-10cm]
        \begin{scope}[xshift=-4cm]
            \CBweight{0,1,1,0}{9,9,9,9}
            \dLINE{2}{1}
            \LINE{3}{1}
            \CCUP{0}{1}{1}{9}
            \CCAP{0}{1}{1}{9}
            \CFILL{4}{9,9,9,9}
        \end{scope}
        \begin{scope}[xshift=4cm]
            \CBweight{1,0,0,1}{9,9,9,9}
            \dLINE{0}{1}
            \LINE{1}{1}
            \CCUP{2}{3}{1}{9}
            \CCAP{2}{3}{1}{9}
            \CFILL{4}{9,9,9,9}
        \end{scope}
    \end{scope}
    \begin{scope}[yshift=-15cm]
        \CBweight{0,1,0,1}{9,9,9,9}
        \CCUP{0}{1}{1}{9}
        \CCAP{0}{1}{1}{9}
        \CCUP{2}{3}{1}{9}
        \CCAP{2}{3}{1}{9}
        \CFILL{4}{9,9,9,9}
    \end{scope}
    \begin{scope}[yshift=-20cm]
        \CBweight{0,0,1,1}{9,9,9,9}
        \CCUP{0}{3}{.75}{9}
        \CCAP{0}{3}{.75}{9}
        \CCUP{1}{2}{1}{9}
        \CCAP{1}{2}{1}{9}
        \CFILL{4}{9,9,9,9}
    \end{scope}

    \begin{scope}[yshift=-25cm]
        \CBweight{1,1,1,1}{9,9,9,9}
        \CdCUP{0}{1}{1}{9}
        \CdCAP{0}{1}{1}{9}
        \CdCUP{2}{3}{1}{9}
        \CdCAP{2}{3}{1}{9}
        \CFILL{4}{9,9,9,9}
    \end{scope}
    
    \draw[<-, very thick, , draw=\colorFor{9}] (1,-1.75) -- (1,-3.25);
\draw[<-, very thick, , draw=\colorFor{9}] (2,-3.25) -- (2,-1.75);
\draw[<-, very thick, , draw=\colorFor{9}] (-1,-6.75) -- (-2,-8.25);
\draw[<-, very thick, , draw=\colorFor{9}] (-1,-8.25) -- (0,-6.75);
\draw[<-, very thick, , draw=\colorFor{9}] (3,-6.75) -- (4,-8.25);
\draw[<-, very thick, , draw=\colorFor{9}] (5,-8.25) -- (4,-6.75);
\draw[<-, very thick, , draw=\colorFor{9}] (-2,-11.75) -- (-1,-13.25);
\draw[<-, very thick, , draw=\colorFor{9}] (0,-13.25) -- (-1,-11.75);
\draw[<-, very thick, , draw=\colorFor{9}] (4,-11.75) -- (3,-13.25);
\draw[<-, very thick, , draw=\colorFor{9}] (4,-13.25) -- (5,-11.75);
\draw[<-, very thick, , draw=\colorFor{9}] (1,-16.75) -- (1,-18.25);
\draw[<-, very thick, , draw=\colorFor{9}] (2,-18.25) -- (2,-16.75);
\draw[ very thick, , draw=\colorFor{9}] (-7,-12.5) arc[start angle=180, end angle=90, x radius=6, y radius=7];
\draw[->,very thick, , draw=\colorFor{9}] (-7,-12.5) arc[start angle=180, end angle=270, x radius=6, y radius=7];
\draw[->,very thick, , draw=\colorFor{9}] (-8,-12.5) arc[start angle=180, end angle=90, x radius=7, y radius=8];
\draw[ very thick, , draw=\colorFor{9}] (-8,-12.5) arc[start angle=180, end angle=270, x radius=7, y radius=8];

\draw[<-, very thick, , draw=\colorFor{9}] (1,3.25) -- (1,1.75);
\draw[<-, very thick, , draw=\colorFor{9}] (2,1.75) -- (2,3.25);
\draw[<-, very thick, , draw=\colorFor{9}] (1,-21.75) -- (1,-23.25);
\draw[<-, very thick, , draw=\colorFor{9}] (2,-23.25) -- (2,-21.75);
\draw[->,very thick, , draw=\colorFor{9}] (10,-12.5) arc[start angle=0, end angle=90, x radius=6, y radius=12];
\draw[very thick, , draw=\colorFor{9}] (10,-12.5) arc[start angle=0, end angle=-90, x radius=6, y radius=12];
\draw[very thick, , draw=\colorFor{9}] (11,-12.5) arc[start angle=0, end angle=90, x radius=7, y radius=13];
\draw[->,very thick, , draw=\colorFor{9}] (11,-12.5) arc[start angle=0, end angle=-90, x radius=7, y radius=13];

\node at (0,-2.5) {$\overline{y}_2$};
\node at (3,-2.5) {$\overline{x}_2$};

\node at (0,-17.5) {$\overline{y}_7$};
\node at (3,-17.5) {$\overline{x}_7$};

\node at (-2.3,-7) {$\overline{y}_3$};
\node at (.3,-8) {$\overline{x}_3$};

\node at (2.7,-8) {$\overline{y}_4$};
\node at (5.3,-7) {$\overline{x}_4$};

\node at (-2.3,-13) {$\overline{y}_5$};
\node at (.3,-12) {$\overline{x}_5$};

\node at (2.7,-12) {$\overline{y}_6$};
\node at (5.3,-13) {$\overline{x}_6$};

\node at (-9,-12.5) {$\overline{y}_8$};
\node at (-6,-12.5) {$\overline{x}_8$};

\node at (0,2.5) {$\overline{y}_1$};
\node at (3,2.5) {$\overline{x}_1$};

\node at (0,-22.5) {$\overline{y}_9$};
\node at (3,-22.5) {$\overline{x}_9$};

\node at (9,-12.5) {$\overline{y}_{10}$};
\node at (12,-12.5) {$\overline{x}_{10}$};

\end{tikzpicture}$$

\begin{definition}\label{def:7.2}
    Let $\overline{S}_{4}$ be the set of all paths in $\overline{Q}_{4}$ of the following four types:
\begin{enumerate}[(I)]
    \item length 2 paths of the form $\overline{y}_{\lambda}^{\nu}\overline{x}_{\mu}^{\nu}$ for some weights $\lambda, \mu, \nu \in \Delta_{4}^{0}$
    \item length 2 paths of the form $\overline{y}_{\lambda}^{\nu}\overline{y}_{\nu}^{\mu}$ or $\overline{x}_{\nu}^{\mu}\overline{x}_{\lambda}^{\nu}$ in path $\textit{(f)}$ from Lemma~\ref{lem:5.11}
    \item length 2 paths of the form $\overline{y}_{\lambda}^{\nu}\overline{y}_{\nu}^{\mu}$ or $\overline{x}_{\nu}^{\mu}\overline{x}_{\lambda}^{\nu}$ in squares $\textit{(a)}$ and $\textit{(d)}$ from Lemma~\ref{lem:5.10}, where $\nu$ is the right vertex in square $\textit{(a)}$, and $\nu$ is the third highest vertex in square $\textit{(d)}$
\end{enumerate}
For each $s \in \overline{S}_4$ let $\varphi_s \in \Bbbk\mathrm{Irr}_{\overline{S}_4}$ denote the linear combination of the irreducible paths so that $s - \varphi_s \in \overline{I}_4$ (Theorem~\ref{thm:7.1} ensure that this is possible). Then we define $\overline{R}_4 = \{(s, \varphi_s) \mid s \in \overline{S}_4\}$. 
The reduction system $\overline{R}_4$ is given explicitely by
\[
\begin{gathered}
(\overline{y}_1\overline{x}_1,\,-\overline{x}_2\overline{y}_2)
\qquad
(\overline{y}_2\overline{x}_2,\,-\overline{x}_3\overline{y}_3-\overline{x}_4\overline{y}_4)
\qquad
(\overline{y}_3\overline{x}_3,\,0)
\qquad
(\overline{y}_4\overline{x}_4,\,0)
\\
(\overline{y}_5\overline{x}_5,\,-\overline{x}_7\overline{y}_7)
\qquad
(\overline{y}_6\overline{x}_6,\,-\overline{x}_7\overline{y}_7)
\qquad
(\overline{y}_7\overline{x}_7,\,-\overline{x}_9\overline{y}_9)
\qquad
(\overline{y}_8\overline{x}_8,\,2\overline{x}_9\overline{y}_9)
\\
(\overline{y}_9\overline{x}_9,\,0)
\qquad
(\overline{y}_{10}\overline{x}_{10},\,0)
\qquad
(\overline{y}_2\overline{x}_{10},\,-\overline{x}_8\overline{x}_9)
\qquad
(\overline{y}_{10}\overline{x}_2,\,-\overline{y}_9\overline{y}_8)
\\
(\overline{y}_3\overline{x}_4,\,-\overline{x}_5\overline{y}_6)
\qquad
(\overline{y}_4\overline{x}_3,\,-\overline{x}_6\overline{y}_5)
\qquad
(\overline{y}_8\overline{x}_4,\,-\overline{y}_7\overline{y}_6)
\qquad
(\overline{y}_4\overline{x}_8,\,-\overline{x}_6\overline{x}_7)
\\
(\overline{y}_8\overline{x}_3,\,-\overline{y}_7\overline{y}_5)
\qquad
(\overline{y}_3\overline{x}_8,\,-\overline{x}_5\overline{x}_7)
\qquad
(\overline{y}_{10}\overline{y}_1,\,0)
\qquad
(\overline{x}_1\overline{x}_{10},\,0)
\\
(\overline{y}_6\overline{y}_4,\,-\overline{y}_5\overline{y}_3-\overline{x}_7\overline{y}_8)
\qquad
(\overline{x}_4\overline{x}_6,\,-\overline{x}_3\overline{x}_5-\overline{x}_8\overline{y}_7)
\qquad
(\overline{y}_8\overline{y}_2,\,-\overline{x}_9\overline{y}_{10})
\qquad
(\overline{x}_2\overline{x}_8,\,-\overline{x}_{10}\overline{y}_{9})
\end{gathered}
\]
\end{definition}

\begin{lemma}\label{lem:7.3}
    The reduction system $\overline{R}_4$ is reduction-finite.
\end{lemma}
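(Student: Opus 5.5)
Our plan is to exhibit a monomial order, or more simply a nonnegative integer-valued weight function on paths, that strictly decreases under every reduction $s \mapsto \varphi_s$ of $\overline{R}_4$. Reduction-finiteness then follows: every reduction step applied to a path occurring in an element replaces it by a combination of paths of strictly smaller weight. Since $\overline{Q}_4$ is finite, there are finitely many paths of each length and weight below a bound. Also, all relations are homogeneous of length $2$, so reductions preserve path length. Hence any sequence of reductions on a fixed element terminates after finitely many steps.

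Concretely, we would use the floor/height data of Definition~\ref{def:5.6} to order the vertices of $\overline{Q}_4$. Label them $1,\dots,9$ from top to bottom as in the picture; the vertex at floor $0$ is $v_1$ and the vertex $\up\up\up\up$ is $v_9$. For a path $p = a_1a_2$ passing through the middle vertex $v$, set $w(p)$ to be the index of $v$, or a suitable strictly monotone function of it. For the type (I) reductions $\overline{y}\,\overline{x} \to \sum \overline{x}\,\overline{y}$, the middle vertex of the left side lies above the source, while each term on the right passes through a vertex below it. For instance, $\overline{y}_1\overline{x}_1 \mapsto -\overline{x}_2\overline{y}_2$ passes first through $v_1$ and then through the vertex $\up\up\dn\dn$. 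So a weight that increases with the middle vertex's height works for type (I) cycles once we reverse the sign: we take $w(p) = $ the position of the middle vertex counted from the bottom. We then check the mixed ones, such as $\overline{y}_2\overline{x}_{10} \mapsto -\overline{x}_8\overline{x}_9$, case by case on the explicit list in Definition~\ref{def:7.2}.

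If a single middle-vertex weight fails for some rule (types (II) and (III), e.g.\ $\overline{y}_6\overline{y}_4 \mapsto -\overline{y}_5\overline{y}_3 - \overline{x}_7\overline{y}_8$, where middle vertices may sit on the same floor), we refine to a lexicographic order. The first key compares the middle vertex position. The second key is either the number of $\overline{x}$-arrows in the first position, or a fixed total order on the arrow labels $\overline{x}_1,\dots,\overline{x}_{10},\overline{y}_1,\dots,\overline{y}_{10}$ comparing first letters. We then verify the $24$ rules one by one. This verification, specifically choosing a tie-breaker compatible simultaneously with the rules in rows five and six of the list, is the main obstacle. As a fallback, we would argue directly that after reducing a length-$2$ path at most a bounded number of times (tracking the finitely many length-$2$ paths between each pair of vertices), one reaches irreducibles. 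In that version we show that the directed graph on length-$2$ paths, with an edge $s \to t$ whenever $t$ occurs in $\varphi_s$, is acyclic, which is a finite check. Longer paths are handled since reductions act on length-$2$ subwords and the length-$2$ graph is acyclic, giving a well-founded order on words by the multiset of their length-$2$ subwords' ranks.
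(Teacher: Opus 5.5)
Your key idea, that every rule of $\overline{R}_4$ lowers the middle vertex, is right. But the step that actually proves reduction-finiteness, passing from length-$2$ words to arbitrary paths, is missing from your main line. The justification you offer in the fallback is false in general.

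Reducing the subword $a_ia_{i+1}$ of $a_1\cdots a_m$ to $bc$ also replaces the neighbouring subwords $a_{i-1}a_i$ and $a_{i+1}a_{i+2}$ by $a_{i-1}b$ and $ca_{i+2}$. Their ranks in the graph on length-$2$ paths can go up. So neither acyclicity of that graph nor the multiset of subword ranks gives termination. Concretely, take loops $a,\dots,g$ at a single vertex with the rules $ab\mapsto cd$, $de\mapsto fg$, $cf\mapsto ab$, $bg\mapsto be$. The length-$2$ graph is acyclic, yet $abe\to cde\to cfg\to abg\to abe$ is an infinite chain of nontrivial reductions.

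What rescues the argument is that your weight depends only on vertices. For a path through $u_0,\dots,u_m$, put $W(p)=\sum_{j=1}^{m-1}r(u_j)$ for a rank function $r$ on vertices. A reduction at $a_ia_{i+1}$ changes only $u_i$, since the neighbouring subwords keep their middle vertices $u_{i-1}$ and $u_{i+1}$. Hence $W(usv)-W(uqv)=W(s)-W(q)$ for every term $q$ of $\varphi_s$. So ``same length, same endpoints, smaller $W$'' is a semigroup partial order. It has DCC because there are finitely many paths of given length and endpoints. It is compatible with $\overline{R}_4$ as soon as each rule strictly lowers the middle vertex, and Bergman's criterion then gives reduction-finiteness.

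The tie-breaker is not a real obstacle. Running through the $24$ rules, only two terms fail to strictly lower the middle vertex in the order of Definition~\ref{def:5.6}:
\begin{itemize}
\item $-\overline{y}_5\overline{y}_3$ in the rule for $\overline{y}_6\overline{y}_4$;
\item $-\overline{x}_3\overline{x}_5$ in the rule for $\overline{x}_4\overline{x}_6$.
\end{itemize}
Both replace $\up\dn\dn\up$ by $\dn\up\up\dn$, which share floor $1$ and local height $2$. Your ``number of $\overline{x}$'s in the first position'' key does not separate them, since both sides start with $\overline{y}$, respectively $\overline{x}$. Simply ranking $\up\dn\dn\up$ above $\dn\up\up\dn$ does. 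With the total order $\dn\dn\dn\dn>\up\up\dn\dn>\up\dn\up\dn>\up\dn\dn\up>\dn\up\up\dn>\dn\up\dn\up>\dn\dn\up\up>\up\up\up\up$, all $24$ rules strictly decrease $W$. (Also, $\overline{Q}_4$ has $8$ vertices, not $9$.)

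Completed this way, your argument is a self-contained check in place of the paper's one-line appeal to Barmeier--Wang's Lemma~3.14. It also makes explicit the one type-$D$ feature, this tie, that the appeal leaves implicit.
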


\begin{proof}
    The proof of~\cite[Lemma~3.14]{BarmeierWang2026} for $\overline{R}_m^n$ works also for $\overline{R}_4$.
\end{proof}

\begin{lemma}\label{lem:7.4}
    The reduction system $\overline{R}_4$ satisfies the diamond condition for $\overline{I}_4$.
\end{lemma}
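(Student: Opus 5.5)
We apply Bergman's Diamond Lemma~\cite{Bergman1978}. By Lemma~\ref{lem:7.3}, $\overline{R}_4$ is reduction-finite. Hence it suffices to check two things: that every overlap ambiguity of $\overline{R}_4$ is resolvable, and that the ideal generated by $\{s-\varphi_s\}_{s\in\overline{S}_4}$ equals $\overline{I}_4$. Since $\overline{R}_4$ has no inclusion ambiguities (all elements of $\overline{S}_4$ are distinct paths of length $2$), only overlaps matter. By the Diamond Lemma these conditions give reduction-uniqueness, and then $\mathrm{Irr}_{\overline{S}_4}$ projects to a basis of $\Bbbk\overline{Q}_4/\overline{I}_4$.

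\textbf{Step 1: the ideal.} By construction each $s-\varphi_s$ lies in $\overline{I}_4$, using Theorem~\ref{thm:7.1}. For the converse we count dimensions in degree $2$, vertex pair by vertex pair. For each pair $(e_\lambda,e_\mu)$ we compare the number of length-$2$ paths in $\overline{S}_4$ from $e_\lambda$ to $e_\mu$ with $\dim e_\lambda(\overline{I}_4)_2e_\mu$. The latter equals (number of length-$2$ paths) minus $\dim e_\mu(\D_4)_2e_\lambda$, since $\overline{I}_4=(I_4)_2^\perp$. The right-hand side is read off from Lemma~\ref{lem:5.17} and Theorem~\ref{thm:5.19}: the dimension is $\mathrm{def}(\lambda)$ on loops and $0$ or $1$ otherwise. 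The relations $s-\varphi_s$ have pairwise distinct leading paths, so they are linearly independent. Matching the counts on the pictured quiver $\overline{Q}_4$ then gives equality in degree $2$, and hence equality of ideals, since both are generated in degree $2$.

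\textbf{Step 2: the ambiguities.} Next we list all overlaps $pqr$ with $pq,qr\in\overline{S}_4$. Each has length $3$ in $\overline{Q}_4$, so there are finitely many, and they are easily enumerated from the explicit table. They are of three kinds. The first kind are the $\overline{y}\,\overline{x}\,\cdot$ and $\cdot\,\overline{y}\,\overline{x}$ overlaps around each vertex, e.g. $\overline{y}_1\overline{x}_1\overline{x}_{10}$, $\overline{y}_{10}\overline{y}_1\overline{x}_1$, $\overline{y}_2\overline{x}_2\overline{x}_8$ and $\overline{y}_8\overline{x}_8\overline{x}_4$. The second kind are the mixed type (I)/(III) overlaps, such as $\overline{y}_8\overline{y}_2\overline{x}_{10}$ and $\overline{y}_6\overline{y}_4\overline{x}_3$. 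The third kind come from the monomial relations $(\overline{y}_{10}\overline{y}_1,0)$ and $(\overline{x}_1\overline{x}_{10},0)$.

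For each overlap we reduce $(pq)r$ and $p(qr)$ to irreducible form and check that they agree. We exploit the involution of Remark~\ref{rmk:invD}, transported to the Koszul dual: it exchanges $\overline{x}$ and $\overline{y}$ and reverses paths. It maps $\overline{R}_4$ to itself, as the table is visibly symmetric (e.g. $(\overline{y}_2\overline{x}_{10},-\overline{x}_8\overline{x}_9)\leftrightarrow(\overline{y}_{10}\overline{x}_2,-\overline{y}_9\overline{y}_8)$). This roughly halves the list. The left--right mirror symmetry of $\overline{Q}_4$, which swaps indices $3\leftrightarrow4$ and $5\leftrightarrow6$, halves it again. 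Many ambiguities resolve to $0$ immediately, because both sides hit a zero reduction such as $\overline{y}_3\overline{x}_3$, $\overline{y}_9\overline{x}_9$ or $\overline{y}_{10}\overline{y}_1$. Others reduce to a common path like $\overline{x}_9\overline{y}_9\overline{x}_9$, which is irreducible after further reduction.

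\textbf{Main obstacle.} The delicate cases are the overlaps where the nonzero coefficients $d$ interact: those passing through vertices $\lambda=1101$-type ($\overline{y}_8$, coefficient $2$) and through the square (d)-type relation $(\overline{y}_6\overline{y}_4,-\overline{y}_5\overline{y}_3-\overline{x}_7\overline{y}_8)$. There the signs $-2$ versus $1$ must cancel exactly, for instance in $\overline{y}_7\overline{y}_6\overline{y}_4$ and $\overline{y}_8\overline{x}_8\overline{x}_4$. For these we carry out the reductions explicitly. If a mismatch appeared, it would indicate that a missing element of $\overline{S}_4$ or a sign in the $d$ coefficients needs correcting. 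As a safeguard, we verify the final count of irreducible paths per vertex pair against the Hilbert series of $\overline{\D}_4$, which is determined by Koszulity from that of $\D_4$.
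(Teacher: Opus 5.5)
Your architecture is the paper's: reduction-finiteness from Lemma~\ref{lem:7.3}, then Bergman's Diamond Lemma, then resolution of the overlap ambiguities. Your Step~1 adds a check that $\{s-\varphi_s\}_{s\in\overline{S}_4}$ generates all of $\overline{I}_4$. The diamond condition needs this and the paper leaves it implicit, so it is a real improvement, but the count as you state it would fail. The pairing is perfect on each vertex pair, so $\dim e_\lambda(\overline{I}_4)_2e_\mu = N-\dim e_\mu(I_4)_2e_\lambda=\dim e_\mu(\D_4)_2e_\lambda$, where $N$ is the number of length-two paths. Your expression $N-\dim e_\mu(\D_4)_2e_\lambda$ is instead $\dim e_\lambda(\overline{\D}_4)_2e_\mu$, the expected number of \emph{irreducible} paths. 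For example, at $\up\up\dn\dn$ there are three length-two loops ($\overline{y}_1\overline{x}_1$, $\overline{x}_2\overline{y}_2$, $\overline{x}_{10}\overline{y}_{10}$) and $\mathrm{def}=1$. Only one of these loops, $\overline{y}_1\overline{x}_1$, lies in $\overline{S}_4$, but your formula predicts two. With the corrected identity, at every vertex pair the number of elements of $\overline{S}_4$ equals $\dim(\D_4)_2$ at the reversed pair, and the rest of Step~1 goes through.

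Step~2 also needs repair. $\overline{y}_8\overline{x}_8\overline{x}_4$ is not a path: $\overline{x}_8$ ends at $\dn\dn\up\up$ while $\overline{x}_4$ starts at $\up\dn\up\dn$. Likewise $\overline{y}_7\overline{y}_6\overline{y}_4$ is not an ambiguity, since $\overline{y}_7\overline{y}_6\notin\overline{S}_4$. So the cases you single out as delicate are not on the list.

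The left--right mirror $3\leftrightarrow4$, $5\leftrightarrow6$ preserves $\overline{I}_4$ but not $\overline{S}_4$: the set contains $\overline{y}_6\overline{y}_4$ but not $\overline{y}_5\overline{y}_3$. Hence the mirror does not permute the ambiguities and cannot be used to skip any. Only the involution $\overline{x}\leftrightarrow\overline{y}$ combined with path reversal preserves $\overline{R}_4$.

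A systematic enumeration uses the fact that the middle letter must be one of $\overline{y}_1,\overline{y}_2,\overline{y}_4,\overline{x}_1,\overline{x}_2,\overline{x}_4$. This gives twelve ambiguities: the paper's ten together with $\overline{y}_8\overline{y}_2\overline{x}_{10}$ and $\overline{y}_{10}\overline{x}_2\overline{x}_8$. You correctly included $\overline{y}_8\overline{y}_2\overline{x}_{10}$, which is missing from the paper's list. Both extra ambiguities reduce to $0$ on each side, using $\overline{y}_{10}\overline{x}_{10}\to0$, $\overline{y}_9\overline{x}_9\to0$ and $\overline{y}_8\overline{x}_8\to2\overline{x}_9\overline{y}_9$.

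The cases where the coefficient $2$ and the relation with leading term $\overline{y}_6\overline{y}_4$ genuinely interact are the following, together with their images under the involution:
\begin{itemize}
\item $\overline{y}_6\overline{y}_4\overline{x}_8$, where both sides reduce to $-\overline{x}_7\overline{x}_9\overline{y}_9$;
\item $\overline{y}_8\overline{y}_2\overline{x}_2$, where $\overline{y}_7\overline{y}_6\overline{y}_4$ occurs only as an intermediate word and both sides reduce to $\overline{x}_9\overline{y}_9\overline{y}_8$.
\end{itemize}
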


\begin{proof}
    Thanks to Lemma~\ref{lem:7.3} and~\cite[Theorem 1.2]{Bergman1978}, it suffices to show that all overlap ambiguities of $\overline{S}_4$ are resolvable with respect to $\overline{R}_4$. But there are only ten overlaps given by
    $$
    \overline{y}_{10}\overline{y}_1\overline{x}_1,\quad
    \overline{y}_6\overline{y}_4\overline{x}_4,\quad
    \overline{y}_6\overline{y}_4\overline{x}_3,\quad
    \overline{y}_6\overline{y}_4\overline{x}_8,\quad
    \overline{y}_8\overline{y}_2\overline{x}_2,
    $$
    $$
    \overline{y}_1\overline{x}_1\overline{x}_{10},\quad
    \overline{y}_4\overline{x}_4\overline{x}_6,\quad
    \overline{y}_3\overline{x}_4\overline{x}_6,\quad
    \overline{y}_8\overline{x}_4\overline{x}_6,\quad
    \overline{y}_2\overline{x}_2\overline{x}_8,
    $$
    and it can be easily checked that they are indeed resolvable with respect to $\overline{R}_4$. For instance, by applying succesive basic reductions we obtain
    $$\textcolor{red}{\overline{y}_{10}\overline{y}_1}\overline{x}_1 \to 0 \quad \text{ and } \quad \overline{y}_{10}\textcolor{red}{\overline{y}_1\overline{x}_1} \to -\overline{y}_{10}\overline{x}_2\overline{y}_2 \to \overline{y}_9\overline{y}_8\overline{y}_2 \to \overline{y}_9\overline{x}_9\overline{y}_{10} \to 0 $$
    as wanted.
\end{proof}

\subsection{Hochschild cohomology of $\D_4$}\label{sec:7.3}

We recall that Theorem~\ref{intformal} tells us that $\D_4$ is intrinsically formal if
$$\text{HH}^2_q(\D_4, \D_4) = 0$$
for every $q > 0$. We will thus compute these Hochschild cohomology groups thanks to $\overline{R}_4$ via the method from Corollary~\ref{method}. Firstly, we explain how to compute easily all but finitely many such groups.

\begin{lemma}\label{lem:7.5}
    We have that
    $$\text{HH}^2_q(\D_4, \D_4) = 0$$
    for $q$ odd, or for $q \geq 10$.
\end{lemma}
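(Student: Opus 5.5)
We will compute through the Koszul dual. By Theorem~\ref{intformaltool}, $\text{HH}^2_q(\D_4,\D_4)\cong \text{HH}^2_q(\overline{\D}_4,\overline{\D}_4)$. By Corollary~\ref{method} applied to $\overline{R}_4$, which satisfies the diamond condition by Lemma~\ref{lem:7.4}, the latter is a subquotient of $\mathrm{Hom}_{\Bbbk Q_0^e}(\Bbbk\overline{S}_4,\Bbbk\mathrm{Irr}_{\overline{S}_4})$. It therefore suffices to show that the relevant graded piece of this cochain space is already zero. That is, for $q$ odd or $q\geq 10$, we show there is no nonzero $\Bbbk Q_0^e$-linear map $\widetilde{\varphi}:\Bbbk\overline{S}_4\to\Bbbk\mathrm{Irr}_{\overline{S}_4}$ of the corresponding internal degree.

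\textbf{Translating $q$ into a path length.} With the bigrading of Theorem~\ref{intformaltool}, every arrow $\overline{a}$ has bidegree $(1,-1)$. A path of length $\ell$ in $\overline{Q}_4$ then has bidegree $(\ell,-\ell)$. Every $s\in\overline{S}_4$ has length $2$. A cochain sending $s$ to an irreducible path of length $\ell$ with the same endpoints shifts the total bidegree by $(\ell-2,2-\ell)$. In Hochschild degree $2$, the convention $\text{HH}^{p}_q$ (cohomological degree plus internal shift) then makes the internal degree $q$ equal to $\ell-2$ up to a fixed normalization; the sign and offset must be checked against Keller's grading. So $\text{HH}^2_q$ is computed from cochains $s\mapsto \widetilde{\varphi}_s$ with $\widetilde{\varphi}_s$ a combination of irreducible paths of length $q+2$ parallel to $s$.

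\textbf{Parity.} By Lemma~\ref{lem:5.5}, $\Gamma_4$ is bipartite, so $\overline{Q}_4$ is bipartite as well. Hence any two parallel paths have lengths of the same parity. Since every $s$ has length $2$, parallel irreducible paths have even length, so $q+2$ is even. This forces the cochain space, and hence $\text{HH}^2_q$, to vanish for $q$ odd.

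\textbf{Large $q$.} We need that $\mathrm{Irr}_{\overline{S}_4}$ has no paths of length $\geq 12$. One route is to list the irreducible paths of $\overline{R}_4$ directly from the quiver picture. Such a path must avoid all type (I) subwords $\overline{y}\,\overline{x}$, so it has the shape $\overline{x}\cdots\overline{x}\,\overline{y}\cdots\overline{y}$: descending arrows followed by ascending ones. It must also avoid the listed $\overline{y}\,\overline{y}$ and $\overline{x}\,\overline{x}$ words. Since $\overline{Q}_4$ has $8$ vertices arranged in height levels, a descending run has bounded length (at most the height span, about $7$), and similarly for ascending runs.

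This crude bound gives roughly $14$, which is too weak, so the main obstacle is sharpening it. An alternative is to use $\overline{\D}_4\cong\D_4^!$ together with the Koszul duality of $\D_4$. The Hilbert series of $\D_4^!$ is determined by that of $\D_4$ through $H_{\D_4}(t)H_{\D_4^!}(-t)=1$ (as matrices indexed by vertices). $\D_4$ is the algebra of $\mathcal{O}$ for a parabolic of $\mathfrak{so}_8$, with global dimension $2\cdot\ell(w_0^{\mathfrak p})$-type bound. The top degree of $\D_4^!$ (an Ext algebra) equals the global dimension of $\D_4$, which we expect to be $2\max\lVert\lambda\rVert$-related, namely $10$ or less. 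Since irreducible paths form a basis of $\overline{\D}_4$, there are no irreducible paths of length above that top degree.

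The plan is to verify concretely, via the explicit list of irreducible paths obtained from $\overline{R}_4$, that the longest irreducible path has length at most $11$. For instance, one descends from $\up\up\up\up$'s region to the top using the $\overline{x}$'s allowed after the $\overline{x}\,\overline{x}$ reductions, then ascends. Then $\ell=q+2\leq 11$, which gives vanishing for $q\geq 10$. Checking this maximal length, and pinning down the exact offset between $q$ and $\ell$, is the step I expect to require the most care.
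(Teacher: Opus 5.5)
Your reduction to cochains on $\overline{R}_4$ and your parity argument via bipartiteness are what the paper does. The gap is in the large-$q$ step. The claim you set out to verify — that $\mathrm{Irr}_{\overline{S}_4}$ has no path of length $\geq 12$, or that the longest irreducible path has length at most $11$ — is false. Consider the path
\[
\overline{x}_1\overline{x}_2\overline{x}_3\overline{x}_5\overline{x}_7\overline{x}_9\overline{y}_9\overline{y}_7\overline{y}_5\overline{y}_3\overline{y}_2\overline{y}_1 ,
\]
which runs from $\dn\dn\dn\dn$ down to $\up\up\up\up$ and back up. None of its length-$2$ subwords lies in $\overline{S}_4$, so it is irreducible. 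By Lemma~\ref{lem:7.4} it is therefore a nonzero basis element of $\overline{\D}_4$ in degree $12$.

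This also shows that $\D_4$ has global dimension $12$ (this is $2\ell(w_0^{\mathfrak p}w_0)=2(12-6)$), not $10$ or less, so the Koszul/Hilbert-series route only bounds lengths by $12$. Your level count is also off: there are $7$ levels, so a run has at most $6$ descending and $6$ ascending arrows. That bound of $12$ is sharp, not $14$. Either way, $q=10$ (length $q+2=12$) is left uncovered, and it is even, so parity does not help.

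The missing idea is the $\Bbbk(\overline{Q}_4)_0^e$-linearity of $\tilde\varphi$. You wrote it into your setup and then dropped it. It means $\tilde\varphi_s$ may only involve irreducible paths with the same source and target as $s$. The length-$12$ path is a cycle at $\dn\dn\dn\dn$, and no element of $\overline{S}_4$ is such a cycle. Indeed, the only arrow leaving $\dn\dn\dn\dn$ is $\overline{x}_1$, and the only word of $\overline{S}_4$ starting with $\overline{x}_1$ is $\overline{x}_1\overline{x}_{10}$, which ends at $\up\up\up\up$.

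For the actual endpoint pairs of $\overline{S}_4$ you then compare lengths. Every irreducible path is a descending run followed by an ascending run, because all composable words $\overline{y}\,\overline{x}$ lie in $\overline{S}_4$. Comparing runs this way, the longest irreducible path parallel to an element of $\overline{S}_4$ is $\overline{x}_2\overline{x}_3\overline{x}_5\overline{x}_7\overline{x}_9\overline{y}_9\overline{y}_7\overline{y}_5\overline{y}_3\overline{y}_2$. It has length $10$ and is parallel to $\overline{y}_1\overline{x}_1$. The cochain space therefore vanishes once $q+2>10$, which is the paper's argument; it in fact gives vanishing already for $q\geq 9$.
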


\begin{proof}
    Let $\{ (s, \varphi_s + \tilde{\varphi}_s t) \}_{s \in \overline{S}_4}$ be a first order deformation of $\overline{R}_4$ corresponding to a cocycle in $\text{HH}^2_q(\D_4,\D_4)$. Then $\tilde{\varphi} \in \text{Hom}_{\Bbbk (\overline{Q}_4)_0^e}(\Bbbk \overline{S}_4, \Bbbk \text{Irr}_{\overline{S}_4})_q$, so in other words $\tilde{\varphi}$ is a degree $q$ map that fixes the endpoints, and hence sends elements of $\overline{S}_4$ to a linear combination of parallel irreducible paths of length $q+2$. \\
    Suppose that $q$ is odd. Then we recall from Lemma~\ref{lem:5.5} that $\overline{Q}_4$ is bipartite, and hence $\tilde{\varphi} = 0$, so that $\text{HH}^2_q(\D_4, \D_4) = 0$. \\
    Suppose now that $q \geq 10$. Observe that the longest and unique irreducible path that is parallel to an element of $\overline{S}_4$ is $\overline{x}_2\overline{x}_3\overline{x}_5\overline{x}_7\overline{x}_9\overline{y}_9\overline{y}_7\overline{y}_5\overline{y}_3\overline{y}_2$, which is parallel to $\overline{y}_1\overline{x}_1$. Since this path has length $10$, $\tilde{\varphi} = 0$, so that $\text{HH}^2_q(\D_4, \D_4) = 0$.
\end{proof}

It remains to compute $\text{HH}^2_q(\D_4, \D_4)$ for $q = 2,4,6$ and $8$. We will compute them one by one.

\begin{lemma}\label{lem:7.6}
    We have that
    $$\text{HH}^2_2(\D_4, \D_4) = 0$$
\end{lemma}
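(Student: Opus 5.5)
We will use Corollary~\ref{method} with the reduction system $\overline{R}_4$, which satisfies the diamond condition by Lemma~\ref{lem:7.4}. Together with Theorem~\ref{intformaltool}, this identifies $\text{HH}^2_2(\D_4,\D_4)$ with the space of degree~$2$ first-order deformations of $\overline{R}_4$ modulo equivalence. The plan has three steps: describe all degree~$2$ cochains, impose the cocycle condition on the ten overlaps of Lemma~\ref{lem:7.4}, and show that every surviving cocycle is a coboundary $\delta(\psi)$.

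\textbf{Step 1: the cochains.} A degree~$2$ cochain $\tilde{\varphi}$ sends each $s\in\overline{S}_4$ (a path of length~$2$) to a combination of irreducible paths of length~$4$ parallel to $s$. For each of the $24$ elements of $\overline{S}_4$ listed in Definition~\ref{def:7.2}, I would list these irreducible paths using the picture of $\overline{Q}_4$. Most elements are $2$-cycles $\overline{y}_i\overline{x}_i$ at a vertex. For these, the parallel irreducible length~$4$ paths are products of the form $\overline{x}\,\overline{x}\,\overline{y}\,\overline{y}$ going down two steps and back up, or variants of this shape, and there are only a few at each vertex. For the non-cycle elements the spaces should be at most one-dimensional, or even zero. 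This produces a finite list of unknown coefficients.

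\textbf{Step 2: the cocycle condition.} For each overlap $pqr$ from Lemma~\ref{lem:7.4}, I would expand $(p\star q)\star r - p\star(q\star r)$ modulo $t^2$, keep the coefficient of $t$, and reduce it to irreducible form. Each overlap gives linear equations among the coefficients from Step~1. For example, the overlap $\overline{y}_{10}\overline{y}_1\overline{x}_1$ relates $\tilde{\varphi}(\overline{y}_{10}\overline{y}_1)\overline{x}_1$ to terms involving $\tilde{\varphi}(\overline{y}_1\overline{x}_1)$, $\tilde{\varphi}(\overline{y}_{10}\overline{x}_2)$, $\tilde{\varphi}(\overline{y}_8\overline{y}_2)$, and so on. In practice many of these constraints vanish for length reasons: a length~$5$ path is often already zero in $\overline{\D}_4$.

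\textbf{Step 3: the coboundaries.} A degree~$2$ element $\psi\in\text{Hom}(\Bbbk\overline{Q}_1,\Bbbk\text{Irr})$ sends each arrow to a combination of parallel irreducible paths of length~$3$. I would list these, which are things like $\overline{x}\,\overline{x}\,\overline{y}$ from a vertex to an adjacent one, and compute $\delta(\psi)(s)=T(\psi)(s-\varphi_s)$ for each $s$. The goal is to show that the image of $\delta$ equals the cocycle space found in Step~2, by comparing dimensions or by explicitly solving $\tilde{\varphi}=\delta(\psi)$. A useful shortcut is to first use the freedom in $\psi$ to normalize $\tilde{\varphi}$: kill the coefficients on the non-cycle relations and on some of the cycle relations. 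After that, the cocycle equations from Step~2 should force the remaining coefficients to vanish.

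\textbf{Main obstacle.} The main difficulty is the bookkeeping in Steps~2 and~3. I have to enumerate correctly all irreducible length~$3$ and length~$4$ paths, which depends on correctly identifying the irreducible paths with respect to $\overline{S}_4$. I also have to handle signs and the coefficients $2$ and $-2$ coming from $(\overline{y}_8\overline{x}_8,2\overline{x}_9\overline{y}_9)$ consistently. The delicate point is the vertex relations on the central chain $\overline{y}_1\overline{x}_1,\overline{y}_2\overline{x}_2,\ldots,\overline{y}_9\overline{x}_9$. There the cocycle equations couple neighbouring vertices, and one has to check that the resulting solution space has exactly the dimension of the image of $\delta$. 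I would handle this by organizing everything vertex by vertex, from the top vertex down to the bottom one.
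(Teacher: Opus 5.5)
Your framework is the same as the paper's. Via Corollary~\ref{method} and Theorem~\ref{intformaltool}, you write the general degree-$2$ cochain on $\overline{S}_4$, impose $(p\star q)\star r=p\star(q\star r)$ on the ten overlaps of Lemma~\ref{lem:7.4}, and compare the cocycle space $Z$ with $\operatorname{im}\delta$. The gap is that you stop there, and for this lemma the computation \emph{is} the proof. Since $\operatorname{im}\delta\subseteq Z$ automatically, the statement is equivalent to $\operatorname{rank}\delta=\dim Z$, and your proposal determines neither number. The paper supplies both:
\begin{itemize}
\item $24$ cochain parameters $\alpha_1,\dots,\alpha_{24}$;
\item eight independent overlap equations, so $\dim Z=16$;
\item $26$ parameters $\beta_1,\dots,\beta_{26}$ for $\psi$;
\item an explicit $24\times 26$ matrix for $\delta$ of rank $16$.
\end{itemize}

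Several of your predictions about this bookkeeping are also off, and this matters because the conclusion is a dimension match.
\begin{itemize}
\item Only $10$ of the $24$ elements of $\overline{S}_4$ are $2$-cycles. Moreover, the non-cycle elements $\overline{y}_6\overline{y}_4$ and $\overline{x}_4\overline{x}_6$ have two-dimensional targets, spanned by $\overline{x}_7\overline{y}_7\overline{y}_5\overline{y}_3,\ \overline{x}_7\overline{x}_9\overline{y}_9\overline{y}_8$, respectively $\overline{x}_3\overline{x}_5\overline{x}_7\overline{y}_7,\ \overline{x}_8\overline{x}_9\overline{y}_9\overline{y}_7$.
\item The overlap conditions are not mostly vacuous: eight of them are independent. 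For example, in the paper's parametrization, $\overline{y}_{10}\overline{y}_1\overline{x}_1$ gives $\alpha_1-\alpha_2-\alpha_{12}+\alpha_{23}=0$ through the parallel irreducible path $\overline{y}_9\overline{y}_7\overline{y}_5\overline{y}_3\overline{y}_2$. Losing any one of these equations leaves a $17$-dimensional space that $\delta$ cannot fill.
\item Your normalization shortcut works only because $\tilde\varphi$ is already a cocycle. Every $\delta(\psi)$ satisfies $\alpha_{11}-\alpha_{24}=\alpha_{12}-\alpha_{23}$ (and two similar identities), so the $14$ non-cycle coefficients of a general cochain cannot be removed.
\item Even after removing the non-cycle coefficients from a cocycle, a $5$-dimensional family remains: $\alpha_1=\alpha_2$, $\alpha_3=2\alpha_4=2\alpha_5$, $\alpha_6$, $\alpha_7=\alpha_{10}$, $\alpha_8=\alpha_9$, with all other $\alpha_i=0$. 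This family still has to be hit by coboundaries that vanish on all non-cycle relations, which is again the rank computation you have not done.
\end{itemize}
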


\begin{proof}
    Let $\{ (s, \varphi_s + \tilde{\varphi}_s t) \}_{s \in \overline{S}_4}$ be a first order deformation of $\overline{R}_4$ corresponding to a cocycle in $\text{HH}^2_2(\D_4,\D_4)$. Then $\tilde{\varphi} \in \text{Hom}_{\Bbbk (\overline{Q}_4)_0^e}(\Bbbk \overline{S}_4, \Bbbk \text{Irr}_{\overline{S}_4})_2$, so in other words $\tilde{\varphi}$ is a degree $2$ map that fixes the endpoints, and thus $\tilde{\varphi}_s$ has the following form for $s \in \overline{S}_4$
    $$\tilde{\varphi}_{\overline{y}_1\overline{x}_1}
= \alpha_1\overline{x}_2\overline{x}_3\overline{y}_3\overline{y}_2
+ \alpha_2\overline{x}_2\overline{x}_4\overline{y}_4\overline{y}_2,
$$
$$\tilde{\varphi}_{\overline{y}_2\overline{x}_2}
= \alpha_3\overline{x}_3\overline{x}_5\overline{y}_5\overline{y}_3
+ \alpha_4\overline{x}_3\overline{x}_5\overline{x}_7\overline{y}_8
+ \alpha_5\overline{x}_8\overline{y}_7\overline{y}_5\overline{y}_3
+ \alpha_6\overline{x}_8\overline{x}_9\overline{y}_9\overline{y}_8,$$
$$\tilde{\varphi}_{\overline{y}_3\overline{x}_3}
= \alpha_7\overline{x}_5\overline{x}_7\overline{y}_7\overline{y}_5,
\quad
\tilde{\varphi}_{\overline{y}_4\overline{x}_4}
= \alpha_8\overline{x}_6\overline{x}_7\overline{y}_7\overline{y}_6,$$
$$\tilde{\varphi}_{\overline{y}_5\overline{x}_5}
= \alpha_9\overline{x}_7\overline{x}_9\overline{y}_9\overline{y}_7,
\quad
\tilde{\varphi}_{\overline{y}_6\overline{x}_6}
= \alpha_{10}\overline{x}_7\overline{x}_9\overline{y}_9\overline{y}_7,$$
$$\tilde{\varphi}_{\overline{y}_2\overline{x}_{10}}
= \alpha_{11}\overline{x}_3\overline{x}_5\overline{x}_7\overline{x}_9,
\quad
\tilde{\varphi}_{\overline{y}_{10}\overline{x}_2}
= \alpha_{12}\overline{y}_9\overline{y}_7\overline{y}_5\overline{y}_3,$$
$$\tilde{\varphi}_{\overline{y}_3\overline{x}_4}
= \alpha_{13}\overline{x}_5\overline{x}_7\overline{y}_7\overline{y}_6,
\quad
\tilde{\varphi}_{\overline{y}_4\overline{x}_3}
= \alpha_{14}\overline{x}_6\overline{x}_7\overline{y}_7\overline{y}_5,$$
$$\tilde{\varphi}_{\overline{y}_8\overline{x}_4}
= \alpha_{15}\overline{x}_9\overline{y}_9\overline{y}_7\overline{y}_6,
\quad
\tilde{\varphi}_{\overline{y}_4\overline{x}_8}
= \alpha_{16}\overline{x}_6\overline{x}_7\overline{x}_9\overline{y}_9,$$
$$\tilde{\varphi}_{\overline{y}_8\overline{x}_3}
= \alpha_{17}\overline{x}_9\overline{y}_9\overline{y}_7\overline{y}_5,
\quad
\tilde{\varphi}_{\overline{y}_3\overline{x}_8}
= \alpha_{18}\overline{x}_5\overline{x}_7\overline{x}_9\overline{y}_9,$$
$$\tilde{\varphi}_{\overline{y}_6\overline{y}_4}
= \alpha_{19}\overline{x}_7\overline{y}_7\overline{y}_5\overline{y}_3
+ \alpha_{20}\overline{x}_7\overline{x}_9\overline{y}_9\overline{y}_8,
$$
$$\tilde{\varphi}_{\overline{x}_4\overline{x}_6}
= \alpha_{21}\overline{x}_3\overline{x}_5\overline{x}_7\overline{y}_7
+ \alpha_{22}\overline{x}_8\overline{x}_9\overline{y}_9\overline{y}_7,
$$
$$\tilde{\varphi}_{\overline{y}_8\overline{y}_2}
= \alpha_{23}\overline{y}_7\overline{y}_5\overline{y}_3\overline{y}_2,
\quad
\tilde{\varphi}_{\overline{x}_2\overline{x}_8}
= \alpha_{24}\overline{x}_2\overline{x}_3\overline{x}_5\overline{x}_7,$$
$$\tilde{\varphi}_s = 0 \quad \text{else},$$
with some conditions on the $\alpha_i$'s that we are going to compute. We recall from Definition~\ref{defmethod}, that $\tilde{\varphi}$ is a \(2\)-cocycle if and only if for any overlap ambiguity $pqr$ of length $3$, we have 
$$(p \star q) \star r  = p \star (q \star r) \text{ mod } t^2$$
where
$$u \star v = \left\{
    \begin{array}{cc}
        uv & \mbox{if } uv \not \in S \\
        \varphi_{uv} + t\tilde{\varphi}_{uv} & \mbox{if } uv \in S
    \end{array} \right.$$
For example, note that the overlap ambiguity $\overline{y}_{10}\overline{y}_1\overline{x}_1$ admits a unique parallel irreducible path of length $5$ given by $\overline{y}_9\overline{y}_7\overline{y}_5\overline{y}_3\overline{y}_2$, that we will denote $p$. Thus we can compute
\begin{align*}
    \overline{y}_{10} \star (\overline{y}_1 \star \overline{x}_1) &= \overline{y}_{10} \star (-\overline{x}_2\overline{y}_2 + t(\alpha_1\overline{x}_2\overline{x}_3\overline{y}_3\overline{y}_2 + \alpha_2\overline{x}_2\overline{x}_4\overline{y}_4\overline{y}_2)) &\text{ mod } t^2 \\
    &= -(\overline{y}_{10} \star \overline{x}_2) \star \overline{y}_2 + t(\alpha_1 - \alpha_2)p &\text{ mod } t^2 \\
    &= -(-\overline{y}_{9}\overline{y}_{8} + t\alpha_{12}\overline{y}_9\overline{y}_7\overline{y}_5\overline{y}_3 ) \star \overline{y}_2 + t(\alpha_1 - \alpha_2)p &\text{ mod } t^2 \\
    &= \overline{y}_{9} \star (\overline{y}_8 \star \overline{y}_2) + t(- \alpha_{12} + \alpha_1 - \alpha_2)p  &\text{ mod } t^2 \\
    &= \overline{y}_{9} \star (-\overline{x}_{9}\overline{y}_{10} + t\alpha_{23}\overline{y}_7\overline{y}_5\overline{y}_3\overline{y}_2 ) + t(- \alpha_{12} + \alpha_1 - \alpha_2)p  &\text{ mod } t^2 \\
    &= t(\alpha_{23} - \alpha_{12} + \alpha_1 - \alpha_2)p  &\text{ mod } t^2 \\
    &= (\overline{y}_{10} \star \overline{y}_1) \star \overline{x}_1 = 0 &\text{ mod } t^2
\end{align*}
So we deduce that
$$\alpha_1 - \alpha_2 - \alpha_{12} + \alpha_{23} = 0$$
The other conditions can be computed similarly and we find
\begin{align*}
& \alpha_1 - \alpha_2 - \alpha_{12} + \alpha_{23} = 0 \\
& \alpha_1 - \alpha_2 - \alpha_{11} + \alpha_{24} = 0 \\
& \alpha_8 - \alpha_9 + \alpha_{13} + \alpha_{15} + \alpha_{19} + \alpha_{20} = 0 \\
& \alpha_8 - \alpha_9 + \alpha_{14} + \alpha_{16} + \alpha_{21} + \alpha_{22} = 0 \\
& \alpha_7 - \alpha_{10} + \alpha_{14} + \alpha_{17} + \alpha_{20} = 0 \\
& \alpha_7 - \alpha_{10} + \alpha_{13} + \alpha_{18} + \alpha_{22} = 0 \\
& \alpha_3 - 2\alpha_5 - \alpha_{12} - \alpha_{15} + \alpha_{17} + \alpha_{19} - \alpha_{23} = 0 \\
& \alpha_3 - 2\alpha_4 - \alpha_{11} - \alpha_{16} + \alpha_{18} + \alpha_{21} - \alpha_{24} = 0
\end{align*}
You can observe that these eight equations are linearly independant because a new variable is introduced in each equation. Thus the dimension of the cocycle space is $24-8 = 16$. \\
Now we need to compute the dimension of the subspace of the coboundaries. We recall that a coboundary is an element $\psi \in \text{Hom}_{\Bbbk (\overline{Q}_4)_0^e}(\Bbbk (\overline{Q}_4)_1, \Bbbk \text{Irr}_{\overline{S}_4})_2$, so in other words $\psi$ is a degree $2$ map that fixes the endpoints, and thus $\psi$ has the following form
    $$\psi_{\overline{y}_1}
= \beta_1\overline{x}_2\overline{y}_2\overline{y}_1,
\quad
\psi_{\overline{x}_1}
= \beta_2\overline{x}_1\overline{x}_2\overline{y}_2,$$
$$\psi_{\overline{y}_2}
= \beta_3\overline{x}_3\overline{y}_3\overline{y}_2
+ \beta_4\overline{x}_4\overline{y}_4\overline{y}_2
+ \beta_5\overline{x}_8\overline{x}_9\overline{y}_{10},
\quad
\psi_{\overline{x}_2}
= \beta_6\overline{x}_2\overline{x}_3\overline{y}_3
+ \beta_7\overline{x}_2\overline{x}_4\overline{y}_4
+ \beta_8\overline{x}_{10}\overline{y}_9\overline{y}_8,$$
$$\psi_{\overline{y}_3}
= \beta_9\overline{x}_5\overline{y}_5\overline{y}_3
+ \beta_{10}\overline{x}_5\overline{x}_7\overline{y}_8,
\quad
\psi_{\overline{x}_3}
= \beta_{11}\overline{x}_3\overline{x}_5\overline{y}_5
+ \beta_{12}\overline{x}_8\overline{y}_7\overline{y}_5,$$
$$\psi_{\overline{y}_4}
= \beta_{13}\overline{x}_6\overline{y}_5\overline{y}_3
+ \beta_{14}\overline{x}_6\overline{x}_7\overline{y}_8,
\qquad
\psi_{\overline{x}_4}
= \beta_{15}\overline{x}_3\overline{x}_5\overline{y}_6
+ \beta_{16}\overline{x}_8\overline{y}_7\overline{y}_6,$$
$$\psi_{\overline{y}_5}
= \beta_{17}\overline{x}_7\overline{y}_7\overline{y}_5,
\qquad
\psi_{\overline{x}_5}
= \beta_{18}\overline{x}_5\overline{x}_7\overline{y}_7,$$
$$\psi_{\overline{y}_6} = \beta_{19}\overline{x}_7\overline{y}_7\overline{y}_6,
\qquad\quad
\psi_{\overline{x}_6} = \beta_{20}\overline{x}_6\overline{x}_7\overline{y}_7,$$
$$\psi_{\overline{y}_7}
= \beta_{21}\overline{x}_9\overline{y}_9\overline{y}_7,
\qquad
\psi_{\overline{x}_7}
= \beta_{22}\overline{x}_7\overline{x}_9\overline{y}_9,$$
$$\psi_{\overline{y}_8}
= \beta_{23}\overline{x}_9\overline{y}_9\overline{y}_8 + \beta_{24}\overline{y}_7\overline{y}_5\overline{y}_3,
\qquad
\psi_{\overline{x}_8}
= \beta_{25}\overline{x}_8\overline{x}_9\overline{y}_9 + \beta_{26}\overline{x}_3\overline{x}_5\overline{x}_7,$$
$$\psi_{\overline{y}_9}
= 0,
\qquad
\psi_{\overline{x}_9}
= 0,\qquad\psi_{\overline{y}_{10}}
= 0,
\qquad
\psi_{\overline{x}_{10}}
= 0,$$
    where $\beta_1, \ldots, \beta_{26} \in \Bbbk$. If $\delta(\psi)_s = \tilde{\varphi}_s$ for every $s \in \overline{S}_4$, then using Definition~\ref{defmethod} to compute $\delta(\psi)_s$, we obtain
    \begin{align*}
& \alpha_1 = \beta_1 + \beta_2 + \beta_3 + \beta_6 \\
& \alpha_2 = \beta_1 + \beta_2 + \beta_4 + \beta_7 \\
& \alpha_3 = -\beta_3 - \beta_4 - \beta_6 - \beta_7 + \beta_9 + \beta_{11} - \beta_{13} - \beta_{15} \\
& \alpha_4 = -\beta_3 - \beta_7 + \beta_{10} - \beta_{14} - \beta_{15} \\
& \alpha_5 = -\beta_4 - \beta_6 + \beta_{12} - \beta_{13} - \beta_{16} \\
& \alpha_6 = -\beta_5 - \beta_8 + \beta_{14} + \beta_{16} \\
& \alpha_7 = -\beta_{10} - \beta_{12} \\
& \alpha_8 = \beta_{13} - \beta_{14} + \beta_{15} - \beta_{16} \\
& \alpha_9 = \beta_{17} + \beta_{18} + \beta_{21} + \beta_{22} \\
& \alpha_{10} = \beta_{19} + \beta_{20} + \beta_{21} + \beta_{22} \\
& \alpha_{11} = \beta_3 - \beta_4 + \beta_{26} \\
& \alpha_{12} = \beta_6 - \beta_7 + \beta_{24} \\
& \alpha_{13} = \beta_9 - \beta_{10} - \beta_{16} + \beta_{18} + \beta_{19} \\
& \alpha_{14} = \beta_{11} - \beta_{12} - \beta_{14} + \beta_{17} + \beta_{20} \\
& \alpha_{15} = -\beta_{15} + 2\beta_{16} - \beta_{19} + \beta_{21} - \beta_{23} - \beta_{24} \\
& \alpha_{16} = -\beta_{13} + 2\beta_{14} - \beta_{20} + \beta_{22} - \beta_{25} - \beta_{26} \\
& \alpha_{17} = -\beta_{11} + 2\beta_{12} - \beta_{17} + \beta_{21} - \beta_{23} \\
& \alpha_{18} = -\beta_9 + 2\beta_{10} - \beta_{18} + \beta_{22} - \beta_{25} \\
& \alpha_{19} = -\beta_9 - \beta_{13} + \beta_{17} - \beta_{19} + \beta_{24} \\
& \alpha_{20} = \beta_{10} + \beta_{14} + \beta_{19} + \beta_{22} + \beta_{23} \\
& \alpha_{21} = -\beta_{11} - \beta_{15} + \beta_{18} - \beta_{20} + \beta_{26} \\
& \alpha_{22} = \beta_{12} + \beta_{16} + \beta_{20} + \beta_{21} + \beta_{25} \\
& \alpha_{23} = -\beta_3 + \beta_4 + \beta_{24} \\
& \alpha_{24} = -\beta_6 + \beta_7 + \beta_{26}
\end{align*}
    Indeed, let's show an equation. We compute
    \begin{align*}
\delta(\psi)_{\overline{y}_3\overline{x}_3}
&= T(\psi)(\overline{y}_3\overline{x}_3)
= \sigma\pi(\psi_{\overline{y}_3}\overline{x}_3
+ \overline{y}_3\psi_{\overline{x}_3}) \\
&= \sigma\pi(
\beta_9\overline{x}_5\overline{y}_5\overline{y}_3\overline{x}_3
+ \beta_{10}\overline{x}_5\overline{x}_7\overline{y}_8\overline{x}_3
+ \beta_{11}\overline{y}_3\overline{x}_3\overline{x}_5\overline{y}_5
+ \beta_{12}\overline{y}_3\overline{x}_8\overline{y}_7\overline{y}_5) \\
&= -\beta_{10}\overline{x}_5\overline{x}_7\overline{y}_7\overline{y}_5
-\beta_{12}\overline{x}_5\overline{x}_7\overline{y}_7\overline{y}_5 \\
&= \tilde{\varphi}_{\overline{y}_3\overline{x}_3}
= \alpha_7\overline{x}_5\overline{x}_7\overline{y}_7\overline{y}_5
\end{align*}
    So we deduce the equation
    $$\alpha_7 = - \beta_{10} - \beta_{12}$$
    Similarly we may verify the other equations. \\ We can observe that this system is compatible with the other equations on the $\alpha_i$'s, which is coherent because the space of coboundaries lies inside the space of cocycles, by definition. This system can be viewed as the following matrix
    \vspace{1cm}
\[
\scalebox{0.67}{$
\begin{pmatrix}
1 & 1 & 1 & 0 & 0 & 1 & 0 & 0 & 0 & 0 & 0 & 0 & 0 & 0 & 0 & 0 & 0 & 0 & 0 & 0 & 0 & 0 & 0 & 0 & 0 & 0 \\
1 & 1 & 0 & 1 & 0 & 0 & 1 & 0 & 0 & 0 & 0 & 0 & 0 & 0 & 0 & 0 & 0 & 0 & 0 & 0 & 0 & 0 & 0 & 0 & 0 & 0 \\
0 & 0 & -1 & -1 & 0 & -1 & -1 & 0 & 1 & 0 & 1 & 0 & -1 & 0 & -1 & 0 & 0 & 0 & 0 & 0 & 0 & 0 & 0 & 0 & 0 & 0 \\
0 & 0 & -1 & 0 & 0 & 0 & -1 & 0 & 0 & 1 & 0 & 0 & 0 & -1 & -1 & 0 & 0 & 0 & 0 & 0 & 0 & 0 & 0 & 0 & 0 & 0 \\
0 & 0 & 0 & -1 & 0 & -1 & 0 & 0 & 0 & 0 & 0 & 1 & -1 & 0 & 0 & -1 & 0 & 0 & 0 & 0 & 0 & 0 & 0 & 0 & 0 & 0 \\
0 & 0 & 0 & 0 & -1 & 0 & 0 & -1 & 0 & 0 & 0 & 0 & 0 & 1 & 0 & 1 & 0 & 0 & 0 & 0 & 0 & 0 & 0 & 0 & 0 & 0 \\
0 & 0 & 0 & 0 & 0 & 0 & 0 & 0 & 0 & -1 & 0 & -1 & 0 & 0 & 0 & 0 & 0 & 0 & 0 & 0 & 0 & 0 & 0 & 0 & 0 & 0 \\
0 & 0 & 0 & 0 & 0 & 0 & 0 & 0 & 0 & 0 & 0 & 0 & 1 & -1 & 1 & -1 & 0 & 0 & 0 & 0 & 0 & 0 & 0 & 0 & 0 & 0 \\
0 & 0 & 0 & 0 & 0 & 0 & 0 & 0 & 0 & 0 & 0 & 0 & 0 & 0 & 0 & 0 & 1 & 1 & 0 & 0 & 1 & 1 & 0 & 0 & 0 & 0 \\
0 & 0 & 0 & 0 & 0 & 0 & 0 & 0 & 0 & 0 & 0 & 0 & 0 & 0 & 0 & 0 & 0 & 0 & 1 & 1 & 1 & 1 & 0 & 0 & 0 & 0 \\
0 & 0 & 1 & -1 & 0 & 0 & 0 & 0 & 0 & 0 & 0 & 0 & 0 & 0 & 0 & 0 & 0 & 0 & 0 & 0 & 0 & 0 & 0 & 0 & 0 & 1 \\
0 & 0 & 0 & 0 & 0 & 1 & -1 & 0 & 0 & 0 & 0 & 0 & 0 & 0 & 0 & 0 & 0 & 0 & 0 & 0 & 0 & 0 & 0 & 1 & 0 & 0 \\
0 & 0 & 0 & 0 & 0 & 0 & 0 & 0 & 1 & -1 & 0 & 0 & 0 & 0 & 0 & -1 & 0 & 1 & 1 & 0 & 0 & 0 & 0 & 0 & 0 & 0 \\
0 & 0 & 0 & 0 & 0 & 0 & 0 & 0 & 0 & 0 & 1 & -1 & 0 & -1 & 0 & 0 & 1 & 0 & 0 & 1 & 0 & 0 & 0 & 0 & 0 & 0 \\
0 & 0 & 0 & 0 & 0 & 0 & 0 & 0 & 0 & 0 & 0 & 0 & 0 & 0 & -1 & 2 & 0 & 0 & -1 & 0 & 1 & 0 & -1 & -1 & 0 & 0 \\
0 & 0 & 0 & 0 & 0 & 0 & 0 & 0 & 0 & 0 & 0 & 0 & -1 & 2 & 0 & 0 & 0 & 0 & 0 & -1 & 0 & 1 & 0 & 0 & -1 & -1 \\
0 & 0 & 0 & 0 & 0 & 0 & 0 & 0 & 0 & 0 & -1 & 2 & 0 & 0 & 0 & 0 & -1 & 0 & 0 & 0 & 1 & 0 & -1 & 0 & 0 & 0 \\
0 & 0 & 0 & 0 & 0 & 0 & 0 & 0 & -1 & 2 & 0 & 0 & 0 & 0 & 0 & 0 & 0 & -1 & 0 & 0 & 0 & 1 & 0 & 0 & -1 & 0 \\
0 & 0 & 0 & 0 & 0 & 0 & 0 & 0 & -1 & 0 & 0 & 0 & -1 & 0 & 0 & 0 & 1 & 0 & -1 & 0 & 0 & 0 & 0 & 1 & 0 & 0 \\
0 & 0 & 0 & 0 & 0 & 0 & 0 & 0 & 0 & 1 & 0 & 0 & 0 & 1 & 0 & 0 & 0 & 0 & 1 & 0 & 0 & 1 & 1 & 0 & 0 & 0 \\
0 & 0 & 0 & 0 & 0 & 0 & 0 & 0 & 0 & 0 & -1 & 0 & 0 & 0 & -1 & 0 & 0 & 1 & 0 & -1 & 0 & 0 & 0 & 0 & 0 & 1 \\
0 & 0 & 0 & 0 & 0 & 0 & 0 & 0 & 0 & 0 & 0 & 1 & 0 & 0 & 0 & 1 & 0 & 0 & 0 & 1 & 1 & 0 & 0 & 0 & 1 & 0 \\
0 & 0 & -1 & 1 & 0 & 0 & 0 & 0 & 0 & 0 & 0 & 0 & 0 & 0 & 0 & 0 & 0 & 0 & 0 & 0 & 0 & 0 & 0 & 1 & 0 & 0 \\
0 & 0 & 0 & 0 & 0 & -1 & 1 & 0 & 0 & 0 & 0 & 0 & 0 & 0 & 0 & 0 & 0 & 0 & 0 & 0 & 0 & 0 & 0 & 0 & 0 & 1 \\
\end{pmatrix}
$}
\]
    whose reduced row echelon form is the matrix
\[
\scalebox{0.65}{$
\begin{pmatrix}
1 & 1 & 0 & 0 & 0 & 0 & 0 & 0 & 0 & 0 & 0 & 0 & 0 & 0 & 0 & 0 & 0 & 0 & 0 & 0 & 0 & 0 & 1 & 0 & 1 & 0 \\
0 & 0 & 1 & 0 & 0 & 0 & 1 & 0 & 0 & 0 & 0 & 0 & 0 & 0 & 0 & 0 & 0 & 0 & 0 & 0 & 0 & 0 & -1 & 0 & -1 & 1 \\
0 & 0 & 0 & 1 & 0 & 0 & 1 & 0 & 0 & 0 & 0 & 0 & 0 & 0 & 0 & 0 & 0 & 0 & 0 & 0 & 0 & 0 & -1 & 0 & -1 & 0 \\
0 & 0 & 0 & 0 & 1 & 0 & 0 & 1 & 0 & 0 & 0 & 0 & 0 & 0 & 0 & 0 & 0 & 0 & 0 & 0 & 0 & 0 & 1 & 0 & 1 & 0 \\
0 & 0 & 0 & 0 & 0 & 1 & -1 & 0 & 0 & 0 & 0 & 0 & 0 & 0 & 0 & 0 & 0 & 0 & 0 & 0 & 0 & 0 & 0 & 0 & 0 & -1 \\
0 & 0 & 0 & 0 & 0 & 0 & 0 & 0 & 1 & 0 & 0 & 0 & 0 & 0 & 0 & -2 & 0 & 1 & 0 & -2 & 0 & 1 & -2 & 0 & -3 & 0 \\
0 & 0 & 0 & 0 & 0 & 0 & 0 & 0 & 0 & 1 & 0 & 0 & 0 & 0 & 0 & -1 & 0 & 0 & 0 & -1 & 0 & 1 & -1 & 0 & -2 & 0 \\
0 & 0 & 0 & 0 & 0 & 0 & 0 & 0 & 0 & 0 & 1 & 0 & 0 & 0 & 0 & 2 & 0 & -1 & 0 & 2 & 0 & -1 & 1 & 0 & 2 & 0 \\
0 & 0 & 0 & 0 & 0 & 0 & 0 & 0 & 0 & 0 & 0 & 1 & 0 & 0 & 0 & 1 & 0 & 0 & 0 & 1 & 0 & -1 & 1 & 0 & 2 & 0 \\
0 & 0 & 0 & 0 & 0 & 0 & 0 & 0 & 0 & 0 & 0 & 0 & 1 & 0 & 0 & 2 & 0 & 0 & 0 & 1 & 0 & -1 & 2 & 0 & 3 & 1 \\
0 & 0 & 0 & 0 & 0 & 0 & 0 & 0 & 0 & 0 & 0 & 0 & 0 & 1 & 0 & 1 & 0 & 0 & 0 & 0 & 0 & 0 & 1 & 0 & 1 & 0 \\
0 & 0 & 0 & 0 & 0 & 0 & 0 & 0 & 0 & 0 & 0 & 0 & 0 & 0 & 1 & -2 & 0 & 0 & 0 & -1 & 0 & 1 & -1 & 0 & -2 & -1 \\
0 & 0 & 0 & 0 & 0 & 0 & 0 & 0 & 0 & 0 & 0 & 0 & 0 & 0 & 0 & 0 & 1 & 1 & 0 & 0 & 0 & 0 & 1 & 0 & 1 & 0 \\
0 & 0 & 0 & 0 & 0 & 0 & 0 & 0 & 0 & 0 & 0 & 0 & 0 & 0 & 0 & 0 & 0 & 0 & 1 & 1 & 0 & 0 & 1 & 0 & 1 & 0 \\
0 & 0 & 0 & 0 & 0 & 0 & 0 & 0 & 0 & 0 & 0 & 0 & 0 & 0 & 0 & 0 & 0 & 0 & 0 & 0 & 1 & 1 & -1 & 0 & -1 & 0 \\
0 & 0 & 0 & 0 & 0 & 0 & 0 & 0 & 0 & 0 & 0 & 0 & 0 & 0 & 0 & 0 & 0 & 0 & 0 & 0 & 0 & 0 & 0 & 1 & 0 & 1 \\
0 & 0 & 0 & 0 & 0 & 0 & 0 & 0 & 0 & 0 & 0 & 0 & 0 & 0 & 0 & 0 & 0 & 0 & 0 & 0 & 0 & 0 & 0 & 0 & 0 & 0 \\
0 & 0 & 0 & 0 & 0 & 0 & 0 & 0 & 0 & 0 & 0 & 0 & 0 & 0 & 0 & 0 & 0 & 0 & 0 & 0 & 0 & 0 & 0 & 0 & 0 & 0 \\
0 & 0 & 0 & 0 & 0 & 0 & 0 & 0 & 0 & 0 & 0 & 0 & 0 & 0 & 0 & 0 & 0 & 0 & 0 & 0 & 0 & 0 & 0 & 0 & 0 & 0 \\
0 & 0 & 0 & 0 & 0 & 0 & 0 & 0 & 0 & 0 & 0 & 0 & 0 & 0 & 0 & 0 & 0 & 0 & 0 & 0 & 0 & 0 & 0 & 0 & 0 & 0 \\
0 & 0 & 0 & 0 & 0 & 0 & 0 & 0 & 0 & 0 & 0 & 0 & 0 & 0 & 0 & 0 & 0 & 0 & 0 & 0 & 0 & 0 & 0 & 0 & 0 & 0 \\
0 & 0 & 0 & 0 & 0 & 0 & 0 & 0 & 0 & 0 & 0 & 0 & 0 & 0 & 0 & 0 & 0 & 0 & 0 & 0 & 0 & 0 & 0 & 0 & 0 & 0 \\
0 & 0 & 0 & 0 & 0 & 0 & 0 & 0 & 0 & 0 & 0 & 0 & 0 & 0 & 0 & 0 & 0 & 0 & 0 & 0 & 0 & 0 & 0 & 0 & 0 & 0 \\
0 & 0 & 0 & 0 & 0 & 0 & 0 & 0 & 0 & 0 & 0 & 0 & 0 & 0 & 0 & 0 & 0 & 0 & 0 & 0 & 0 & 0 & 0 & 0 & 0 & 0 \\
\end{pmatrix}
$}
\]
    This $(24 \times 26)$ matrix has rank $16$. Since we know that the space of cocycles has also dimension $16$, this implies that
    $$\text{HH}^2_2(\D_4, \D_4) = 0$$
    as wanted.
\end{proof}

\begin{lemma}\label{lem:7.7}
    We have that
    $$\text{HH}^2_4(\D_4, \D_4) = 0$$
\end{lemma}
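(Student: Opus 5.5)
The plan follows the template of Lemma~\ref{lem:7.6}, now in internal degree $q=4$. Let $\{(s,\varphi_s+\tilde{\varphi}_s t)\}_{s\in\overline{S}_4}$ be a first order deformation of $\overline{R}_4$ corresponding to a cocycle in $\text{HH}^2_4(\D_4,\D_4)$. By Corollary~\ref{method}, $\text{HH}^2_4$ is the cocycle space modulo coboundaries inside $\text{Hom}_{\Bbbk(\overline{Q}_4)_0^e}(\Bbbk\overline{S}_4,\Bbbk\text{Irr}_{\overline{S}_4})_4$.

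The first step is to list the possible values of $\tilde{\varphi}$. Each $\tilde{\varphi}_s$ must be a linear combination of irreducible paths of length $6$ parallel to $s$, so for every $s$ among the $24$ elements of $\overline{S}_4$ I would enumerate these paths. Irreducibility is decided by the explicit list in Definition~\ref{def:7.2}: no subpath of type $\overline{y}\,\overline{x}$ that is a peak, and none of the listed $\overline{y}\,\overline{y}$ or $\overline{x}\,\overline{x}$ pairs. The irreducible paths are essentially of the shape ``descend, then ascend'', so they are governed by the floor structure of $\overline{Q}_4$, and the count should be much smaller than in degree $2$. For example, $\overline{y}_1\overline{x}_1$ should only admit paths such as $\overline{x}_2\overline{x}_3\overline{x}_5\overline{y}_5\overline{y}_3\overline{y}_2$ and its analogues through $\overline{x}_4,\overline{x}_6$ and $\overline{x}_8$. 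This gives coefficients $\alpha_1,\dots,\alpha_N$.

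The second step imposes the cocycle conditions. For each of the ten overlap ambiguities of Lemma~\ref{lem:7.4}, I would compute $(p\star q)\star r$ and $p\star(q\star r)$ modulo $t^2$, exactly as done for $\overline{y}_{10}\overline{y}_1\overline{x}_1$, and compare coefficients on the parallel irreducible paths of length $7$. Those paths are few, since length $10$ is the maximum by Lemma~\ref{lem:7.5}. The involution of Remark~\ref{rmk:invD}, transported to $\overline{D}_4$, pairs the ambiguities (each $\overline{y}\,\overline{y}\,\overline{x}$ with an $\overline{y}\,\overline{x}\,\overline{x}$), so only about five computations are needed and the rest follow by symmetry. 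The result is the dimension $c$ of the cocycle space.

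The third step computes the coboundaries. Write the general $\psi\in\text{Hom}(\Bbbk(\overline{Q}_4)_1,\Bbbk\text{Irr}_{\overline{S}_4})_4$, assigning to each of the $20$ arrows a combination of parallel irreducible paths of length $5$, with coefficients $\beta_j$. Then compute $\delta(\psi)_s=\sigma\pi\,T(\psi)(s-\varphi_s)$, which expresses each $\alpha_i$ linearly in the $\beta_j$. The aim is to show that the rank of this linear map equals $c$, which gives $\text{HH}^2_4(\D_4,\D_4)=0$.

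I expect the main obstacle to be the bookkeeping in the reductions $\sigma\pi$. Products like $\psi_{\overline{y}}\overline{x}$ create non-irreducible words of length $6$ that need several successive reductions, with the coefficients $-1,-2,2$ coming from the rules for $\overline{y}_8\overline{x}_8$, $\overline{y}_6\overline{y}_4$ and $\overline{x}_4\overline{x}_6$. The rank computation is only reliable if those signs are tracked correctly. As a consistency check I would verify that every coboundary satisfies the cocycle equations, as was done in Lemma~\ref{lem:7.6}. If the rank fell short of $c$, I would first recheck the enumeration of irreducible paths, since a missing length-$5$ path in some $\psi_a$ would lower the rank artificially.
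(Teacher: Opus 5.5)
Your plan is the same as the paper's proof: parametrize $\tilde{\varphi}$ by irreducible paths of length $6$, impose the overlap conditions, and show that the coboundary map $\psi\mapsto\delta(\psi)$ has rank equal to the dimension of the cocycle space. In the paper there are in fact no cocycle conditions in degree $4$, since no overlap has a parallel irreducible path of length $7$. This leaves $16$ free coefficients, and the $16\times 26$ coboundary matrix has rank $16$.

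One correction to your illustrative guess for $\tilde{\varphi}_{\overline{y}_1\overline{x}_1}$. The analogues of $\overline{x}_2\overline{x}_3\overline{x}_5\overline{y}_5\overline{y}_3\overline{y}_2$ through $\overline{x}_4$, $\overline{x}_6$ or $\overline{x}_8$ are reducible, because they contain $\overline{x}_4\overline{x}_6$, $\overline{y}_6\overline{y}_4$ or $\overline{x}_2\overline{x}_8$. The other two basis paths are instead $\overline{x}_{10}\overline{y}_9\overline{y}_7\overline{y}_5\overline{y}_3\overline{y}_2$ and $\overline{x}_2\overline{x}_3\overline{x}_5\overline{x}_7\overline{x}_9\overline{y}_{10}$.
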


\begin{proof}
    Let $\{ (s, \varphi_s + \tilde{\varphi}_s t) \}_{s \in \overline{S}_4}$ be a first order deformation of $\overline{R}_4$ corresponding to a cocycle in $\text{HH}^2_4(\D_4,\D_4)$. Then $\tilde{\varphi} \in \text{Hom}_{\Bbbk (\overline{Q}_4)_0^e}(\Bbbk \overline{S}_4, \Bbbk \text{Irr}_{\overline{S}_4})_4$, so in other words $\tilde{\varphi}$ is a degree $4$ map that fixes the endpoints, and thus $\tilde{\varphi}_s$ has the following form for $s \in \overline{S}_4$
    $$\tilde{\varphi}_{\overline{y}_1\overline{x}_1}
= \alpha_1\overline{x}_2\overline{x}_3\overline{x}_5\overline{y}_5\overline{y}_3\overline{y}_2
+ \alpha_2\overline{x}_{10}\overline{y}_9\overline{y}_7\overline{y}_5\overline{y}_3\overline{y}_2
+ \alpha_3\overline{x}_2\overline{x}_3\overline{x}_5\overline{x}_7\overline{x}_9\overline{y}_{10},
$$
$$\tilde{\varphi}_{\overline{y}_2\overline{x}_2}
= \alpha_4\overline{x}_3\overline{x}_5\overline{x}_7\overline{y}_7\overline{y}_5\overline{y}_3
+ \alpha_5\overline{x}_8\overline{x}_9\overline{y}_9\overline{y}_7\overline{y}_5\overline{y}_3
+ \alpha_6\overline{x}_3\overline{x}_5\overline{x}_7\overline{x}_9\overline{y}_9\overline{y}_8,$$
$$\tilde{\varphi}_{\overline{y}_3\overline{x}_3}
= \alpha_7\overline{x}_5\overline{x}_7\overline{x}_9\overline{y}_9\overline{y}_7\overline{y}_5,
\quad
\tilde{\varphi}_{\overline{y}_4\overline{x}_4}
= \alpha_8\overline{x}_6\overline{x}_7\overline{x}_9\overline{y}_9\overline{y}_7\overline{y}_6,$$
$$\tilde{\varphi}_{\overline{y}_3\overline{x}_4}
= \alpha_{9}\overline{x}_5\overline{x}_7\overline{x}_9\overline{y}_9\overline{y}_7\overline{y}_6,
\quad
\tilde{\varphi}_{\overline{y}_4\overline{x}_3}
= \alpha_{10}\overline{x}_6\overline{x}_7\overline{x}_9\overline{y}_9\overline{y}_7\overline{y}_5,$$
$$\tilde{\varphi}_{\overline{y}_{10}\overline{y}_1}
= \alpha_{11}\overline{y}_9\overline{y}_7\overline{y}_5\overline{y}_3\overline{y}_2\overline{y}_1,
\quad
\tilde{\varphi}_{\overline{x}_1\overline{x}_{10}}
= \alpha_{12}\overline{x}_1\overline{x}_2\overline{x}_3\overline{x}_5\overline{x}_7\overline{x}_9,$$
$$\tilde{\varphi}_{\overline{y}_6\overline{y}_4}
= \alpha_{13}\overline{x}_7\overline{x}_9\overline{y}_9\overline{y}_7\overline{y}_5\overline{y}_3,
\quad \tilde{\varphi}_{\overline{x}_4\overline{x}_6}
= \alpha_{14}\overline{x}_3\overline{x}_5\overline{x}_7\overline{x}_9\overline{y}_9\overline{y}_7,
$$
$$\tilde{\varphi}_{\overline{y}_8\overline{y}_2}
= \alpha_{15}\overline{x}_9\overline{y}_9\overline{y}_7\overline{y}_5\overline{y}_3\overline{y}_2,
\quad
\tilde{\varphi}_{\overline{x}_2\overline{x}_8}
= \alpha_{16}\overline{x}_2\overline{x}_3\overline{x}_5\overline{x}_7\overline{x}_9\overline{y}_9,$$
$$\tilde{\varphi}_s = 0 \quad \text{else},$$
for $\alpha_1, \ldots, \alpha_{16} \in \Bbbk$ without any condition on them because none of the overlaps admit a parallel irreducible path of length $7$. \\
Now we need to compute the dimension of the subspace of the coboundaries. We recall that a coboundary is an element $\psi \in \text{Hom}_{\Bbbk (\overline{Q}_4)_0^e}(\Bbbk (\overline{Q}_4)_1, \Bbbk \text{Irr}_{\overline{S}_4})_4$, so in other words $\psi$ is a degree $4$ map that fixes the endpoints, and thus $\psi$ has the following form
    $$\psi_{\overline{y}_1}
= \beta_1\overline{x}_2\overline{x}_3\overline{y}_3\overline{y}_2\overline{y}_1 + \beta_2\overline{x}_2\overline{x}_4\overline{y}_4\overline{y}_2\overline{y}_1 ,
\quad
\psi_{\overline{x}_1}
= \beta_3\overline{x}_1\overline{x}_2\overline{x}_3\overline{y}_3\overline{y}_2 + \beta_4\overline{x}_1\overline{x}_2\overline{x}_4\overline{y}_4\overline{y}_2,$$
$$\psi_{\overline{y}_2}
= \beta_5\overline{x}_3\overline{x}_5\overline{y}_5\overline{y}_3\overline{y}_2
+ \beta_6\overline{x}_8\overline{y}_7\overline{y}_5\overline{y}_3\overline{y}_2
+ \beta_7\overline{x}_3\overline{x}_5\overline{x}_{7}\overline{x}_9\overline{y}_{10},$$
$$\psi_{\overline{x}_2}
= \beta_8\overline{x}_2\overline{x}_3\overline{x}_5\overline{y}_5\overline{y}_3
+ \beta_9\overline{x}_2\overline{x}_3\overline{x}_5\overline{x}_7\overline{y}_8
+ \beta_{10}\overline{x}_{10}\overline{y}_9\overline{y}_7\overline{y}_5\overline{y}_3,$$
$$\psi_{\overline{y}_3}
= \beta_{11}\overline{x}_5\overline{x}_7\overline{y}_7\overline{y}_5\overline{y}_3
+ \beta_{12}\overline{x}_5\overline{x}_7\overline{x}_9\overline{y}_9\overline{y}_8,
\quad
\psi_{\overline{x}_3}
= \beta_{13}\overline{x}_3\overline{x}_5\overline{x}_7\overline{y}_7\overline{y}_5
+ \beta_{14}\overline{x}_8\overline{x}_9\overline{y}_9\overline{y}_7\overline{y}_5,$$
$$\psi_{\overline{y}_4}
= \beta_{15}\overline{x}_6\overline{x}_7\overline{y}_7\overline{y}_5\overline{y}_3
+ \beta_{16}\overline{x}_6\overline{x}_7\overline{x}_9\overline{y}_9\overline{y}_8,
\qquad
\psi_{\overline{x}_4}
= \beta_{17}\overline{x}_3\overline{x}_5\overline{x}_7\overline{y}_7\overline{y}_6
+ \beta_{18}\overline{x}_8\overline{x}_9\overline{y}_9\overline{y}_7\overline{y}_6,$$
$$\psi_{\overline{y}_5}
= \beta_{19}\overline{x}_7\overline{x}_9\overline{y}_9\overline{y}_7\overline{y}_5,
\qquad
\psi_{\overline{x}_5}
= \beta_{20}\overline{x}_5\overline{x}_7\overline{x}_9\overline{y}_9\overline{y}_7,$$
$$\psi_{\overline{y}_6} = \beta_{21}\overline{x}_7\overline{x}_9\overline{y}_9\overline{y}_7\overline{y}_6,
\qquad\quad
\psi_{\overline{x}_6} = \beta_{22}\overline{x}_6\overline{x}_7\overline{x}_9\overline{y}_9\overline{y}_7,$$
$$\psi_{\overline{y}_8}
= \beta_{23}\overline{x}_9\overline{y}_9\overline{y}_7\overline{y}_5\overline{y}_3,
\qquad
\psi_{\overline{x}_8}
= \beta_{24}\overline{x}_3\overline{x}_5\overline{x}_7\overline{x}_9\overline{y}_9,$$
$$\psi_{\overline{y}_{10}}
= \beta_{25}\overline{y}_9\overline{y}_7\overline{y}_5\overline{y}_3\overline{y}_2,
\qquad
\psi_{\overline{x}_{10}}
= \beta_{26}\overline{x}_2\overline{x}_3\overline{x}_5\overline{x}_7\overline{x}_9,$$
$$\psi_{\overline{y}_7}
= 0,
\qquad
\psi_{\overline{x}_7}
= 0,\qquad\psi_{\overline{y}_{9}}
= 0,
\qquad
\psi_{\overline{x}_{9}}
= 0,$$
    where $\beta_1, \ldots, \beta_{26} \in \Bbbk$. If $\delta(\psi)_s = \tilde{\varphi}_s$ for every $s \in \overline{S}_4$, then using Definition~\ref{defmethod} to compute $\delta(\psi)_s$, we obtain
    \begin{align*}
& \alpha_1 = \beta_1 + \beta_2 + \beta_3 + \beta_4 + \beta_5 + \beta_8 \\
& \alpha_2 = -\beta_3 - \beta_4 - \beta_6 + \beta_{10} \\
& \alpha_3 = -\beta_1 - \beta_2 + \beta_7 - \beta_9 \\
& \alpha_4 = \beta_5 + \beta_8 + \beta_{11} + \beta_{13} - \beta_{15} - \beta_{17} \\
& \alpha_5 = -\beta_6 - \beta_8 - \beta_{10} + \beta_{14} + \beta_{15} - \beta_{18} \\
& \alpha_6 = -\beta_5 - \beta_7 - \beta_9 + \beta_{12} - \beta_{16} + \beta_{17} \\
& \alpha_7 = -\beta_{12} - \beta_{14} \\
& \alpha_8 = -\beta_{15} - \beta_{16} - \beta_{17} - \beta_{18} \\
& \alpha_9 = -\beta_{11} - \beta_{12} - \beta_{18} + \beta_{20} + \beta_{21} \\
& \alpha_{10} = -\beta_{13} - \beta_{14} - \beta_{16} + \beta_{19} + \beta_{22} \\
& \alpha_{11} = \beta_1 - \beta_2 + \beta_{25} \\
& \alpha_{12} = \beta_3 - \beta_4 + \beta_{26} \\
& \alpha_{13} = \beta_{11} + \beta_{15} + \beta_{19} - \beta_{21} + \beta_{23} \\
& \alpha_{14} = \beta_{13} + \beta_{17} + \beta_{20} - \beta_{22} + \beta_{24} \\
& \alpha_{15} = -\beta_5 + 2\beta_6 + \beta_{23} + \beta_{25} \\
& \alpha_{16} = -\beta_8 + 2\beta_9 + \beta_{24} + \beta_{26}
\end{align*}
    Indeed, let's show an equation. We compute
    \begin{align*}
\delta(\psi)_{\overline{y}_3\overline{x}_3}
&= T(\psi)(\overline{y}_3\overline{x}_3)
= \sigma\pi(\psi_{\overline{y}_3}\overline{x}_3
+ \overline{y}_3\psi_{\overline{x}_3}) \\
&= \sigma\pi(
\beta_{11}\overline{x}_5\overline{x}_7\overline{y}_7\overline{y}_5\overline{y}_3\overline{x}_3
+ \beta_{12}\overline{x}_5\overline{x}_7\overline{x}_9\overline{y}_9\overline{y}_8\overline{x}_3 \\
&\qquad + \beta_{13}\overline{y}_3\overline{x}_3\overline{x}_5\overline{x}_7\overline{y}_7\overline{y}_5
+ \beta_{14}\overline{y}_3\overline{x}_8\overline{x}_9\overline{y}_9\overline{y}_7\overline{y}_5) \\
&= -\beta_{12}\overline{x}_5\overline{x}_7\overline{x}_9\overline{y}_9\overline{y}_7\overline{y}_5
-\beta_{14}\overline{x}_5\overline{x}_7\overline{x}_9\overline{y}_9\overline{y}_7\overline{y}_5 \\
&= \tilde{\varphi}_{\overline{y}_3\overline{x}_3}
= \alpha_7\overline{x}_5\overline{x}_7\overline{x}_9\overline{y}_9\overline{y}_7\overline{y}_5
\end{align*}
    So we deduce the equation
    $$\alpha_7 = - \beta_{12} - \beta_{14}$$
    Similarly we may verify the other equations. \\
    This system can be viewed as the following matrix
    \[
\scalebox{0.7}{$
\begin{pmatrix}
1 & 1 & 1 & 1 & 1 & 0 & 0 & 1 & 0 & 0 & 0 & 0 & 0 & 0 & 0 & 0 & 0 & 0 & 0 & 0 & 0 & 0 & 0 & 0 & 0 & 0 \\
0 & 0 & -1 & -1 & 0 & -1 & 0 & 0 & 0 & 1 & 0 & 0 & 0 & 0 & 0 & 0 & 0 & 0 & 0 & 0 & 0 & 0 & 0 & 0 & 0 & 0 \\
-1 & -1 & 0 & 0 & 0 & 0 & 1 & 0 & -1 & 0 & 0 & 0 & 0 & 0 & 0 & 0 & 0 & 0 & 0 & 0 & 0 & 0 & 0 & 0 & 0 & 0 \\
0 & 0 & 0 & 0 & 1 & 0 & 0 & 1 & 0 & 0 & 1 & 0 & 1 & 0 & -1 & 0 & -1 & 0 & 0 & 0 & 0 & 0 & 0 & 0 & 0 & 0 \\
0 & 0 & 0 & 0 & 0 & -1 & 0 & -1 & 0 & -1 & 0 & 0 & 0 & 1 & 1 & 0 & 0 & -1 & 0 & 0 & 0 & 0 & 0 & 0 & 0 & 0 \\
0 & 0 & 0 & 0 & -1 & 0 & -1 & 0 & -1 & 0 & 0 & 1 & 0 & 0 & 0 & -1 & 1 & 0 & 0 & 0 & 0 & 0 & 0 & 0 & 0 & 0 \\
0 & 0 & 0 & 0 & 0 & 0 & 0 & 0 & 0 & 0 & 0 & -1 & 0 & -1 & 0 & 0 & 0 & 0 & 0 & 0 & 0 & 0 & 0 & 0 & 0 & 0 \\
0 & 0 & 0 & 0 & 0 & 0 & 0 & 0 & 0 & 0 & 0 & 0 & 0 & 0 & -1 & -1 & -1 & -1 & 0 & 0 & 0 & 0 & 0 & 0 & 0 & 0 \\
0 & 0 & 0 & 0 & 0 & 0 & 0 & 0 & 0 & 0 & -1 & -1 & 0 & 0 & 0 & 0 & 0 & -1 & 0 & 1 & 1 & 0 & 0 & 0 & 0 & 0 \\
0 & 0 & 0 & 0 & 0 & 0 & 0 & 0 & 0 & 0 & 0 & 0 & -1 & -1 & 0 & -1 & 0 & 0 & 1 & 0 & 0 & 1 & 0 & 0 & 0 & 0 \\
1 & -1 & 0 & 0 & 0 & 0 & 0 & 0 & 0 & 0 & 0 & 0 & 0 & 0 & 0 & 0 & 0 & 0 & 0 & 0 & 0 & 0 & 0 & 0 & 1 & 0 \\
0 & 0 & 1 & -1 & 0 & 0 & 0 & 0 & 0 & 0 & 0 & 0 & 0 & 0 & 0 & 0 & 0 & 0 & 0 & 0 & 0 & 0 & 0 & 0 & 0 & 1 \\
0 & 0 & 0 & 0 & 0 & 0 & 0 & 0 & 0 & 0 & 1 & 0 & 0 & 0 & 1 & 0 & 0 & 0 & 1 & 0 & -1 & 0 & 1 & 0 & 0 & 0 \\
0 & 0 & 0 & 0 & 0 & 0 & 0 & 0 & 0 & 0 & 0 & 0 & 1 & 0 & 0 & 0 & 1 & 0 & 0 & 1 & 0 & -1 & 0 & 1 & 0 & 0 \\
0 & 0 & 0 & 0 & -1 & 2 & 0 & 0 & 0 & 0 & 0 & 0 & 0 & 0 & 0 & 0 & 0 & 0 & 0 & 0 & 0 & 0 & 1 & 0 & 1 & 0 \\
0 & 0 & 0 & 0 & 0 & 0 & 0 & -1 & 2 & 0 & 0 & 0 & 0 & 0 & 0 & 0 & 0 & 0 & 0 & 0 & 0 & 0 & 0 & 1 & 0 & 1
\end{pmatrix}
$}
\]
    whose reduced row echelon form is the matrix
    \[
\scalebox{0.7}{$
\begin{pmatrix}
1 & 0 & 0 & 0 & 0 & 0 & 0 & 0 & 0 & 0 & 0 & 0 & 0 & 0 & 0 & 0 & 0 & 0 & 0 & 0 & -\frac{1}{2} & -\frac{1}{2} & 0 & 0 & \frac{1}{2} & -\frac{1}{2} \\
0 & 1 & 0 & 0 & 0 & 0 & 0 & 0 & 0 & 0 & 0 & 0 & 0 & 0 & 0 & 0 & 0 & 0 & 0 & 0 & -\frac{1}{2} & -\frac{1}{2} & 0 & 0 & -\frac{1}{2} & -\frac{1}{2} \\
0 & 0 & 1 & 0 & 0 & 0 & 0 & 0 & 0 & 0 & 0 & 0 & 0 & 0 & 0 & 0 & 0 & 0 & 0 & 0 & -\frac{1}{2} & -\frac{1}{2} & 0 & 0 & -\frac{1}{2} & \frac{1}{2} \\
0 & 0 & 0 & 1 & 0 & 0 & 0 & 0 & 0 & 0 & 0 & 0 & 0 & 0 & 0 & 0 & 0 & 0 & 0 & 0 & -\frac{1}{2} & -\frac{1}{2} & 0 & 0 & -\frac{1}{2} & -\frac{1}{2} \\
0 & 0 & 0 & 0 & 1 & 0 & 0 & 0 & 0 & -2 & 0 & 0 & 0 & 0 & 0 & 0 & 0 & 0 & 0 & 0 & 2 & 2 & -1 & 0 & 1 & 0 \\
0 & 0 & 0 & 0 & 0 & 1 & 0 & 0 & 0 & -1 & 0 & 0 & 0 & 0 & 0 & 0 & 0 & 0 & 0 & 0 & 1 & 1 & 0 & 0 & 1 & 0 \\
0 & 0 & 0 & 0 & 0 & 0 & 1 & 0 & 0 & 1 & 0 & 0 & 0 & 0 & 0 & 0 & 0 & 0 & 0 & 0 & -1 & -1 & \frac{1}{2} & \frac{1}{2} & 0 & 0 \\
0 & 0 & 0 & 0 & 0 & 0 & 0 & 1 & 0 & 2 & 0 & 0 & 0 & 0 & 0 & 0 & 0 & 0 & 0 & 0 & 0 & 0 & 1 & 0 & 0 & 1 \\
0 & 0 & 0 & 0 & 0 & 0 & 0 & 0 & 1 & 1 & 0 & 0 & 0 & 0 & 0 & 0 & 0 & 0 & 0 & 0 & 0 & 0 & \frac{1}{2} & \frac{1}{2} & 0 & 1 \\
0 & 0 & 0 & 0 & 0 & 0 & 0 & 0 & 0 & 0 & 1 & 0 & 0 & 0 & 0 & 0 & -1 & 0 & 0 & -1 & -1 & 0 & \frac{1}{2} & -\frac{1}{2} & 0 & 0 \\
0 & 0 & 0 & 0 & 0 & 0 & 0 & 0 & 0 & 0 & 0 & 1 & 0 & 0 & 0 & 0 & 1 & 1 & 0 & 0 & 0 & 0 & -\frac{1}{2} & \frac{1}{2} & 0 & 0 \\
0 & 0 & 0 & 0 & 0 & 0 & 0 & 0 & 0 & 0 & 0 & 0 & 1 & 0 & 0 & 0 & 1 & 0 & 0 & 1 & 0 & -1 & 0 & 1 & 0 & 0 \\
0 & 0 & 0 & 0 & 0 & 0 & 0 & 0 & 0 & 0 & 0 & 0 & 0 & 1 & 0 & 0 & -1 & -1 & 0 & 0 & 0 & 0 & \frac{1}{2} & -\frac{1}{2} & 0 & 0 \\
0 & 0 & 0 & 0 & 0 & 0 & 0 & 0 & 0 & 0 & 0 & 0 & 0 & 0 & 1 & 0 & 1 & 0 & 0 & 0 & 1 & 1 & \frac{1}{2} & \frac{1}{2} & 1 & 1 \\
0 & 0 & 0 & 0 & 0 & 0 & 0 & 0 & 0 & 0 & 0 & 0 & 0 & 0 & 0 & 1 & 0 & 1 & 0 & 0 & -1 & -1 & -\frac{1}{2} & -\frac{1}{2} & -1 & -1 \\
0 & 0 & 0 & 0 & 0 & 0 & 0 & 0 & 0 & 0 & 0 & 0 & 0 & 0 & 0 & 0 & 0 & 0 & 1 & 1 & -1 & -1 & 0 & 0 & -1 & -1
\end{pmatrix}
$}\]
    This $(16 \times 26)$ matrix has rank $16$. Since we know that the space of cocycles has also dimension $16$, this implies that
    $$\text{HH}^2_4(\D_4, \D_4) = 0$$
    as wanted.
\end{proof}

\begin{lemma}\label{lem:7.8}
    We have that
    $$\text{HH}^2_6(\D_4, \D_4) = 0$$
\end{lemma}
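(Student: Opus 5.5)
I will follow exactly the template of Lemmas~\ref{lem:7.6} and~\ref{lem:7.7}, using Corollary~\ref{method} with the reduction system $\overline{R}_4$ of Definition~\ref{def:7.2}, which satisfies the diamond condition by Lemma~\ref{lem:7.4}. A class in $\text{HH}^2_6(\D_4,\D_4)$ is represented by a map $\tilde{\varphi}\in\text{Hom}_{\Bbbk(\overline{Q}_4)_0^e}(\Bbbk\overline{S}_4,\Bbbk\text{Irr}_{\overline{S}_4})_6$. Such a map sends each $s\in\overline{S}_4$ to a combination of irreducible paths of length $8$ parallel to $s$.

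\textbf{Step 1: enumerate the cochains.} First I will list, for every $s\in\overline{S}_4$, the irreducible parallel paths of length $8$. They must climb far down the quiver (through $\overline{x}_7,\overline{x}_9$ and back), so very few survive. Candidates are:
\begin{itemize}
\item for $\overline{y}_1\overline{x}_1$: $\overline{x}_2\overline{x}_3\overline{x}_5\overline{x}_7\overline{y}_7\overline{y}_5\overline{y}_3\overline{y}_2$ and $\overline{x}_{10}\overline{y}_9\overline{y}_7\overline{y}_5\overline{y}_3\overline{y}_2$-type paths lengthened by one $\overline{x}_9\overline{y}_9$ detour, together with their mirror images;
\item for $\overline{y}_2\overline{x}_2$: $\overline{x}_3\overline{x}_5\overline{x}_7\overline{x}_9\overline{y}_9\overline{y}_7\overline{y}_5\overline{y}_3$;
\item for $\overline{y}_{10}\overline{y}_1$, $\overline{x}_1\overline{x}_{10}$, $\overline{y}_8\overline{y}_2$, $\overline{x}_2\overline{x}_8$: the paths obtained by extending the length $6$ paths of Lemma~\ref{lem:7.7} by two arrows.
\end{itemize}
I expect roughly six to eight parameters $\alpha_i$. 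Next I will check the cocycle conditions on the ten overlaps of Lemma~\ref{lem:7.4}. These require parallel irreducible paths of length $9$, which do not exist because the quiver is bipartite (Lemma~\ref{lem:5.5}), since overlaps have odd length $3$. So, exactly as in Lemma~\ref{lem:7.7}, every such $\tilde{\varphi}$ is a cocycle, and the cocycle space has dimension equal to the number of $\alpha_i$.

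\textbf{Step 2: compute the coboundaries.} I will write a general $\psi\in\text{Hom}(\Bbbk(\overline{Q}_4)_1,\Bbbk\text{Irr}_{\overline{S}_4})_6$. This assigns to each arrow a combination of irreducible parallel paths of length $7$, for example:
\begin{itemize}
\item $\psi_{\overline{y}_1}$ involving $\overline{x}_2\overline{x}_3\overline{x}_5\overline{y}_5\overline{y}_3\overline{y}_2\overline{y}_1$;
\item $\psi_{\overline{y}_2}$ involving $\overline{x}_3\overline{x}_5\overline{x}_7\overline{y}_7\overline{y}_5\overline{y}_3\overline{y}_2$ and $\overline{x}_8\overline{x}_9\overline{y}_9\overline{y}_7\overline{y}_5\overline{y}_3\overline{y}_2$;
\item $\psi_{\overline{y}_3}$ involving $\overline{x}_5\overline{x}_7\overline{x}_9\overline{y}_9\overline{y}_7\overline{y}_5\overline{y}_3$;
\item $\psi_{\overline{y}_{10}}$, $\psi_{\overline{x}_{10}}$ through the length $7$ paths passing $\overline{y}_9\overline{y}_7\overline{y}_5\overline{y}_3\overline{y}_2$;
\end{itemize}
together with their $x/y$ mirrors. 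Then I will compute $\delta(\psi)_s=\sigma\pi(T(\psi)(s-\varphi_s))$ for each $s$ with nonzero target space, reducing with $\overline{R}_4$ as in the displayed sample computation of Lemma~\ref{lem:7.7}. This expresses each $\alpha_i$ as a linear combination of the $\beta_j$.

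\textbf{Step 3: compare ranks.} I will record the relations as a matrix and show its rank equals the number of $\alpha_i$. Then every cocycle is a coboundary and $\text{HH}^2_6(\D_4,\D_4)=0$. Since the matrix is small, I expect to exhibit it in near-triangular form directly, each $\alpha_i$ having a $\beta_j$ appearing only in its own equation. Natural candidates are the $\beta$'s of $\psi_{\overline{y}_{10}},\psi_{\overline{x}_{10}}$ for $\alpha$'s on $\overline{y}_{10}\overline{y}_1,\overline{x}_1\overline{x}_{10}$, and those of $\psi_{\overline{y}_1},\psi_{\overline{x}_1}$ for $\overline{y}_1\overline{x}_1$.

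\textbf{Main obstacle.} The hard part is Step 1 together with the reductions in Step 2. One must be sure the list of irreducible length $7$ and $8$ paths is complete, and reduce products such as $\psi_{\overline{y}_2}\overline{x}_2$ correctly to normal form. That reduction repeatedly uses the rules $\overline{y}_7\overline{x}_7\to-\overline{x}_9\overline{y}_9$ and $\overline{y}_9\overline{x}_9\to0$, so many terms vanish and signs are delicate. I would first determine the irreducible paths systematically by vertex, using that irreducible long paths must essentially follow the unique longest irreducible path of Lemma~\ref{lem:7.5}. If the rank turned out too small, I would recheck for overlooked $\beta$-terms before concluding.
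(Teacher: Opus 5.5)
\textbf{This is a plan, not a proof, and its concrete predictions are wrong.} You use the paper's framework (Corollary~\ref{method} with $\overline{R}_4$), but you never carry out the computation, which for this lemma is the whole proof.

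\emph{The parity argument fails.} Your Step~1 reason for the absence of cocycle conditions is invalid. A length-$3$ overlap joins the two sides of the bipartition, so every parallel path has odd length, and $9$ is odd. Parity therefore rules out nothing. The same reasoning would wrongly eliminate the eight conditions of Lemma~\ref{lem:7.6}, which come from length-$5$ paths parallel to length-$3$ overlaps.

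\emph{The enumeration overcounts.} There is no irreducible length-$8$ path parallel to $\overline{y}_{10}\overline{y}_1$, $\overline{x}_1\overline{x}_{10}$, $\overline{y}_8\overline{y}_2$ or $\overline{x}_2\overline{x}_8$. The ``$\overline{x}_{10}\overline{y}_9\overline{y}_7\overline{y}_5\overline{y}_3\overline{y}_2$ with an $\overline{x}_9\overline{y}_9$ detour'' does not exist, because any such detour creates $\overline{y}_9\overline{x}_9\in\overline{S}_4$. Also $\psi_{\overline{y}_{10}}=\psi_{\overline{x}_{10}}=0$ in this degree. The cochain space is $2$-dimensional, not six to eight. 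Since you produce neither the $\beta$-equations nor a primitive, nothing is actually established.

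\emph{The missing observation.} Type (I) of Definition~\ref{def:7.2} puts \emph{every} composable pair $\overline{y}\,\overline{x}$ into $\overline{S}_4$. So irreducible paths are valleys: a run of descending $\overline{x}$'s followed by a run of ascending $\overline{y}$'s, avoiding the six pairs of types (II) and (III).
\begin{itemize}
\item Give the vertices levels $0,\dots,6$ from $\up\up\up\up$ to $\dn\dn\dn\dn$; each arrow changes the level by $1$, $3$ or $5$. An irreducible path from $u$ to $v$ with lowest level $L$ then has length at most $\mathrm{level}(u)+\mathrm{level}(v)-2L$.
\item Among elements of $\overline{S}_4$, this bound reaches $8$ only for the cycles $\overline{y}_1\overline{x}_1$ (level $5$) and $\overline{y}_2\overline{x}_2$ (level $4$). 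A short check leaves exactly $\overline{x}_2\overline{x}_3\overline{x}_5\overline{x}_7\overline{y}_7\overline{y}_5\overline{y}_3\overline{y}_2$ and $\overline{x}_3\overline{x}_5\overline{x}_7\overline{x}_9\overline{y}_9\overline{y}_7\overline{y}_5\overline{y}_3$.
\item For the ten overlaps the level sum is at most $5<9$. This is the correct reason the cocycle conditions are vacuous. They are unnecessary anyway once $\delta$ is shown to hit every cochain.
\end{itemize}

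\emph{The explicit primitive.} No rank computation is then needed. As in the paper, set $\psi_{\overline{y}_1}=-\alpha_1\overline{x}_2\overline{x}_3\overline{x}_5\overline{y}_5\overline{y}_3\overline{y}_2\overline{y}_1$ and $\psi_{\overline{y}_3}=\alpha_2\overline{x}_5\overline{x}_7\overline{x}_9\overline{y}_9\overline{y}_7\overline{y}_5\overline{y}_3$, with all other components zero.
\begin{itemize}
\item $\delta(\psi)_{\overline{y}_2\overline{x}_2}=\overline{x}_3\psi_{\overline{y}_3}$, which is already irreducible.
\item $\delta(\psi)_{\overline{y}_1\overline{x}_1}=\sigma\pi(\psi_{\overline{y}_1}\overline{x}_1)$ reduces to $\alpha_1\overline{x}_2\overline{x}_3\overline{x}_5\overline{x}_7\overline{y}_7\overline{y}_5\overline{y}_3\overline{y}_2$. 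The reduction successively uses the rules for $\overline{y}_1\overline{x}_1$, $\overline{y}_2\overline{x}_2$, $\overline{y}_3\overline{x}_4$, $\overline{y}_5\overline{x}_5$ and $\overline{y}_6\overline{y}_4$. The stray term dies via $\overline{y}_7\overline{x}_7$, $\overline{y}_8\overline{y}_2$ and $\overline{y}_9\overline{x}_9$.
\item All other components of $\delta(\psi)$ land in zero spaces for degree reasons.
\end{itemize}
Hence every degree-$6$ cochain is a coboundary.
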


\begin{proof}
    Let $\{ (s, \varphi_s + \tilde{\varphi}_s t) \}_{s \in \overline{S}_4}$ be a first order deformation of $\overline{R}_4$ corresponding to a cocycle in $\text{HH}^2_6(\D_4,\D_4)$. Then $\tilde{\varphi} \in \text{Hom}_{\Bbbk (\overline{Q}_4)_0^e}(\Bbbk \overline{S}_4, \Bbbk \text{Irr}_{\overline{S}_4})_6$, so in other words $\tilde{\varphi}$ is a degree $6$ map that fixes the endpoints, and thus $\tilde{\varphi}_s$ has the following form for $s \in \overline{S}_4$
    $$\tilde{\varphi}_{\overline{y}_1\overline{x}_1}
= \alpha_1\overline{x}_2\overline{x}_3\overline{x}_5\overline{x}_7\overline{y}_7\overline{y}_5\overline{y}_3\overline{y}_2,
$$
$$\tilde{\varphi}_{\overline{y}_2\overline{x}_2}
= \alpha_2\overline{x}_3\overline{x}_5\overline{x}_7\overline{x}_9\overline{y}_9\overline{y}_7\overline{y}_5\overline{y}_3,$$
$$\tilde{\varphi}_s = 0 \quad \text{else},$$
for $\alpha_1, \alpha_{2} \in \Bbbk$ without any condition on them because none of the overlaps admit a parallel irreducible path of length $9$. \\
Consider now the coboundary $\psi \in \text{Hom}_{\Bbbk (\overline{Q}_4)_0^e}(\Bbbk (\overline{Q}_4)_1, \Bbbk \text{Irr}_{\overline{S}_4})_6$ defined by
$$\psi_{\overline{y}_1}
= -\alpha_1\overline{x}_2\overline{x}_3\overline{x}_5\overline{y}_5\overline{y}_3\overline{y}_2\overline{y}_1,
$$
$$\psi_{\overline{y}_3}
= \alpha_2\overline{x}_5\overline{x}_7\overline{x}_9\overline{y}_9\overline{y}_7\overline{y}_5\overline{y}_3,$$
$$\psi_{y_i} = \psi_{x_i} = 0 \quad \text{else},$$
Then if we compute
\begin{align*}
\delta(\psi)_{\overline{y}_1\overline{x}_1}
&= T(\psi)(\overline{y}_1\overline{x}_1 + \overline{x}_2\overline{y}_2) \\
&= \sigma\pi(\psi_{\overline{y}_1}\overline{x}_1 + \overline{y}_1\psi_{\overline{x}_1} + \psi_{\overline{x}_2}\overline{y}_2 + \overline{x}_2\psi_{\overline{y}_2} ) \\
&= \sigma\pi(-\alpha_1\overline{x}_2\overline{x}_3\overline{x}_5\overline{y}_5\overline{y}_3\overline{y}_2\overline{y}_1\overline{x}_1) \\
&= \alpha_1\overline{x}_2\overline{x}_3\overline{x}_5\overline{x}_7\overline{y}_7\overline{y}_5\overline{y}_3\overline{y}_2 = \tilde{\varphi}_{\overline{y}_1\overline{x}_1}
\end{align*}
and similarly
$$\delta(\psi)_{\overline{y}_2\overline{x}_2} = \tilde{\varphi}_{\overline{y}_2\overline{x}_2}$$
Thus we have that $\delta(\psi) = \tilde{\varphi}$ and hence $\tilde{\varphi}$ is trivial. So we deduce that
$$\text{HH}^2_6(\D_4, \D_4) = 0$$
as wanted.
\end{proof}

\begin{lemma}\label{lem:7.9}
    We have that
    $$\text{HH}^2_8(\D_4, \D_4) = 0$$
\end{lemma}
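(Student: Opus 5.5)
By Lemma~\ref{lem:7.5}, a cocycle in degree $8$ is very constrained, so the plan is to follow the same strategy as in Lemma~\ref{lem:7.8}. Let $\{(s,\varphi_s+\tilde\varphi_s t)\}_{s\in\overline{S}_4}$ be a first order deformation of $\overline{R}_4$ representing a class in $\text{HH}^2_8(\D_4,\D_4)$.

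First, each $\tilde\varphi_s$ is a linear combination of irreducible paths of length $10$ parallel to $s$. The proof of Lemma~\ref{lem:7.5} shows that there is exactly one such path, namely
$$p=\overline{x}_2\overline{x}_3\overline{x}_5\overline{x}_7\overline{x}_9\overline{y}_9\overline{y}_7\overline{y}_5\overline{y}_3\overline{y}_2,$$
and it is parallel only to $\overline{y}_1\overline{x}_1$. Hence
$$\tilde\varphi_{\overline{y}_1\overline{x}_1}=\alpha\, p,\qquad \tilde\varphi_s=0 \text{ otherwise},$$
with $\alpha\in\Bbbk$. The cocycle condition imposes nothing. Indeed, it would compare combinations of irreducible paths of length $11$ parallel to an overlap ambiguity. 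No such paths exist, since $11$ is odd and $\overline{Q}_4$ is bipartite by Lemma~\ref{lem:5.5}. So the space of cocycles has dimension at most $1$.

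Second, I will show that $\tilde\varphi$ is a coboundary by exhibiting an explicit $\psi$. Mimicking Lemma~\ref{lem:7.8}, I take
$$\psi_{\overline{y}_1}=c\,\alpha\,\overline{x}_2\overline{x}_3\overline{x}_5\overline{x}_7\overline{y}_7\overline{y}_5\overline{y}_3\overline{y}_2\overline{y}_1,$$
for a sign $c=\pm1$ to be fixed, and $\psi=0$ on all other arrows. This path has length $9$. It goes from the source of $\overline{y}_1$ back through the top vertex, so $\psi$ has degree $8$, and the path is irreducible because it contains no element of $\overline{S}_4$. As in Lemma~\ref{lem:7.8}, we then compute
$$\delta(\psi)_{\overline{y}_1\overline{x}_1}=T(\psi)(\overline{y}_1\overline{x}_1+\overline{x}_2\overline{y}_2)=\sigma\pi\bigl(\psi_{\overline{y}_1}\overline{x}_1\bigr).$$
All other values $\delta(\psi)_s$ vanish. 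The terms involving $\psi_{\overline{y}_1}$ all have length $10$, and the only irreducible length $10$ path parallel to some $s$ is $p$, which is parallel only to $\overline{y}_1\overline{x}_1$.

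The main obstacle is reducing $\overline{x}_2\overline{x}_3\overline{x}_5\overline{x}_7\overline{y}_7\overline{y}_5\overline{y}_3\overline{y}_2\overline{y}_1\overline{x}_1$ to a nonzero multiple of $p$. I would reduce from the right, repeatedly pushing the $2$-cycle inward:
$$\overline{y}_1\overline{x}_1\to-\overline{x}_2\overline{y}_2,\qquad \overline{y}_2\overline{x}_2\to-\overline{x}_3\overline{y}_3-\overline{x}_4\overline{y}_4.$$
In the branch through $\overline{x}_3$ we continue with $\overline{y}_3\overline{x}_3\to 0$. In the branch through $\overline{x}_4$ we continue with $\overline{y}_3\overline{x}_4\to-\overline{x}_5\overline{y}_6$, then with rules of type (I) at the lower vertices, and finally $\overline{y}_7\overline{x}_7\to-\overline{x}_9\overline{y}_9$. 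Along the way, square-type rules such as $(\overline{y}_6\overline{y}_4,\ldots)$ may produce extra terms, and each of these must be seen to die via the monomial rules $\overline{y}_3\overline{x}_3\to0$, $\overline{y}_4\overline{x}_4\to0$ or $\overline{y}_9\overline{x}_9\to0$. By the uniqueness of $p$, the result is necessarily a scalar multiple $\kappa p$. The real content is checking that $\kappa\neq 0$, keeping track of signs and of the coefficient $2$ in $(\overline{y}_8\overline{x}_8,2\overline{x}_9\overline{y}_9)$ in case that rule intervenes.

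If this route gives $\kappa=0$ through cancellation, I would instead put the correction on another arrow adjacent to the top vertex. Candidates are $\psi_{\overline{x}_1}$ with $\overline{x}_1\overline{x}_2\overline{x}_3\overline{x}_5\overline{x}_7\overline{y}_7\overline{y}_5\overline{y}_3\overline{y}_2$, or $\psi_{\overline{y}_2}$, $\psi_{\overline{x}_2}$, which enter through the $\overline{x}_2\overline{y}_2$ term of $T(\psi)$. By the symmetry under the involution of Remark~\ref{rmk:invD}, at least one of these should give a nonzero multiple. Once $\kappa\neq0$, choosing $c=1/\kappa$ gives $\delta(\psi)=\tilde\varphi$. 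Hence every cocycle is a coboundary and $\text{HH}^2_8(\D_4,\D_4)=0$.
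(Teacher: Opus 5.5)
This is the paper's argument. The degree-$8$ cochains are $\tilde\varphi_{\overline{y}_1\overline{x}_1}=\alpha p$ with all other components zero, and the paper kills them with exactly your $\psi$ supported on $\overline{y}_1$, taking $c=1$.

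As written, however, your proof stops at its only nontrivial step: you never determine $\kappa$. The fallback (``by the symmetry of Remark~\ref{rmk:invD}, at least one of these should give a nonzero multiple'') is not an argument. No fallback is needed, because the reduction you sketch goes through:
\begin{align*}
\overline{x}_2\overline{x}_3\overline{x}_5\overline{x}_7\overline{y}_7\overline{y}_5\overline{y}_3\overline{y}_2\overline{y}_1\overline{x}_1
&\to -\overline{x}_2\overline{x}_3\overline{x}_5\overline{x}_7\overline{y}_7\overline{y}_5\overline{y}_3\overline{y}_2\overline{x}_2\overline{y}_2\\
&\to \overline{x}_2\overline{x}_3\overline{x}_5\overline{x}_7\overline{y}_7\overline{y}_5\overline{y}_3\overline{x}_4\overline{y}_4\overline{y}_2\\
&\to -\overline{x}_2\overline{x}_3\overline{x}_5\overline{x}_7\overline{y}_7\overline{y}_5\overline{x}_5\overline{y}_6\overline{y}_4\overline{y}_2\\
&\to \overline{x}_2\overline{x}_3\overline{x}_5\overline{x}_7\overline{y}_7\overline{x}_7\overline{y}_7\overline{y}_6\overline{y}_4\overline{y}_2\\
&\to -\overline{x}_2\overline{x}_3\overline{x}_5\overline{x}_7\overline{x}_9\overline{y}_9\overline{y}_7\overline{y}_6\overline{y}_4\overline{y}_2\\
&\to p+\overline{x}_2\overline{x}_3\overline{x}_5\overline{x}_7\overline{x}_9\overline{y}_9\overline{y}_7\overline{x}_7\overline{y}_8\overline{y}_2 .
\end{align*}
The rules used, in order, are $\overline{y}_1\overline{x}_1$, then $\overline{y}_2\overline{x}_2$ (its $\overline{x}_3\overline{y}_3$-branch dies by $\overline{y}_3\overline{x}_3\to 0$), then $\overline{y}_3\overline{x}_4$, $\overline{y}_5\overline{x}_5$, $\overline{y}_7\overline{x}_7$ and $\overline{y}_6\overline{y}_4$. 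The leftover term vanishes via $\overline{y}_7\overline{x}_7\to-\overline{x}_9\overline{y}_9$ followed by $\overline{y}_9\overline{x}_9\to 0$. So $\sigma\pi(\psi_{\overline{y}_1}\overline{x}_1)=c\,\alpha\,p$, that is $\kappa=1$ and $c=1$. Your caution about the sign was justified. The analogous reduction in Lemma~\ref{lem:7.8} lacks the step $\overline{y}_7\overline{x}_7\to-\overline{x}_9\overline{y}_9$ and produces $-1$, which is why the paper's $\psi_{\overline{y}_1}$ there carries a minus sign.

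Separately, your reason that the cocycle condition is vacuous is wrong. An overlap ambiguity has length $3$ and the competing paths have length $11$; both are odd, so bipartiteness excludes nothing. What is true, and what the paper asserts, is that no overlap admits a parallel irreducible path of length $11$. Since every $\overline{y}\,\overline{x}$ lies in $\overline{S}_4$, irreducible paths descend and then ascend, which keeps them short. In any case you do not need this. The cochain space is at most one-dimensional, and since $\kappa\neq 0$ it is spanned by $\delta(\psi)$, which is automatically a cocycle.
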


\begin{proof}
    Let $\{ (s, \varphi_s + \tilde{\varphi}_s t) \}_{s \in \overline{S}_4}$ be a first order deformation of $\overline{R}_4$ corresponding to a cocycle in $\text{HH}^2_8(\D_4,\D_4)$. Then $\tilde{\varphi} \in \text{Hom}_{\Bbbk (\overline{Q}_4)_0^e}(\Bbbk \overline{S}_4, \Bbbk \text{Irr}_{\overline{S}_4})_8$, so in other words $\tilde{\varphi}$ is a degree $8$ map that fixes the endpoints, and thus $\tilde{\varphi}_s$ has the following form for $s \in \overline{S}_4$
    $$\tilde{\varphi}_{\overline{y}_1\overline{x}_1}
= \alpha_1\overline{x}_2\overline{x}_3\overline{x}_5\overline{x}_7\overline{x}_9\overline{y}_9\overline{y}_7\overline{y}_5\overline{y}_3\overline{y}_2,
$$
$$\tilde{\varphi}_s = 0 \quad \text{else},$$
for $\alpha_1 \in \Bbbk$ without any condition on it because none of the overlaps admit a parallel irreducible path of length $11$. \\
Consider now the coboundary $\psi \in \text{Hom}_{\Bbbk (\overline{Q}_4)_0^e}(\Bbbk (\overline{Q}_4)_1, \Bbbk \text{Irr}_{\overline{S}_4})_8$ defined by
$$\psi_{\overline{y}_1}
= \alpha_1\overline{x}_2\overline{x}_3\overline{x}_5\overline{x}_7\overline{y}_7\overline{y}_5\overline{y}_3\overline{y}_2\overline{y}_1,
$$
$$\psi_{y_i} = \psi_{x_i} = 0 \quad \text{else},$$
Then if we compute
\begin{align*}
\delta(\psi)_{\overline{y}_1\overline{x}_1}
&= T(\psi)(\overline{y}_1\overline{x}_1 + \overline{x}_2\overline{y}_2) \\
&= \sigma\pi(\psi_{\overline{y}_1}\overline{x}_1 + \overline{y}_1\psi_{\overline{x}_1} + \psi_{\overline{x}_2}\overline{y}_2 + \overline{x}_2\psi_{\overline{y}_2} ) \\
&= \sigma\pi(\alpha_1\overline{x}_2\overline{x}_3\overline{x}_5\overline{x}_7\overline{y}_7\overline{y}_5\overline{y}_3\overline{y}_2\overline{y}_1\overline{x}_1) \\
&= \alpha_1\overline{x}_2\overline{x}_3\overline{x}_5\overline{x}_7\overline{x}_9\overline{y}_9\overline{y}_7\overline{y}_5\overline{y}_3\overline{y}_2 = \tilde{\varphi}_{\overline{y}_1\overline{x}_1}
\end{align*}
Thus we have that $\delta(\psi) = \tilde{\varphi}$ and hence $\tilde{\varphi}$ is trivial. So we deduce that
$$\text{HH}^2_8(\D_4, \D_4) = 0$$
as wanted.
\end{proof}

\begin{corollary}\label{cor:7.10}
    The algebra $\D_4$ is intrinsically formal.
\end{corollary}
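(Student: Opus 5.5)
The plan is to assemble the vanishing results of Section~\ref{sec:7.3} and feed them into Theorem~\ref{intformal}. By that theorem it suffices to show $\text{HH}^2_q(\D_4,\D_4)=0$ for every $q>0$. Here the bigrading is the one from Theorem~\ref{intformaltool}, so the computation may equivalently be carried out on the Koszul dual $\overline{\D}_4 \cong \Bbbk\overline{Q}_4/\overline{I}_4$. This is exactly what the lemmas of Section~\ref{sec:7.3} do, via Corollary~\ref{method} applied to the reduction system $\overline{R}_4$.

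First I will justify that Corollary~\ref{method} applies. Lemma~\ref{lem:7.4} shows that $\overline{R}_4$ satisfies the diamond condition for $\overline{I}_4$, and Theorem~\ref{thm:7.1} identifies $\overline{I}_4$ with the ideal of the Koszul dual. Theorem~\ref{thm:5.19} together with Koszulity of $\D_4$ ensures that Theorem~\ref{intformaltool} applies. Hence $\text{HH}^2_q(\D_4,\D_4)\cong\text{HH}^2_q(\overline{\D}_4,\overline{\D}_4)$, and the latter is computed as first-order deformations of $\overline{R}_4$ of internal degree $q$ modulo equivalence.

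Next I split the range $q>0$ into cases:
\begin{itemize}
\item for $q$ odd, or $q\ge 10$, the group vanishes by Lemma~\ref{lem:7.5};
\item for $q\in\{2,4,6,8\}$, it vanishes by Lemmas~\ref{lem:7.6}, \ref{lem:7.7}, \ref{lem:7.8} and~\ref{lem:7.9} respectively.
\end{itemize}
These cases exhaust all positive integers, so $\text{HH}^2_q(\D_4,\D_4)=0$ for all $q>0$, and Theorem~\ref{intformal} gives intrinsic formality.

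The main point requiring care is not this bookkeeping but the consistency of gradings. Theorem~\ref{intformal} is stated for the internal grading of $\D_4$ viewed as a graded ($A_\infty$-)algebra concentrated appropriately. The lemmas instead index by the degree of $\tilde\varphi$ as a map $\Bbbk\overline{S}_4\to\Bbbk\mathrm{Irr}_{\overline{S}_4}$, which raises path length by $q$. I would check that, under the bidegree conventions of Theorem~\ref{intformaltool} (arrows of $\overline{Q}_4$ in bidegree $(1,-1)$), a cochain sending length-$2$ paths to length-$(q+2)$ paths lands in $\text{HH}^2_q$ with the same sign convention for $q$. Only a reindexing $q\mapsto -q$ could occur. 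Since the vanishing covers all $q\neq 0$ anyway, with odd $q$ and $|q|\ge 10$ handled by Lemma~\ref{lem:7.5} and negative degrees impossible because $\tilde\varphi$ cannot shorten paths below the length of elements of $\overline{S}_4$ except trivially, this does not affect the conclusion.
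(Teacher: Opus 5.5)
Your main argument is the paper's own. Theorem~\ref{intformal} reduces the claim to $\text{HH}^2_q(\D_4,\D_4)=0$ for $q>0$. Theorem~\ref{intformaltool} and Corollary~\ref{method} (with Lemma~\ref{lem:7.4}) turn this into a computation with first-order deformations of $\overline{R}_4$. Lemmas~\ref{lem:7.5}--\ref{lem:7.9} then cover every $q>0$. That part is fine as it stands.

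Your last paragraph should be replaced. You are right that the direction of the grading needs checking, but your claim that it cannot matter is false.

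\begin{itemize}
\item If the index were reversed, you would need the lemmas' degrees $q<0$. Degrees $\le -3$ vanish trivially and $q=-1$ vanishes by bipartiteness, but nothing covers $q=-2$: Lemma~\ref{lem:7.5} treats $q\ge 10$, not $|q|\ge 10$.
\item In degree $-2$, $\tilde\varphi$ can shorten paths. It may send a $2$-cycle $\overline{y}_i\overline{x}_i\in\overline{S}_4$ to a multiple of the idempotent at its base vertex.
\item There are no coboundaries at all in that degree, since a degree $-2$ map $\psi$ would have to send arrows to paths of length $-1$.
\item Nonzero cocycles do exist. Send $\overline{y}_i\overline{x}_i$ to its idempotent for $2\le i\le 7$ and every other element of $\overline{S}_4$ to $0$. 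Each overlap ambiguity imposes a linear condition on the coefficients $c_i$ of $\overline{y}_i\overline{x}_i$, such as $c_2=c_7$ or $c_5+c_6=2c_7+c_8$, and all of them hold here.
\end{itemize}

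So under the reversed reading the hypothesis of Theorem~\ref{intformal} would not hold.

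What saves the argument is that the paper's reading is the correct one. Via the Koszul resolution, the cochains relevant to Theorem~\ref{intformal} have Hochschild degree $p\ge 3$ and internal degree $2-p$. They are therefore maps from the length-$p$ generators of the resolution into $(\D_4)_2$. The identification behind Theorem~\ref{intformaltool} exchanges these two lengths. This produces $2$-cochains on $\overline{\D}_4$ from the relations (length $2$) to paths of length $p$, i.e.\ exactly the lemmas' degree $q=p-2\ge 1$. The degree $-2$ classes above correspond instead to $p=0$, i.e.\ to degree-$2$ central elements of $\D_4$, which are irrelevant to formality. That verification, not a symmetry argument, is what your final paragraph should contain.
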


\newpage

\nocite{*}
\printbibliography[heading=bibintoc]

\end{document}

%% file: arcDiagram.tex
\newcommand{\colorFor}[1]{%
  \ifcase#1
    blue!95!cyan
  \or
    red!90!black
  \or
    green!60!black
  \or
    yellow!90!black
  \or
    orange
  \or
    magenta
  \or
    violet
  \or
    cyan!95!blue
  \or
    green!40!yellow
  \or
    black
  \or
    teal
  \else
    black%
  \fi
}

\tikzset{
  diagram/.style={x=1em,y=1em}
}

\tikzset{
  strand/.style={
    line width=1pt,
    line cap=round
  }
}

\tikzset{
  dot/.style={
    fill,
    circle,
    inner sep=1.5pt
  }
}

\tikzset{
  bigdot/.style={
    fill,
    circle,
    inner sep=2.5pt
  }
}

\tikzset{
  sdot/.style={
    fill,
    circle,
    inner sep=1pt
  }
}

\tikzset{
  colored strand/.style args={#1}{
    strand,
    draw=\colorFor{#1}
  }
}

\tikzset{
  colored dot/.style={
    dot,
    fill=\colorFor{#1},
    draw=\colorFor{#1}
  }
}

\newcommand{\UP}[1]{%
  \fill
    (#1,0.16)
    -- (#1-0.14,-0.16)
    -- (#1,-0.08)
    -- (#1+0.14,-0.16)
    -- cycle;
}

\newcommand{\DN}[1]{%
  \fill
    (#1,-0.16)
    -- (#1-0.14,0.16)
    -- (#1,0.08)
    -- (#1+0.14,0.16)
    -- cycle;
}

\newcommand{\BUP}[1]{%
  \draw[strand] (#1,-0.1) -- (#1,0.03);
  \fill
    (#1,0.32)
    -- (#1-0.24,-0.32)
    -- (#1,-0.16)
    -- (#1+0.24,-0.32)
    -- cycle;
}

\newcommand{\BDN}[1]{%
  \draw[strand] (#1,0.1) -- (#1,-0.03);
  \fill
    (#1,-0.32)
    -- (#1-0.24,0.32)
    -- (#1,0.16)
    -- (#1+0.24,0.32)
    -- cycle;
}

\newcommand{\cBUP}[2]{%
  \draw[colored strand={#2}] (#1,-0.1) -- (#1,0.03);
  \fill[\colorFor{#2}]
    (#1,0.32)
    -- (#1-0.24,-0.32)
    -- (#1,-0.16)
    -- (#1+0.24,-0.32)
    -- cycle;
}

\newcommand{\cBDN}[2]{%
  \draw[colored strand={#2}] (#1,0.1) -- (#1,-0.03);
  \fill[\colorFor{#2}]
    (#1,-0.32)
    -- (#1-0.24,0.32)
    -- (#1,0.16)
    -- (#1+0.24,0.32)
    -- cycle;
}

\newcommand{\cUP}[2]{%
  \draw[colored strand={#2}] (#1,-0.1) -- (#1,0.03);
  \fill[\colorFor{#2}]
    (#1,0.16)
    -- (#1-0.14,-0.16)
    -- (#1,-0.08)
    -- (#1+0.14,-0.16)
    -- cycle;
}

\newcommand{\cDN}[2]{%
  \draw[colored strand={#2}] (#1,0.1) -- (#1,-0.03);
  \fill[\colorFor{#2}]
    (#1,-0.16)
    -- (#1-0.14,0.16)
    -- (#1,0.08)
    -- (#1+0.14,0.16)
    -- cycle;
}

\newcommand{\weight}[1]{%
  \foreach[count=\i from 0] \w in {#1} {%
    \ifnum\w=1
      \UP{\i}
    \else
      \DN{\i}
    \fi
  }%
}

\newcommand{\Bweight}[1]{%
  \foreach[count=\i from 0] \w in {#1} {%
    \ifnum\w=1
      \BUP{\i}
    \else
      \BDN{\i}
    \fi
  }%
}

\newcommand{\Cweight}[2]{%
  \foreach[count=\i from 0] \w [count=\j from 0] in {#1} {%
    \pgfmathparse{{#2}[\j]}%
    \let\col\pgfmathresult
    \ifnum\w=1
      \cUP{\i}{\col}
    \else
      \cDN{\i}{\col}
    \fi
  }%
}

\newcommand{\CBweight}[2]{%
  \foreach[count=\i from 0] \w [count=\j from 0] in {#1} {%
    \pgfmathparse{{#2}[\j]}%
    \let\col\pgfmathresult
    \ifnum\w=1
      \cBUP{\i}{\col}%
    \else
      \cBDN{\i}{\col}%
    \fi
  }%
}

\newcommand{\CAP}[3]{%
  \pgfmathsetmacro{\mid}{(#1+#2)/2}%
  \pgfmathsetmacro{\rad}{(#2-#1)/2}%
  \draw[strand] (\mid-\rad,0.21)
    arc[start angle=180,end angle=0,
        x radius=\rad,
        y radius=#3*\rad];
}

\newcommand{\CUP}[3]{%
  \pgfmathsetmacro{\mid}{(#1+#2)/2}%
  \pgfmathsetmacro{\rad}{(#2-#1)/2}%
  \draw[strand] (\mid-\rad,-0.21)
    arc[start angle=-180,end angle=0,
        x radius=\rad,
        y radius=#3*\rad];
}

\newcommand{\DCAP}[3]{%
  \pgfmathsetmacro{\mid}{(#1+#2)/2}%
  \pgfmathsetmacro{\rad}{(#2-#1)/2}%
  \draw[strand, dotted] (\mid-\rad,0.21)
    arc[start angle=180,end angle=0,
        x radius=\rad,
        y radius=#3*\rad];
}

\newcommand{\DCUP}[3]{%
  \pgfmathsetmacro{\mid}{(#1+#2)/2}%
  \pgfmathsetmacro{\rad}{(#2-#1)/2}%
  \draw[strand, dotted] (\mid-\rad,-0.21)
    arc[start angle=-180,end angle=0,
        x radius=\rad,
        y radius=#3*\rad];
}

\newcommand{\CCAP}[4]{%
  \pgfmathsetmacro{\mid}{(#1+#2)/2}%
  \pgfmathsetmacro{\rad}{(#2-#1)/2}%
  \draw[colored strand={#4}] (\mid-\rad,0.21)
    arc[start angle=180,end angle=0,
        x radius=\rad,
        y radius=#3*\rad];
}

\newcommand{\CCUP}[4]{%
  \pgfmathsetmacro{\mid}{(#1+#2)/2}%
  \pgfmathsetmacro{\rad}{(#2-#1)/2}%
  \draw[colored strand={#4}] (\mid-\rad,-0.21)
    arc[start angle=-180,end angle=0,
        x radius=\rad,
        y radius=#3*\rad];
}

\newcommand{\DOWNLINE}[2]{%
  \draw[strand] (#1,-0.21) -- (#1,-#2);
}

\newcommand{\UPLINE}[2]{%
  \draw[strand] (#1,0.21) -- (#1,#2);
}

\newcommand{\LINE}[2]{%
  \draw[strand] (#1,0.21) -- (#1,#2);
  \draw[strand] (#1,-0.21) -- (#1,-#2);
}

\newcommand{\RLINE}[2]{%
  \draw[strand] (#1,0) arc[start angle=180, end angle=130, x radius=0.5, y radius=#2];
  \draw[strand] (#1,0) arc[start angle=180, end angle=230, x radius=0.5, y radius=#2];
  \draw[strand, dotted] (#1,0) arc[start angle=180, end angle=100, x radius=0.5, y radius=#2];
  \draw[strand, dotted] (#1,0) arc[start angle=180, end angle=260, x radius=0.5, y radius=#2];
}

\newcommand{\TRLINE}[2]{%
  \draw[strand] (#1,0) arc[start angle=180, end angle=130, x radius=0.5, y radius=#2];
  \draw[strand, dotted] (#1,0) arc[start angle=180, end angle=100, x radius=0.5, y radius=#2];
}

\newcommand{\BRLINE}[2]{%
  \draw[strand] (#1,0) arc[start angle=180, end angle=230, x radius=0.5, y radius=#2];
  \draw[strand, dotted] (#1,0) arc[start angle=180, end angle=260, x radius=0.5, y radius=#2];
}

\newcommand{\CRLINE}[3]{%
  \draw[colored strand={#3}] (#1,0) arc[start angle=180, end angle=130, x radius=0.5, y radius=#2];
  \draw[colored strand={#3}] (#1,0) arc[start angle=180, end angle=230, x radius=0.5, y radius=#2];
  \draw[colored strand={#3}, dotted] (#1,0) arc[start angle=180, end angle=100, x radius=0.5, y radius=#2];
  \draw[colored strand={#3}, dotted] (#1,0) arc[start angle=180, end angle=260, x radius=0.5, y radius=#2];
}

\newcommand{\FILL}[1]{%
  \foreach \i in {0,...,\numexpr#1-1\relax} {
    \draw[strand] (\i,0.21) -- (\i,-0.21);
  }
}

\newcommand{\BFILL}[1]{%
  \foreach \i in {0,...,\numexpr#1-1\relax} {
    \draw[strand] (\i,0) -- (\i,-0.21);
  }
}

\newcommand{\BBFILL}[1]{%
    \draw[strand] (#1,0) -- (#1,-0.21);
}

\newcommand{\TFILL}[1]{%
  \foreach \i in {0,...,\numexpr#1-1\relax} {
    \draw[strand] (\i,0.21) -- (\i,0);
  }
}

\newcommand{\CFILL}[2]{%
  \foreach[count=\i from 0] \c in {#2} {%
    \ifnum\i<\numexpr#1\relax
      \draw[colored strand={\c}] (\i,0.21) -- (\i,-0.21);
    \fi
  }%
}

\newcommand{\PFILL}[1]{%
    \draw[strand] (#1,0.21) -- (#1,-0.21);
}

\newcommand{\CPFILL}[2]{%
    \draw[colored strand = {#2}] (#1,0.21) -- (#1,-0.21);
}

\newcommand{\dCAP}[3]{%
  \pgfmathsetmacro{\mid}{(#1+#2)/2}%
  \pgfmathsetmacro{\rad}{(#2-#1)/2}%
  \draw[strand] (\mid-\rad,0.21)
    arc[start angle=180,end angle=0,
        x radius=\rad,
        y radius=#3*\rad];
  \node[dot] at (\mid, 0.21+#3*\rad) {};%
}

\newcommand{\bigdCAP}[3]{%
  \pgfmathsetmacro{\mid}{(#1+#2)/2}%
  \pgfmathsetmacro{\rad}{(#2-#1)/2}%
  \draw[strand] (\mid-\rad,0.21)
    arc[start angle=180,end angle=0,
        x radius=\rad,
        y radius=#3*\rad];
  \node[bigdot] at (\mid, 0.21+#3*\rad) {};%
}

\newcommand{\dCUP}[3]{%
  \pgfmathsetmacro{\mid}{(#1+#2)/2}%
  \pgfmathsetmacro{\rad}{(#2-#1)/2}%
  \draw[strand] (\mid-\rad,-0.21)
    arc[start angle=-180,end angle=0,
        x radius=\rad,
        y radius=#3*\rad];
  \node[dot] at (\mid, -0.21-#3*\rad) {};%
}

\newcommand{\bigdCUP}[3]{%
  \pgfmathsetmacro{\mid}{(#1+#2)/2}%
  \pgfmathsetmacro{\rad}{(#2-#1)/2}%
  \draw[strand] (\mid-\rad,-0.21)
    arc[start angle=-180,end angle=0,
        x radius=\rad,
        y radius=#3*\rad];
  \node[bigdot] at (\mid, -0.21-#3*\rad) {};%
}

\newcommand{\dDOWNLINE}[2]{%
  \draw[strand] (#1,-0.21) -- (#1,-#2);
  \node[dot] at (#1, {(-0.21-#2)/2}) {};%
}

\newcommand{\bigdDOWNLINE}[2]{%
  \draw[strand] (#1,-0.21) -- (#1,-#2);
  \node[bigdot] at (#1, {(-0.21-#2)/2}) {};%
}

\newcommand{\dUPLINE}[2]{%
  \draw[strand] (#1,0.21) -- (#1,#2);
  \node[dot] at (#1, {(0.21+#2)/2}) {};%
}

\newcommand{\bigdUPLINE}[2]{%
  \draw[strand] (#1,0.21) -- (#1,#2);
  \node[bigdot] at (#1, {(0.21+#2)/2}) {};%
}

\newcommand{\dLINE}[2]{%
  \draw[strand] (#1,0.21) -- (#1,#2);
  \node[dot] at (#1, {(0.21+#2)/2}) {};%
  \draw[strand] (#1,-0.21) -- (#1,-#2);
  \node[dot] at (#1, {(-0.21-#2)/2}) {};%
}

\newcommand{\sdCAP}[3]{%
  \pgfmathsetmacro{\mid}{(#1+#2)/2}%
  \pgfmathsetmacro{\rad}{(#2-#1)/2}%
  \draw[strand] (\mid-\rad,0.21)
    arc[start angle=180,end angle=0,
        x radius=\rad,
        y radius=#3*\rad];
  \node[sdot] at (\mid, 0.21+#3*\rad) {};%
}

\newcommand{\sdCUP}[3]{%
  \pgfmathsetmacro{\mid}{(#1+#2)/2}%
  \pgfmathsetmacro{\rad}{(#2-#1)/2}%
  \draw[strand] (\mid-\rad,-0.21)
    arc[start angle=-180,end angle=0,
        x radius=\rad,
        y radius=#3*\rad];
  \node[sdot] at (\mid, -0.21-#3*\rad) {};%
}

\newcommand{\sdDOWNLINE}[2]{%
  \draw[strand] (#1,-0.21) -- (#1,-#2);
  \node[sdot] at (#1, {(-0.21-#2)/2}) {};%
}

\newcommand{\sdUPLINE}[2]{%
  \draw[strand] (#1,0.21) -- (#1,#2);
  \node[sdot] at (#1, {(0.21+#2)/2}) {};%
}

\newcommand{\sdLINE}[2]{%
  \draw[strand] (#1,0.21) -- (#1,#2);
  \node[sdot] at (#1, {(0.21+#2)/2}) {};%
  \draw[strand] (#1,-0.21) -- (#1,-#2);
  \node[sdot] at (#1, {(-0.21-#2)/2}) {};%
}

\newcommand{\CdCUP}[4]{%
  \pgfmathsetmacro{\mid}{(#1+#2)/2}%
  \pgfmathsetmacro{\rad}{(#2-#1)/2}%
  \draw[colored strand={#4}] (\mid-\rad,-0.21)
    arc[start angle=-180,end angle=0,
        x radius=\rad,
        y radius=#3*\rad];
  \node[colored dot={#4}] at (\mid, -0.21-#3*\rad) {};%
}

\newcommand{\CdCAP}[4]{%
  \pgfmathsetmacro{\mid}{(#1+#2)/2}%
  \pgfmathsetmacro{\rad}{(#2-#1)/2}%
  \draw[colored strand={#4}] (\mid-\rad,0.21)
    arc[start angle=180,end angle=0,
        x radius=\rad,
        y radius=#3*\rad];
  \node[colored dot={#4}] at (\mid, 0.21+#3*\rad) {};%
}

\newcommand{\pdLINE}[2]{%
  \draw[strand] (#1,0.21) -- (#1,#2*.8);
  \draw[strand,dotted] (#1,0.21) -- (#1,#2*1.2);
  \node[dot] at (#1, {(0.21+#2)/2.2}) {};%
  \draw[strand] (#1,-0.21) -- (#1,-#2*.8);
  \draw[strand,dotted] (#1,-0.21) -- (#1,-#2*1.2);
  \node[dot] at (#1, {(-0.21-#2)/2.2}) {};%
}

\newcommand{\dRLINE}[2]{%
  \draw[strand] (#1,0) arc[start angle=180, end angle=130, x radius=0.5, y radius=#2]
    node[dot, pos=0.75] {};
  \draw[strand] (#1,0) arc[start angle=180, end angle=230, x radius=0.5, y radius=#2]
    node[dot, pos=0.75] {};
  \draw[strand, dotted] (#1,0) arc[start angle=180, end angle=100, x radius=0.5, y radius=#2];
  \draw[strand, dotted] (#1,0) arc[start angle=180, end angle=260, x radius=0.5, y radius=#2];
}

\newcommand{\dBRLINE}[2]{%
  \draw[strand] (#1,0) arc[start angle=180, end angle=230, x radius=0.5, y radius=#2]
    node[dot, pos=0.75] {};
  \draw[strand, dotted] (#1,0) arc[start angle=180, end angle=260, x radius=0.5, y radius=#2];
}

\newcommand{\dLLINE}[2]{%
  \draw[strand] (#1,0) arc[start angle=0, end angle=50, x radius=0.5, y radius=#2]
    node[dot, pos=0.75] {};
  \draw[strand] (#1,0) arc[start angle=0, end angle=-50, x radius=0.5, y radius=#2]
    node[dot, pos=0.75] {};
  \draw[strand, dotted] (#1,0) arc[start angle=0, end angle=80, x radius=0.5, y radius=#2];
  \draw[strand, dotted] (#1,0) arc[start angle=0, end angle=-80, x radius=0.5, y radius=#2];
}

\newcommand{\CpdLINE}[3]{%
  \draw[colored strand={#3}] (#1,0.21) -- (#1,#2*.8);
  \draw[colored strand={#3},dotted] (#1,0.21) -- (#1,#2*1.2);
  \node[colored dot = {#3}] at (#1, {(0.21+#2)/2.2}) {};%
  \draw[colored strand={#3}] (#1,-0.21) -- (#1,-#2*.8);
  \draw[colored strand={#3},dotted] (#1,-0.21) -- (#1,-#2*1.2);
  \node[colored dot = {#3}] at (#1, {(-0.21-#2)/2.2}) {};%
}

\newcommand{\BpdLINE}[2]{%
  \draw[strand] (#1,-0.21) -- (#1,-#2*.8);
  \draw[strand,dotted] (#1,-0.21) -- (#1,-#2*1.2);
  \node[dot] at (#1, {(-0.21-#2)/2.2}) {};%
}

\newcommand{\TpdLINE}[2]{%
  \draw[strand] (#1,0.21) -- (#1,#2*.8);
  \draw[strand,dotted] (#1,0.21) -- (#1,#2*1.2);
  \node[dot] at (#1, {(0.21+#2)/2.2}) {};%
}